\title{$\td$-tilting theory for linear Nakayama algebras}
\author[E. S. Rundsveen]{Endre S. Rundsveen\, \orcidlink{0009-0000-0589-8399}{$^\ast$}}
\address{Department of mathematical sciences, NTNU, Trondheim, Norway \\ \orcidlink{0009-0000-0589-8399}\, 0009-0000-0589-8399}
\email{endre.s.rundsveen@ntnu.no}
\author[L. Vaso]{Laertis Vaso\, \orcidlink{0000-0003-0313-7777}}
\address{Department of mathematical sciences, NTNU, Trondheim, Norway \\  \orcidlink{0000-0003-0313-7777}\, 0000-0003-0313-7777}
\email{laertis.vaso@ntnu.no}
\address{$^{\ast}$Corresponding author}
\subjclass{16G20,16S90,18E10,18G99}
\DeclareRobustCommand{\SkipTocEntry}[5]{} 
\newsavebox{\wideeqbox}
\newenvironment{wideeq}
  {\begin{displaymath}\begin{lrbox}{\wideeqbox}$\displaystyle}
  {$\end{lrbox}\makebox[0pt]{\usebox{\wideeqbox}}\end{displaymath}}
\DeclareMathOperator{\m}{mod}
\newcommand{\syzygy}{\Omega}
\newcommand{\cosyzygy}{\Omega^{-}}
\newcommand{\rad}[1]{\mathrm{Rad}(#1)}
\newcommand{\ALinOr}[1]{\overset{\rightarrow}{{\mathbb{A}}_{#1}}}
\newcommand{\K}{\mathbf{k}}
\newcommand{\C}{\mathcal{C}}
\newcommand{\add}[1]{\mathrm{add}\left( #1\right)}
\newcommand{\modfin}[1]{\m#1}
\newcommand{\ind}[2]{M(#1,#2)}
\newcommand{\Hom}[3]{\mathrm{Hom}_{#1}\left(#2,#3\right)}
\newcommand{\diag}[1]{\mathcal{D}_{#1}}
\newcommand{\diagm}[1]{D_{#1}}
\newcommand{\updiag}[2]{\diag{#1}^\uparrow(#2)}
\newcommand{\updiagm}[2]{\diagm{#1}^\uparrow(#2)}
\newcommand{\downdiag}[2]{\diag{#1}^\downarrow(#2)}
\newcommand{\downdiagm}[2]{\diagm{#1}^\downarrow(#2)}
\newcommand{\length}[1]{\ell(#1)}
\newcommand{\Socle}[1]{\mathrm{Soc}(#1)}
\newcommand{\coker}{coker}
\newcommand{\im}{\mathrm{im}}
\newcommand{\td}{\tau_d^{\phantom{-}}}
\newcommand{\tdo}{\tau_d^{-}}
\newcommand{\gldim}{\mathrm{gl.dim.}}
\newcommand{\projinj}{\mathcal{PI}}
\newcommand{\fac}[1]{\mathrm{Fac}(#1)}
\newcommand{\proj}[1]{\mathrm{proj}\,#1}
\newcommand{\HomotopyC}[2]{\mathrm{K}^{#2}(#1)}
\newcommand{\thick}[1]{\mathrm{thick}\left( #1\right)}
\newcommand{\silt}[1]{\mathrm{silt}\,#1}
\newcommand{\trunc}[2]{\sigma_{#2}#1}
\newcommand{\shift}[2]{#1[#2]}
\newcommand{\stautilt}[1]{\mathrm{s}\tau\operatorname{-tilt}(#1)}
\newcommand{\ftors}[1]{\operatorname{f-tors}(#1)}
\newcommand{\dtors}[1]{\operatorname{d-tors}(#1)}
\newcommand{\dsilt}[1]{(d+1)\operatorname{-silt}(#1)}
\newcommand{\fdtors}[1]{\operatorname{f-d-tors}(#1)}
\newcommand{\twosilt}[1]{2\operatorname{-silt}(#1)}
\newcommand{\cC}{\mathcal{C}}
\newcommand{\cP}{\mathcal{P}}
\newcommand{\cM}{\mathcal{M}}
\newcommand{\cU}{\mathcal{U}}
\newcommand{\cA}{\mathcal{A}}
\newcommand{\isom}{\cong}
\DeclarePairedDelimiter\abs{\lvert}{\rvert}%
\newcommand{\Ext}{\mathrm{Ext}}
\newcommand{\blue}{B}
\newcommand{\red}{R}
\newcommand{\notblue}[1]{B_{#1}^{\times}}
\newcommand{\notred}[1]{R_{#1}^{\times}}
\newtheorem{theorem}{Theorem}[section] 
\newtheorem{theoremIntro}{Theorem}
\newtheorem{corollary}[theorem]{Corollary}
\newtheorem{lemma}[theorem]{Lemma}
\newtheorem{proposition}[theorem]{Proposition}
\theoremstyle{definition}
\newtheorem{definition}[theorem]{Definition}
\newtheorem{remark}[theorem]{Remark}
\newtheorem{example}[theorem]{Example}
\newtheorem{conditions}[theorem]{Conditions}
\numberwithin{equation}{section}
\begin{document}

\begin{abstract}
    Support $\tau$-tilting pairs, functorially finite torsion classes and $2$-term silting complexes are three much studied concepts in the representation theory of finite-dimensional algebras, which moreover turn out to be connected via work of Adachi, Iyama and Reiten. We investigate their higher-dimensional analogues via $\td$-rigid pairs, $d$-torsion classes and $(d+1)$-term silting complexes as well as the connections between these three concepts. Our work is done in the setting of truncated linear Nakayama algebras $\Lambda(n,l)=\K \mathbb{A}_{n}/\rad{\K \mathbb{A}_{n}}^l$ admitting a $d$-cluster tilting module. More specifically, we classify $\td$-rigid pairs $(M,P)$ of $\Lambda(n,l)$ with $\abs{M}+\abs{P}=n$ via an explicit combinatorial description and show that they can be characterized by a certain maximality condition as well as by giving rise to a $(d+1)$-term silting complex in $\HomotopyC{\proj{\Lambda(n,l)}}{b}$. We also describe all $d$-torsion classes of $\Lambda(n,l)$. Finally, we compare our results to the classical case $d=1$ and investigate mutation with a special emphasis on the case where $d=\gldim(\Lambda)$. 
\end{abstract}
\maketitle 
\tableofcontents

\setcounter{section}{0}

\section{Introduction}
\subsection{Historical background and motivation} Tilting theory came into being through a series of papers in the mid to late 1900's \cite{gelfand1972problems,bernstein1973coxeter,auslander1979coxeter,brenner1980generalizations,happel1982tilted}; for details see the survey \cite{hugel2007handbook}. It soon found itself safely in the toolboxes of both representation theorists in particular and algebraists in general. Concurrently and intertwined with the birth of tilting theory, Auslander--Reiten theory was also developed \cite{auslander1974representation,auslander1975representation,auslander1977representation}, providing yet another tool in the study of finite-dimensional algebras. This article lies in the intersection of certain generalizations of these two theories that we now describe.

\subsubsection*{Tilting theory.} Recall that a \emph{partial tilting}\index[definitions]{tilting module} module $T$ is a rigid module with projective dimension 1 or less, and if in addition $\abs{T}=\abs{\Lambda}$, then $T$ is called a \emph{tilting module}\index[definitions]{tilting module!partial}. Bongartz proved that any partial tilting module can be completed to a tilting module \cite{BongCompl}. Later it was shown that for an \emph{almost complete}\index[definitions]{tilting module!almost complete} tilting module $T$ (i.e. a partial tilting module with $\abs{T}=\abs{\Lambda}-1$), there are at most two ways of completing it \cite{riedtmann1991simplicial,UNGER1990205}. The interest in the number of completions arose from the fact that two completions of the same almost complete tilting module relate to each other through a natural short exact sequence.  

If $\Lambda$ is hereditary, then there are exactly two ways to complete an almost complete tilting module $T$ if and only if $T$ is sincere \cite{happel1989almost}. To remove the assumption on $T$ being sincere, the notion of \emph{support tilting} modules was introduced \cite{Ringel_2007,ingalls2009noncrossing}. A support tilting module\index[definitions]{tilting module!support} $T$ is a partial tilting module which is tilting over $\modfin{\Lambda/\langle e\rangle}$ for some idempotent $e\in\Lambda$. Over hereditary algebras this was a sufficient generalization, i.e. almost complete support tilting modules of hereditary algebras can be completed in two different ways. 

\subsubsection*{\texorpdfstring{$\tau$}{t}-tilting theory.} In 2014, Adachi, Iyama and Reiten \cite{adachi_-tilting_2014} proposed a broader generalization of tilting modules to remove the assumption on the algebra being hereditary. They introduced the notion of \emph{$\tau$-rigid}\index[definitions]{$\tau$-rigid module} pairs defined as a module $M\in\modfin\Lambda$ and a projective module $P\in\proj\Lambda$ such that $\Hom{\Lambda}{M}{\tau M}=0$ and $\Hom{\Lambda}{P}{M}=0$ where $\tau$ is the Auslander--Reiten translation. They next defined a \emph{$\tau$-tilting}\index[definitions]{$\tau$-tilting module} module $M$ to be a $\tau$-rigid module such that $\abs{M}=\abs{\Lambda}$, and a \emph{support $\tau$-tilting pair}\index[definitions]{support $\tau$-tilting pair} $(M,P)$ to be a $\tau$-rigid pair such that $M$ is $\tau$-tilting in $\modfin{\Lambda/\langle e\rangle}$ where $P=e\Lambda$. This generalization was shown to be broad enough for the initial motivation, as well as preserving a close relationship with functorially finite torsion classes which tilting modules had already enjoyed.

\subsubsection*{Auslander--Reiten theory.} Auslander--Reiten theory, developed by Maurice Auslander and Idun Reiten in the 1970s, is central in the representation theory of finite-dimensional algebras. The theory focuses on the study of almost split sequences and Auslander--Reiten quivers, which encode indecomposable modules and irreducible morphisms between them. An important role is played by the \emph{Auslander--Reiten} translations $\tau$ and $\tau^{-}$ which act as bijections between the sets of indecomposable non-injective and indecomposable non-projective modules over an algebra. 

\subsubsection*{Higher-dimensional homological algebra.} The generalization of Auslander--Reiten theory takes place in the world of higher-dimensional homological algebra introduced by Iyama \cite{Iyama200722,iyama2008auslander,iyama2011cluster}. A central role in this theory is played by certain subcategories $\cC$ of $\modfin{\Lambda}$ called \emph{$d$-cluster tilting}. In such a subcategory one finds a $d$-dimensional version of Auslander--Reiten theory. This leads to a natural generalization of $\tau$-rigidity in $\cC$ given by replacing $\tau$ with the higher Auslander--Reiten translation $\td$. 

\subsubsection*{Motivation.} Many notions from classical representation theory and homological algebra have been lifted and studied in the setting of higher-dimensional homological algebra; a relevant example for us is the notion of $d$-torsion classes introduced by Jørgensen \cite{Jorgensen2014,asadollahi2022higher,august2023characterisation}. In such generalizations usually the role of the module category $\modfin{\Lambda}$ is played by a $d$-cluster tilting subcategory $\cC\subseteq\modfin{\Lambda}$. However, this approach is not fruitful if one tries to generalize support $\tau$-tilting modules in this setting. Indeed, if $\Lambda$ admits a $d$-cluster tilting subcategory $\cC$ and $e\in\Lambda$ is an idempotent, then, in general, $\Lambda/\langle e\rangle$ does not admit a $d$-cluster tilting subcategory.

Nevertheless, inspired by the maximality properties of $\tau$-tilting modules, several authors have suggested possible generalizations of support $\tau$-tilting modules in higher-dimensional homological algebra \cite{JACOBSEN2020119,mcmahon2021support,ZHOU2023193,MARTINEZ202398}. In a similar spirit, there is work by several authors studying how the relationship between support $\tau$-tilting pairs, functorially finite torsion classes and $2$-silting complexes generalizes in the higher setting \cite{august2023characterisation,MARTINEZ202398,august2024taudtilting}.

The above facts can be seen as a motivation for this paper. How do the different types of higher $\tau$-tilting theories found in the literature look like? Is it possible to infer a type of maximality for pairs which have the same amount of indecomposable summands as $\Lambda$, as in the classical case? Inspired by Adachi's classification of $\tau$-tilting modules of Nakayama algebras \cite{ADACHI2016227}, we chose to investigate these questions over Nakayama algebras.

\subsection{This paper} As mentioned we chose the realm of Nakayama algebras as base camp for our reconnaissance. However, when exploring new land one tries to tackle bite sized hurdles. We focus on acyclic Nakayama algebras bounded by homogeneous relations for two reasons. Firstly, there exists an explicit classification of Nakayama algebras bounded by homogeneous relations which admit $d$-cluster tilting subcategories \cite{VASO20192101, darpo2020representationfinite} while such a classification is lacking for general Nakayama algebras. Second, acyclic ones were chosen since in this case the $d$-cluster tilting subcategory is unique, in contrast to the cyclic case. The algebras we are working with are therefore bounded quiver algebras of the form 
$\K \ALinOr{n}/R^l$ where $\ALinOr{n}$ is the quiver given by
\[
\begin{tikzcd}
	\ALinOr{n}\colon\qquad n\rar{}&n-1\rar{}&\cdots\rar{}&3\rar{}&2\rar{}&1.
\end{tikzcd}
\]
and $R$ is the arrow ideal of $\K\ALinOr{n}$. We denote $\Lambda(n,l)\coloneqq \K \ALinOr{n}/R^l$. The triples $(n,l,d)$ for which $\Lambda(n,l)$ admits a $d$-cluster tilting subcategory $\cC$ can be found in Theorem \ref{thm:VasoClassifyAcyclicCluster}, and the description of $\cC$ is given in Section \ref{subsec:d-cluster tilting subcategories for Lambda(n,l)}. 

\subsubsection*{Summand-maximal $\td$-rigid pairs} We say that a $\td$-rigid pair $(M,P)$ is \emph{summand-maximal} if for any other $\td$-rigid pair $(N,Q)$ we have $\abs{N}+\abs{Q}\leq \abs{M}+\abs{P}$. This is the main maximality property of $\td$-rigid pairs which we study. Over an algebra $\Lambda(n,l)$ which admits a $d$-cluster tilting subcategory, we can describe all summand-maximal $\td$-rigid pairs in an explicit combinatorial way encoded in the notion of \emph{well-configured pairs}, see Definition \ref{def:well-configured}. This allows us to prove our first main result:

\begin{theoremIntro}[Theorem \ref{thrm:taud tilting is well-configured}]\label{Theorem A}
Assume $\Lambda=\Lambda(n,l)$ admits a $d$-cluster tilting subcategory. If $M\in\cC$ and $P\in\proj\Lambda$, then the following are equivalent
\begin{enumerate}
    \item[(a)] $(M,P)$ is a summand-maximal $\td$-rigid pair,
    \item[(b)] $(M,P)$ is a $\td$-rigid pair and $\abs{M}+\abs{P}=\abs{\Lambda}$, and
    \item[(c)] $(M,P)$ is well-configured (Definition \ref{def:well-configured}).
\end{enumerate}
\end{theoremIntro}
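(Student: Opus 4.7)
The plan is to establish a cycle of implications $(c) \Rightarrow (b) \Rightarrow (a) \Rightarrow (c)$, using the explicit description of $\cC$ for $\Lambda(n,l)$ together with an explicit formula for $\td$ on the indecomposables of $\cC$.

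\textbf{Step 1: (c) $\Rightarrow$ (b).} I would first verify that if $(M,P)$ is well-configured in the sense of Definition \ref{def:well-configured}, then $(M,P)$ is a genuine $\td$-rigid pair and a direct count of the combinatorial data gives $\abs{M}+\abs{P}=n=\abs{\Lambda}$. The $\td$-rigidity splits into the two vanishing conditions $\Hom{\Lambda}{M}{\td M}=0$ and $\Hom{\Lambda}{P}{M}=0$; using the parametrization of indecomposables in $\cC$ by ``diagonals'' on the $\ALinOr{n}$-quiver and the corresponding formula for $\td$, each vanishing condition translates into a combinatorial incidence condition which is manifestly forbidden by the well-configured rules. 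This reduces the step to a case check on pairs of indecomposable summands.

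\textbf{Step 2: (b) $\Rightarrow$ (a).} I would prove a general bound $\abs{N}+\abs{Q}\leq n$ for every $\td$-rigid pair $(N,Q)$ in this setting. The conceptually cleanest route is to pass to the corresponding object in $\HomotopyC{\proj\Lambda}{b}$, show that it is presilting, and then invoke the standard bound on the number of indecomposable summands of a presilting complex. Alternatively one can bound summands directly inside $\cC$ by using the structure of the $d$-Auslander--Reiten quiver of $\cC$. Either way, $\abs{M}+\abs{P}=n$ then forces $(M,P)$ to be summand-maximal.

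\textbf{Step 3: (a) $\Rightarrow$ (c).} This is the main obstacle. Here one starts only with the abstract maximality property and must recover the full combinatorial description. The plan is to note that the well-configured pairs constructed in Step~1 already realize the bound $\abs{M}+\abs{P}=n$, so a summand-maximal pair must also achieve this bound; hence it suffices to prove the stronger statement that every $\td$-rigid pair $(M,P)$ with $\abs{M}+\abs{P}=n$ is well-configured. I would write $M=\bigoplus_i \ind{a_i}{b_i}$ and $P=\bigoplus_j e_{c_j}\Lambda$, translate $\td$-rigidity into the combinatorial incidence conditions as in Step~1, and then argue that any such maximal ``packing'' of diagonals must exhaust one of the well-configured templates. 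The cleanest implementation is an induction on $n$: remove a carefully chosen extremal indecomposable summand (for instance one supported at the leftmost or rightmost vertex), use the $\td$-rigidity to control the remaining summands near that vertex, and reduce to a smaller $\Lambda(n',l)$ to which the inductive hypothesis applies.

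The hardest part is the induction in Step~3: one must choose the extremal summand so that the residual pair is again $\td$-rigid of maximal summand count for the smaller algebra, and one must handle cleanly the interface between the removed piece and the rest (since $\td$ is only defined on $\cC$, and the restriction of $\cC$ to the smaller quiver requires care). Once the inductive setup is in place, matching the residual pair to a well-configured pair on $\Lambda(n',l)$ and lifting the missing summand back to a well-configured pair on $\Lambda(n,l)$ is a bookkeeping exercise driven by Definition \ref{def:well-configured}.
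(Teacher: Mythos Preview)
Your Steps 1 and 2 are sound. Step 1 matches the paper's Proposition \ref{prop:well-configured implies td-rigid} and the counting in the proof of Theorem \ref{thrm:taud tilting is well-configured}. Step 2 is actually cleaner than the paper's route: the paper obtains the bound $\abs{N}+\abs{Q}\leq n$ only as a byproduct of the full combinatorial machinery (Corollary \ref{cor:bound on summands} via Proposition \ref{prop:bound for M with partition}), whereas your presilting argument, combined with Lemma \ref{lemma:taud-rigid iff presilting} and the standard fact that a presilting object in $\HomotopyC{\proj{\Lambda}}{b}$ has at most $\abs{\Lambda}$ indecomposable summands, gives the bound for free.

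Step 3, however, has a genuine gap: the induction on $n$ cannot be set up as you describe. By Theorem \ref{thm:VasoClassifyAcyclicCluster}, the algebra $\Lambda(n,l)$ admits a $d$-cluster tilting subcategory only when $n=(p-1)\bigl(\tfrac{d-1}{2}l+1\bigr)+\tfrac{l}{2}$ for some $p\geq 1$; in particular $\Lambda(n-1,l)$ almost never admits one, so removing a single summand cannot land you in a smaller instance of the same problem. Even stepping down by the correct increment (from $p$ to $p-2$, say) does not help: the indecomposables in $\cC$ for $\Lambda(n,l)$ are not restrictions of modules over $\Lambda(n',l)$, the diagonals $\diag{i}$ shift, and a summand-maximal pair need not split along any such cut (for instance, an admissible configuration of type (IV)--(VIII) spans two or three consecutive diagonals). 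Your parenthetical ``the restriction of $\cC$ to the smaller quiver requires care'' is the whole difficulty, and it does not go away.

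The paper's approach to (b)$\Rightarrow$(c) is not inductive but \emph{local-to-global}: it partitions $[1,n]$ into intervals $\Xi(T_j)$ and $\Theta_j$ governed by the diagonal support of $M$, proves a local inequality on each $\Xi(T_j)$ (Proposition \ref{prop:to not lose need admissible configuration}) showing that the summand count on that piece is at most $\abs{\Xi(T_j)}$ with equality only for a full admissible configuration, and then sums these local bounds (Proposition \ref{prop:bound for M with partition}). The hard content is the case analysis in Propositions \ref{prop:to not lose need less than 2} and \ref{prop:to not lose need admissible configuration}, which rules out support on three or more consecutive diagonals (for $d>2$) and pins down the eight admissible types. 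You will need some analogue of this local analysis; there is no shortcut via restriction to a smaller algebra.
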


Notice that part (b) of Theorem \ref{Theorem A} provides an easy way of checking if a $\td$-rigid pair is summand-maximal, while part (c) provides an easy way of constructing summand-maximal $\td$-rigid pairs.

\subsubsection*{$d$-torsion classes} Next we focus on the study of $d$-torsion classes of $\Lambda(n,l)$ when there exists a $d$-cluster tilting subcategory $\cC\subseteq\modfin{\Lambda}$. Once again we provide an explicit combinatorial method of constructing $d$-torsion classes in this case through paths in the graphs $G_1$ (eq. \ref{eq:multigraph1 for d-torsion}) when $l>2$ and $G_2$ (eq. \ref{eq:multigraph2 for d-torsion}) when $l=2$. It turns out that our method constructs all $d$-torsion classes which leads to the following result.

\begin{theoremIntro}[Theorem \ref{thrm:classification of d-torsion classes} and Theorem \ref{thrm:classification of d-torsion classes in l=2}]\label{Theorem B}
\begin{enumerate}
    \item[(a)] If $l>2$, then there exists a bijection between the set of paths $\chi$ in $G_1$ of length $p-1$ starting at an odd vertex and the set of $d$-torsion classes $\cU$ in $\cC$.
    \item[(b)] If $l=2$, then there exists a bijection between the set of paths $\chi$ in $G_2$ of length $p-1$ and the set of $d$-torsion classes $\cU$ in $\cC$.
\end{enumerate}
\end{theoremIntro}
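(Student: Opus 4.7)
The plan is to prove both parts by constructing an explicit map from paths to $d$-torsion classes, verifying its image lands in $\dtors{\cC}$, and then establishing bijectivity via an inverse. I would split the argument into the construction, the verification, and the classification, handling the $l>2$ and $l=2$ cases in parallel since the skeleton is the same and only the underlying multigraph changes.

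First I would define the map $\Phi$ sending a path $\chi=(v_0,v_1,\ldots,v_{p-1})$ of length $p-1$ (with $v_0$ odd when $l>2$) to the subcategory $\cU(\chi)=\add{M_1\oplus\cdots\oplus M_{p-1}}\subseteq\cC$, where each edge $(v_{i-1},v_i)$ of $\chi$ is read off as a specific indecomposable $M_i\in\cC$. The combinatorics of $G_1,G_2$ should be set up so that vertices correspond to "slices" of $\cC$ (rows of the Auslander--Reiten quiver of $\cC$ where indecomposables can be chosen from) and edges correspond to the indecomposables themselves, with the parity condition in the $l>2$ case encoding the requirement that the lowest "slice" used is of the correct type. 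Injectivity of $\Phi$ is then immediate: the multiset of indecomposable summands of $\cU(\chi)$ recovers the edge sequence, and hence $\chi$ up to nothing.

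Second, I would prove $\cU(\chi)\in\dtors{\cC}$ by verifying the two defining properties directly on the combinatorial model. Closure under $d$-quotients reduces to showing that if $M_i\in\cU(\chi)$ and $N\in\cC$ is a $d$-quotient of $M_i$, then $N$ corresponds either to a later edge of $\chi$ or splits as a sum of such; this is a local check on the shape of minimal $d$-exact sequences out of $M_i$ in $\cC$, which for $\Lambda(n,l)$ have a completely explicit form. Extension closure via $\Ext^d$ is similarly reduced to showing that any nontrivial $d$-extension of two indecomposables indexed by edges of $\chi$ decomposes into pieces also indexed by edges of $\chi$, and this uses the fact that the path structure forces consecutive edges to share an endpoint vertex, which is precisely the compatibility required so that the resulting middle terms live in $\cU(\chi)$.

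For surjectivity, I would start from an arbitrary $d$-torsion class $\cU\subseteq\cC$ and extract a path. The indecomposables of $\cU$ can be placed in the AR-quiver of $\cC$; the key lemma to prove is that $\cU$ must hit each slice in at most one "interval," so that the support of $\cU$ traces out a sequence of edges in $G_1$ (resp. $G_2$). The heart of this is showing that a forbidden configuration—two indecomposables in the same slice that are not realizable as consecutive edges—would violate closure under $d$-quotients or $d$-extensions by producing an object outside $\cU$. Once this structural lemma is in place, the path $\chi(\cU)$ is well-defined, has length exactly $p-1$ by a counting argument on the number of slices, and its starting vertex satisfies the required parity condition when $l>2$ because the bottom slice can only be approached from one side. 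Then $\Phi(\chi(\cU))=\cU$ by construction.

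I expect the main obstacle to be the surjectivity step, specifically the structural lemma that the support of a $d$-torsion class must be "path-shaped." The difficulty is twofold: the closure conditions for $d$-torsion classes are weaker and more delicate than for classical torsion classes (as one must track $d$-exact sequences rather than short exact sequences), and the $l=2$ case behaves differently because $\cC$ is semisimple-like, so many of the extension arguments degenerate and must be replaced with direct checks, which explains the different graph $G_2$ and the absence of the parity condition in part (b).
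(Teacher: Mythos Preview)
Your proposal rests on a fundamental misreading of what the graphs $G_1$ and $G_2$ encode, and as a result the construction $\Phi$ you describe is not the paper's map and would not produce $d$-torsion classes.

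In the paper, the vertices of $G$ are not ``slices'' from which one picks a single indecomposable per edge. Rather, a vertex records the entire intersection $\cU_i=\cU\cap\diag{i}$, which must take one of a handful of shapes: $\downdiag{i}{0}$, $\downdiag{i}{h}$, $\diag{i}$, $\updiag{i}{h}$, or $\downdiag{i}{1}$, depending on the parity of $i$ (this is the content of Lemma~\ref{lem:possible Uis}). An edge from one vertex to the next does not select an indecomposable $M_i$; instead it records which pair $(\cU_i,\cU_{i+1})$ occurs \emph{together with} the precise subset $Q_{\cU}\cap F_i$ of projective-injective indecomposables lying between the two diagonals (Lemmas~\ref{lem:arrows starting at odd vertex} and~\ref{lem:arrows starting at even vertex}). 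The multiplicity of parallel arrows in $G$ exists exactly to index these choices of projective-injectives. Consequently $\cU(\chi)$ is not $\add{M_1\oplus\cdots\oplus M_{p-1}}$ for $p-1$ indecomposables, but rather $\add{\cU_1(\chi),\ldots,\cU_p(\chi),I(Q_{\cU(\chi)})}$, which typically has far more than $p-1$ indecomposables.

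Because your construction is wrong, your verification plan cannot go through as written: you cannot check closure under $d$-quotients by looking at what happens to a single edge-indexed $M_i$, and your injectivity argument (``recover the edge sequence from the summands'') fails since the data attached to an edge is not a single summand. The paper's actual verification that $\cU(\chi)$ is a $d$-torsion class does not check closure under $d$-quotients and $d$-extensions; it instead verifies Definition~\ref{def:d-torsion class} directly, by exhibiting for each indecomposable $C\in\cC$ an explicit $d$-extension with the required exactness property, using the minimal $d$-extensions computed in Lemma~\ref{lem:n-extensions in C}. Your surjectivity sketch is closer in spirit to what the paper does (extract the path from the structure of $\cU$), but the key structural input is not that $\cU$ hits each slice in ``at most one interval''; it is the precise list of allowed pairs $(\cU_i,\cU_{i+1})$ together with the projective-injective data, established by repeated application of closure under minimal weak cokernels (Lemma~\ref{lem:closed under minimal weak cokernel}) and under the specific minimal $d$-extensions of Lemma~\ref{lem:n-extensions in C}. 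Finally, the $l=2$ case is not ``semisimple-like''; the simplification is only that each diagonal has a single indecomposable, so the vertex set of $G$ collapses to two possibilities and parity becomes irrelevant.
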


The case $l=2$ is in fact a very degenerative version of the case $l>2$, but we distinguish between the two cases for technical reasons. The above bijection, the details of which are given in Sections \ref{subsubsec:the case l>2} and \ref{subsubsec:the case l=2}, gives an easily implemented algorithm to construct $d$-torsion classes, and a relatively more cumbersome but still manageable way to check that a given subcategory of $\cC$ is a $d$-torsion class.

\subsubsection*{$(d+1)$-term silting complexes} Our last main result gives a characterization of summand-maximal $\td$-rigid pairs in terms of $(d+1)$-term silting complexes. 

\begin{theoremIntro}[Theorem \ref{Theorem:strongly maximal and silting}] \label{Theorem C}
    Assume $\Lambda=\Lambda(n,l)$ admits a $d$-cluster tilting subcategory. Let $M\in\cC$ and $P\in\proj\Lambda$.
    Then $(M,P)$ is a summand-maximal $\td$-rigid pair if and only if $\mathbf{P}^\bullet_{(M,P)}\coloneqq \shift{P}{d}\oplus\trunc{\mathbf{P}^\bullet(M)}{\geq -d}$ is a silting complex in $\HomotopyC{\proj{\Lambda}}{b}$.
\end{theoremIntro}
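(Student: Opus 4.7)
The plan is to reduce Theorem~\ref{Theorem C} to two independent tasks via Theorem~\ref{Theorem A} and the standard fact that a presilting object in $\HomotopyC{\proj{\Lambda}}{b}$ is silting precisely when it admits $\abs{\Lambda}=n$ pairwise non-isomorphic indecomposable summands. By Theorem~\ref{Theorem A}, $(M,P)$ is summand-maximal if and only if $(M,P)$ is $\td$-rigid and $\abs{M}+\abs{P}=n$. Consequently, the statement reduces to proving (I) that $\mathbf{P}^\bullet_{(M,P)}$ has exactly $\abs{M}+\abs{P}$ pairwise non-isomorphic indecomposable summands in $\HomotopyC{\proj{\Lambda}}{b}$, and (II) that $(M,P)$ is $\td$-rigid if and only if $\mathbf{P}^\bullet_{(M,P)}$ is presilting.

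For task (I), I would prove that for each indecomposable summand $M_i$ of $M$, the complex $\trunc{\mathbf{P}^\bullet(M_i)}{\geq -d}$ is indecomposable in $\HomotopyC{\proj{\Lambda}}{b}$, and that no two such truncations, and no truncation together with a shifted indecomposable direct summand of $\shift{P}{d}$, are isomorphic. These statements are expected to follow combinatorially from the explicit description of the $d$-cluster tilting subcategory of $\Lambda(n,l)$ and the uniserial structure of its projective modules, by comparing the top (or length) of the terms appearing in each homological degree; note in particular that $\shift{P}{d}$ is concentrated in degree $-d$ whereas each truncated resolution has a non-zero term in degree $0$.

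For task (II), the presilting condition $\Hom{\HomotopyC{\proj{\Lambda}}{b}}{\mathbf{P}^\bullet_{(M,P)}}{\shift{\mathbf{P}^\bullet_{(M,P)}}{i}}=0$ for $i>0$ splits into four bilinear $\Hom$-vanishings according to $(X,Y)\in\{\shift{P}{d},\trunc{\mathbf{P}^\bullet(M)}{\geq -d}\}^2$. The pair $(\shift{P}{d},\shift{P}{d})$ is automatic since $P$ is projective; the pair $(\shift{P}{d},\trunc{\mathbf{P}^\bullet(M)}{\geq -d})$ reduces to $\Hom{\Lambda}{P}{M}=0$, the second clause of $\td$-rigidity; and the pair $(\trunc{\mathbf{P}^\bullet(M)}{\geq -d},\shift{P}{d})$ vanishes since the source sits in degrees $[-d,0]$ while the target sits in degree $-(d+i)<-d$. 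The main pair $(\trunc{\mathbf{P}^\bullet(M)}{\geq -d},\trunc{\mathbf{P}^\bullet(M)}{\geq -d})$ is attacked through the distinguished triangle
\[
\shift{\syzygy^{d+1}M}{d}\to\trunc{\mathbf{P}^\bullet(M)}{\geq -d}\to M\to \shift{\syzygy^{d+1}M}{d+1}
\]
in $D^b(\Lambda)$; the associated long exact sequences together with the $d$-cluster tilting vanishings $\Ext^j_\Lambda(M,M)=0$ for $1\leq j\leq d-1$ and the higher Auslander--Reiten formula $D\,\overline{\mathrm{Hom}}_\Lambda(M,\td M)\cong \Ext^d_\Lambda(M,M)$ should collapse the computation to the single condition $\Hom{\Lambda}{M}{\td M}=0$, i.e.\ the first clause of $\td$-rigidity.

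The main obstacle I anticipate is task (II), specifically the long-exact-sequence chase controlling all shifts $i>0$, and in particular $i\geq d+1$, where several $\Ext$-groups between $M$ and $\syzygy^{d+1}M$ could in principle contribute new obstructions. The combinatorial description of well-configured pairs from Theorem~\ref{Theorem A}, together with the explicit projective resolutions available for the $d$-cluster tilting subcategory of $\Lambda(n,l)$, should provide enough control to close the argument and, in the process, verify that each step of the chain of implications in (II) is reversible, so that both directions of the theorem follow simultaneously.
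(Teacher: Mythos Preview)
Your proposal has a genuine gap at the very first step: the claim that a presilting object in $\HomotopyC{\proj{\Lambda}}{b}$ with $\abs{\Lambda}$ pairwise non-isomorphic indecomposable summands is automatically silting is \emph{not} a standard fact when the complex is $(d+1)$-term with $d>1$. This statement is equivalent to ``presilting implies partial silting'', which is known for $2$-term complexes (this is the content of \cite{adachi_-tilting_2014}) and for silting-discrete algebras, but is open in general. The paper explicitly notes (see Remark~6.3) that most of the algebras $\Lambda(n,l)$ under consideration are \emph{not} silting-discrete, so this shortcut is unavailable; the thick-closure condition must be verified directly.

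The paper's proof handles your tasks (I) and (II) essentially as you outline (the latter is precisely Lemma~\ref{lemma:taud-rigid iff presilting}), so those parts are fine. What replaces your invocation of the ``standard fact'' is a substantial combinatorial argument: one must show that every indecomposable projective $P(k)$ lies in $\thick{\mathbf{P}^\bullet_{(M,P)}}$. The paper does this by introducing the notion of a \emph{reducing set} (Definition~\ref{def:Reducing set}), which formalizes an inductive procedure for extracting projectives one at a time from the truncated resolutions of the non-projective summands of $M$. The key reduction (Lemma~\ref{lemma:reducing set can be computed locally}) localizes the problem to each admissible configuration $(M_i,\ldots,M_{i+k})$ of the well-configured pair, and then a case-by-case analysis over the eight admissible types (Lemmas~\ref{lemma:reducing set in types (I),(II),(IV),(V)}--\ref{lemma:reducing set in type (VIII)}) produces the required reducing sets. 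This is where the structural classification of Theorem~\ref{Theorem A} is actually used in the proof of Theorem~\ref{Theorem C}, not merely to count summands.
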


\subsubsection*{Comparing the classical and the higher case} At the very end of the paper we take a step back and look at how properties of support $\tau$-tilting modules can be traced to different generalizations. In addition we compare the suggested frameworks in \cite{JACOBSEN2020119}, \cite{ZHOU2023193}, \cite{MARTINEZ202398} and our paper. These observations and comparisons can be roughly summarized in the following diagram

\begin{wideeq}
\begin{tikzpicture}
    \draw[rounded corners=2pt,thick,gray!50] (-3.5,.5) rectangle (9.5,-.45); 
    \node at (-3.5,0) [anchor=west] {Classical};
    \draw[rounded corners=2pt,thick,gray!50] (-3.5,-.55) rectangle (9.5,-5);
    \node at (-3.5,-2.75) [anchor=west] {Generalizations};
    
    \node (ftors) at (0,0) [] {$\ftors{\Lambda}$};
    \node (sttilt) at (4,0) [] {$\stautilt{\Lambda}$};
    \node (2silt) at (8,0) [] {$\twosilt{\Lambda}$};
    \node (dsilt) at (8,-4.5) [] {$\dsilt{\Lambda}$};
    \node (fdtors) at (0,-4.5) [] {$\fdtors{\Lambda}$};
    \node (mtdrigid) at (2,-1.5) [scale=.8] {$\left\{\begin{array}{c}
         \mathrm{Maximal}  \\
          \td\text{-\,rigid\ pairs}
    \end{array}\right\}$};
    \node (stdtilt) at (6,-1.5) [scale=.8] {$\left\{\begin{array}{c}
         \mathrm{Support}  \\
          \td\text{-\,tilting\ pairs}
    \end{array}\right\}$};
    \node (mtdrigidWithSum) at (2,-3) [scale=.8] {$\left\{\begin{array}{c}
         \mathrm{Maximal}  \\
          \td\text{-\,rigid\ pairs}\\
          \mathrm{s.t.}\ \abs{M}+\abs{P}=\abs{\Lambda}
    \end{array}\right\}$};
    \node (stdrigid) at (6,-3) [scale=.8] {$\left\{\begin{array}{c}
         \mathrm{Summand\ maximal}  \\
          \td\text{-\,rigid\ pairs}
    \end{array}\right\}$};

    \draw[decorate,decoration={zigzag,segment length=1mm,amplitude=1pt,pre=lineto,pre length=4pt,post=lineto,post length=4pt},->] (ftors)--(fdtors)node[midway,left,scale=.8]{\cite{Jorgensen2014}};
    \draw[decorate,decoration={zigzag,segment length=1mm,amplitude=1pt,pre=lineto,pre length=4pt,post=lineto,post length=4pt},->] (2silt)--(dsilt)node[midway,right,scale=.8]{\cite{MARTINEZ202398}};
    \draw[decorate,decoration={zigzag,segment length=1mm,amplitude=1pt,pre=lineto,pre length=4pt,post=lineto,post length=4pt},->] (sttilt)--(mtdrigid)node[midway,anchor=west,yshift=-3pt,scale=.8]{\cite{JACOBSEN2020119}};
    \draw[decorate,decoration={zigzag,segment length=1mm,amplitude=1pt,pre=lineto,pre length=4pt,post=lineto,post length=4pt},->] (sttilt)--(stdtilt)node[midway,anchor=east,yshift=-3pt,scale=.8]{\cite{ZHOU2023193}};
    
    \draw[<->] (sttilt)--(2silt)node[midway,above,scale=.8]{$1-1$}node[midway,below,scale=.8]{\cite{adachi_-tilting_2014}};
    \draw[<->] (sttilt)--(ftors)node[midway,above,scale=.8]{$1-1$}node[midway,below,scale=.8]{\cite{adachi_-tilting_2014}};
    \draw[->] (fdtors)--(dsilt)node[midway,above,scale=.8]{\cite{august2024taudtilting}};
    \draw[right hook->] ([xshift=10pt]fdtors.north)--([yshift=-4pt]mtdrigidWithSum.south)node[midway,below,scale=.8]{\cite{august2024taudtilting}};
    \draw[right hook->] (mtdrigidWithSum)--(mtdrigid);
    \draw[right hook->] (stdrigid)--(stdtilt);
    \draw[double equal sign distance] (stdrigid)--(mtdrigidWithSum)node[midway,below,scale=.8]{$\ast$}node[midway,yshift=-10pt,scale=.8]{Thm. \ref{Theorem A}};
    \draw[right hook->] ([yshift=-.2cm]stdrigid.south)--(dsilt)node[midway,anchor=east,xshift=-5pt]{$\ast$}node[midway,xshift=-33pt,scale=.8]{Thm. \ref{Theorem C}};
    \draw[left hook->] ([yshift=-4pt]stdtilt.west)--([yshift=-4pt]mtdrigid.east)node[midway, scale=.8,below]{$\ast$}node[midway,yshift=-12pt,scale=.8]{Cor. \ref{lem:support tau_d tilting and strongly maximal is the same if no homs}};
    \draw[right hook->] ([yshift=4pt]mtdrigid.east)--([yshift=4pt]stdtilt.west);
\end{tikzpicture}
\end{wideeq}

\noindent where the zigzag arrows trace different generalizations and the arrows marked with an asterix hold for $\Lambda=\Lambda(n,l)$ admitting a $d$-cluster tilting subcategory. We also present some observations regarding mutation of summand-maximal $\td$-rigid pairs and the lattice of $d$-torsion classes and investigate in more details the special case where $d=\gldim(\Lambda(n,l))$.

Another direction of generalization that we haven't considered in this paper is given through \cite{gupta2024dtermsiltingobjectstorsion} and \cite{zhou2025tiltingextendedmodule}. Rather than working in a $d$-cluster tilting subcategory of $\modfin{\Lambda}$, this generalization takes place in the \emph{\(m\)-extended module category} \(m~-~\modfin{\Lambda}\), which is a subcategory of  \(\mathrm{D}^b(\modfin{\Lambda})\) given by complexes with non-zero homology only in degrees \(i\) with \(i\in [-(m-1),0]\).

\subsection{Structure} The paper consists of six sections. Section 2 sets the stage for the rest of the text by introducing some fundamental notation, recalling a few results and proving some preliminary observations. In Section 3 we further build our toolbox and prove Theorem \ref{Theorem A}. In Section 4 we recall some results and notions about $d$-torsion classes before proving Theorem \ref{Theorem B}. In Section 5 we recall results about $(d+1)$-silting complexes before proving Theorem \ref{Theorem C}. Sections 4 and 5 are independent of each other. Section 6 depends on all preceding sections and acts both as a summary and as a collection of observations. In there we present an overview of our results and explore how they do or do not generalize relevant results from the case $d=1$. Furthermore we present examples and results regarding mutation of summand-maximal $\td$-rigid pairs with special emphasis put on the case $d=\gldim(\Lambda(n,l))$. In Section \ref{sec:appendix} we include an illustration that showcases the most widely used notation in this article. We also include an index of definitions and symbols used in this paper. 

We have made a website that we hope can serve as a supplemental tool for the reader during their exploration of the paper. The site can be found at \url{https://endresr.github.io/Higher_Tau_Nakayama/}.

\addtocontents{toc}{\SkipTocEntry} 
\section*{Acknowledgements}
The authors gratefully acknowledge the support and resources provided by the Centre for Advanced Studies at the Norwegian Academy of Science, and thank them for their kind hospitality during the preparation of part of this paper. 

The authors thank Steffen Oppermann for his support and feedback throughout the project. They would also like to thank Jacob Fjeld Grevstad and Erlend Due B\o rve for useful comments and discussions. Finally, the authors express their gratitude to the authors of \cite{august2024taudtilting} for sharing a preliminary version of their manuscript with us.

\section{Preliminaries}
In this section we introduce some notation and recall all the background facts that we need.

\subsection{Notation and conventions} We start with some basic conventions that hold throughout the whole article.

All subcategories in this article will be assumed to be full, additive and closed under isomorphisms, unless otherwise specified. 

For a set $A$ we denote by $\abs{A}$ the cardinality of $A$. If $B$ is another set, then we denote by $A\sqcup B$\index[symbols]{((a@$\sqcup$} the \emph{disjoint union} of $A$ and $B$, that is when we write $X=A\sqcup B$, we mean that $X=A \cup B$ and $A\cap B=\varnothing$. For integers $a$ and $b$ we denote by $[a,b]$ the \emph{interval} of all integers between $a$ and $b$, that is $[a,b] = \{z\in \mathbb{Z} \mid a\leq z\leq b\}$. The \emph{length} of $[a,b]$ is defined to be its cardinality.

Throughout this article we fix a field $\K$\index[symbols]{Ka@$\K$}. We denote by $n$, $l$ and $d$ three positive integers with $d\geq 2$, unless stated otherwise. By an algebra we mean a finite-dimensional associative unital algebra over $\K$. If $\Lambda$ is an algebra, we denote by $\modfin{\Lambda}$ the category of finitely-generated right $\Lambda$-modules. We denote by $\underline{\mathrm{mod}}({\Lambda})$\index[symbols]{Mb@$\underline{\mathrm{mod}}({\Lambda})$} respectively $\overline{\mathrm{mod}}(\Lambda)$\index[symbols]{Ma@ $\overline{\mathrm{mod}}(\Lambda)$}, the \emph{projectively stable category} respectively \emph{injectively stable category}. We denote by $\tau$ and $\tau^{-}$\index[symbols]{T@$\tau(M)$}\index[symbols]{T@$\tau^{-}(M)$} the \emph{Auslander--Reiten translations} of $\Lambda$ and we denote by $\gldim(\Lambda)$ the \emph{global dimension}\index[symbols]{G@$\gldim(\Lambda)$} of $\Lambda$. 

For a module $M\in\modfin{\Lambda}$ we denote by $\syzygy(M)$ the \emph{syzygy} of $M$,
that is the kernel of a projective cover of $M$, and by $\cosyzygy(M)$\index[symbols]{O@$\Omega(M)$}\index[symbols]{O@$\Omega^{-}(M)$} the \emph{cosyzygy} of $M$, that is the cokernel of an injective envelope of $M$. We denote by $\abs{M}$\index[symbols]{((b@$\abs{M}$} the number of nonisomorphic indecomposable direct summands of $M$; we usually use the same letter as we used for the module in small font for that, so for example $m=\abs{M}$. We denote by $\length{M}$\index[symbols]{L@$\length{M}$} the \emph{Loewy length} of $M$. We write $\add{M}$\index[symbols]{A@$\add{M}$} for the \emph{additive closure} of $M$ in $\modfin{\Lambda}$, that is the smallest subcategory of $\modfin{\Lambda}$ closed under taking direct sums of direct summands of $M$; we usually use the same letter as we used for the module in script font for that, so for example $\cM=\add{M}$. 

A \emph{quiver} $Q=(Q_0,Q_1,s,t)$ consists of a set $Q_0$ of \emph{vertices}, a set $Q_1$ of \emph{arrows} and \emph{source} and \emph{target} maps $s,t:Q_1\to Q_0$. All quivers in this article are \emph{finite}, that is the sets $Q_0$ and $Q_1$ are finite sets. 

If $\Lambda=\K Q/I$ is a bound-quiver algebra and $i\in Q_0$ is a vertex, then we denote by $S(i)$\index[symbols]{Sa@$S(i)$}\index[symbols]{Pa@$P(i)$}\index[symbols]{Ia@$I(i)$} the simple module corresponding to the vertex $i$. We denote by $P(i)$ the indecomposable projective $\Lambda$-module with top $S(i)$ and by $I(i)$ the indecomposable injective $\Lambda$-module with socle $S(i)$. For a subset $J\subseteq Q_0$ we denote the direct sums
$$
S(J)\coloneqq \bigoplus_{i\in J}S(i), \;\; P(J)\coloneqq \bigoplus_{i\in J}P(i), \;\;
I(J)\coloneqq \bigoplus_{i\in J}I(i).
$$\index[symbols]{Sb@$S(J)$}\index[symbols]{Pb@$P(J)$}\index[symbols]{Ib@$I(J)$}

For more details on the representation theory of finite-dimensional algebras, on Auslander--Reiten theory and on representations of quivers with relations we refer to the textbooks \cite{ARS} and \cite{ASS}.

\subsection{\texorpdfstring{$d$}{d}-cluster tilting subcategories and \texorpdfstring{$\td$}{td}-rigid pairs} We continue by recalling some central notions of higher-dimensional homological algebra.

For a subcategory $\cC$ of an abelian category $\cA$ and an object $A\in \cA$, we call a morphism $f\colon A\to C$ a \emph{left $\cC$-approximation of $A$}\index[definitions]{left approximation} if $C\in \cC$ and $-\circ f\colon \Hom{\cA}{C}{-}|_\cC\to \Hom{\cA}{A}{-}|_\cC$ is an epimorphism. Further, if every object $A\in\cA$ admit a left $\cC$-approximation, we call $\cC$ \emph{covariantly finite in $\cA$}\index[definitions]{covariantly finite subcategory}. The notions of \emph{right $\cC$-approximations} \index[definitions]{right approximation} and \emph{contravariantly finite}\index[definitions]{contravariantly finite subcategory} subcategories are defined dually. A \emph{functorially finite} \index[definitions]{functorially finite subcategory} subcategory is a subcategory which is both covariantly and contravariantly finite.

Let $\Lambda$ be a finite-dimensional algebra. Following \cite{iyama2008auslander} we recall the concept of a $d$-cluster tilting subcategory.

\begin{definition}\label{def:n-ct}
A functorially finite subcategory $\C\subseteq\modfin{\Lambda}$ is called  a \emph{$d$-cluster tilting subcategory} if \index[definitions]{$d$-cluster tilting subcategory}
\begin{align*}
\C &= \{ M \in\modfin{\Lambda} \mid \Ext_{\Lambda}^{i}(M,\C) = 0 \text{ for $1\leq i\leq d-1$}\}\\
&= \{ M \in\modfin{\Lambda} \mid \Ext_{\Lambda}^{i}(\C,M) = 0 \text{ for $1\leq i\leq d-1$}\}.
\end{align*}
If $\C=\add{C}$ for some module $C\in\modfin{\Lambda}$, then $C$ is called a \emph{$d$-cluster tilting module}\index[definitions]{$d$-cluster tilting module}.
\end{definition}

For more details on higher-dimensional Auslander--Reiten theory we refer to the survey \cite{iyama2008auslander}.

Notice that any subcategory with an additive generator is functorially finite. Notice also that a $d$-cluster tilting subcategory always contains the projective and the injective $\Lambda$-modules. We define the \emph{$d$-Auslander--Reiten translations} as in \cite{iyama2008auslander} by $\td=\tau\syzygy^{d-1}$ and $\tdo=\tau^{-}\syzygy^{-(d-1)}$\index[symbols]{T@$\td(M)$}\index[symbols]{T@$\tdo(M)$}. They are crucial in the study of $d$-cluster tilting subcategories.
\begin{theorem}[{\cite[Theorem 2.8 and 2.9]{iyama2008auslander}}]
    Let $\cC$ be a $d$-cluster tilting subcategory of $\modfin{\Lambda}$ and $\underline{\cC}$ and $\overline{\cC}$ the corresponding subcategories of $\underline{\mathrm{mod}}({\Lambda})$ and $\overline{\mathrm{mod}}(\Lambda)$. Then $\td$ and $\tdo$ induce mutually quasi-inverse equivalences
    \[
    \td\colon \underline{\cC}\to\overline{\cC}\quad \text{ and }\quad \tdo\colon \overline{\cC}\to\underline{\cC}.
    \]    
\end{theorem}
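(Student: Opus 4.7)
The plan is to divide the theorem into two pieces. First, I would show that $\td$ restricts to a functor $\underline{\cC} \to \overline{\cC}$, with the dual statement for $\tdo$. Second, I would establish the natural isomorphisms $\tdo \td \cong \mathrm{id}_{\underline{\cC}}$ and $\td \tdo \cong \mathrm{id}_{\overline{\cC}}$. Since $\tau$ and $\tau^{-}$ are the classical Auslander--Reiten equivalences between $\underline{\mathrm{mod}}(\Lambda)$ and $\overline{\mathrm{mod}}(\Lambda)$, the essential task is to control how $\syzygy^{d-1}$ interacts with the subcategory $\cC$.

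For the first piece, given non-projective $M \in \cC$, I would verify $\td M \in \cC$ by checking the Ext-vanishing condition from Definition \ref{def:n-ct}. Combining the Auslander--Reiten formula $\Ext^{1}_{\Lambda}(A,B) \cong D\,\overline{\mathrm{Hom}}_{\Lambda}(\tau^{-} B, A)$ with the standard dimension shift $\Ext^{i}_{\Lambda}(X,Y) \cong \Ext^{1}_{\Lambda}(\syzygy^{i-1} X, Y)$ for $i \geq 1$ and the equivalence $\tau^{-}\tau \cong \mathrm{id}$, a careful computation yields, for any $X \in \cC$ and $1 \leq i \leq d-1$,
\[
\Ext^{i}_{\Lambda}(X, \td M) \;\cong\; D\,\Ext^{d-i}_{\Lambda}(M, X),
\]
which vanishes since $1 \leq d-i \leq d-1$ and $M, X \in \cC$. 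Because $\cC$ is cut out by either of the two Ext-vanishing conditions, this suffices to place $\td M$ in $\cC$. Non-injectivity of $\td M$ reduces to $\syzygy^{d-1} M$ being non-projective, which one extracts from the Ext-vanishing in $\cC$ (if $M$ had projective dimension below $d$, the self-Ext-vanishing inside $\cC$ would force $M$ itself to be projective). The statement for $\tdo$ is symmetric via cosyzygies and the opposite form of the Auslander--Reiten formula.

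For the second piece, I would build the natural isomorphism
\[
\tdo \td \;=\; \tau^{-} \cosyzygy^{d-1}\, \tau \syzygy^{d-1} \;\cong\; \tau^{-}\tau \cdot \cosyzygy^{d-1}\syzygy^{d-1} \;\cong\; \mathrm{id}_{\underline{\cC}},
\]
by commuting $\tau$ past $\cosyzygy^{d-1}$ in the stable category and then applying $\tau^{-}\tau \cong \mathrm{id}$. The main obstacle sits in the second isomorphism: for a generic module $\cosyzygy^{d-1}\syzygy^{d-1}$ is not the identity, and one must leverage the $d$-cluster tilting hypothesis to see that on $\underline{\cC}$ the loss of information in a $(d-1)$-fold syzygy is exactly balanced by a $(d-1)$-fold cosyzygy. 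Concretely, I would argue that for $M \in \cC$ the Ext-vanishing forces the minimal projective resolution $P_{d-1} \to \cdots \to P_0 \to M$ and the minimal injective coresolution $M \to I^{0} \to \cdots \to I^{d-1}$ to splice together coherently, yielding $\cosyzygy^{d-1} \syzygy^{d-1} M \cong M$ in $\underline{\cC}$; this is where the full force of Definition \ref{def:n-ct} is used.
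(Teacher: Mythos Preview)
The paper does not prove this theorem; it is quoted verbatim from Iyama's survey and cited without argument. So there is no ``paper's own proof'' to compare against, and your proposal should be judged on its own merits.

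Your first piece is reasonable in outline but the displayed isomorphism $\Ext^{i}_{\Lambda}(X,\td M)\cong D\,\Ext^{d-i}_{\Lambda}(M,X)$ is already a substantial portion of the higher Auslander--Reiten duality, not an immediate consequence of the classical AR formula plus dimension shift. The dimension-shift identity $\Ext^{i}(A,B)\cong\Ext^{1}(\Omega^{i-1}A,B)$ only holds up to maps factoring through projectives, and chaining it with the AR formula leaves you with stable Hom spaces involving syzygies on \emph{both} sides; collapsing those to $\Ext^{d-i}(M,X)$ requires exactly the Ext-vanishing inside $\cC$ that you are trying to exploit, and the bookkeeping is nontrivial. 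You should spell this out rather than assert it.

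The second piece has a genuine gap. The manipulation
\[
\tau^{-}\,\cosyzygy^{d-1}\,\tau\,\syzygy^{d-1}\;\cong\;\tau^{-}\tau\cdot\cosyzygy^{d-1}\syzygy^{d-1}
\]
is not valid: $\tau$ and $\cosyzygy$ do not commute on the stable module category of a general finite-dimensional algebra (they do on a self-injective algebra, where $\tau\cong\Omega^{2}\nu$, but that is a very special situation). Iyama's actual argument avoids this entirely: he uses the higher AR duality $\underline{\mathrm{Hom}}(X,Y)\cong D\,\Ext^{d}(Y,\td X)$ for $X,Y\in\cC$ together with the Yoneda lemma to conclude that $\td$ and $\tdo$ are adjoint equivalences, rather than computing the composite explicitly. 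Your attempt to ``splice'' projective resolutions and injective coresolutions to get $\cosyzygy^{d-1}\syzygy^{d-1}M\cong M$ is also not correct as stated; there is no reason the cosyzygies of $\syzygy^{d-1}M$ should reconstruct $M$ without passing through the Nakayama functor or an equivalent device. You should abandon the commutation strategy and instead establish the duality formula, from which the equivalence follows formally.
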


The following definition introduces the main objects of interest in this article. Parts (a) and (b) in it were first introduced in \cite{JACOBSEN2020119}. 

\begin{definition}\label{def:taud-tilting}
Let $\C$ be a $d$-cluster tilting subcategory of $\modfin{\Lambda}$. Let $M\in\C$ and $P\in\modfin{\Lambda}$ be a projective module. Then
\begin{enumerate}[label=(\alph*)]
    \item $M$ is called \emph{$\td$-rigid}\index[definitions]{$\td$-rigid module} if $\Hom{\Lambda}{M}{\td(M)}=0$.
    \item The pair $(M,P)$ is called a \emph{$\td$-rigid pair}\index[definitions]{$\td$-rigid pair} if $M$ is $\td$-rigid and $\Hom{\Lambda}{P}{M}=0$.
    \item The pair $(M,P)$ is called a \emph{summand-maximal $\td$-rigid pair}\index[definitions]{$\td$-rigid pair!summand-maximal}, if it is a $\td$-rigid pair and for any other $\td$-rigid pair $(N,Q)$ we have $\abs{N}+\abs{Q}\leq \abs{M}+\abs{P}$.
\end{enumerate}
\end{definition}

Note that in particular a summand-maximal $\td$-rigid pair $(M,P)$ has at least $\abs{\Lambda}$ summands, since $(\Lambda,0)$ is $\td$-rigid.

\subsection{Nakayama algebras}
We now introduce the main class of algebras which we study in this article.

If $Q$ is a quiver, we denote by $R=R_Q$\index[symbols]{Rc@$R_Q$} the ideal of the path algebra $\K Q$ generated by all arrows of $Q$. We denote by $\ALinOr{n}$\index[symbols]{A@$\ALinOr{n}$} the quiver
\[
\begin{tikzcd}
	\ALinOr{n}\colon\qquad n\rar{}&n-1\rar{}&\cdots\rar{}&3\rar{}&2\rar{}&1.
\end{tikzcd}
\]
A bound quiver algebra $\Lambda$ of the form $\Lambda=\K \ALinOr{n}/I$ is called a \emph{linear Nakayama algebra} and if $I=R^l$ for some $l\geq 2$, then $\Lambda$ is called \emph{homogeneous}. 

We denote the bound quiver algebra $\K\ALinOr{n}/ R^l$ by $\Lambda(n,l)$\index[symbols]{L@$\Lambda(n,l)$}. The category $\modfin{\Lambda(n,l)}$ is well-understood. We proceed by recalling a description of the category $\modfin{\Lambda(n,l)}$ by describing the indecomposable modules and the morphisms between them. For the rest of this section we set $\Lambda=\Lambda(n,l)$.

Indecomposable $\Lambda$-modules are defined uniquely by their support which in turn is given by an interval $[a,b]\subseteq[1,n]$ with $b-a\leq l-1$. Thus, we denote by $\ind{a}{b}$ the indecomposable $\Lambda$-module with support $[a,b]$\index[symbols]{M@$\ind{a}{b}$}. As a representation of $\ALinOr{n}$ bound by $R^l$, this is given by setting $K$ in the vertices $i\in Q_0$ with $a\leq i \leq b$, and by setting all the morphisms corresponding to arrows in $\ALinOr{n}$ to be the identity if possible and zero otherwise. In particular, we can compute the length of $\ind{a}{b}$ as
\[
\length{\ind{a}{b}} = b - a + 1.
\]
If $b<a$, or $b<1$ or $n<a$ then we set $\ind{a}{b}=0$. Morphisms in $\modfin{\Lambda}$ are described by the following lemma.

\begin{lemma}[{\cite[Lemma 2.4]{ADACHI2016227}}]\label{Lemma:HomSpaceAdachi}
    Let $\ind{a}{b}$ and $\ind{c}{d}$ be indecomposable $\Lambda$-modules. The following conditions are equivalent:
    \begin{enumerate}
        \item[(a)] $\Hom{\Lambda}{\ind{a}{b}}{\ind{c}{d}}\neq 0$.
        \item[(b)] $\Hom{\Lambda}{\ind{a}{b}}{\ind{c}{d}}=\K$.
        \item[(c)] $b\in [c,d] $ and $c\in [a,b]$.
    \end{enumerate}
    Moreover, if $d-c\leq b-a $, then the following is also equivalent:
    \begin{enumerate}
        \item[(d)] $\Hom{\Lambda}{P(b)}{\ind{c}{d}}\neq 0$.
    \end{enumerate}
\end{lemma}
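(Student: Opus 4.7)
The plan is to exploit the uniserial structure of indecomposables over the linear Nakayama algebra $\Lambda = \K\ALinOr{n}/R^l$. With arrows oriented from high to low indices, the module $\ind{a}{b}$ is uniserial with top $S(b)$, socle $S(a)$, and composition factors $S(b), S(b-1), \ldots, S(a)$ read from top to bottom. A short inspection shows that the submodules of $\ind{a}{b}$ are exactly the $\ind{a}{k}$ for $k \in [a,b]$, while the quotients are exactly the $\ind{k}{b}$ for $k \in [a,b]$. These descriptions will drive the entire argument.

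For the equivalence of (a), (b), (c), the implication (b)$\Rightarrow$(a) is trivial. For (a)$\Rightarrow$(c), any nonzero morphism $f\colon \ind{a}{b}\to \ind{c}{d}$ has image that is simultaneously a quotient of $\ind{a}{b}$ and a submodule of $\ind{c}{d}$. By the description above, the image must be both of the form $\ind{k}{b}$ for some $k\in[a,b]$ and of the form $\ind{c}{j}$ for some $j\in[c,d]$. Matching these two descriptions forces $k=c$ and $j=b$, which in particular requires $c\in[a,b]$ and $b\in[c,d]$. For (c)$\Rightarrow$(b), under the given conditions the image is forced to be $\ind{c}{b}$, and any morphism must factor as the canonical quotient $\ind{a}{b}\twoheadrightarrow \ind{c}{b}$ followed by the canonical inclusion $\ind{c}{b}\hookrightarrow \ind{c}{d}$. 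Since each such morphism is determined by the scalar by which it acts on the top $S(b)$, this yields a one-dimensional Hom space.

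For the moreover statement, note that $P(b)=\ind{\max(1,b-l+1)}{b}$, so by the main equivalence already established, (d) is equivalent to the pair of conditions $b\in[c,d]$ and $c\in[\max(1,b-l+1),b]$. The direction (c)$\Rightarrow$(d) is immediate by composing a projective cover $P(b)\twoheadrightarrow \ind{a}{b}$ with the nonzero map supplied by (c). Conversely, assume (d) and $d-c\leq b-a$. From $b\in[c,d]$ we have $b\leq d$, so $d-b\geq 0$. Combined with $d-c\leq b-a$ this gives $a\leq c+(b-d)\leq c$, hence $c\in[a,b]$, and together with $b\in[c,d]$ this yields (c).

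I expect no serious obstacle; the only subtlety is to be careful with the orientation convention (the top of $\ind{a}{b}$ sits at vertex $b$, not $a$) so that the submodule/quotient lattice is described correctly. Once that is fixed, the uniserial structure makes the matching of quotient and submodule forms essentially automatic, and the inequality chain in the moreover part is a direct rearrangement.
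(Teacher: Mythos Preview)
Your proof is correct. The paper does not give its own proof of this lemma: it is cited as \cite[Lemma 2.4]{ADACHI2016227} and used as a black box throughout, so there is nothing in the paper to compare against. Your argument via the uniserial structure (matching the image of a nonzero map as a quotient of the source and a submodule of the target) is the standard one and works cleanly; the orientation and submodule/quotient descriptions you use are consistent with the paper's conventions, and the inequality manipulation for (d)$\Rightarrow$(c) is correct.
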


In particular, we have that if $M,N\in\modfin{\Lambda}$ are indecomposable, then there exists a non-zero morphism from $M$ to $N$ if and only if the top of $M$ is a composition factor of $N$ and the socle of $N$ is a composition factor of $M$. The following lemma classifies indecomposable projective, injective and projective-injective $\Lambda$-modules. 

\begin{lemma}[{\cite[Lemma 4.3]{VASO20192101}}]\label{lem:proj inj}
    Let $k\in [1,n]$ be a vertex of $\ALinOr{n}$.
    \begin{enumerate}[label=(\alph*)]
        \item For the indecomposable projective $\Lambda$-module $P(k)$ we have that 
        \[
        P(k)=\begin{cases}
        \ind{1}{k},&\text{if }1\leq k\leq l,\\
        \ind{k-l+1}{k},&\text{if }l+1\leq k\leq n.
        \end{cases}
        \]
        \item For the indecomposable injective $\Lambda$-module $I(k)$ we have that 
        \[
        I(k)=\begin{cases}
            \ind{k}{k+l-1},&\text{if }1\leq k\leq n-l+1,\\
            \ind{k}{n},&\text{if }n-l+2\leq k\leq n.
        \end{cases}
        \]
        \item An indecomposable $\Lambda$-module $\ind{a}{b}$ is both projective and injective if and only if $b-a=l-1$.
    \end{enumerate}
\end{lemma}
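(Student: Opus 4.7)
The plan is to compute the indecomposable projective and injective modules directly from the quiver description, using the transparent combinatorial features of $\Lambda(n,l) = \K\ALinOr{n}/R^l$: the quiver $\ALinOr{n}$ has a unique path $i \to i-1 \to \cdots \to j$ of length $i-j$ whenever $i \geq j$, and $R^l$ precisely annihilates all paths of length at least $l$.

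For part (a), I would use that $P(k) = e_k \Lambda$ has a $\K$-basis given by residues modulo $R^l$ of paths starting at vertex $k$. Such paths have the form $k \to k-1 \to \cdots \to k-j$, which exist in $\ALinOr{n}$ exactly when $k - j \geq 1$ and survive $R^l$ exactly when $j \leq l - 1$. Hence the composition factors of $P(k)$ are $S(k), S(k-1), \ldots, S(\max\{1, k-l+1\})$. Splitting at the threshold $k = l$ gives $P(k) = \ind{1}{k}$ when $k \leq l$ and $P(k) = \ind{k-l+1}{k}$ when $k > l$, which is exactly the claim.

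Part (b) is dual: using the $\K$-linear dual of $\Lambda e_k$ to realize $I(k)$, and the $\K$-basis of $\Lambda e_k$ given by residues of paths ending at $k$, one finds that the composition factors of $I(k)$ are $S(k), S(k+1), \ldots, S(\min\{n, k+l-1\})$. Splitting at $k = n - l + 1$ produces the two cases of the statement. Alternatively, one may argue via the standard characterization of $I(k)$ over a Nakayama algebra as the unique indecomposable with socle $S(k)$ of maximal possible Loewy length.

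For part (c), I would combine (a) and (b). An indecomposable $\ind{a}{b}$ is projective iff it equals $P(b)$, so either $a = 1$ with $b \leq l$, or $a = b - l + 1$ with $b > l$; it is injective iff it equals $I(a)$, so either $b = a + l - 1$ with $a \leq n - l + 1$, or $b = n$ with $a > n - l + 1$. A short case analysis of the four combinations shows that being both forces $b - a = l - 1$. Conversely, if $b - a = l - 1$, then $a \geq 1$ gives $b \geq l$ so $\ind{a}{b} = P(b)$, and $b \leq n$ gives $a \leq n - l + 1$ so $\ind{a}{b} = I(a)$, as desired. There is no genuine obstacle; the only point requiring mild care is bookkeeping these splits at the boundary thresholds $k = l$ and $k = n-l+1$.
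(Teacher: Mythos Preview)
Your proposal is correct and entirely standard. The paper does not supply its own proof of this lemma; it is simply quoted from \cite[Lemma 4.3]{VASO20192101}, so there is no in-paper argument to compare against. Your direct computation from the quiver presentation is the natural way to establish the result.
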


\subsection{\texorpdfstring{$d$}{d}-cluster tilting subcategories for \texorpdfstring{$\Lambda(n,l)$}{L(n,l)}}\label{subsec:d-cluster tilting subcategories for Lambda(n,l)}
Our motivation for studying the homogeneous Nakayama algebras $\Lambda(n,l)$ is that not only their representation theory is well-understood, but also that they provide a non-trivial class of examples where one can apply higher-dimensional Auslander--Reiten theory. In this paragraph we recall these examples through the classification of $d$-cluster tilting subcategories of $\modfin{\Lambda(n,l)}$. The following theorem is the main result.

\begin{theorem}[{\cite[Theorem 2]{VASO20192101}}]\label{thm:VasoClassifyAcyclicCluster}
  Let $\Lambda=\Lambda(n,l)$. There exists a $d$-cluster tilting subcategory $\cC\subseteq\modfin{\Lambda}$ if and only if there exists $p\geq 1$ such that
  \begin{equation}\label{eq:connection between n,p,d,l}
      n = (p-1)\left(\frac{d-1}{2}l+1\right)+\frac{l}{2}
  \end{equation}
  and either
  \begin{enumerate}
      \item[(i)] $l=2$, or
      \item[(ii)] $l>2$ and $d$ and $p$ are even
  \end{enumerate}
  holds. The $d$-cluster tilting subcategory $\C$ is then given by 
    \begin{equation}\label{eq:TheNCT} \C=\add{\bigoplus_{r=0}^{p-1} \tau_d^{-r} (\Lambda)}.
    \end{equation}
\end{theorem}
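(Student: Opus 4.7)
The plan is to use the general theory of $d$-cluster tilting subcategories to first pin down what $\C$ must look like, then verify its properties by explicit computation using the combinatorial description of $\modfin{\Lambda}$ given in Lemma~\ref{Lemma:HomSpaceAdachi} and Lemma~\ref{lem:proj inj}.

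First I would establish that any $d$-cluster tilting subcategory $\C\subseteq\modfin{\Lambda(n,l)}$ must contain $\td^{-r}(\Lambda)$ for every $r\geq 0$. Indeed, $\Lambda\in\C$ because every projective module lies in any $d$-cluster tilting subcategory, and $\C$ is closed under $\tdo$ (as a consequence of the Iyama equivalence $\tdo\colon\overline{\C}\to\underline{\C}$). Since $\modfin{\Lambda}$ has finitely many indecomposables, the $\tdo$-orbit of $\Lambda$ is finite. This identifies the candidate subcategory as the one displayed in~\eqref{eq:TheNCT} and reduces the problem to: (a) computing when this orbit ``closes up'' at the injectives, and (b) verifying the $\Ext$-vanishing axioms.

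Next, I would compute $\tdo$ explicitly on indecomposables. Using $\tdo=\tau^{-}\cosyzygy^{d-1}$, the injective envelope and cokernel descriptions of Lemma~\ref{lem:proj inj}, and the shift behavior of $\tau^{-}$ on intervals, one shows by induction that, whenever defined, $\tdo M(a,b)$ is again of the form $M(a',b')$ with $a'-a$ and $b'-b$ equal to a constant depending only on $d$ and $l$ (namely a shift of $\tfrac{d-1}{2}l+1$ in the relevant indices when $d$ is even or $l=2$). Tracing the orbit of the projective $P(1)=M(1,1)$, the condition that it terminates at an injective of $\Lambda(n,l)$ exactly after $p-1$ iterations translates directly into equation~\eqref{eq:connection between n,p,d,l}, and the parity constraints in (i) and (ii) are forced by the requirement that the intermediate shifts produce well-defined modules of $\modfin{\Lambda}$ (half-integer shifts occur if $l>2$ and either $d$ or $p$ is odd, which is impossible). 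This gives both the necessity of the numerical condition and an explicit list of the indecomposables of the candidate $\C$.

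Finally, for sufficiency I would verify that $\C=\add{\bigoplus_{r=0}^{p-1}\tdo^r(\Lambda)}$ satisfies both equalities in Definition~\ref{def:n-ct}. By the explicit description, any two indecomposables in $\C$ have intervals whose relative position is controlled by their orbit indices; a direct calculation using projective resolutions (which for Nakayama algebras are periodic and combinatorially transparent) shows that $\Ext^i_\Lambda(M,N)=0$ for $1\leq i\leq d-1$ and $M,N\in\C$. For the maximality part of Definition~\ref{def:n-ct}, one checks that if $X\in\modfin{\Lambda}\setminus\C$, then $X=M(a,b)$ with $(a,b)$ not in the orbit list, and one locates an element of $\C$ against which $X$ has a nontrivial $\Ext^i$ for some $1\leq i\leq d-1$, again by an interval computation.

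The main obstacle is Step~3, the $\Ext$-vanishing verification: although each individual computation is elementary, organizing them into a clean combinatorial argument that simultaneously covers all pairs of orbit representatives and all intermediate degrees $1\leq i\leq d-1$ is delicate. A secondary subtlety is the separation of cases (i) and (ii): when $l=2$ the algebra is radical-square-zero and every non-projective non-injective indecomposable is simple, so the parity restrictions on $d$ and $p$ disappear, and this degeneracy has to be handled separately from the generic case $l>2$.
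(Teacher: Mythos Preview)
The paper does not prove this theorem at all: it is quoted verbatim from \cite[Theorem~2]{VASO20192101} and used as a black box throughout. So there is no ``paper's own proof'' to compare against here.

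That said, your outline is broadly the right strategy and matches in spirit how the cited reference proceeds: show that $\C$ must contain the $\tdo$-orbit of $\Lambda$, compute that orbit explicitly, and then verify the $\Ext$-vanishing. A couple of points are looser than they should be. First, your explanation of the parity constraints (``half-integer shifts occur if $l>2$ and either $d$ or $p$ is odd'') is not quite the mechanism: the shifts in Lemma~\ref{lem:computation of taud} are integral whenever $d$ is even regardless of $p$, and the constraint on $p$ comes instead from requiring the orbit to terminate in the injectives (equivalently, from the integrality of $n$ in \eqref{eq:connection between n,p,d,l}). Second, the step you flag as the main obstacle---organizing the $\Ext^i$-vanishing for $1\leq i\leq d-1$ across all orbit pairs---is indeed the substantive part, and your sketch does not indicate how you would structure it; in \cite{VASO20192101} this is handled via explicit syzygy computations together with the interval description of $\Hom$-spaces, and the maximality direction is obtained by a dimension-counting or direct obstruction argument rather than by exhibiting a single bad $\Ext$ for each $X\notin\C$.
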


\begin{remark}\label{rem:d=gldim(Lambda) for Nakayama}
    If an algebra $A$ admits a $d$-cluster tilting module for $d=\gldim(A)$, then $A$ is called \emph{$d$-representation-finite}.\index[definitions]{$d$-representation-finite algebra} This case is of special interest in higher-dimensional homological algebra. From \cite[Theorem 3]{VASO20192101} we have that $\Lambda=\Lambda(n,l)$ admits a $d$-cluster tilting subcategory for $d=\gldim(\Lambda)$ if and only if $d=2\tfrac{n-1}{l}$. Using Theorem \ref{thm:VasoClassifyAcyclicCluster} it follows that this is the case if and only if $p=2$.
\end{remark}

For the rest of the section we fix integers $n$, $l$ and $d$ such that there exists a $d$-cluster tilting subcategory $\cC\subseteq\modfin{\Lambda}$, where $\Lambda=\Lambda(n,l)$. The indecomposable modules lying in the subcategory $\cC$ are crucial in our investigation. As it can be inferred from Theorem \ref{thm:VasoClassifyAcyclicCluster}, they are of two kinds: they are either projective-injective, or they are of the form $\tau_d^{-r}(P)$ for some indecomposable projective non-injective module $P\in\modfin{\Lambda}$. In fact, the modules of the second kind can be grouped with respect to the power $r$. To describe how this works, we need to recall how to apply the functors $\td$ and $\tdo$ to indecomposable modules in $\cC$.

\begin{lemma}[{\cite[Lemma 4.8]{VASO20192101}}]\label{lem:computation of taud}
Let $\ind{a}{b}\in\C$ be an indecomposable module.
\begin{enumerate}[label=(\alph*)]
    \item If $\ind{a}{b}$ is nonprojective, then
    \[
    \td(\ind{a}{b})=\ind{b-\tfrac{d}{2}l}{a-\tfrac{d-2}{2}l-2}
    \]
    and the length of $\td(\ind{a}{b})$ satisfies
    \[
    \length{\td (\ind{a}{b})}=l-\length{\ind{a}{b}}.
    \]
    \item If $\ind{a}{b}$ is noninjective, then
    \[
    \tdo(\ind{a}{b})=\ind{b+\tfrac{d-2}{2}l+2}{a+\tfrac{d}{2}l}
    \]
    and the length of $\tdo(\ind{a}{b})$ satisfies
    \[
    \length{\tdo(\ind{a}{b})}=l-\length{\ind{a}{b}}.
    \]
\end{enumerate}
\end{lemma}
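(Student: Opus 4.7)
The plan is to use the factorization $\td = \tau \syzygy^{d-1}$ and compute each piece explicitly via projective covers in the Nakayama algebra. Part (b) will follow from part (a) by a dual argument using injective envelopes and cosyzygies (or equivalently by verifying that applying $\td$ to the claimed right-hand side of (b) returns $M(a,b)$), so I focus on (a).

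First I record the base syzygy formula. If $M(a,b)$ is a nonprojective indecomposable, then by Lemma \ref{lem:proj inj} its projective cover is $P(b) = M(\max(1, b-l+1), b)$, so
$$\syzygy M(a,b) = M(\max(1, b-l+1),\, a-1).$$
For $M(a,b) \in \cC$ nonprojective one verifies, using the classification in Theorem \ref{thm:VasoClassifyAcyclicCluster}, that $b \geq l$ (so $\max(1, b-l+1) = b-l+1$), and that the iterated syzygies remain in $\cC$ away from the left boundary so that the same generic formula keeps applying at each step. A straightforward induction on $k$ then yields
$$\syzygy^{2k-1} M(a,b) = M\bigl(b - kl + 1,\; a - (k-1)l - 1\bigr), \qquad \syzygy^{2k} M(a,b) = M(a - kl,\, b - kl).$$

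Specializing to $k = d/2$ when $d$ is even (the only case allowed for $l > 2$) and to $k = (d-1)/2$ when $d$ is odd (possible only for $l = 2$), and then invoking the standard formula $\tau M(x, y) = M(x - 1, y - 1)$ for nonprojective Nakayama indecomposables — which follows either from the AR sequence $0 \to M(x-1, y-1) \to M(x-1, y) \oplus M(x, y-1) \to M(x, y) \to 0$ or from computing the transpose of a minimal projective presentation $P(a-1) \to P(b) \to M(a,b) \to 0$ — gives
$$\td M(a,b) = \tau\,\syzygy^{d-1} M(a,b) = M\bigl(b - \tfrac{d}{2}l,\; a - \tfrac{d-2}{2}l - 2\bigr)$$
in both parities after substituting and simplifying. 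The length identity $\length{\td M(a,b)} = l - \length{M(a,b)}$ follows by direct subtraction from this closed form.

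The main obstacle is the bookkeeping of boundary cases: I must ensure that every intermediate syzygy is non-zero and nonprojective so that the inductive formula continues to apply, and that $\syzygy^{d-1} M(a,b)$ itself is nonprojective so $\tau$ acts nontrivially. The classification of $\cC$ from Theorem \ref{thm:VasoClassifyAcyclicCluster} together with the arithmetic relation (\ref{eq:connection between n,p,d,l}) pins down exactly which $M(a,b)$ lie in $\cC$ and forces their coordinates to avoid these problematic boundary values of $[1, n]$, so the generic syzygy formula may be used throughout the induction. Part (b) then follows dually by replacing projective covers with injective envelopes $I(a) = M(a,\min(n, a+l-1))$ in the analogous induction computing $\cosyzygy^{d-1}$, followed by an application of $\tau^{-}$.
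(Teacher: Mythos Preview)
Your computation is correct and is exactly the standard approach: compute the iterated syzygy $\Omega^{d-1}$ via successive projective covers $P(b)=M(b-l+1,b)$, then apply $\tau M(x,y)=M(x-1,y-1)$; the parity split into $d$ even versus $d$ odd (forcing $l=2$, where nonprojective indecomposables in $\cC$ are simple so $a=b$) is handled correctly, and the length identity drops out of the closed form.

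Note, however, that the paper does not supply its own proof of this lemma: it is quoted verbatim from \cite[Lemma~4.8]{VASO20192101} and used as a black box. Your argument is essentially the one carried out in that reference. The only point where you are slightly informal is the boundary bookkeeping---ensuring each intermediate $\Omega^{j}M(a,b)$ remains nonprojective (equivalently, that its top sits at a vertex $\geq l$) so the generic formula $\Omega M(x,y)=M(y-l+1,x-1)$ continues to apply. This is indeed forced by the description of the diagonals $\diag{i}$ for $i\geq 2$ (e.g.\ $s_2=\tfrac{d}{2}l+1>l$ already), but you should make explicit that the $j$-th syzygy of a module in $\diag{i}$ lands in the appropriate range for every $0\leq j\leq d-1$, rather than only asserting it.
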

	
Using (\ref{eq:TheNCT}) and Lemma \ref{lem:computation of taud} it is easy to give a description of all modules in the $d$-cluster tilting subcategory $\cC$. First, by Lemma \ref{lem:proj inj}, we have that 
\[
P([l,n]) = \bigoplus_{i\in [l,n]} P(i)
\]
is the direct sum of all indecomposable projective-injective $\Lambda$-modules. We denote by $\projinj$\index[symbols]{Pe@$\projinj$} the subcategory $\add{P([l,n])}$ of $\modfin{\Lambda}$. Notice that by (\ref{eq:TheNCT}) we have that $\projinj\subseteq \cC$. Next, for $1\leq i\leq p$ we define 
\begin{equation}\label{eq:the definition of s_i}
s_i = \begin{cases}
    (i-1)\frac{d-1}{2}l+i, &\mbox{if $i$ is odd,} \\
    (i-1)(\frac{d-1}{2}l+1)+\frac{l}{2}, &\mbox{if $i$ is even.}
\end{cases}
\end{equation}
\index[symbols]{Sc@$s_i$}In particular, we immediately have that $s_1=1$ and, using (\ref{eq:connection between n,p,d,l}), we also obtain that $s_p=n$. A direct computation using Lemma \ref{lem:computation of taud} gives that a simple module $S$ is in $\cC$ if and only if $S\isom S(s_i)$ for some $1\leq i\leq p$. Thus the set $\{S(s_1),S(s_2),\ldots,S(s_p)\}$ is a complete and irredundant collection of representatives of isomorphism classes of simple $\Lambda$-modules in $\cC$. Moreover, immediately from the definition of $s_i$, we can show the following equalities which we use throughout 
\begin{equation}\label{eq:difference of consecutive simples}
    s_{i+1}-s_{i}=\begin{cases}
        \frac{d}{2}l, &\mbox{if $i$ is odd,} \\
        \frac{d-2}{2}l+2, &\mbox{if $i$ is even,}
    \end{cases} \text{ and } s_{i+2}-s_{i}=(d-1)l+2.
\end{equation}
We have seen that the sequence of integers $s_1,\ldots,s_p$ can be used to describe the simple $\Lambda$-modules in $\cC$, but it can also be used to describe the rest of the indecomposable modules in $\cC$, excluding the projective-injective modules which as we have seen are easy to describe. Indeed, we define 
\begin{equation}\label{eq:definition of diagonals}
    \diagm{i} = \begin{cases}
    \bigoplus\limits_{x=1}^{l-1} \ind{s_i}{s_i+x-1}, &\mbox{if $i$ is odd,}\\
    \bigoplus\limits_{x=1}^{l-1} \ind{s_i-x+1}{s_i}, &\mbox{if $i$ is even.}
    \end{cases}
\end{equation}
\index[symbols]{Da@$\diagm{i}$}\index[symbols]{Dd@$\diag{i}$}We also set $\diag{i}=\add{\diagm{i}}$ and we call it \emph{the $i$-th diagonal};\index[definitions]{$i$-th diagonal} we will soon motivate the name by an example. Notice that by Lemma \ref{lem:proj inj} it follows that $\diag{1}$ is the category of projective non-injective $\Lambda$-modules and that $\diag{p}$ is the category of injective non-projective $\Lambda$-modules. Another direct computation using Lemma \ref{lem:computation of taud} yields that $\tdo(\diag{i})=\diag{i+1}$ for $1\leq i\leq p-1$. Using (\ref{eq:TheNCT}), we can conclude that
\begin{equation}\label{eq:description of C}
\C=\add{ \left(\bigoplus_{i=2}^{p} D_i \right)
\oplus P([1,n])}.
\end{equation}
The following example is indicative of the general picture.

\begin{example}\label{Example:NamingDiagonals} 
The algebra $\Lambda(9,3)=\K\ALinOr{9}/R^3$ admits a $2$-cluster subcategory $\cC$. Below we have drawn the Auslander--Reiten quiver of this algebra, where the indecomposable modules are given by their composition series. The indecomposable modules in $\cC$ are the boxed vertices in the Auslander--Reiten quiver.

    $$
    \includegraphics{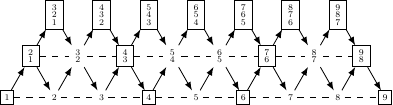}
    $$
The relative position of the vertices carries all the essential information of the associated indecomposables. Hence in the following we omit the arrows and use circles for indecomposable modules and filled circles for modules in $d$-cluster tilting subcategories. With this convention, the above picture simplifies to
    $$
    \includegraphics{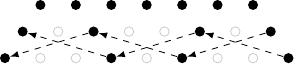}
    $$
    where we have also drawn the action of $\tau_2^{\phantom{-}}$ using dashed arrows. The projective-injective indecomposable modules in $\cC$ correspond to the top row of filled circles. The other eight indecomposable modules in $\cC$ form four diagonals $\diag{1}$, $\diag{2}$, $\diag{3}$, $\diag{4}$, read from left to right. For example, we have that $\diag{1}=\add{S(1)\oplus P(2)}$ and $\diag{4}=\add{S(9)\oplus I(8)}$.
\end{example}

Let $1\leq x\leq l-1$. For later convenience, we denote by $\updiagm{i}{x}$ the direct sum of all indecomposable direct summands of $\diagm{i}$ which have length larger than or equal to $x$. Similarly, we denote by $\downdiagm{i}{x}$ the direct sum of all indecomposable summands of $\diagm{i}$ which have length smaller than or equal to $x$, and we also denote $\downdiagm{i}{0}=0$. We also set $\updiag{i}{x}=\add{\updiagm{i}{x}}$ and $\downdiag{i}{x}=\add{\downdiagm{i}{x}}$.\index[symbols]{Db@$\downdiagm{i}{x}$}\index[symbols]{De@$\downdiag{i}{x}$}\index[symbols]{Dc@$\updiagm{i}{x}$}\index[symbols]{Df@$\updiag{i}{x}$} 

\begin{remark}\label{rem:first observations}
\begin{enumerate}
    \item[(a)] Let $1\leq i\leq p$. If $i$ is odd, then all indecomposable modules in $\diag{i}$ have the same socle $S(s_i)$. Similarly, if $i$ is even, then all indecomposable modules in $\diag{i}$ have the same top $S(s_i)$. 
    \item[(b)] For $2\leq i\leq p$ we have $\td(\updiag{i}{t})=\downdiag{i-1}{l-t}$ and $\td(\downdiag{i}{t})=\updiag{i-1}{l-t}$.  
\end{enumerate}
\end{remark}

If $M$ is a module in $\cC$, then computing $\Hom{\Lambda}{M}{\td(M)}$ is all that is needed for checking if $M$ is $\td$-rigid. On the other hand, if $M$ is in $\cC$, then we have seen that each of its indecomposable summands either is projective-injective or lies in some diagonal $\diag{i}$. Hence it becomes important to compute homomorphisms between indecomposable modules lying in the diagonals $\diag{i}$. First we notice that when indecomposable modules lie in the same diagonal, then the only thing that determines the existence of a morphism between them is the length of the modules.

\begin{lemma}
    \label{Lemma:MorhphismInDiagonal}
    Let $1\leq i\leq p$ and $M,N\in \diag{i}$ be indecomposable. 
    \begin{enumerate}
        \item[(a)] If $i$ is even, then $\Hom{\Lambda}{M}{N}\neq 0$ if and only if $\length{N}\leq \length{M}$,
        \item[(b)] If $i$ is odd, then $\Hom{\Lambda}{M}{N}\neq 0$ if and only if $\length{M}\leq \length{N}$.
    \end{enumerate}
\end{lemma}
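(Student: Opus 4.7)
The plan is to deduce both statements directly from Lemma \ref{Lemma:HomSpaceAdachi} together with the explicit interval description of the indecomposable summands of $\diagm{i}$ recorded in equation \eqref{eq:definition of diagonals}. The key point is that once $M$ and $N$ are written as interval modules $\ind{a}{b}$ and $\ind{c}{d}$, one of the two containment conditions of Lemma \ref{Lemma:HomSpaceAdachi}(c) becomes automatic, precisely because all indecomposables in a single diagonal share either a socle or a top, as recorded in Remark \ref{rem:first observations}(a).

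For part (b), when $i$ is odd, I would write $M=\ind{s_i}{s_i+m-1}$ and $N=\ind{s_i}{s_i+n-1}$, where $m=\length{M}$ and $n=\length{N}$ both lie in $[1,l-1]$. With this notation the starting point of $N$ coincides with that of $M$, so the condition ``$c\in [a,b]$'' of Lemma \ref{Lemma:HomSpaceAdachi} holds automatically. The remaining condition ``$b\in [c,d]$'' becomes $s_i+m-1\in [s_i,s_i+n-1]$, which is equivalent to $m\leq n$, that is, $\length{M}\leq \length{N}$.

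For part (a), when $i$ is even, the analogous rewriting gives $M=\ind{s_i-m+1}{s_i}$ and $N=\ind{s_i-n+1}{s_i}$. Now the endpoints of $M$ and $N$ coincide at $s_i$, so the condition ``$b\in [c,d]$'' is automatic, and ``$c\in [a,b]$'' reduces to $s_i-n+1\geq s_i-m+1$, i.e. $n\leq m$, that is, $\length{N}\leq \length{M}$. There is no genuine obstacle here: the lemma is essentially a direct translation of the combinatorics of the intervals defining $\diag{i}$ through the hom-space criterion of Lemma \ref{Lemma:HomSpaceAdachi}, and the only care needed is to remember which endpoint is fixed within a diagonal of each parity.
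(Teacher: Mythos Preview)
Your proof is correct and takes exactly the approach the paper intends: the paper's proof is simply ``Follows immediately by Lemma~\ref{Lemma:HomSpaceAdachi},'' and you have spelled out the routine verification that one of the two interval conditions in Lemma~\ref{Lemma:HomSpaceAdachi}(c) is automatic on a fixed diagonal while the other reduces to the length inequality.
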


\begin{proof}
Follows immediately by Lemma \ref{Lemma:HomSpaceAdachi}.
\end{proof}

Things are generally simpler when indecomposable modules lie in different diagonals. In this case, there are no nonzero morphisms between them, with one notable exception which is recorded in the following lemma.

\begin{lemma}\label{Lemma:DiagZeroHom}
Let $1\leq i\leq p$ and $1\leq j\leq p$ with $i\neq j$. Then the following are equivalent.
\begin{enumerate}
    \item[(a)] $\Hom{\Lambda}{\diag{i}}{\diag{j}} \neq 0$.
    \item[(b)] $d=2$ and $l>3$ and $i$ is odd and $j=i+1$. 
\end{enumerate}
\end{lemma}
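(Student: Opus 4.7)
The plan is to apply Lemma \ref{Lemma:HomSpaceAdachi} directly to pairs of indecomposable summands, using the explicit description (\ref{eq:definition of diagonals}) of the diagonals and the gap formulas (\ref{eq:difference of consecutive simples}). Concretely, for indecomposables $M=\ind{a}{b}\in\diag{i}$ and $N=\ind{c}{e}\in\diag{j}$, the condition $\Hom{\Lambda}{M}{N}\neq 0$ is equivalent to the chain of inequalities $a\leq c\leq b\leq e$, and substituting the forms from (\ref{eq:definition of diagonals}) reduces each case to a small linear system in the length parameters $x,y\in[1,l-1]$ and the differences $s_i-s_j$.

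For the implication (b)$\Rightarrow$(a), I would simply exhibit an explicit nonzero morphism. With $d=2$, $l>3$, $i$ odd and $j=i+1$, take $M=\ind{s_i}{s_i+l-2}$ and $N=\ind{s_{i+1}-l+2}{s_{i+1}}$. Since $s_{i+1}-s_i=\tfrac{d}{2}l=l$, the four inequalities become $s_i\leq s_i+2\leq s_i+l-2\leq s_i+l$, which hold as soon as $l\geq 4$.

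The reverse implication (a)$\Rightarrow$(b) I would settle by a case analysis on the parities of $i$ and $j$. When $i$ and $j$ have the same parity with $i\neq j$, the inequality $c\leq b$ forces $|s_i-s_j|\leq l-2$, while (\ref{eq:difference of consecutive simples}) gives $|s_i-s_j|\geq s_{i+2}-s_i=(d-1)l+2>l-2$ for $d\geq 2$, a contradiction. When $i$ is even and $j$ is odd, the inequalities force $s_j<s_i$, hence $j\leq i-1$, which gives $s_i-s_j\geq\tfrac{d}{2}l$, but $a\leq c$ requires $s_i-s_j\leq l-2$, again impossible. Finally, when $i$ is odd and $j$ is even, $a\leq c$ forces $j\geq i+1$; the sub-case $j\geq i+3$ is ruled out because $s_j-s_i\geq\tfrac{3d-2}{2}l+2>2l-4$, whereas $s_j-s_i\leq x+y-2$ is bounded above by $2l-4$. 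The remaining sub-case $j=i+1$ reduces the binding inequality to $\tfrac{d}{2}l\leq x+y-2\leq 2l-4$, i.e. $l(4-d)\geq 8$; combined with the dichotomy of Theorem \ref{thm:VasoClassifyAcyclicCluster} (either $l=2$, or $l>2$ with $d$ even), this forces $d=2$ and $l\geq 4$, which is precisely (b).

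The main obstacle is not conceptual but organisational: each case produces a small system of inequalities whose (in)feasibility is immediately clear once one is careful about which endpoint of the support is $s_i$ (depending on the parity of $i$). Once the orientations are fixed, the gap estimates from (\ref{eq:difference of consecutive simples}) finish every case in one line.
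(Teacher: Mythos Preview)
Your proof is correct. Both your argument and the paper's rest on the same two tools—Lemma~\ref{Lemma:HomSpaceAdachi} to convert a nonzero Hom into the chain $a\leq c\leq b\leq e$, and the gap formulas (\ref{eq:difference of consecutive simples}) to rule out all but one configuration—so the approaches are essentially the same. The only difference is organisational: you do a direct four-way split on the parities of $(i,j)$ and handle each case by a one-line inequality, while the paper first invokes representation-directedness to get $j\geq i+1$, then proves an auxiliary claim (for odd $m\leq j$ one has $c\geq s_m$) and uses it to exclude $i$ even and $j\geq i+2$ in two short strokes before treating $j=i+1$. Your case split is slightly more elementary and self-contained; the paper's claim-based route avoids repeating the same estimate across parities. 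Either way the endgame at $j=i+1$ is identical: $\tfrac{d}{2}l\leq x+y-2\leq 2l-4$ forces $(4-d)l\geq 8$, and combined with Theorem~\ref{thm:VasoClassifyAcyclicCluster} this pins down $d=2$ and $l>3$.
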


\begin{proof}
Assume first that (b) holds and we show (a). Since $l>3$ we have
\[
s_i < s_i+2 \leq s_i+l-2 < s_i+l, 
\]
where $s_i$ is defined in (\ref{eq:the definition of s_i}). By Lemma \ref{Lemma:HomSpaceAdachi} we obtain that \[
\Hom{\Lambda}{\ind{s_i}{s_i+l-2}}{\ind{s_i+2}{s_i+l}}\neq 0.\]
Clearly the indecomposable module $\ind{s_i}{s_i+l-2}$ is in $\diag{i}$ and it is enough to show that $\ind{s_i+2}{s_i+l}\in\diag{j}=\diag{i+1}$. Since $i$ is odd and $d=2$ and using (\ref{eq:difference of consecutive simples}) we have $s_i=s_{i+1}-l$. Hence $\ind{s_i+2}{s_i+l}=\ind{s_{i+1}-l+2}{s_{i+1}}\in\diag{i+1}$, as required.

Assume now that (a) holds and we show (b). By (a) we have that there exist indecomposable modules $\ind{a}{b}\in\diag{i}$ and $\ind{c}{d}\in\diag{j}$ such that 
\[
\Hom{\Lambda}{\ind{a}{b}}{\ind{c}{d}}\neq 0.
\]
By (\ref{eq:definition of diagonals}) we have that $b=s_i+x-1$ and $c=s_j-y+1$ for some $x,y\in [1,l-1]$. By Lemma \ref{Lemma:HomSpaceAdachi} we have that
\begin{equation}\label{eq:nonzero hom for special case}
    a\leq c \leq b \leq d.
\end{equation}
Since $\Lambda$ is representation-directed and since diagonals do not intersect we have that $j\geq i$, and by the assumption $i\neq j$ we obtain that $j\geq i+1$. 

We first claim that if $1\leq m\leq j$ holds for some $m$ odd, then $c\geq s_m$. Indeed, if $j=m$ then $j$ is odd and so $c=s_j=s_m$. On the other hand, if $j\geq m+1$, then $s_j\geq s_{m+1}$ and using (\ref{eq:difference of consecutive simples}) we obtain
\[
c = s_j-y+1 \geq s_{m+1} - y+1 = \frac{d}{2}l+s_m-y+1 \geq l+ s_m -(l-1)+1 > s_m.
\]

Assume now towards a contradiction that $i$ is even. Then $b=s_i$ and $i+1$ is odd. Since $j\geq i+1$, the claim gives that $c\geq s_{i+1}$. Then (\ref{eq:nonzero hom for special case}) gives $s_{i+1}\leq c \leq b=s_{i}$ which is a contradiction. Hence $i$ is odd.

Assume now towards a contradiction that $j\geq i+2$. Since $i+2$ is odd, the claim gives that $c\geq s_{i+2}$. Then (\ref{eq:nonzero hom for special case}) gives $s_{i+2}\leq c\leq b=s_i$ which is a contradiction. Hence $j=i+1$.

Since $i$ is odd and $j=i+1$, we obtain using (\ref{eq:difference of consecutive simples}) that $c=s_{i+1}-y+1=s_i+\tfrac{d}{2}l-y+1$. Then, using (\ref{eq:nonzero hom for special case}) we have 
\[
s_i+\tfrac{d}{2}l-y+1 \leq s_{i}+x-1
\]
which we can rewrite as $\tfrac{d}{2}l\leq x+y-2$. Since $x,y\in [1,l-1]$, we obtain that $\tfrac{d}{2}l\leq 2l-4$. Rearranging this once more we obtain that
\[
8\leq (4-d)l.
\]
If $l=2$, then this inequality gives $0\leq -2d$ which is impossible. If $l=3$, then this inequality gives $4\geq 3d$ which is again impossible since $d\geq 2$. Hence $l>3$. Then $d$ must be even by Theorem \ref{thm:VasoClassifyAcyclicCluster}. But if $d\geq 4$ then the above inequality is again not satisfied. Hence $d=2$. 
\end{proof}
\label{Section:Preliminaries}

\section{Summand-maximal \texorpdfstring{$\td$}{td}-rigid pairs for \texorpdfstring{$\Lambda(n,l)$}{L(n,l)}}
\label{Section:strongly maximal td-rigid}
\subsubsection*{Aim.} In this section we fix an algebra $\Lambda=\Lambda(n,l)$ that satisfies the conditions of Theorem \ref{thm:VasoClassifyAcyclicCluster}, so that it admits a $d$-cluster tilting subcategory $\cC$. Our aim is to investigate the structure of summand-maximal $\td$-rigid pairs for $\Lambda$. Before we proceed, let us outline the basic steps of our strategy.

\subsubsection*{Describing $\td$-rigid pairs.} A basic $\td$-rigid pair $(M,P)$ can be described using the indecomposable summands of $M$ and $P$. By definition, all of these indecomposable summands lie in $\cC$. On the other hand, using (\ref{eq:description of C}), we have that we can separate the indecomposable modules in $\cC$ in two different collections: the indecomposable projectives and the modules in the diagonals $\diag{2},\ldots,\diag{p}$. Each indecomposable projectives can be described uniquely using a number from $1,\ldots,n$. Each indecomposable module in the diagonals can also be described uniquely by specifying in which diagonal it lies and its length. Thus to describe a basic $\td$-rigid pair $(M,P)$ we need a subset $\red$ of $[1,n]$ corresponding to the indecomposable projective summands of $M$, a subset $\blue$ of $[1,n]$ corresponding to the indecomposable (neccessarily projective) summands of $P$ and then a collection of indecomposable modules in $\diag{2},\ldots,\diag{p}$ corresponding to the non-projective summands of $M$.

We first introduce some notation to be able to describe our $\td$-rigid pairs using this point of view. The first part in our strategy is to answer the following question: if we have a collection of combinatorial data as described in the previous paragraph, under what additional conditions does it define a $\td$-rigid pair? In this way we characterize basic $\td$-rigid pairs in a purely combinatorial way. This is the aim of Section \ref{subsec:basic td-rigid pairs}.

\subsubsection*{Local restrictions on summand-maximal $\td$-rigid pairs.} The next part of our strategy is to observe that the combinatorial data described above has some important local properties. More specifically, assume that a basic $\td$-rigid pair $(M,P)$ is supported in some consecutive diagonals $\diag{i},\ldots,\diag{i+k}$. Then we define an appropriate interval $\Xi\subseteq [1,n]$ which lies ``close'' to the diagonals $\diag{i},\ldots,\diag{i+k}$. We may then count the amount of indecomposable summands of $(M,P)$ that lie in the diagonals $\diag{i},\ldots,\diag{i+k}$ and add to that number the amount of indecomposable projective summands of $(M,P)$ that are of the form $P(k)$ for some $k\in\Xi$. It turns out that this number of indecomposable summands of $(M,P)$ ``in the neighbourhood of'' $\diag{i},\ldots,\diag{i+k}$ cannot exceed $\abs{\Xi}$. In fact, with a few crucial exceptions, it cannot even exceed $\abs{\Xi}-1$. These observations are made precise in Section \ref{subsec:basic td-rigid pairs supported in consecutive diagonals}.

The main aim of Section \ref{subsec:admissible configurations} is to study how this local constraint relates to summand-maximal $\td$-rigid pairs. It turns out that the local gap described above cannot be patched by adding indecomposable summands outside of this local area. Together with some more obvious observations, we see that this property forces summand-maximal $\td$-rigid pairs to be supported on at most two consecutive diagonals (again, with one exception). Then we only need to check what kind of configurations are possible when one restricts to at most two consecutive diagonals. We provide a list that describes such configurations in Definition \ref{def:admissible configurations}.  

\subsubsection*{Gluing the local configurations to classify summand-maximal $\td$-rigid pairs.} With that done, the last part of the strategy in classifying summand-maximal $\td$-rigid pairs is to find how the local admissible configurations can be glued together so that the end result is not only a $\td$-rigid pair but is also summand-maximal. This is the aim of Section \ref{subsec:well-configured pairs}. This leads immediately to the notion of well-configured pairs, introduced in Definition \ref{def:well-configured}, which completely characterizes summand-maximal $\td$-rigid pairs in $\modfin{\Lambda}$ in an explicit combinatorial way. 

Another important result of this characterization is that summand-maximal $\td$-rigid pairs are also characterized by a simple counting argument: the number of indecomposable summands of $M\oplus P$ is equal to $\abs{\Lambda}=n$ if and only if $(M,P)$ is a summand-maximal $\td$-rigid pair. As this condition is significantly easier to check, we highlight this in examples even before showing its equivalence to the property of being a summand-maximal $\td$-rigid pair.

The following section is quite heavy on notation. We have tried to give many examples and use conventions that make it easier to remember them, but the interested reader is advised to make use of the index and the table at the end of the article to get helpful reminders.

\subsubsection*{A special case.} As it can be inferred from the explanation of our strategy, there are a few exceptions that need to be dealt separately. The following remark encapsulates the different cases and the essential reason for their differences.

\begin{remark}\label{rem:the case d=2 and l>3 is different}
If $d>2$ or $l\leq 3$, then Lemma \ref{Lemma:DiagZeroHom} gives that $\Hom{\Lambda}{\diag{i}}{\diag{j}}=0$ for all $1\leq i\neq j\leq p$. However, if $d=2$ and $l>3$, then this property does not hold. This makes the study of $\td$-rigid pairs for $d=2$ and $l>3$ require some more attention. This explains the need for considering this special case more closely in the sequel.   
\end{remark}

\subsection{Basic \texorpdfstring{$\td$}{td}-rigid pairs.} \label{subsec:basic td-rigid pairs}

The main aim of this section is to give a combinatorial characterization of $\td$-rigid pairs in $\modfin{\Lambda}$. To be able to do this, we introduce some notation that lets us describe pairs $(M,P)$ with $M\in\cC$ and $P$ projective.

\begin{definition}\label{def:C-pair}
A \emph{$\cC$-pair}\index[definitions]{$\cC$-pair} is a pair $(M,P)$ where $M\in\cC$ and $P\in\proj\Lambda$ are modules such that $M\oplus P$ is basic. 
\end{definition}

Let $(M,P)$ be a $\cC$-pair. Using the description of $\cC$ in (\ref{eq:description of C}), we may assume that indecomposable summands of $M$ and $P$ are indecomposable summands of $P([1,n])\oplus \bigoplus_{i=2}^{p}\diagm{i}$. Then we may write
\[
M = M_{\text{pr}}\oplus \bigoplus_{i=2}^{p}M_i
\]
where $M_{\text{pr}}\in\add{P[1,n]}$\index[symbols]{M@$M_\text{pr}$}\index[symbols]{M@$M_i$} and where for $i\in [2,p]$ we let $M_i$ be the direct sum of indecomposable summands of $M$ in the diagonal $\diag{i}$. In particular we have that indecomposable summands of $\bigoplus\limits_{i=2}^{p}M_i$ are never projective. For $i\not\in[2,p]$ we set $M_i=0$. We denote $\cM=\add{M}$, $\cM_i=\add{M_i}$\index[symbols]{M@$\cM$}\index[symbols]{M@$\cM_i$}, $m_i=\abs{M_i}$\index[symbols]{M@$m_i$} and $\cP=\add{P}$. Moreover, we have $M_{\text{pr}}\in\add{P[1,n]}$ and $P\in \add{P[1,n]}$ and so there exist unique sets $\red,\blue\subseteq [1,n]$ such that $M_{\text{pr}}= P(\red)$ and $P= P(\blue)$. We call $(\red,\blue)$\index[symbols]{Ra@$\red$}\index[symbols]{B@$\blue$} the \emph{non-diagonal component}\index[definitions]{component!non-diagonal} of $M$. In pictures and examples we use Red colour and dashed lines when denoting an indecomposable module which is a summand of $M$ and we use Blue colour and bold lines when denoting an indecomposable projective module which is a summand of $P(\blue)$. Notice finally that since $M\oplus P$ is basic, we obtain that $\red\cap\blue=\varnothing$, which we use freely in the sequel. 

Clearly, if $(M,P)$ is a basic $\td$-rigid pair, it is also a $\cC$-pair. Given a $\cC$-pair $(M,P)$, our next aim is to give necessary and sufficient conditions on the data $\red$, $\blue$, $M_2,\ldots, M_p$ for $(M,P)$ to be a $\td$-rigid pair. The following is the first step towards that direction.

\begin{lemma}\label{lem:blue then red}
Let $(M,P)$ be a $\cC$-pair with non-diagonal component $(\red,\blue)$. Then the following are equivalent.
\begin{enumerate}
    \item[(a)] $\Hom{\Lambda}{P(\blue)}{P(\red)}=0$.
    \item[(b)] For every $x\in \blue$ and $y\in \red$ such that $1\leq x < y\leq n$, there exists $z\in [1,n]$ with $x<z<\cdots<z+l-2<y$ such that $(\red\cup\blue)\cap [z,z+l-2]=\varnothing$.
\end{enumerate}
\end{lemma}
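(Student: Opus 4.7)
The plan is to reduce condition (a) to a simple distance constraint on $\blue$ and $\red$, and then prove the equivalence with (b) by exhibiting one explicit interval $[z, z+l-2]$.

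First I would compute $\Hom{\Lambda}{P(x)}{P(y)}$ for $x, y \in [1, n]$ via Lemma \ref{lem:proj inj} (describing each $P(k)$ as an interval module $\ind{\max(1,k-l+1)}{k}$) and Lemma \ref{Lemma:HomSpaceAdachi} (deciding nonvanishing of Hom between interval modules). A short case analysis should yield that $\Hom{\Lambda}{P(x)}{P(y)} \neq 0$ exactly when $x \leq y$ and $y - x \leq l - 1$. Since $\red \cap \blue = \varnothing$ excludes $x = y$, condition (a) becomes the numerical statement (a$'$): for all $x \in \blue$ and $y \in \red$ with $x < y$, one has $y - x \geq l$. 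The direction (b) $\Rightarrow$ (a$'$) is then immediate, since the constraints $x < z$ and $z + l - 2 < y$ already force $y - x \geq l$.

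For (a$'$) $\Rightarrow$ (b), I fix $x \in \blue$ and $y \in \red$ with $x < y$. I would let $r$ be the smallest element of $\red \cap (x, y]$ (well-defined, since $y$ itself lies in this set) and then let $w^\ast$ be the largest element of $\blue \cap [x, r)$ (nonempty, since it contains $x$). By maximality of $w^\ast$ there is no blue element in $(w^\ast, r)$, and by minimality of $r$ there is no red element in $(x, r)$, so the interval $[w^\ast + 1, r - 1]$ is disjoint from $\red \cup \blue$. Its length $r - w^\ast - 1$ is at least $l - 1$ by (a$'$) applied to the pair $(w^\ast, r)$, so taking $z = w^\ast + 1$ gives $z > x$, the inclusion $[z, z + l - 2] \subseteq [w^\ast + 1, r - 1]$, disjointness of $[z, z+l-2]$ from $\red \cup \blue$, and $z + l - 2 \leq r - 1 < y$, verifying (b).

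The main subtlety I expect is the right choice of pivot: one must locate a blue element (so that (a$'$) can be invoked) that is adjacent to a red-and-blue-free gap of length at least $l - 1$ while still lying to the left of $y$. Routing through the pair $(w^\ast, r)$ achieves exactly this and reduces the whole argument to a single application of (a$'$); the bookkeeping about which side is blue and which is red is what makes this slightly delicate, but both conditions are forced by the definitions of $w^\ast$ and $r$.
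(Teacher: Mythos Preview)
Your proposal is correct and follows essentially the same approach as the paper's proof: both reduce (a) to the distance condition $y-x\geq l$ via the description of indecomposable projectives, and for (a)$\Rightarrow$(b) both locate a maximal blue element adjacent to a red-and-blue-free gap of length $l-1$. The only cosmetic difference is that the paper takes directly the maximal $x\in\blue$ below $y$ and sets $z=x+1$, whereas you first pass to the minimal red $r$ above $x$ before taking the maximal blue $w^\ast$ below $r$; your extra step is harmless but unnecessary.
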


\begin{proof}
Assume first that (b) holds but (a) fails and we reach a contradiction. By (a) there exist $x\in \blue$ and $y\in \red$ such that $\Hom{\Lambda}{P(x)}{P(y)}\neq 0$. In particular, if $P(x)=\ind{a}{x}$ and $P(y)=\ind{b}{y}$ for some $a$ and $b$, then $a\leq b\leq x \leq y $ by Lemma \ref{Lemma:HomSpaceAdachi}. Since $\red\cap\blue=\varnothing$, we have $x<y$. By part (b) we conclude that $y-b\geq y-x\geq l$, which contradicts $\Lambda=\Lambda(n,l)$. 

Assume now that (a) holds and we show that (b) also holds. Let $y\in \red$. If there exists $x<y$, without loss of generality, we may assume that $x$ is maximal such that $x<y$ and $x\in \blue$. Then $\Hom{\Lambda}{P(x)}{P(x+i)}\neq 0$ for $1\leq i\leq l-1$ by Lemma \ref{Lemma:HomSpaceAdachi}. Since $\Hom{\Lambda}{P(\blue)}{P(\red)}=0$, we have that $x+i\not\in\red$ for $1\leq i\leq l-1$. Since $x<y$ and $y\in\red$, we conclude that $x+l-1<y$. By maximality of $x$, we have that $(x+i)\not\in\blue$ for $1\leq i\leq l-1$. The claim follows by setting $z=x+1$.
\end{proof}

Let $(M,P)$ be a $\cC$-pair and let $i\in [2,p]$. Assume that $M_i\neq 0$. Recall that $M_i$ is the direct sum of all indecomposable summands of $M$ in the diagonal $\diag{i}$. Since the diagonal $\diag{i}$ has exactly one indecomposable module of each length $1,\ldots,l-1$ (see also (\ref{eq:definition of diagonals})) we obtain that there exists a unique set $X_i\subseteq [1,l-1]$\index[symbols]{X@$X_i$} such that
\begin{equation}\label{eq:the definition of M_i}
M_i = \begin{cases}
    \bigoplus\limits_{x\in X_i} \ind{s_i}{s_i+x-1}, &\mbox{if $i$ is odd,} \\
    \bigoplus\limits_{x\in X_i
    } \ind{s_i-x+1}{s_i}, &\mbox{if $i$ is even.}
\end{cases}
\end{equation}
In particular, an indecomposable summand of $\diagm{i}$ is a direct summand of $M_i$ if and only if it is of length $x\in X_i$. We set 
\begin{equation}\label{eq:the definition of l_i and n_i}
l_i = \begin{cases}
    \min{X_i}, &\mbox{if $M_i\neq 0$,} \\
    l, &\mbox{if $M_i=0$}
\end{cases}\quad \text{ and }\quad n_i =  \begin{cases}
    \max{X_i}, &\mbox{if $M_i\neq 0$,} \\
    0, &\mbox{if $M_i=0$.}
\end{cases}
\end{equation}
The definition of $l_i$\index[symbols]{L@$l_i$}\index[symbols]{N@$n_i$} and $n_i$ in the case $M_i=0$ may seem a bit arbitrary but it is motivated by the fact that it allows us to state many results without assuming that $M_i\neq 0$. The following observation follows immediately by the definitions.

\begin{lemma}\label{lem:immediate use of mi li ni}
Let $(M,P)$ be a $\cC$-pair and let $i\in[2,p]$. Then $1\leq l_i\leq l$ and $0\leq n_i\leq l-1$. Moreover, $m_i\neq 0$ if and only if $m_i\leq n_i-l_i+1$ and $1\leq l_i \leq n_i\leq l-1$ both hold. 
\end{lemma}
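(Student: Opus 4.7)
The plan is to unpack the definitions directly, splitting into the two cases $M_i = 0$ and $M_i \neq 0$ that were used to define $l_i$ and $n_i$ in (\ref{eq:the definition of l_i and n_i}).

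\textbf{Bounds on $l_i$ and $n_i$.} First I would verify the inequalities $1\leq l_i\leq l$ and $0\leq n_i\leq l-1$. If $M_i=0$, then by definition $l_i=l$ and $n_i=0$, and both inequalities hold (using that $l\geq 2$). If $M_i\neq 0$, then $X_i\subseteq[1,l-1]$ is a nonempty subset, so $l_i=\min X_i$ and $n_i=\max X_i$ both lie in $[1,l-1]$, which is contained in both $[1,l]$ and $[0,l-1]$.

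\textbf{Forward direction of the equivalence.} Assume $m_i\neq 0$, equivalently $M_i\neq 0$, so $X_i$ is nonempty. Then $l_i=\min X_i$ and $n_i=\max X_i$ belong to $[1,l-1]$ and satisfy $l_i\leq n_i$, giving $1\leq l_i\leq n_i\leq l-1$. Moreover, since $X_i\subseteq[l_i,n_i]$ by definition of min and max, the cardinality bound $m_i=|X_i|\leq n_i-l_i+1$ follows.

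\textbf{Reverse direction.} Assume $m_i\leq n_i-l_i+1$ together with $1\leq l_i\leq n_i\leq l-1$. I need to rule out $m_i=0$. If $m_i=0$, then $M_i=0$, and the definitions force $l_i=l$ and $n_i=0$; but then $l_i\leq n_i$ would read $l\leq 0$, contradicting $l\geq 2$. Hence $m_i\neq 0$.

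\textbf{Anticipated difficulty.} There is essentially no obstacle here: the statement is a bookkeeping lemma that translates the defining expressions of $l_i$, $n_i$, and $m_i$ into a usable numerical criterion. The only mild subtlety is remembering that the conventional values $l_i=l$ and $n_i=0$ in the case $M_i=0$ were chosen precisely to be incompatible with the inequality $l_i\leq n_i$, which is exactly what makes the reverse direction of the equivalence work cleanly.
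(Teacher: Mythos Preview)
Your proof is correct and is exactly the unpacking of definitions that the paper has in mind; the paper itself simply writes ``Follows immediately by the definitions.'' There is nothing to add.
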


The next lemma is our second step towards the main result of this paragraph.

\begin{lemma}\label{lem:connection between lis and nis}
Let $(M,P)$ be a $\cC$-pair and $i\in [2,p]$.
\begin{enumerate}
    \item[(a1)] If $i$ is odd, then $\Hom{\Lambda}{M_{i-1}}{\td(M_{i})}=0$ if and only if $n_{i-1}+n_i\leq l-1$.
    \item[(a2)] If $i$ is odd, then $\Hom{\Lambda}{M_i}{\td(M_{i+1})}=0$ if and only if $l+1\leq l_{i}+l_{i+1}$.
    \item[(b1)] If $i$ is even, then $\Hom{\Lambda}{M_i}{\td(M_{i+1})}=0$ if and only if $n_{i}+n_{i+1}\leq l-1$.
    \item[(b2)] If $i$ is even, then $\Hom{\Lambda}{M_{i-1}}{\td(M_i)}=0$ if and only if $l+1\leq l_{i-1}+l_{i}$.
    \item[(c1)] If $i$ is odd and $d=2$, then $\Hom{\Lambda}{M_i}{\td(M_{i+2})}=0$ if and only if $n_i\leq l_{i+2}+1$.
    \item[(c2)] If $i$ is odd and $d=2$, then $\Hom{\Lambda}{M_{i-2}}{\td(M_i)}=0$ if and only if $n_{i-2}\leq l_{i}+1$.
\end{enumerate}
\end{lemma}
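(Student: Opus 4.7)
The plan is to reduce each part to a direct computation: describe the indecomposable summands of the relevant $\td$-translate explicitly, then pair them against the summands of the adjacent $M_{i\pm 1}$ (or $M_{i\pm 2}$) via the $\Hom$-criteria of Lemma \ref{Lemma:HomSpaceAdachi} and Lemma \ref{Lemma:MorhphismInDiagonal}. The first step is to make $\td(M_i)$ explicit: starting from the description (\ref{eq:the definition of M_i}) I apply Lemma \ref{lem:computation of taud} to each indecomposable summand of $M_i$ and simplify the resulting expressions via the recurrences (\ref{eq:difference of consecutive simples}). This shows that $\td(M_i)$ is a direct sum of indecomposables lying in $\diag{i-1}$ with length multiset $\{l-x : x\in X_i\}$; in particular the smallest and largest lengths are $l-n_i$ and $l-l_i$, refining Remark \ref{rem:first observations}(b) at the level of summands. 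An identical calculation handles $\td(M_{i+1})$.

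For parts (a1) and (b1), both modules in the $\Hom$ sit inside an even-indexed diagonal, where indecomposables share a common top, so Lemma \ref{Lemma:MorhphismInDiagonal}(a) says that a nonzero map exists iff some codomain summand has length at most some domain summand. Concretely, $\Hom{\Lambda}{M_{i-1}}{\td(M_i)}\neq 0$ iff some pair $(x,y)\in X_i\times X_{i-1}$ satisfies $l-x\leq y$; maximising over both variables, this is $n_{i-1}+n_i\geq l$, whose negation is the desired inequality. Parts (a2) and (b2) are dual: the relevant modules now share a common socle in an odd-indexed diagonal, so Lemma \ref{Lemma:MorhphismInDiagonal}(b) reverses the comparison and the inequality becomes $l_{i-1}+l_i\geq l+1$ or $l_i+l_{i+1}\geq l+1$ after minimising.

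For (c1) and (c2), specialising to $d=2$ collapses (\ref{eq:difference of consecutive simples}) to $s_{i+1}=s_i+l$ and $s_{i+2}=s_i+l+2$. A direct calculation with Lemma \ref{lem:computation of taud} then shows that the indecomposable summands of $\td(M_{i+2})$ are precisely the modules $\ind{s_i+y+1}{s_i+l}$ for $y\in X_{i+2}$, which lie in $\diag{i+1}$ rather than in $\diag{i}$. Pairing these with the summands $\ind{s_i}{s_i+x-1}$ of $M_i$ via Lemma \ref{Lemma:HomSpaceAdachi} yields a nonzero $\Hom$ iff $y+2\leq x$; extremising produces $n_i\leq l_{i+2}+1$, and (c2) follows by the symmetric computation, giving summands of $\td(M_i)$ of the form $\ind{s_{i-2}+x+1}{s_{i-2}+l}$ and the criterion $n_{i-2}\leq l_i+1$.

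Throughout, the degenerate cases where one of $M_i$, $M_{i\pm 1}$ or $M_{i\pm 2}$ is zero are absorbed automatically by the conventions $l_i=l$ and $n_i=0$ of (\ref{eq:the definition of l_i and n_i}): each of the six inequalities becomes trivial, consistent with the trivially vanishing $\Hom$. The main obstacle is bookkeeping rather than content: tracking the parity of $i$ correctly so that the appropriate half of Lemma \ref{Lemma:MorhphismInDiagonal} is applied, and carrying the $s_i$-shifts cleanly through the $\td$-computation so that one lands in the expected diagonal with the expected length multiset.
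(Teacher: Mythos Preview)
Your proposal is correct and follows essentially the same approach as the paper: compute the summands of $\td(M_j)$ via Lemma~\ref{lem:computation of taud}, then use Lemma~\ref{Lemma:MorhphismInDiagonal} for parts (a1)--(b2) (both modules lie in the same diagonal) and a direct application of Lemma~\ref{Lemma:HomSpaceAdachi} for parts (c1)--(c2) (the modules lie in adjacent diagonals when $d=2$). The only cosmetic differences are that the paper proves (a1) and (c1) in full and then obtains (a2), (b1), (b2), (c2) by reindexing, and it disposes of the degenerate cases $M_j=0$ up front rather than at the end.
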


\begin{proof}
\begin{enumerate}
    \item[(a1)] If $M_{i-1}=0$ respectively $M_i=0$, then $n_{i-1}=0$ respectively $n_i=0$. Since both $n_{i}\leq l-1$ and $n_{i-1}\leq l-1$ hold by Lemma \ref{lem:immediate use of mi li ni}, the claim follows. 
    
    Now assume that $M_{i-1}\neq 0$ and $M_i\neq 0$. We have that 
    \[
    \Hom{\Lambda}{M_{i-1}}{\td(M_i)}=0
    \]
    if and only if 
    \[
    \Hom{\Lambda}{L}{\td(N)}=0 \text{ for all indecomposable } L\in \cM_{i-1} \text{ and } N\in \cM_{i}.
    \]
    By Lemma \ref{Lemma:MorhphismInDiagonal}(a), and since $i-1$ is even, this is equivalent to 
    \[
    \length{\td(N)}>\length{L} \text{ for all indecomposable } L\in \cM_{i-1} \text{ and } N\in \cM_{i}.
    \]
    Using $\length{\td(N)}=l-\length{N}$ from Lemma \ref{lem:computation of taud}(a), we obtain that this is equivalent to 
    \[
    \length{L}+\length{N}\leq l-1 \text{ for all indecomposable } L\in \cM_{i-1} \text{ and } N\in \cM_{i}.
    \]
    Let $L'=\ind{s_{i-1}-n_{i-1}+1}{s_{i-1}}\in\cM_{i-1}$ and $N'=\ind{s_i}{s_i+n_i-1}\in\cM_i$. Since $\length{L}\leq \length{L'}=n_{i-1}$ for all indecomposable $L\in \cM_{i-1}$ and $\length{N}\leq \length{N'}=n_i$ for all indecomposable $N\in\cM_i$, the above is equivalent to $n_{i-1}+n_{i}\leq l-1$, as required.

    \item[(a2)] Similar to part (a1).

    \item[(b1)] Follows immediately by applying part (a1) to $i+1$.

    \item[(b2)] Follows immediately by applying part (a2) to $i-1$.

    \item[(c1)] If $M_i=0$ respectively $M_{i+2}=0$ then $n_i=0$ respectively $l_{i+2}=l$. Since both $n_i\leq l-1$ and $1\leq l_{i+2}$ hold by Lemma \ref{lem:immediate use of mi li ni}, the claim follows. 
    
    Now assume that $M_{i}\neq 0$ and $M_{i+2}\neq 0$. Then an indecomposable module in $\cM_{i}$ is of the form $L_x=\ind{s_i}{s_i+x-1}$ and an indecomposable module in $\cM_{i+2}$ is of the form $N_y=\ind{s_{i+2}}{s_{i+2}+y-1}$. We compute using Lemma \ref{lem:computation of taud}(a) that
    \[
    \td(N_y)=\ind{s_{i+2}+y-1-l}{s_{i+2}-2}\overset{(\ref{eq:difference of consecutive simples})}{=} \ind{s_i+y+1}{s_i+l}.
    \]
    We have that $\Hom{\Lambda}{M_{i}}{\td(M_{i+2})}=0$ if and only if 
    \[
    \Hom{\Lambda}{L}{\td(N)}=0 \text{ for all indecomposable } L\in \cM_{i} \text{ and } N\in \cM_{i+2}.
    \]
    By the above computation, this is equivalent to
    \[
    \Hom{\Lambda}{\ind{s_i}{s_i+x-1}}{\ind{s_i+y+1}{s_i+l}}=0 \text{ for all $x$ and $y$ such that } L_x\in \cM_{i} \text{ and } N_y\in \cM_{i+2}.
    \]
    By Lemma \ref{Lemma:HomSpaceAdachi} we obtain that this is equivalent to $s_i+x-1\not\in [s_{i}+y+1,s_i+l]$ or $s_{i}+y+1\not\in [s_i,s_i+x-1]$. Since $x\leq l-1$ and $1\leq y$, both of the above cases are equivalent to 
    \[
    x\leq y+1 \text{ for all $x$ and $y$ such that } L_x\in \cM_{i} \text{ and } N_y\in \cM_{i+2}.
    \]
    Since $L_{n_i}\in\cM_i$ and $N_{l_{i+2}}\in\cM_{i+2}$ and also $x\leq n_i$ and $l_{i+2}\leq y$ for all $x$ and $y$ such that $L_x\in\cM_i$ and $N_y\in\cM_{i+2}$, the above is equivalent to $n_{i}\leq l_{i+2}+1$, as required.

    \item[(c2)] Follows immediately by applying part (c1) to $i-2$. \qedhere
\end{enumerate}
\end{proof}

Before giving the third and last step towards the main result, we need to introduce some more notation. Let $(M,P)$ be a $\cC$-pair with non-diagonal component $(\red,\blue)$. For $i\in [2,p]$ we define
\begin{equation}\label{eq:the definition of notblue and notred}
\notblue{i} \coloneqq \begin{cases}
    [s_{i},s_{i}+n_{i}-1], &\mbox{if $i$ is odd,} \\
    [s_{i}-n_{i}+1,s_{i}], &\mbox{if $i$ is even,}
\end{cases}
\text{ and }
\notred{i} \coloneqq \begin{cases}
    [s_{i-1}-(l-l_i)+1,s_{i-1}], &\mbox{if $i$ is odd,} \\
    [s_{i-1}, s_{i-1}+(l-l_i)-1], &\mbox{if $i$ is even.}
\end{cases}
\end{equation}

Note that by definition we have that $\abs{\notred{i}}=l-l_i$\index[symbols]{Rb@$\notred{i}$}\index[symbols]{B@$\notblue{i}$} and $\abs{\notblue{i}}=n_i$; we use these two equalities freely throughout. The aim of the intervals $\notred{i}$ respectively $\notblue{i}$ is to encode indecomposable projective modules which can not be in $\red$ respectively $\blue$, as the following lemma shows.

\begin{lemma}\label{lem:the intervals Bi and Ri}
Let $(M,P)$ be a $\cC$-pair with non-diagonal component $(\red,\blue)$ and $i\in [2,p]$.
\begin{enumerate}
    \item[(a)] $\Hom{\Lambda}{P(\blue)}{M_i}=0$ if and only if $\blue\cap\notblue{i}=\varnothing$.
    \item[(b)] $\Hom{\Lambda}{P(\red)}{\td(M_i)}=0$ if and only if $\red\cap\notred{i}=\varnothing$. 
\end{enumerate}
\end{lemma}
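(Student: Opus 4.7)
The plan is to reduce both statements to direct applications of Lemma \ref{Lemma:HomSpaceAdachi}, by decomposing $M_i$ (respectively $\td(M_i)$) into its indecomposable summands and identifying which indices $k\in[1,n]$ admit a nonzero morphism $P(k)\to N$ for some such summand $N$. In both cases this set will turn out to be exactly $\notblue{i}$ (respectively $\notred{i}$), and then the lemma follows at once from $P(\blue)=\bigoplus_{k\in\blue}P(k)$ and $P(\red)=\bigoplus_{k\in\red}P(k)$.

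For (a), I will first dispatch the trivial case $M_i=0$: then $n_i=0$ forces $\notblue{i}=\varnothing$ and both sides hold vacuously. Assuming $M_i\neq 0$, I invoke the explicit description (\ref{eq:the definition of M_i}) and apply Lemma \ref{Lemma:HomSpaceAdachi} (combined with Lemma \ref{lem:proj inj}(a) to know the shape of $P(k)$) to each indecomposable summand. This yields, for every $x\in X_i$, that $\Hom{\Lambda}{P(k)}{\ind{s_i}{s_i+x-1}}\neq 0$ iff $k\in[s_i,s_i+x-1]$ in the $i$ odd case, and iff $k\in[s_i-x+1,s_i]$ in the $i$ even case. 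Taking the union over $x\in X_i$ and recalling $n_i=\max X_i$, this set is exactly $\notblue{i}$ by (\ref{eq:the definition of notblue and notred}), which gives (a).

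For (b), the strategy is identical once I compute $\td(M_i)$ explicitly. Using Lemma \ref{lem:computation of taud}(a) together with the identities of (\ref{eq:difference of consecutive simples}) to translate between $s_i$ and $s_{i-1}$, a direct calculation shows
\[
\td(\ind{s_i}{s_i+x-1})=\ind{s_{i-1}-(l-x)+1}{s_{i-1}} \quad (i\text{ odd}),
\]
\[
\td(\ind{s_i-x+1}{s_i})=\ind{s_{i-1}}{s_{i-1}+(l-x)-1} \quad (i\text{ even}),
\]
which is consistent with Remark \ref{rem:first observations}(b) and the length identity $\length{\td(N)}=l-\length{N}$. Running the argument of (a) for $\td(M_i)$ in place of $M_i$, and noting that the minimal length $l_i$ among summands of $M_i$ produces the maximal length $l-l_i$ among summands of $\td(M_i)$, the union over $x\in X_i$ of the supports of the summands of $\td(M_i)$ equals precisely $\notred{i}$ by (\ref{eq:the definition of notblue and notred}). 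The case $M_i=0$ is again immediate since $l_i=l$ forces $\notred{i}=\varnothing$.

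There is no deep obstacle: the statement is essentially a bookkeeping unpacking of Lemma \ref{Lemma:HomSpaceAdachi} through the explicit descriptions in (\ref{eq:the definition of M_i}) and Lemma \ref{lem:computation of taud}. The only mild care required is (i) matching the parity-dependent conventions of (\ref{eq:the definition of notblue and notred}) and (\ref{eq:the definition of M_i}) so that the endpoints align in each case, and (ii) checking that the secondary requirement in Lemma \ref{Lemma:HomSpaceAdachi}(c) involving the source interval of $P(k)$ is automatic, which follows from $x\leq l-1$ together with the description of $P(k)$ in Lemma \ref{lem:proj inj}(a).
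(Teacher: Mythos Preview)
Your proposal is correct and follows essentially the same approach as the paper. The only cosmetic difference is that the paper shortcuts your union over $x\in X_i$ by observing directly that the single summand of $M_i$ of maximal length (respectively, for (b), the summand of minimal length, whose $\td$-image then has maximal length) already has support equal to $\notblue{i}$ (respectively $\notred{i}$), so it alone witnesses all nonzero homomorphisms; your explicit union computation and verification of the second condition in Lemma~\ref{Lemma:HomSpaceAdachi}(c) make the same point with a bit more detail.
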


\begin{proof}
\begin{enumerate}
    \item[(a)] If $M_i=0$, then $\notblue{i}=\varnothing$ and the claim follows trivially. Assume that $M_i\neq 0$. Let $M(a,b)\in\cM_i$ be the indecomposable summand of $M_i$ of largest length. Then $[a,b]=\notblue{i}$ by the definitions involved, that is (\ref{eq:the definition of M_i}) and (\ref{eq:the definition of l_i and n_i}). Then we have that $\Hom{\Lambda}{P(\blue)}{M_i}=0$ if and only if $\blue\cap [a,b]=\varnothing$ if and only if $\blue\cap \notblue{i}=\varnothing$, as required.
    \item[(b)] Similar to part (a) by considering the module $\td(M(c,d))$ where $M(c,d)$ is the indecomposable summand of $M_i$ of smallest length. \qedhere
\end{enumerate}    
\end{proof}

With this in mind we can give our main result for this paragraph.

\begin{proposition}\label{prop:description of td-rigid pairs}
    Let $(M,P)$ be a $\cC$-pair with non-diagonal component $(\red,\blue)$. Then $(M,P)$ is a basic $\td$-rigid pair if and only if the following conditions all hold.
    \begin{enumerate}
        \item[(a)] For every $x\in \blue$ and $y\in \red$ such that $1\leq x < y\leq n$, there exists $z\in [1,n]$ with $x<z<\cdots<z+l-2<y$ such that $(\red\cup\blue)\cap [z,z+l-2]=\varnothing$.
        \item[(b)] For all $i\in [2,p]$ we have that
        \begin{enumerate}
            \item[(b1)] if $i$ is odd, then $n_{i-1}+n_i\leq l-1$ and $l+1\leq l_i+l_{i+1}$ hold. If moreover $d=2$, then also $n_{i}\leq l_{i+2}+1$ and $n_{i-2}\leq l_{i}+1$ hold,
            \item[(b2)] if $i$ is even, then $n_i+n_{i+1}\leq l-1$ and $l+1\leq l_{i-1}+l_i$ hold, and
            \item[(b3)] $\red\cap\notred{i}=\varnothing$ and $\blue\cap\notblue{i}=\varnothing$.
        \end{enumerate}
    \end{enumerate}
\end{proposition}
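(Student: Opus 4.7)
The plan is to unfold the definition of a $\td$-rigid pair and match its two vanishing conditions with (a) and (b) using the preliminary lemmas of this subsection. Recall that $(M,P)$ is $\td$-rigid precisely when $\Hom{\Lambda}{P}{M}=0$ and $\Hom{\Lambda}{M}{\td(M)}=0$. Writing $M=P(\red)\oplus\bigoplus_{i=2}^{p}M_i$ with $M_i\in\diag{i}$ and $P=P(\blue)$, I would split each vanishing condition summand-wise and translate each piece by invoking the appropriate earlier result.

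The first condition breaks into $\Hom{\Lambda}{P(\blue)}{P(\red)}=0$, which Lemma \ref{lem:blue then red} shows is equivalent to (a), and $\Hom{\Lambda}{P(\blue)}{M_i}=0$ for each $i\in[2,p]$, which Lemma \ref{lem:the intervals Bi and Ri}(a) translates to $\blue\cap\notblue{i}=\varnothing$, half of (b3). For the second condition, the projective summand contributes nothing on the right since $\td(P(\red))=0$, so the relevant target is $\bigoplus_{j=2}^{p}\td(M_j)$ with each $\td(M_j)\in\diag{j-1}$ by Remark \ref{rem:first observations}(b). This further breaks into $\Hom{\Lambda}{P(\red)}{\td(M_j)}=0$, which Lemma \ref{lem:the intervals Bi and Ri}(b) turns into $\red\cap\notred{j}=\varnothing$, completing (b3), together with $\Hom{\Lambda}{M_i}{\td(M_j)}=0$ for all $i,j\in[2,p]$.

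For this last family, the key observation is that by Lemma \ref{Lemma:DiagZeroHom} the space $\Hom{\Lambda}{\diag{i}}{\diag{j-1}}$ vanishes unless either $j=i+1$, or $d=2$, $l>3$, $i$ is odd and $j=i+2$. In the first case the relevant Hom is between modules of a single diagonal, and Lemma \ref{lem:connection between lis and nis}(a1), (a2), (b1), (b2), applied according to the parity of $i$, delivers exactly the four inequalities on $n_{i-1}+n_i$, $n_i+n_{i+1}$, $l_{i-1}+l_i$ and $l_i+l_{i+1}$ present in (b1) and (b2). The exceptional case is governed by Lemma \ref{lem:connection between lis and nis}(c1), (c2), providing the two extra inequalities in (b1) required when $d=2$; when $l\leq 3$ these are vacuous, consistent with the automatic vanishing from Lemma \ref{Lemma:DiagZeroHom}.

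The principal obstacle is bookkeeping rather than depth: I must verify that every pair $(i,j)\in[2,p]^{2}$ with potentially nonzero $\Hom{\Lambda}{M_i}{\td(M_j)}$ is matched to the correct parity-dependent clause of Lemma \ref{lem:connection between lis and nis}, and that the boundary conventions $l_i=l$, $n_i=0$ when $M_i=0$ keep the statement uniform at the extreme indices $i=2$ and $i=p$. Since every invoked lemma is already an equivalence, both directions of the proposition will follow simultaneously.
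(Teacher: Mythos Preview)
Your proposal is correct and follows essentially the same route as the paper: both unfold the two vanishing conditions defining a $\td$-rigid pair along the summand decomposition $M=P(\red)\oplus\bigoplus_{i}M_i$, $P=P(\blue)$, and then invoke Lemma~\ref{lem:blue then red}, Lemma~\ref{lem:the intervals Bi and Ri}, Lemma~\ref{Lemma:DiagZeroHom} and Lemma~\ref{lem:connection between lis and nis} to translate each piece into the stated combinatorial conditions. Your write-up is in fact slightly more explicit than the paper's, spelling out why $\td(P(\red))=0$ and why the extra $d=2$ inequalities are vacuous when $l\leq 3$.
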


\begin{proof}
The $\cC$-pair $(M,P)$ is $\td$-rigid if and only if 
\[
\begin{cases}
    \Hom{\Lambda}{P(\blue)}{P(\red)} =0, \\    
    \Hom{\Lambda}{M}{\td(M)}=0, \\
    \Hom{\Lambda}{P(\blue)}{M}=0, \\
    \Hom{\Lambda}{P(\red)}{\td(M)}=0
\end{cases}
\]
all hold. The first of these equalities is equivalent to condition (a) by Lemma \ref{lem:blue then red}. The second equality is equivalent to the conditions (b1) and (b2) by Lemma \ref{lem:connection between lis and nis}, keeping in mind Lemma \ref{Lemma:DiagZeroHom}. The third and fourth equalities are equivalent to condition (b3) by Lemma \ref{lem:the intervals Bi and Ri}.
\end{proof}

We finish this paragraph with some observations about basic $\td$-rigid pairs and some examples. First notice that if we have some $\td$-rigid pair $(M,P)$, and we know that some $k\in [1,n]$ is both in $\notred{i}$ and $\notblue{j}$ for some $i,j$, then we have that $k$ is not in $\red\cup\blue$ by Proposition \ref{prop:description of td-rigid pairs}(b3). In particular $P(k)$ is not a direct summand of $(M,P)$. This motivates the first observtation.

\begin{lemma}\label{lem:connection between notblue and notred}
Let $(M,P)$ be a basic $\td$-rigid pair with non-diagonal component $(\red,\blue)$. Let $i\in [2,p]$ be odd. If $M_i\neq 0$, then $\notred{i+1}\subsetneq \notblue{i}$ and $\notblue{i-1}\subsetneq \notred{i}$.  
\end{lemma}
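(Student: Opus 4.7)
The plan is to unfold the definitions in (\ref{eq:the definition of notblue and notred}) so that each claimed strict inclusion reduces to a single numerical inequality, and then to derive those inequalities from the basic $\td$-rigidity conditions recorded in Proposition \ref{prop:description of td-rigid pairs}(b1) together with the ``$l_i\leq n_i$'' consequence of $M_i\neq 0$ from Lemma \ref{lem:immediate use of mi li ni}.

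For the first inclusion, since $i$ is odd and $i+1$ is even, the definitions give
\[
\notblue{i}=[s_i,\,s_i+n_i-1] \quad\text{and}\quad \notred{i+1}=[s_i,\,s_i+(l-l_{i+1})-1].
\]
Both intervals share the left endpoint $s_i$, so $\notred{i+1}\subsetneq \notblue{i}$ is equivalent to $l-l_{i+1}<n_i$, i.e.\ $l+1\leq n_i+l_{i+1}$. Now Proposition \ref{prop:description of td-rigid pairs}(b1) applied at $i$ odd yields $l+1\leq l_i+l_{i+1}$, and since $M_i\neq 0$ we have $l_i\leq n_i$ by Lemma \ref{lem:immediate use of mi li ni}. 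Combining the two gives $l+1\leq n_i+l_{i+1}$, as required. Note that the edge case $M_{i+1}=0$ is covered automatically, because then $l_{i+1}=l$ and $\notred{i+1}=\varnothing$, while $\notblue{i}\neq \varnothing$.

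For the second inclusion, since $i-1$ is even,
\[
\notblue{i-1}=[s_{i-1}-n_{i-1}+1,\,s_{i-1}] \quad\text{and}\quad \notred{i}=[s_{i-1}-(l-l_i)+1,\,s_{i-1}],
\]
which share the right endpoint $s_{i-1}$. Hence $\notblue{i-1}\subsetneq \notred{i}$ is equivalent to $n_{i-1}<l-l_i$, i.e.\ $n_{i-1}+l_i\leq l-1$. Proposition \ref{prop:description of td-rigid pairs}(b1) for $i$ odd gives $n_{i-1}+n_i\leq l-1$, and combining again with $l_i\leq n_i$ yields $n_{i-1}+l_i\leq n_{i-1}+n_i\leq l-1$. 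The edge case $M_{i-1}=0$ is again automatic, since then $n_{i-1}=0$ and $\notblue{i-1}=\varnothing$, while $l_i\leq n_i\leq l-1$ forces $\notred{i}\neq\varnothing$.

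There is essentially no obstacle: the content of the lemma is almost entirely bookkeeping with the intervals of (\ref{eq:the definition of notblue and notred}), with the two relevant inequalities from Proposition \ref{prop:description of td-rigid pairs}(b1) doing the substantive work. The only subtlety worth keeping in mind is that the inequalities need to be strict, which is ensured because both inequalities from (b1) are combined with $l_i\leq n_i$ (coming from $M_i\neq 0$), yielding $\leq l-1$ rather than $\leq l$ on the respective sides.
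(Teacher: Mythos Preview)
Your proof is correct and follows essentially the same approach as the paper: unfolding the definitions in (\ref{eq:the definition of notblue and notred}) to reduce each strict inclusion to a single numerical inequality, then deriving that inequality from Proposition~\ref{prop:description of td-rigid pairs}(b1) combined with $l_i\leq n_i$ from Lemma~\ref{lem:immediate use of mi li ni}. The paper only writes out the inclusion $\notred{i+1}\subsetneq\notblue{i}$ and declares the other similar, while you spell out both and handle the empty edge cases explicitly; the underlying argument is identical.
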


\begin{proof}
We only show the strict inclusion $\notred{i+1}\subsetneq \notblue{i}$ as the proof for $\notblue{i-1}\subsetneq\notred{i}$ is similar. By (\ref{eq:the definition of notblue and notred}) we have that the interval $\notblue{i}$ is equal to $[s_i,s_i+n_{i}-1]$. Since $M_i\neq 0$, we have that $n_i>0$ and son in particular $\notblue{i}=[s_i,s_i+n_{i}-1]\neq \varnothing$. If $M_{i+1}=0$, then $\notred{i+1}=\varnothing$ and so $\notred{i+1}\subsetneq \notblue{i}$ holds. Assume that $M_{i+1}\neq 0$. Then (\ref{eq:the definition of notblue and notred}) gives $\notred{i+1}=[s_i,s_i+(l-l_{i+1})-1]$ and using Proposition \ref{prop:description of td-rigid pairs}(b1) and $l_i\leq n_i$ we have
\[
s_i+(l-l_{i+1}) - 1 \leq s_i+(l_i-1) - 1  < s_i+n_i-1,
\]
which shows that $\notred{i+1}\subsetneq \notblue{i}$.
\end{proof}

The second observation is related to the maximal number $k$ such that $M_{i},\ldots,M_{i+k}$ can all be nonzero. As we have mentioned in the introduction to this section, this is a crucial factor when investigating summand-maximal $\td$-rigid pairs. At this point we only deal with two special cases.

\begin{remark}\label{Remark: no three consecutive for l=3}
Let $(M,P)$ be a basic $\td$-rigid pair. Let $i\in [2,p]$. Assume that $M_{i}\neq 0$ and that $M_{i+1}\neq 0$. Then in the following cases we can show that $M_{i+2}= 0$.
\begin{enumerate}
\item[(a)] Assume first that $i$ is odd and $d=2$. Assume to a contradiction that $M_{i+2}\neq 0$. Then the number $m_{i+2}=\abs{M_{i+2}}$ is positive. Then we have
\begin{align*}
    m_{i+2} &\leq n_{i+2} - l_{i+2} + 1 &\text{(Lemma \ref{lem:immediate use of mi li ni})} \\
    &\leq (l-1-n_{i+1}) + (1-n_i) +1 &\text{(Proposition \ref{prop:description of td-rigid pairs}(b1))} \\
    &= l+1 -n_{i+1} -n_i &\text{(rearranging)} \\
    &\leq l_i+l_{i+1} -n_{i+1}-n_i &\text{(Proposition \ref{prop:description of td-rigid pairs}(b1))} \\
    &=(l_i-n_i) + (l_{i+1}-n_{i+1}) &\text{(rearranging)} \\ 
    &\leq 0 + 0 = 0, &\text{(Lemma \ref{lem:immediate use of mi li ni})},
\end{align*}
contradicting $m_{i+2}>0$. We conclude that $M_{i+2}=0$.

\item[(b)] Now assume that $l=3$. Let $i$ be even. Then by Proposition \ref{prop:description of td-rigid pairs}(b2) we have that $n_i+n_{i+1}\leq l-1=2$ and so we deduce that $n_i=l_i=1=n_{i+1}=l_{i+1}$. On the other hand, since $i+1$ is odd, by Proposition \ref{prop:description of td-rigid pairs}(b1) we obtain that 
\[
l+1\leq l_{i+1}+l_{i+2} = 1 +l_{i+2} \leq 1 +l
\]
from which we obtain that $l_{i+2}=l$ and so $M_{i+2}=0$. If $i$ is odd then a similar argument works.
\end{enumerate}
\end{remark}

Now we illustrate the notation we have introduced in the following example.

\begin{example}\label{example tau 4 rigid Lambda 23 4}
    Let us construct a basic $\tau_4$-rigid pair $({\color{rigid}M},{\color{support}P})$ for $\Lambda(23,4)$. The algebra $\Lambda(23,4)$ together with the $4$-cluster tilting subcategory $\cC$ is illustrated via the following picture:
    \[
    \includegraphics{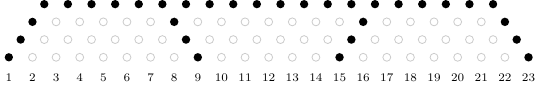}
    \]  
    We see that we have $4$ diagonals and so $p=4$. Moreover, we see that the simple modules in $\cC$ correspond to $s_1=1$, $s_2=9$, $s_3=15$, $s_4=23$. 
    
    For each of the diagonals $\diag{2}$, $\diag{3}$ and $\diag{4}$ we need to choose a module $M_2$, $M_3$ and $M_4$. We start by choosing, say, $\ind{s_3}{s_3+1}=\ind{15}{16}$ to be a summand of ${\color{rigid}M_3}$. This module has length $2$, and so $l_3\leq 2\leq n_3$ where $l_3$ respectively $n_3$ are the smallest and largest lengths of modules in $M_3$. By Proposition \ref{prop:description of td-rigid pairs}(b1) we obtain that $n_2\leq 1$ and $l_4\geq 3$. We encircle $\ind{15}{16}$ with a red dashed line and put red crosses to the modules which we cannot choose because of the restrictions $n_2\leq 1$ and $l_4\geq 3$.
    \[
    \includegraphics{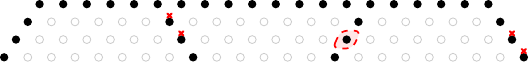}
    \]
    We can also observe that certain indecomposable projectives are excluded from ${\color{support}P}$ respectively ${\color{rigid}M}$, namely those whose top appears in the composition series of $\ind{15}{16}$ respectively $\tau_4(\ind{15}{16})=\ind{8}{9}$. These are precisely the modules that are encoded by $\notblue{3}$ and $\notred{3}$.
    \[
    \includegraphics{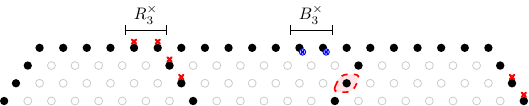}
    \]
    No further restrictions exist at this point and we may set ${\color{rigid}M_2}=\ind{8}{8}$ and ${\color{rigid}M_4}=\ind{21}{23}$. Then Proposition \ref{prop:description of td-rigid pairs}(b2) gives $n_3\leq 2\leq l_3$. We also introduce new restrictions encoded by $\notblue{2}$, $\notblue{4}$, $\notred{2}$ and $\notred{4}$: 
    \begin{equation*}
        \includegraphics{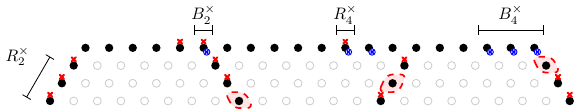}
    \end{equation*}
    As it can be seen from the above picture, we have now exhausted the amount of non-projective summands that we can include in ${\color{rigid}M}$, and are only left with including projective summands in ${\color{rigid}M}$ and ${\color{support}P}$. We may for example include every indecomposable projective without a red cross in ${\color{rigid}M}$
    \[
    \includegraphics{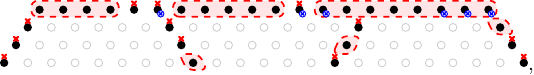}
    \]
    which gives $\abs{{\color{rigid}M}}+\abs{{\color{support}P}}=20<23=\abs{\Lambda}$. We may instead include every indecomposable projective without a blue cross in ${\color{support}P}$
    \[
    \includegraphics{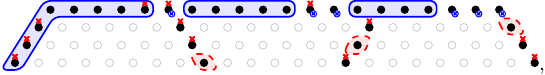}
    \]
    which also gives $\abs{{\color{rigid}M}}+\abs{{\color{support}P}}=20<23=\abs{\Lambda}$. Finally, we could include some indecomposable projective modules in ${\color{rigid}M}$ and some indecomposable projective modules in ${\color{support} P}$, for example
    \[
    \includegraphics{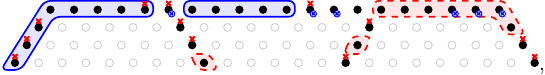}
    \]
    where we get more summands, but still shy of $\abs{\Lambda}$: $\abs{{\color{rigid}M}}+\abs{{\color{support}P}}=22<23=\abs{\Lambda}$. There are, of course, many different choices of indecomposable projective modules to include in ${\color{rigid} M}$ and ${\color{support}P}$, but, as it is shown in Proposition \ref{prop:to not lose need less than 2}, given that we have that $d>2$ and that ${\color{rigid} M_i}\neq 0$, ${\color{rigid} M_{i+1}}\neq 0$ and ${\color{rigid} M_{i+2}}\neq 0$ for some $i\in [2,p]$, any such choice gives $\abs{{\color{rigid}M}}+\abs{{\color{support}P}}<\abs{\Lambda}$. With this in mind, we encourage the reader to play around with this algebra and try to find a basic $\tau_4$-rigid pair with 23 summands, before venturing on. 
\end{example}

\begin{remark}
Let $A$ be any finite-dimensional algebra. If $d=1$, then there exists an analogue of \emph{Bongartz completion} \cite{BongCompl}. Namely, if $(M,P)$ is a basic $\tau$-rigid pair for $A$, then there exists a basic $\tau$-rigid pair $(M',P')$ with $\abs{M'}+\abs{P'}=\abs{A}$ such that $M$ is a direct summand of $M'$ and $P$ is a direct summand of $P'$, see \cite[Theorem 0.2]{adachi_-tilting_2014}. Example \ref{example tau 4 rigid Lambda 23 4} shows that the naive generalization of this result for $d>1$ fails.
\end{remark}

\subsection{Basic \texorpdfstring{$\td$}{td}-rigid pairs supported in consecutive diagonals}\label{subsec:basic td-rigid pairs supported in consecutive diagonals}

Let $(M,P)$ be a basic $\td$-rigid pair. As we have seen, the indecomposable summands of $M$ are either projective or they belong to $\diag{i}$ for some $i\in [2,p]$. It turns out that the existence of a sequence $M_{i},M_{i+1},\ldots,M_{i+k}$ where $M_{i+j}\neq 0$ for $j\in [0,k]$, can impose big restrictions on the rest of the modules appearing in $(M,P)$. The first such restriction is the following.

\begin{lemma}\label{lem:consecutive diagonals can have at most l-1}
Let $(M,P)$ be a basic $\td$-rigid pair and let $i\in [2,p]$. Assume that $M_{i+j}\neq 0$ for $0\leq j\leq k$. Then $\sum_{j=0}^k m_{i+j} \leq l-1$.    
\end{lemma}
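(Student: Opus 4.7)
The plan is to reduce the claim to an inequality on the auxiliary invariants $n_{i+j}$ and $l_{i+j}$, and then to combine the pairwise constraints from Proposition \ref{prop:description of td-rigid pairs} with the trivial bounds from Lemma \ref{lem:immediate use of mi li ni}. Since each $M_{i+j}$ is nonzero for $j\in[0,k]$, Lemma \ref{lem:immediate use of mi li ni} gives $m_{i+j}\leq n_{i+j}-l_{i+j}+1$ together with $1\leq l_{i+j}\leq n_{i+j}\leq l-1$. Summing over $j$ reduces the claim to
$$
(k+1)+\sum_{j=0}^{k}n_{i+j}-\sum_{j=0}^{k}l_{i+j}\leq l-1.
$$

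Next, Proposition \ref{prop:description of td-rigid pairs}(b1)--(b2) provides pairwise constraints that alternate in type as $j$ increases through $[0,k-1]$: we have $l_{i+j}+l_{i+j+1}\geq l+1$ whenever $i+j$ is odd, and $n_{i+j}+n_{i+j+1}\leq l-1$ whenever $i+j$ is even. My plan is to exploit this alternation by partitioning $[0,k]$ into non-overlapping consecutive pairs in two different ways: one partition aligned with the $l$-type constraints (to produce a lower bound for $\sum l_{i+j}$) and one partition aligned with the $n$-type constraints (to produce an upper bound for $\sum n_{i+j}$). Indices that remain uncovered in a given partition are handled by the trivial bounds $n_{i+j}\leq l-1$ and $l_{i+j}\geq 1$.

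The argument then concludes with a case analysis on the parities of $i$ and $k$, which yields four subcases. In each subcase, each $l$-pair contributes $-(l+1)$ to the relevant side, each $n$-pair contributes $l-1$, and the leftover indices contribute bounded corrections, while the initial offset of $(k+1)$ is absorbed; a direct computation then shows that the total collapses to exactly $l-1$ in every case. The main obstacle I expect is the careful bookkeeping required to identify, for each parity combination of $i$ and $k$, which indices are covered by $l$-pairs, which by $n$-pairs, and which are leftover; once the partition is described correctly the required arithmetic is routine. Note that the additional constraints from part (c) of Lemma \ref{lem:connection between lis and nis} in the special case $d=2$ are not needed, since the uniform argument above already covers all cases.
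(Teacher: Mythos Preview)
Your proposal is correct and follows essentially the same approach as the paper: both reduce via Lemma \ref{lem:immediate use of mi li ni} to bounding $\sum n_{i+j}-\sum l_{i+j}$ by $l-k-2$, then establish this by pairing consecutive indices according to the parity-alternating constraints of Proposition \ref{prop:description of td-rigid pairs}(b1)--(b2), handling leftover indices with the trivial bounds $n_{i+j}\leq l-1$ and $l_{i+j}\geq 1$, and splitting into four cases on the parities of $i$ and $k$. Your observation that the $d=2$ constraints are unnecessary is also consistent with the paper's proof.
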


\begin{proof}
Using Proposition \ref{prop:description of td-rigid pairs}(b1) and (b2) we can show the inequalities
\begin{equation*}
\sum_{j=0}^k n_{i+j} \leq \begin{cases}
    \left(\frac{k+3}{2}\right)(l-1), &\mbox{if $i$ is odd and $k$ is odd,} \\
    \left(\frac{k+1}{2}\right)(l-1), &\mbox{if $i$ is even and $k$ is odd,} \\
    \left(\frac{k+2}{2}\right)(l-1), &\mbox{if $k$ is even,}
\end{cases}    
\end{equation*}
and
\begin{equation*}
\sum_{j=0}^k l_{i+j} \geq \begin{cases}
    \left(\frac{k+1}{2}\right)(l+1), &\mbox{if $i$ 
 is odd and $k$ is odd,} \\
    \left(\frac{k-1}{2}\right)(l+1)+2, &\mbox{if $i$ is even and $k$ is odd,} \\
    \left(\frac{k}{2}\right)(l+1)+1, &\mbox{if $k$ is even.}
\end{cases}    
\end{equation*}
Let us demonstrate by showing the first inequality. We thus assume that $i$ and $k$ are both odd. Then using Proposition \ref{prop:description of td-rigid pairs}(b1) and (b2) we have
\begin{align*}
    \sum_{j=0}^{k}n_{i+j} &= n_i + \sum_{j=1}^{\frac{k-1}{2}}(n_{i+2j-1}+n_{i+2j}) +n_{i+k} \leq (l-1) + \sum_{j=1}^{\frac{k-1}{2}}(l-1) + (l-1)= \left(\frac{k+3}{2}\right)(l-1).
\end{align*}
Now using the above inequalities a direct computation shows that independently of the parity of $i$ and $k$ we have
\[
\sum_{j=0}^{k} n_{i+j} - \sum_{j=0}^{k}l_{i+j} \leq l-k-2.
\]
Using Lemma \ref{lem:immediate use of mi li ni} and the above we compute
\[
\sum_{j=0}^{k}m_{i+j} \leq \sum_{j=0}^{k}(n_{i+j}-l_{i+j}+1) = \sum_{j=0}^{k}(n_{i+j}-l_{i+j}) + \sum_{j=0}^{k}1 \leq (l-k-2) + (k+1) = l-1,
\]
as required.
\end{proof}

Let $(M,P)$ be a $\cC$-pair. Let $2\leq i\leq i+k \leq p$ and assume that $M_{i+j}\neq 0$ for $0\leq j\leq k$. Then these modules $M_{i+j}$ define intervals $\notred{i+j}$ and $\notblue{i+j}$ which affect which indecomposable projective modules can appear in $M$ and $P$, as we have seen. We are interested in the largest possible subinterval $\Xi$ of $[1,n]$ that contains all of these intervals $\notred{i+j}$ and $\notblue{i+j}$, as this encodes which indecomposable projective modules are allowed or not allowed to be summands of $M$ and $P$, given the sequence $M_{i},\ldots,M_{i+k}$. Thus we define
\begin{equation}\label{eq:the definition of Xi}
\Xi(i,i+k) = \begin{cases}
    [s_{i-1}-(l-l_i)+1, s_{i+k}], &\mbox{if $i$ is odd and $k$ is odd,} \\
    [s_{i-1}-(l-l_i)+1, s_{i+k}+n_{i+k}-1], &\mbox{if $i$ is odd and $k$ is even,} \\
    [s_{i-1}, s_{i+k}+n_{i+k}-1], &\mbox{if $i$ is even and $k$ is odd,} \\
    [s_{i-1},s_{i+k}], &\mbox{if $i$ is even and $k$ is even.}
\end{cases}
\end{equation}
That is $\Xi(i,i+k)$\index[symbols]{X@$\Xi(i,i+k)$}, is the interval starting at the start of $\notred{i}$ and ending at the end of $\notblue{i+k}$. We simply write $\Xi_i$ instead of $\Xi(i,i)$\index[symbols]{X@$\Xi(i)$}. In particular we have 
\[
\Xi_i = \begin{cases}
    [s_{i-1}-(l-l_i)+1,s_i+n_i-1], &\mbox{if $i$ is odd,} \\
    [s_{i-1},s_i], &\mbox{if $i$ is even,}
\end{cases}
\]
and a straightforward induction shows that $\Xi(i,i+k)=\bigcup_{j=0}^{k}\Xi_{i+j}$. We also define $Z_i=\Xi_i\setminus (\notred{i}\cup \notblue{i})$\index[symbols]{Z@$Z_i$}, that is
\begin{equation}\label{eq:definition of Z_i}
Z_i = \begin{cases}
    [s_{i-1}+1,s_i-1], &\mbox{if $i$ is odd,}\\
    [s_{i-1}+(l-l_i),s_i-n_i], &\mbox{if $i$ is even.}
\end{cases}
\end{equation}

\begin{example}
    In any of the $\tau_4$-rigid pairs $({\color{rigid}M},{\color{support}P})$ that appear in Example \ref{example tau 4 rigid Lambda 23 4}, we have
    \[
    \begin{split}
        \Xi_2&=[s_1,s_2]=[1,9],\\
        \Xi_3&=[s_2-(l-l_3)+1,s_3+n_3-1]=[8,16],\\
        \Xi_4&=[s_3,s_4]=[15,23],\\
        \Xi(2,4)&=[s_1,s_4]=[1,23]=\Xi_2\cup \Xi_3\cup \Xi_4.
    \end{split}
    \]
\end{example}

The main result of this paragraph counts the difference between the size of the set $\Xi(i,i+k)$ and the sum of the number of indecomposable summands of the modules $M_i,\ldots,M_{i+k}$ together with the number of indecomposable projective modules indexed in $\Xi(i,i+k)$. It shows that in most cases $\Xi(i,i+k)$ has strictly larger size. As we will see later, this is an obstruction to having a summand-maximal $\td$-rigid pair.

\begin{proposition}\label{prop:to not lose need less than 2}
Let $(M,P)$ be a basic $\td$-rigid pair with non-diagonal component $(\red,\blue)$. Let $2\leq i\leq i+k \leq p$ and assume that $M_{i+j}\neq 0$ for $0\leq j\leq k$. Set $\Xi=\Xi(i,i+k)$. 
\begin{enumerate}
    \item[(a)] If $d=2$ and $i$ is odd, then $k\leq 1$.
    \item[(b)] If $d=2$ and $i$ is even, then $k\leq 2$.
    \item[(c)] If $d>2$ and $k\geq 2$, then
    \begin{equation}\label{eq:bound on loss k large}
        \abs{(\red\cup \blue)\cap \Xi} + \sum_{j=0}^{k}m_{i+j} < \abs{\Xi}.
    \end{equation}
\end{enumerate}
\end{proposition}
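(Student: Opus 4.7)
These reduce directly to Remark~\ref{Remark: no three consecutive for l=3}(a). For part (a), if $k\geq 2$ then $M_i,M_{i+1},M_{i+2}$ would all be nonzero with $i$ odd and $d=2$, contradicting the remark applied at index $i$. For part (b), if $k\geq 3$ then $M_{i+1},M_{i+2},M_{i+3}$ would all be nonzero with $i+1$ odd and $d=2$, contradicting the remark applied at index $i+1$.

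\textbf{Part (c).} The plan is a careful counting argument inside $\Xi$. First I would compute $|\Xi(i,i+k)|$ from (\ref{eq:the definition of Xi}) using the difference formulas (\ref{eq:difference of consecutive simples}), obtaining a closed-form expression in each of the four parity subcases of $(i,k)$. Next I would bound $|(\red\cup\blue)\cap\Xi|$ from above: Proposition~\ref{prop:description of td-rigid pairs}(b3) forbids $\red$ from any $\notred{i+j}$ and $\blue$ from any $\notblue{i+j}$, and since $\red\cap\blue=\varnothing$, the intersection $\big(\bigcup_j\notred{i+j}\big)\cap\big(\bigcup_j\notblue{i+j}\big)$ inside $\Xi$ is forbidden for both. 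Lemma~\ref{lem:connection between notblue and notred} (for odd $i+j$) together with a parallel inequality derivation for even $i+j$ (deducing $\notblue{i+j}\subseteq \notred{i+j+1}$ from the constraints of Proposition~\ref{prop:description of td-rigid pairs}(b1)--(b2)) shows that this intersection contains the pairwise disjoint union of boundary overlaps $O_j:=\notblue{i+j}\cap\notred{i+j+1}$ for $j\in[0,k-1]$, each of size $l-l_{i+j+1}$ or $n_{i+j}$ according to the parity of $i+j$. This yields $|(\red\cup\blue)\cap\Xi|\leq |\Xi|-\sum_{j=0}^{k-1}|O_j|$.

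Combined with $\sum_{j=0}^k m_{i+j}\leq \sum_{j=0}^k(n_{i+j}-l_{i+j}+1)$ from Lemma~\ref{lem:immediate use of mi li ni} and the numerical inequalities of Proposition~\ref{prop:description of td-rigid pairs}(b1)--(b2), the desired bound (\ref{eq:bound on loss k large}) reduces to elementary inequalities in the integers $n_{i+j}, l_{i+j}$, which I would verify by parity case. The main obstacle will be sharpening this in cases (notably when $i$ is even) where the boundary-overlap bound alone fails to cover the slack; there I would invoke the gap condition of Lemma~\ref{lem:blue then red}, which forces $l-1$ consecutive positions of $\Xi$ to be avoided by $\red\cup\blue$ whenever $\Xi$ contains both a $\blue$- and a $\red$-element, and would fall back to the stronger estimate $|(\red\cup\blue)\cap\Xi|\leq |\Xi\setminus\bigcup_j\notblue{i+j}|$ (or its $\red$-analogue) when only one of $\red\cap\Xi, \blue\cap\Xi$ is nonempty. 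Juggling these three bounds simultaneously across all parity combinations of $(i,k)$ is the crux of the argument.
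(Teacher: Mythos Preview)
Your arguments for (a) and (b) are correct and match the paper exactly.

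For (c) your approach is viable but organizes the work differently from the paper, and misses one simplifying idea. The paper avoids computing $|\Xi|$ at all: it first invokes Lemma~\ref{lem:consecutive diagonals can have at most l-1} to get $\sum_{j} m_{i+j}\leq l-1$, so the strict inequality reduces to exhibiting a single set $X\subseteq\Xi$ with $(\red\cup\blue)\cap X=\varnothing$ and $|X|\geq l$. For $i$ odd the paper takes $X=\notred{i+1}\cup\notblue{i+1}$; this is precisely your $O_0\cup O_1$, so here the two approaches coincide. For $i$ even, however, your overlap sum $|O_0|+|O_1|=n_i+(l-l_{i+2})$ can be as small as $2$, so as you note the overlap bound alone fails. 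Your fallback plan (the gap condition from Proposition~\ref{prop:description of td-rigid pairs}(a) when both colours occur, the single-colour bound $|\Xi\setminus\bigcup_j\notblue{i+j}|$ otherwise) uses exactly the same ingredients as the paper and will work, but carrying it out forces a case split essentially identical to the paper's Subcases~1 and~2, including showing the gap interval $Z$ is disjoint from the extra forbidden piece ($\notred{i+2}$ or $\notblue{i}$) needed to push $|X|$ from $l-1$ up to $l$.

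The organizing idea you are missing is the interval $Z_{i+1}=[s_i+1,s_{i+1}-1]$ of (\ref{eq:definition of Z_i}): since $i$ is even, $|Z_{i+1}|=\tfrac{d-2}{2}l+1$, and this is exactly where the hypothesis $d>2$ enters---it gives $|Z_{i+1}|\geq l$ for free. The paper tries $X=Z_{i+1}$ first, and only when it meets $\red$ or $\blue$ does the gap/single-colour machinery kick in. This makes both the role of $d>2$ and the subsequent case analysis considerably more transparent than the four-parity bookkeeping you propose. (Incidentally, the ``parallel inequality derivation for even $i+j$'' you mention is already the second conclusion of Lemma~\ref{lem:connection between notblue and notred}, applied with the odd index $i+j+1$.)
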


\begin{proof}
Part (a) is Remark \ref{Remark: no three consecutive for l=3}(a). Part (b) follows immediately from part (a). It remains to show part (c). Thus we assume that $d>2$ and $k\geq 2$. Then we have that $M_i\neq 0$, $M_{i+1}\neq 0$ and $M_{i+2}\neq 0$. By Lemma \ref{lem:consecutive diagonals can have at most l-1} we obtain that $\sum_{j=0}^{k}m_{i+j}\leq l-1$, that is the number of nonisomorphic indecomposable summands appearing in all of $M_i,\ldots,M_{i+k}$ is at most equal to $l-1$. Hence it is enough to show that $\abs{(\red\cup \blue)\cap \Xi}\leq \Xi-l$. Therefore, it is enough to show that there exists a subset $X\subseteq \Xi$ such that 
\begin{enumerate}
    \item[(i)] $(\red\cup \blue)\cap X=\varnothing$, and
    \item[(ii)] $\abs{X} \geq l$.
\end{enumerate}
We consider the cases $i$ odd and $i$ even separately and in each case we construct such a set $X$.
    
Assume first that $i$ is odd. Set $X=\notred{i+1}\cup \notblue{i+1}$. Clearly $X\subseteq \Xi$. By Lemma \ref{lem:connection between notblue and notred} we have that
\[
\notred{i+1} \subsetneq \notblue{i} \text{ and } \notblue{i+1}\subsetneq \notred{i+2}.
\]
By Proposition \ref{prop:description of td-rigid pairs}(b3) we have that $\blue\cap \notblue{j}=\varnothing$ and $\red\cap\notred{j}=\varnothing$ for all $j\in [2,p]$. We conclude that 
\[
(\red\cup \blue)\cap \notred{i+1} = \varnothing \text{ and } (\red\cup \blue)\cap \notblue{i+1}=\varnothing,
\]
and so condition (i) holds. Moreover, by definition of the intervals $\notred{i+1}$ and $\notblue{i+1}$ we have that $\notred{i+1}\cap \notblue{i+1}=\varnothing$. Using Lemma \ref{lem:immediate use of mi li ni} we obtain
\[
\abs{X} =\abs{\notred{i+1}} + \abs{\notblue{i+1}} = l-l_{i+1} + n_{i+1} = l+(n_{i+1}-l_{i+1}) \geq l,
\]
and so condition (ii) holds too. This shows the claim when $i$ is odd.

Assume now that $i$ is even. Let $X_1=Z_{i+1}$. Using the definition of $Z_{i+1}$ in (\ref{eq:definition of Z_i}) and since $i$ is even, we obtain
\begin{equation}\label{eq:where we use d>2}
\abs{X_1} = s_{i+1}-1 - (s_i+1)+1 = \frac{d-2}{2}l+1 \geq l+1,
\end{equation}
where we used (\ref{eq:difference of consecutive simples}) and $d>2$. Hence $X_1$ satisfies condition (ii). We may thus assume that $X_1$ does not satisfy condition (i), otherwise there is nothing to prove. We then obtain that $(\red\cup \blue)\cap Z_{i+1}\neq \varnothing$. We consider the subcases $\red\cap Z_{i+1}\neq\varnothing$ and $\blue\cap Z_{i+1}\neq\varnothing$ separately.

\textbf{Subcase 1}: Assume that $\red\cap Z_{i+1}\neq \varnothing$. Then there exists $y\in \red\cap Z_{i+1}$. Now let $X_2=\notred{i}\cup \notred{i+1}\subseteq \Xi$. By definition of the intervals $\notred{i}$ and $\notred{i+1}$ we have that $\notred{i}\cap \notred{i+1}=\varnothing$. Using Lemma \ref{lem:immediate use of mi li ni} and Proposition \ref{prop:description of td-rigid pairs}(b2) we obtain
\[
\abs{X_2} = \abs{\notred{i}}+\abs{\notred{i+1}} = (l-l_i) + (l-l_{i+1}) \geq 2l -(n_i+n_{i+1}) \geq 2l-(l-1) = l+1.
\]
Hence $X_2$ also satisfies condition (ii). Again we may assume that $X_2$ does not satisfy condition (i). Since by Proposition \ref{prop:description of td-rigid pairs}(b3) we have that $\red\cap\notred{i}=\varnothing$ and $\red\cap\notred{i+1}=\varnothing$, the negation of condition (i) for $X_2$ gives that $\blue\cap (\notred{i}\cup \notred{i+1})\neq \varnothing$. Thus there exists $x\in \blue\cap (\notred{i}\cup \notred{i+1})$. By the definition of the intervals $\notred{i}$, $\notred{i+1}$ and $Z_{i+1}$ we have that 
\[
x\leq \max{(\notred{i}\cup \notred{i+1})} =  s_{i} < s_{i} +1 =\min{Z_{i+1}} \leq y. 
\]
Since $x<y$ and $x\in\blue$ and $y\in\red$ all hold, by Proposition \ref{prop:description of td-rigid pairs}(a) we conclude that there exists $x<z<\cdots <z+l-2<y$ such that for $Z=[z,z+l-2]$ we have $(\red\cup \blue)\cap Z=\varnothing$. Set $X = Z\cup \notred{i+2}$. By Lemma \ref{lem:connection between notblue and notred} we have that $\notred{i+2}\subsetneq \notblue{i+1}$. By Proposition \ref{prop:description of td-rigid pairs}(b3) we conclude that $\notred{i+2}\cap (\red\cup \blue)=\varnothing$. Hence $X$ satisfies condition (i). Moreover, we have that $Z\cap \notred{i+2}=\varnothing$ since $\max{Z}<y\leq\max{Z_{i+1}}<\min{\notred{i+2}}$. Then
\[
\abs{X} = \abs{[z,z+l-2]} + \abs{\notred{i+2}} =l-1+\abs{\notred{i+2}} \geq l,
\]
and so $X$ satisfies condition (ii) as well.

\textbf{Subcase 2}: Assume that $\blue\cap Z_{i+1}\neq \varnothing$. Then there exists $x'\in \blue\cap Z_{i+1}$. An argument similar to the previous subcase shows that $X_3=\notblue{i+2}\cup \notblue{i+1}$ satisfies condition (ii) and thus we may assume that $\red\cap(\notblue{i+1}\cup \notblue{i+2})\neq \varnothing$. Then there exists $y'\in \red\cap (\notblue{i+1}\cup \notblue{i+2})$. Applying Proposition \ref{prop:description of td-rigid pairs}(a) to $x'<y'$ similarly as above gives a set $Z'$ with $(\red\cup \blue)\cap Z'=\varnothing$ and $\abs{Z'}=l-1$. Setting $X'=Z'\cup \notblue{i}$ one can then show similarly that $X'$ satisfies conditions (i) and (ii). 
\end{proof}

\subsection{Admissible configurations}\label{subsec:admissible configurations}

We now want to describe summand-maximal $\td$-rigid pairs for $\Lambda$. Thus let $(M,P)$ be a summand-maximal $\td$-rigid pair. Assume that we have a sequence $M_{i},M_{i+1},\ldots,M_{i+k}$ such that $M_{i+j}$ is nonzero for all $0\leq j\leq k$. If $d=2$, then Proposition \ref{prop:to not lose need less than 2}(a) and (b) bound $k$ by $1$ if $i$ is odd and by $2$ if $i$ is even. If $d>2$, then we show later that Proposition \ref{prop:to not lose need less than 2}(c) implies that $k\leq 1$. This indicates that $M$ can intersect a very small number of consecutive diagonals $\diag{i},\ldots,\diag{i+k}$, namely at most $2$ if $d>2$ and at most $3$ in the very special case when $d=2$ and $i$ is even. This is a first reduction of our problem of classifying summand-maximal $\td$-rigid pairs, as we now need to only study configurations satisfying
\begin{align*}
&M_{i-1}=0,\;\; M_i\neq 0,\;\;\text{and } M_{i+1} =0,\text{ or} \\
&M_{i-1}=0,\;\; M_i\neq 0,\;\; M_{i+1}\neq 0,\;\;\text{and } M_{i+2}=0,\text{ or}\\
&M_{i-1}=0,\;\; M_i\neq 0,\;\; M_{i+1}\neq 0,\;\; M_{i+2}\neq 0,\;\; M_{i+3}=0,\;\; d=2,\;\;\text{and } \text{$i$ is even.}
\end{align*}
We can find certain restrictions on how these modules look like and these restrictions are encoded in Table \ref{tab:illustration admissible configurations} in an illustrative way and in Definition \ref{def:admissible configurations} in a formal way.

In addition to the modules on the diagonals, we can find a few more necessary conditions that need to be satisfied in such local configurations if $(M,P)$ is a summand-maximal $\td$-rigid pair. The crucial part of these conditions pertain to the interval $\Xi=\Xi(i,i+k)$ and impose restrictions on which indecomposable projectives $P(\xi)$ with $\xi\in \Xi$ can be direct summands of $M$ and of $P$. Since $P(\red)$ is the direct sum of all indecomposable projective summands of $M$ and $P(\blue)=P$, we can reformulate these conditions using the intersections $\red\cap \Xi$ and $\blue\cap \Xi$. The following definition is motivated by the need to describe these conditions.

\begin{definition}\label{def:support, rigid, support to rigid}
Let $(M,P)$ be a $\cC$-pair with non-diagonal component $(\red,\blue)$. Let $I\subseteq [1,n]$. We say that $I$ is
\begin{itemize}
    \item \emph{support}\index[definitions]{interval!support} if $\red\cap I =\varnothing$ and $\blue\cap I=I\setminus\left(\bigcup_{i=2}^{p} \notblue{i}\right)$,
    \item \emph{rigid}\index[definitions]{interval!rigid} if $\blue\cap I=\varnothing$ and $\red\cap I = I\setminus \left(\bigcup_{i=2}^{p} \notred{i}\right)$,
    \item \emph{support to rigid at $x\in I$}\index[definitions]{interval!support to rigid} if $\blue\cap I = \{y\in I \mid y\leq x\}\neq \varnothing$ and $\red\cap I = \{y\in I\mid x+l\leq y\}\neq \varnothing$,
    \item \emph{rigid to support at $x\in I$}\index[definitions]{interval!rigid to support} if $\red \cap I = \{y\in I\mid y\leq x\}\neq \varnothing$ and $\blue\cap I = \{y\in I\mid x+1\leq y\}\neq\varnothing$.
\end{itemize}
\end{definition}

Note that in Definition \ref{def:support, rigid, support to rigid} the properties of the interval $I$ are defined with respect to the given $\cC$-pair, which is suppressed in the naming. Note also that an empty interval is both rigid and support, and that is the only interval with this property. With this in mind we can now describe all configurations that are possible for a summand-maximal $\td$-rigid pair, as we prove later.

\begin{table}
        \centering
        \begin{tabular}{cc|cc}
            (I) - $i$ odd & \includegraphics[scale=.8]{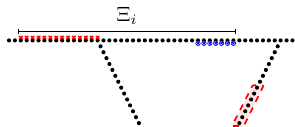} & (I) - $i$ even& \includegraphics[scale=.8]{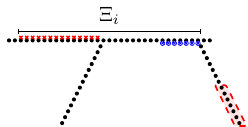}\\ \hline
            (II) - $i$ odd & \includegraphics[scale=.8]{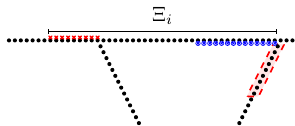} & (II) - $i$ even & \includegraphics[scale=.8]{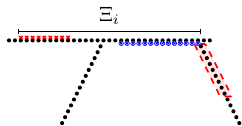} \\ \hline
            (III) - $i$ odd & \includegraphics[scale=.8]{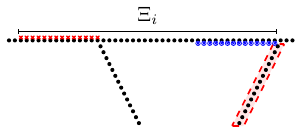} & (III) - $i$ even & \includegraphics[scale=.8]{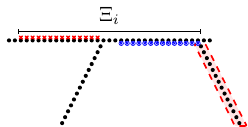} \\\hline
            (IV) & \multicolumn{2}{c}{\includegraphics[scale=.8]{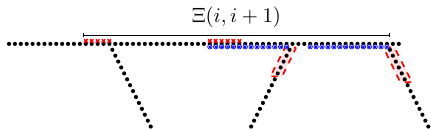}}&$m_i+m_{i+1}<l-1$\\\hline
            (V)& \multicolumn{2}{c}{\includegraphics[scale=.8]{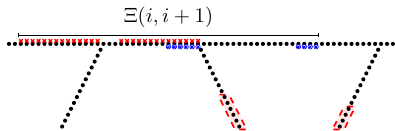}}&$m_i+m_{i+1}<l-1$\\\hline
            (VI)&\multicolumn{2}{c}{\includegraphics[scale=.8]{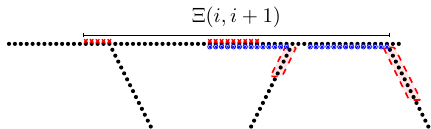}}&$m_i+m_{i+1}=l-1$\\\hline
            (VII)&\multicolumn{2}{c}{\includegraphics[scale=.8]{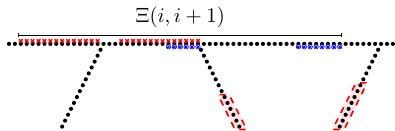}}&$m_i+m_{i+1}=l-1$\\\hline
            (VIII) &\multicolumn{2}{c}{\includegraphics[scale=.9]{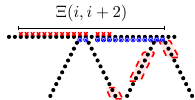}}&$\begin{array}{c}
                l_{i+1}=l-l_{i+2}+1    \\
                n_{i+1}=l-n_i-1\\
                m_i+m_{i+1}+m_{i+2}=l-1
            \end{array}$
            
        \end{tabular}
        \caption{Types of admissible configurations}
        \label{tab:illustration admissible configurations}
\end{table}

\begin{definition}\label{def:admissible configurations}
Let $(M,P)$ be a $\cC$-pair with non-diagonal component $(\red,\blue)$. Let $2\leq i\leq i+k \leq p$ and assume that $M_{i+j}\neq 0$ for $0\leq j\leq k$. Set $\Xi=\Xi(i,i+k)$. We say that $(M_i,\ldots,M_{i+k})$\index[symbols]{M@$(M_i,\ldots,M_{i+k})$} is an \emph{admissible configuration}\index[definitions]{admissible configuration} if one of the following holds.
\begin{enumerate}
    \item[(I)] $k=0$, $l_i=1$, $n_i<l-1$ and $\Xi$ is support.
    \item[(II)] $k=0$, $l_i>1$, $n_i=l-1$ and $\Xi$ is rigid.
    \item[(III)] $k=0$, $l_i=1$, $n_i=l-1$ and $\Xi$ is either support, rigid, or support to rigid at some $x\in\Xi\setminus \notblue{i}$.
    \item[(IV)] $k=1$, $i$ is odd, $n_i=n_{i+1}=l-1$, $m_i+m_{i+1}<l-1$, and $\Xi$ is rigid.
    \item[(V)] $k=1$, $i$ is even, $l_i=l_{i+1}=1$, $m_i+m_{i+1}<l-1$, and $\Xi$ is support.
    \item[(VI)] $k=1$, $i$ is odd, $n_i=n_{i+1}=l-1$, $m_i+m_{i+1} = l-1$ and $\Xi$ is rigid or support to rigid at $x\in [s_{i}-(l-m_{i+1}),s_{i}-1]$.
    \item[(VII)] $k=1$, $i$ is even, $l_i=l_{i+1}=1$, $m_i+m_{i+1}=l-1$, and $\Xi$ is support or support to rigid at $x\in [s_i-(l-1),s_i-m_i]$.
    \item[(VIII)] $k=2$, $d=2$, $i$ is even, $l_i=1$, $l_{i+1}=l-l_{i+2}+1$, $n_{i+1}=l-n_i-1$, $n_{i+2}=l-1$ and $\Xi$ is support to rigid at $x\in [s_{i+1}-l_{i+2},s_i-n_i]$.
\end{enumerate}
In each case (I)---(VIII) we call the corresponding roman numeral the \emph{type} of the admissible configuration. We say that an admissible configuration is \emph{full}\index[definitions]{admissible configuration!full} if $m_{i+j}=n_{i+j}-l_{i+j}+1$ holds for $0\leq j\leq k$.
\end{definition}

Before we give some examples, we collect some useful observations about admissible configurations.

\begin{remark}\label{rem:admissible configurations are td-rigid}
Let $(M,P)$ be a $\cC$-pair with non-diagonal component $(\red,\blue)$ and let $(M_i,\ldots,M_{i+k})$ be an admissible configuration. Let $\Xi=\Xi(i,i+k)$.
\begin{enumerate}
    \item[(a)] If $l=2$ then $(M_i,\ldots,M_{i+k})$ is necessarily of type (III).
    \item[(b)] An admissible configuration has the property of being ``locally'' $\td$-rigid. Indeed, assume that $M_{i-1}=0$ and $M_{i+k+1}=0$. Set $\red'=\red\cap\Xi$ and $\blue'=\blue\cap\Xi$. Let $i'\in [i,i+k]$. It is straightforward to check that independently of the type of the admissible configuration, conditions (a) and (b3) in Proposition \ref{prop:description of td-rigid pairs} hold with $\red'$ and $\blue'$ in place of $\red$ and $\blue$ and conditions (b1) and (b2) in Proposition \ref{prop:description of td-rigid pairs} hold with $i'$ in place of $i$.
    \item[(c)] Assume that the admissible configuration is full. We claim that
    \begin{equation}\label{eq:admissible configurations do not lose 1}
        \abs{(\red\cup\blue)\cap\Xi} +\sum_{j=0}^{k}m_i  = \abs{\Xi}.
    \end{equation}
    Indeed, we have that $\Xi$ is either support, rigid or support to rigid. Assume $\Xi$ is support. Then $(M_i,\ldots,M_{i+k})$ is of one of the types (I), (III), (V) or (VII). In all of these cases $l_{i+t}=1$ for $0\leq t\leq k$ and so $m_{i+t}=n_{i+t}$. Additionally, in cases (V) and (VII) it is easy to see that $\notblue{i}\cap\notblue{i+1}=\varnothing$. Hence, since $\Xi$ is support, we compute
    \[ 
    \abs{(\red\cup\blue)\cap\Xi} = \abs{\Xi}-\sum_{t=0}^{k} \abs{\notblue{i+t}}  = \abs{\Xi}-\sum_{t=0}^{q} n_{i+t} = \abs{\Xi}-\sum_{t=0}^{q} m_{i+t},
    \]
    which shows (\ref{eq:admissible configurations do not lose 1}). If $\Xi$ is rigid, then the proof is similar. If $\Xi$ is support to rigid, then $\abs{(\red\cup\blue)\cap\Xi}=\abs{\Xi}-(l-1)$ and $(M_i,\ldots,M_{i+k})$ is of one of the types (III), (VI), (VII) or (VIII). We claim that in all of these types we have $\sum_{t=0}^{k} m_{i+t}=l-1$, and so (\ref{eq:admissible configurations do not lose 1}) holds again. Indeed, it is only needed to show that $\sum_{t=0}^{k} m_{i+t}=l-1$ holds in type (VIII), as in the other cases it holds by definition. In that case we have $k=2$ and
    \[
    \sum_{t=0}^{2}m_{i+t}= \sum_{t=0}^{2}(n_{i+t}-l_{i+t}+1)=(n_i-1+1) + (l-n_i-1-(l-l_{i+2}+1)+1) +(l-1-l_{i+2}+1) = l-1,
    \]
    as required.    
    \item[(d)] Assume that the type of the configuration is (III). If $\Xi$ is support, then $\Xi$ is the disjoint union of two consecutive intervals $\blue\cap\Xi$ and $\notblue{i}$. If $\Xi$ is rigid, then $\Xi$ is the disjoint union of two consecutive intervals $\notred{i}$ and $\red\cap\Xi$. Furthermore, both intervals $\notblue{i}$ and $\notred{i}$ have length $l-1$.
\end{enumerate}
\end{remark}

\begin{example}\label{Example:Admissible configuration}
    Let us assume that we have a $\tau_4$-rigid module ${\color{rigid}M}$ of $\Lambda(23,4)$ which we can see encircled in the following picture:
    \[
    \includegraphics{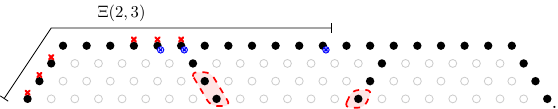}
    \]
    In the above picture we have also marked the sets $\notred{2}\cup\notred{3}$ with red crosses, the set $\notblue{2}\cup\notblue{3}$ with blue crosses and $\Xi(2,3)$. Assume that we want to find a projective module $\color{support} P$ such that $({\color{rigid}M},{\color{support}P})$ is a $\tau_4$-rigid pair and $({\color{rigid}M_2},{\color{rigid}M_3})$ is a full admissible configuration of type (VII). Then there are several choices for $\Xi(2,3)$. For example, $\Xi(2,3)$ can be support:
    \[
    \includegraphics{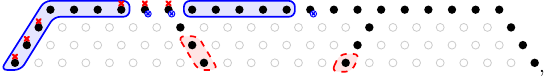}
    \]
    or, $\Xi(2,3)$ can be support to rigid at $7\in[s_2-(l-1),s_2-m_2]=[6,7]$:
    \[
    \includegraphics{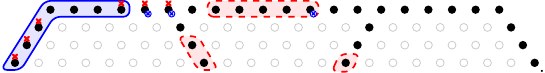}
    \]
    In each of these case we can construct $({\color{rigid}M},{\color{support}P})$ such that $\abs{{\color{rigid}M}}+\abs{{\color{support}P}}=\abs{\Lambda}=23$. In the first case this is done by letting $[1,23]\setminus\Xi(2,3)=[16,23]$ be support, and in the other case by letting $[16,23]$ be either rigid, rigid to support, or support.
\end{example}

\begin{example}\label{ex:Admissible type VIII}
    Now, let us look at the slightly special case of $d=2$. We consider the algebra $\Lambda(13,5)$ which admit a $2$-cluster tilting subcategory, and the $\tau_2$-rigid module ${\color{rigid}M}$ given by the following encircled summands:
    \[
    \includegraphics{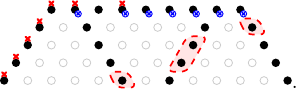}
    \]
    Now, if we want to expand $({\color{rigid}M_2},{\color{rigid}M_3},{\color{rigid}M_4})$ into an admissible configuration, it has to be of type (VIII). Hence $\Xi(2,4)=[1,13]$ needs to be support to rigid at some $x\in[s_3-4,s_2-1]=[4,5]=\mathcal{I}$. For example, the following expansion is a full admissible configuration of type (VIII)
    \[
    \includegraphics{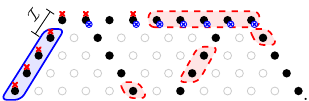}
    \]
    After counting, we see that $\abs{{\color{rigid}M}}+\abs{{\color{support}P}}=13=\abs{\Lambda}$.
\end{example}

Let $(M,P)$ be a basic $\td$-rigid pair with non-diagonal component $(\red,\blue)$. Let $2\leq i\leq i+k\leq p$ and assume that $M_{i+j}\neq 0$ for $0\leq j\leq k$. Set $\Xi=\Xi(i,i+k)$. As we have mentioned, we are now interested in studying $(M,P)$ locally in the ``neighbourhood'' of the sequence $M_{i},\ldots,M_{i+k}$. A way to define this neighbourhood is to think of all the indecomposable $\Lambda$-modules which are directly affected from $M_{i},\ldots,M_{i+k}$ with respect to being or not direct summands of $(M,P)$. As we have seen, the interval $\Xi$ is the largest subinterval of $[1,n]$ which contains all of $\notred{i+j}$ and $\notblue{i+j}$ for $0\leq j \leq k$ and thus is a good candidate for such a neighbourhood. Thus we can count how many $\xi\in \Xi$ belong to $\red$, how many $\xi\in\Xi$ belong to $\blue$ and also count the indecomposable summands of each $M_{i+j}$ which we have agreed to denote by $m_{i+j}$. Thus the number of indecomposable summands of $(M,P)$ in this neighbourhood of $M_{i},\ldots,M_{i+k}$ is
\begin{equation}\label{Eq:the number N}
N\coloneqq (\red\cup\blue)\cap\Xi +\sum_{j=0}^{k}m_{i+j}.
\end{equation}
We have seen in Proposition \ref{prop:to not lose need less than 2} that if $d>2$ and $k\geq 2$, then $N(i,i+k)$ cannot be greater than $\abs{\Xi}-1$. The main aim of this paragraph is to show that the greatest possible value for $N$ is $\abs{\Xi}$, and moreover that this value is attained if and only if $(M_i,\ldots,M_{i+k})$ is a full admissible configuration.

We should mention that at this point it is not clear why maximizing each of these numbers $N$ does maximize the total number of indecomposable summands of $(M,P)$ which is our aim. But the fact that the maximal possible value attained by $N$ is $\abs{\Xi}$ will be enough to show that indeed such a maximization has to happen. The details for this point belong to the next section.

In counting the number $N$ from above, we may notice that the intersection $(\red\cup\blue)\cap\Xi$ is closely related to the intervals $\notred{i}$ and $\notblue{i}$. Hence we first investigate how the intervals $\notred{i}$ and $\notred{i+1}$ interact when $M_i\neq 0$ and $M_{i+1}\neq 0$ (and similarly for $\notblue{i}$ and $\notblue{i+1}$).

\begin{lemma}\label{lem:the case k=1, i even and d=2}
Let $(M,P)$ be a $\td$-rigid pair with non-diagonal component $(\red,\blue)$. Let $2\leq i\leq i+1\leq p$ be such that $M_{i}\neq 0$ and $M_{i+1}\neq 0$.
\begin{enumerate}
    \item[(a)] If $i$ is even and $d=2$, then $\notred{i}\cup\notred{i+1}=[s_{i-1},s_{i}]$. Otherwise, we have $\notred{i}\cap\notred{i+1}=\varnothing$.
    \item[(b)] If $i$ is odd and $d=2$, then $\notblue{i}\cup\notblue{i+1}=[s_i,s_{i+1}]$. Otherwise, we have $\notblue{i}\cap\notblue{i+1}=\varnothing$.
\end{enumerate}
\end{lemma}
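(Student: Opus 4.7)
The plan is to prove both parts by a direct computation of the interval endpoints, using the formula~(\ref{eq:difference of consecutive simples}) for $s_i - s_{i-1}$, the definition~(\ref{eq:the definition of notblue and notred}) of $\notred{i}$ and $\notblue{i}$, and the numerical inequalities on $l_j$ and $n_j$ supplied by Proposition~\ref{prop:description of td-rigid pairs}. As a preliminary observation, when $l = 2$ one has $l_j = n_j = 1$ whenever $M_j\neq 0$, and the hypothesis that $M_i$ and $M_{i+1}$ are both nonzero is then incompatible with Proposition~\ref{prop:description of td-rigid pairs}(b1) and (b2); hence the lemma is vacuous in that case. One may therefore assume $l \geq 3$, in which case $d > 2$ forces $d \geq 4$ by Theorem~\ref{thm:VasoClassifyAcyclicCluster}.

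For part (a), I would split on the parity of $i$. When $i$ is odd, $\notred{i}$ ends at $s_{i-1}$ while $\notred{i+1}$ starts at $s_i$; since $s_{i-1} < s_i$, the two intervals are immediately disjoint. When $i$ is even one has $s_i - s_{i-1} = \tfrac{d}{2}l$ by~(\ref{eq:difference of consecutive simples}), and I would rewrite
\[
\notred{i} = [s_{i-1},\, s_{i-1}+(l-l_i)-1] \text{ and } \notred{i+1} = [s_{i-1}+\tfrac{d}{2}l - (l-l_{i+1})+1,\, s_{i-1}+\tfrac{d}{2}l].
\]
The key quantity is the gap
\[
(\text{start of }\notred{i+1}) - (\text{end of }\notred{i}) = \left(\tfrac{d}{2}-2\right)l + l_i + l_{i+1} + 2.
\]
Applying Proposition~\ref{prop:description of td-rigid pairs}(b1) to the odd index $i+1$ yields $n_i + n_{i+1} \leq l-1$, hence $l_i + l_{i+1} \leq l-1$. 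When $d = 2$ the gap is therefore at most $1$, so the two intervals cover every integer from $s_{i-1}$ to $s_i$ without leaving a hole, proving $\notred{i}\cup\notred{i+1} = [s_{i-1},s_i]$. When $d > 2$, so $d \geq 4$, the term $(\tfrac{d}{2}-2)l$ is nonnegative and the gap is at least $l_i + l_{i+1} + 2 \geq 4$, forcing $\notred{i}\cap\notred{i+1} = \varnothing$.

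Part (b) will follow by a completely symmetric argument. For $i$ even, $\notblue{i}$ ends at $s_i$ and $\notblue{i+1}$ starts at $s_{i+1}$, so they are trivially disjoint. For $i$ odd one has $s_{i+1}-s_i = \tfrac{d}{2}l$, and the analogous gap comes out to $\tfrac{d}{2}l - n_i - n_{i+1}+2$. The main subtlety I expect is noting which inequality from Proposition~\ref{prop:description of td-rigid pairs} to invoke: whereas part (a) for $i$ even used the upper bound on $n_i+n_{i+1}$, part (b) for $i$ odd instead requires the lower bound $l_i+l_{i+1}\geq l+1$ from Proposition~\ref{prop:description of td-rigid pairs}(b1), which then forces $n_i + n_{i+1} \geq l+1$. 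This makes the gap at most $1$ when $d = 2$, giving $\notblue{i}\cup\notblue{i+1} = [s_i,s_{i+1}]$, and at least $\tfrac{d}{2}l - 2(l-1)+2 \geq 4$ when $d \geq 4$, giving disjointness. No deeper obstacle is anticipated; the only real task is to keep track of the asymmetry between $\notred{}$ and $\notblue{}$ imposed by the parity of the diagonals.
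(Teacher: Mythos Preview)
Your proof is correct and follows essentially the same direct-computation approach as the paper: both arguments compute the endpoints of $\notred{i}$, $\notred{i+1}$ (respectively $\notblue{i}$, $\notblue{i+1}$) from~(\ref{eq:the definition of notblue and notred}) and~(\ref{eq:difference of consecutive simples}) and compare them using the numerical constraints of Proposition~\ref{prop:description of td-rigid pairs}. Your version is slightly more streamlined, packaging the comparison into a single gap formula $(\tfrac{d}{2}-2)l + l_i + l_{i+1} + 2$ (respectively $\tfrac{d}{2}l - n_i - n_{i+1} + 2$) and then reading off both the $d=2$ and $d\geq 4$ cases from it; the paper instead treats these cases with separate ad-hoc inequalities. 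Your preliminary observation that the hypotheses are vacuous when $l=2$ is also correct and makes explicit why one may assume $d$ is even.
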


\begin{proof}
We only show part (a) as part (b) is similar. If $i$ is odd, then by (\ref{eq:the definition of notblue and notred}) we have that $\notred{i}=[s_{i-1}-(l-l_i)+1,s_{i-1}]$ and $\notred{i+1}=[s_i,s_{i}+(l-l_{i})-1]$ and so $\notred{i}\cap\notred{i+1=\varnothing}$ since $s_{i-1}<s_{i}$. 

Assume that $i$ is even. By (\ref{eq:the definition of notblue and notred}) we have that $\notred{i}=[s_{i-1},s_{i-1}+(l-l_i)-1]$ and $\notred{i+1}=[s_i-(l-l_{i+1})+1,s_i]$. If $d>2$, then using $s_i-s_{i-1}=\tfrac{d}{2}l$ by (\ref{eq:difference of consecutive simples}) and $1\leq l_i$ by Lemma \ref{lem:immediate use of mi li ni}, we obtain that
\[
s_{i-1}+(l-l_i)-1 = s_i-\frac{d}{2}l+(l-l_i)-1 < s_i-l+(l-l_i)-1 = s_i-l_i-1 \leq s_i-2,
\]
and so $\notred{i}\cap\notred{i+1}=\varnothing$. If $d=2$, then it is enough to show that 
\[
s_{i-1} \leq s_{i} - (l-l_{i+1}) +1 \leq \left(s_{i-1}+(l-l_i)-1\right)+1 \leq s_{i}.
\]
Equivalently, and using $s_i-s_{i-1}=l$ by (\ref{eq:difference of consecutive simples}), we need to show that
\[
0 \leq l_{i+1}+1\leq l-l_{i} \leq l.
\]
Thus only the middle inequality needs to be shown. But this inequality is equivalent to $l_{i}+l_{i+1}\leq l-1$, which holds since $l_{i}+l_{i+1}\leq n_{i}+n_{i+1}\leq l-1$ by Proposition \ref{prop:description of td-rigid pairs}(b2).
\end{proof}

In the definition of admissible configurations we see that the interval $\Xi$ is either support, rigid, or support to rigid. Assume that $\Xi$ is rigid; similar considerations hold when $\Xi$ is support. The interval $\Xi=\Xi(i,i+k)$ contains all of the intervals $\notred{i},\ldots,\notred{i+k}$ and exactly those as long as intervals of the form $\notred{j}$ are concerned. Therefore, $\Xi$ being rigid implies that $(\red\cup\blue)\cap\Xi$ is equal to $\Xi$ from which we have removed all of the intervals $\notred{i},\ldots,\notred{i+k}$. Then the number $N$ in (\ref{Eq:the number N}) is equal to 
\[
\abs{\Xi}-\abs{\bigcup_{j=0}^{k}\notred{i+j}}+\sum_{j=0}^{k}m_{i+j}.
\]
Thus to find the maximal value of $N$ we need to consider the difference between the last two terms above. The following technical lemma deals with this case.

\begin{lemma}\label{lem:the intervals X}
Let $(M,P)$ be a $\td$-rigid pair with non-diagonal component $(\red,\blue)$. Let $k\in\{0,1\}$ be such that $2\leq i\leq i+k \leq p$ and assume that $M_{i+j}\neq 0$ for $0\leq j\leq k$. Set $\Xi=\Xi(i,i+k)$, $\notred{}(i,i+k)=\bigcup_{j=0}^{k}\notred{i+j}$, and $\notblue{}(i,i+k)=\bigcup_{j=0}^{k}\notblue{i+j}$. Then the following hold.
\begin{enumerate}
    \item[(a)] $\abs{\notred{}(i,i+k)}\geq \sum_{j=0}^{k}m_{i+j}$. Moreover, if $k=1$ and $i$ is even, then the inequality is strict.
    \item[(b)] $\abs{\notblue{}(i,i+k)}\geq \sum_{j=0}^{k}m_{i+j}$. Moreover, if $k=1$ and $i$ is odd, then the inequality is strict.
    \item[(c)] Let $X=\notred{}(i,i+k)$ or $X=\notblue{}(i,i+k)$. If
    \begin{equation}\label{eq:condition for maximal interval}
    (\red\cup \blue)\cap \Xi=\Xi\setminus X \text{ and } \abs{X}=\sum_{j=0}^{k}m_{i+j}
    \end{equation}
    hold, then $(M_i,\ldots,M_{i+k})$ is a full admissible configuration.
\end{enumerate}
\end{lemma}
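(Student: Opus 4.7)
The plan is to prove parts (a) and (b) by direct counting, and then derive (c) by analyzing when those counting inequalities become equalities.

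For part (a), the plan is to split on whether $k=0$ or $k=1$, and in the case $k=1$ on the parity of $i$. When $k=0$, we simply have $\abs{\notred{i}}=l-l_i$ and $m_i\leq n_i-l_i+1\leq l-l_i$ by Lemma \ref{lem:immediate use of mi li ni}, so the inequality holds. When $k=1$, Lemma \ref{lem:the case k=1, i even and d=2}(a) tells us the intervals are disjoint unless $i$ is even and $d=2$. In the disjoint case, $\abs{\notred{}(i,i+1)} = 2l-l_i-l_{i+1}$, which we compare against $\sum m_{i+j} \leq (n_i+n_{i+1})-(l_i+l_{i+1})+2$ via Lemma \ref{lem:immediate use of mi li ni}. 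For $i$ odd this reduces to $n_i+n_{i+1}\leq 2l-2$, which follows from the two individual bounds $n_{i+j}\leq l-1$. For $i$ even with $d>2$, Proposition \ref{prop:description of td-rigid pairs}(b2) gives the stronger bound $n_i+n_{i+1}\leq l-1$, yielding $\sum m_{i+j}\leq l+1-l_i-l_{i+1} < 2l-l_i-l_{i+1}$ and hence strict inequality. In the remaining case ($i$ even, $d=2$), the second clause of Lemma \ref{lem:the case k=1, i even and d=2}(a) gives $\abs{\notred{}(i,i+1)}=l+1$, and Lemma \ref{lem:consecutive diagonals can have at most l-1} gives $m_i+m_{i+1}\leq l-1$, again yielding strict inequality. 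Part (b) is proved by an entirely symmetric argument using Lemma \ref{lem:the case k=1, i even and d=2}(b) and the analogous bounds from Proposition \ref{prop:description of td-rigid pairs}.

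For part (c), I would first observe that equality $\abs{X}=\sum m_{i+j}$ forces every intermediate inequality in the proof of (a) or (b) to be an equality. In particular one obtains $m_{i+j}=n_{i+j}-l_{i+j}+1$ for each $j$, which is precisely the definition of full. In addition, if $X=\notred{}(i,i+k)$ one forces $n_{i+j}=l-1$ for each $j$ in the relevant range, while if $X=\notblue{}(i,i+k)$ one forces $l_{i+j}=1$. The strict inequalities established in (a) and (b) rule out the cases $X=\notred{}$ when $k=1$ and $i$ is even, and $X=\notblue{}$ when $k=1$ and $i$ is odd. Next, using the second hypothesis $(\red\cup\blue)\cap\Xi=\Xi\setminus X$ together with Proposition \ref{prop:description of td-rigid pairs}(b3), I would unwind how $\red\cap\Xi$ and $\blue\cap\Xi$ sit inside $\Xi\setminus X$; since $\red$ avoids every $\notred{j}$ and $\blue$ avoids every $\notblue{j}$, the allowed locations for $\blue$ and $\red$ within $\Xi\setminus X$ are forced. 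Matching the resulting combinatorial data against Definition \ref{def:admissible configurations} should then identify exactly one of the types (I)--(VII): for $k=0$ one lands in (I), (II) or (III) according to whether $l_i=1$, $n_i=l-1$, or both; for $k=1$ with $i$ odd and $X=\notred{}$ one gets type (IV) or (VI) according to whether $m_i+m_{i+1}<l-1$ or $=l-1$; symmetrically for $i$ even and $X=\notblue{}$ yielding (V) or (VII). Type (VIII) does not arise here since it requires $k=2$.

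The main obstacle will be the case analysis in (c), in particular showing that $\Xi$ is precisely of support, rigid, or support-to-rigid form rather than some hybrid. The subtle point is that when $\Xi\setminus X$ is nonempty and split in the middle by a gap $Z$, one must verify that $\blue$ occupies the ``left half'' of $\Xi\setminus X$ and $\red$ the ``right half'' (or one of them is empty), which requires combining condition (a) of Proposition \ref{prop:description of td-rigid pairs} (no $\blue$-element precedes a $\red$-element without a gap of $l-1$ empty slots between them) with the fact that $\abs{Z}<l-1$ in the relevant configurations. This ordering constraint is what distinguishes support to rigid from a disordered placement, and it is the step requiring the most care.
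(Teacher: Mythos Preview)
Your treatment of (a) and (b) is correct and essentially matches the paper's proof, including the split according to whether $\notred{i}$ and $\notred{i+1}$ overlap (Lemma~\ref{lem:the case k=1, i even and d=2}).

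For (c), you have assembled the right ingredients---deriving fullness and $n_{i+j}=l-1$ (resp.\ $l_{i+j}=1$) from the equality $\abs{X}=\sum m_{i+j}$, then appealing to Proposition~\ref{prop:description of td-rigid pairs}(a) together with a bound $\abs{\notred{i+1}}<l-1$---but your anticipated conclusion is off. You frame the goal as showing that $\Xi$ is support, rigid, \emph{or support to rigid}. In fact, under the hypothesis $(\red\cup\blue)\cap\Xi=\Xi\setminus\notred{}(i,i+k)$, one must show that $\Xi$ is \emph{rigid} (and dually, support when $X=\notblue{}$); the support-to-rigid case simply does not arise in this lemma. Concretely, for $k=1$ and $i$ odd the set $\notred{i}\cup\notred{i+1}$ consists of two non-adjacent intervals and hence can never coincide with a single interval $[x+1,x+l-1]$; and for $k=0$ with $l_i>1$ one has $\abs{\notred{i}}=l-l_i<l-1$, again ruling out support to rigid. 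If you followed the support-to-rigid branch of your case analysis you would land in situations matching no type in Definition~\ref{def:admissible configurations}. The support-to-rigid conclusions only enter later, in Proposition~\ref{prop:to not lose need admissible configuration}, under the different hypothesis that $X$ is an arbitrary length-$(l-1)$ gap $Z$ rather than $\notred{}(i,i+k)$.

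The paper's argument for (c) is more direct than the case analysis you propose. It first locates one specific element of $\red\cap\Xi$, namely $s_{i+k}$ (which lies in $\notblue{i+k}\subseteq\Xi\setminus X$ and so cannot belong to $\blue$), and then shows by contradiction that $\blue\cap(\Xi\setminus X)=\varnothing$. The contradiction runs along the line you sketch: any $x\in\blue\cap(\Xi\setminus X)$ would satisfy $x<s_{i+k}$, forcing via Proposition~\ref{prop:description of td-rigid pairs}(a) a gap $Z$ of length $l-1$ inside $X$; since $x\not\in\notred{i}$ and $\Xi$ begins at $\notred{i}$, this gap must sit in $\notred{i+1}$, whence $k=1$ and (by the strictness in (a)) $i$ is odd; but then Lemma~\ref{lem:connection between notblue and notred} gives $\notred{i+1}\subsetneq\notblue{i}$, so $\abs{\notred{i+1}}<n_i\leq l-1$. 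Once $\blue\cap\Xi=\varnothing$, the interval $\Xi$ is rigid by the first hypothesis, and the type (II)/(III) for $k=0$ or (IV)/(VI) for $k=1$ follows immediately.
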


\begin{proof}
\begin{enumerate}
    \item[(a)] If $k=1$ and $i$ is even and $d=2$, then using Lemma \ref{lem:the case k=1, i even and d=2}(a) and Proposition \ref{prop:description of td-rigid pairs}(b2) we have
    \[
    \abs{\notred{}(i,i+1)} = s_i-s_{i-1} = l > n_{i}+n_{i+1} \geq m_{i}+m_{i+1},
    \]
    as required. Otherwise $\notred{i}\cap\notred{i+1}=\varnothing$ holds by Lemma \ref{lem:the case k=1, i even and d=2}(a) and so
    \[
    \abs{\notred{}(i,i+k)} = \sum_{j=0}^{k}\abs{\notred{i+j}} = \sum_{j=0}^{k}(l-l_{i+j}) \geq \sum_{j=0}^{k}(n_{i+j}+1-l_{i+j}) = \sum_{j=0}^{k}m_{i+j},
    \]
    as required. Moreover, if $k=1$ and $i$ is even, then we have $l\geq n_i+n_{i+1}+1$ by Proposition \ref{prop:description of td-rigid pairs}(b2). Using this we obtain
    \begin{align*}
    \abs{\notred{}(i,i+k)} &= \abs{\notred{i}}+\abs{\notred{i+1}} = l-l_i+l-l_{i+1}\geq 2(n_i+n_{i+1}+1)-l_i-l_{i+1} \\
    &= m_{i}+m_{i+1}+n_{i}+n_{i+1}>m_{i}+m_{i+1},
    \end{align*}
    and so the inequality is strict.
    \item[(b)] Similar to part (a).
    \item[(c)] We only show the claim when $X=\notred{}(i,i+k)$ as the proof when $X=\notblue{}(i,i+k)$ is similar. By definition of the intervals we have that $s_{i+k}\in\notblue{i+k}$ and $s_{i+k}\not\in\notred{}(i,i+k)$, see (\ref{eq:the definition of notblue and notred}). By  the first part of (\ref{eq:condition for maximal interval}) we have
    \[
    s_{i+k}\in \Xi\setminus \notred{}(i,i+k) = (\red\cup\blue)\cap \Xi=(\red\cap\Xi)\cup (\blue\cap\Xi).
    \]
    Since $s_{i+k}\in\notblue{i+k}$ and $\blue\cap\notblue{i+k}=\varnothing$, we conclude that $s_{i+k}\in \red\cap\Xi$. In particular, we have that $s_{i+k}\in\red\cap\notblue{i+k}$. Set $y=s_{i+k}$.

    We claim that $\blue\cap(\Xi\setminus X)=\varnothing$. Indeed, assume to a contradiction that there exists $x\in\blue\cap(\Xi\setminus X)$. In particular $x\not\in\notblue{i+k}$ and $x\not\in\notred{i}$. Recall that $\Xi$ is the interval starting at $\notred{i}$ and ending at $\notblue{i+k}$. Since $y\in\notblue{i+k}$, we obtain that $x<y$. Then by Proposition \ref{prop:description of td-rigid pairs}(a) there exists an interval $Z=[z,z+l-2]$ with $x<z<z+l-2<y$ such that $(\red\cup\blue)\cap Z=\varnothing$. It follows that
    \[
    (\red\cup\blue) \cap \Xi = (\red\cup\blue)\cap(\Xi\setminus Z) \subseteq \Xi\setminus Z.
    \]
    By the first part of (\ref{eq:condition for maximal interval}) we conclude that $\Xi\setminus X\subseteq \Xi\setminus Z$ or equivalently that $Z\subseteq X=\notred{i}\cup\notred{i+k}$. Since $x<z$ and $x\not\in\notred{i}$, we have that $\notred{i}$ and $Z$ are disjoint (as both intervals $\notred{i}$ and $Z$ lie inside $\Xi$, and $\Xi$ starts at $\notred{i}$). Hence $k=1$ and $Z\subseteq \notred{i+1}$. Then the second part of (\ref{eq:condition for maximal interval}) together with part (a) of this lemma imply that $i$ is odd. Hence Lemma \ref{lem:connection between notblue and notred} implies that
    \[
    Z\subseteq \notred{i+1} \subsetneq \notblue{i}
    \]
    and so $l-1 =\abs{Z} < \abs{\notblue{i}} = n_i\leq l-1$, which is a contradiction. Therefore, our claim that $\blue\cap(\Xi\setminus X)=\varnothing$ holds. 

    Now notice that by the first part of
    (\ref{eq:condition for maximal interval}) we have
    \[
    \varnothing=\blue\cap (\Xi\setminus X) = \blue\cap ((\red\cup\blue)\cap \Xi) = (\blue\cap (\red\cup\blue))\cap\Xi = \blue\cap\Xi.
    \]
    Hence $\blue\cap\Xi=\varnothing$ and so the first part of (\ref{eq:condition for maximal interval}) gives that $\Xi$ is rigid. By the second part of (\ref{eq:condition for maximal interval}) we have that $\abs{\notred{}(i,i+k)}=\sum_{j=0}^{k}m_{i+j}$ holds. By part (a) of this lemma we obtain that $k=0$ or $i$ is even. In any case, Lemma \ref{lem:the case k=1, i even and d=2} gives that $\abs{\notred{}(i,i+k)}=\sum_{j=0}^{k}\abs{\notred{i+j}}$. Hence we obtain that
    \[
    \sum_{j=0}^{k}(l-l_{i+j})=\sum_{j=0}^{k}\abs{\notred{i+j}} = \sum_{j=0}^{k}m_{i+j} \leq \sum_{j=0}^{k}(n_{i+j}-l_{i+j}+1).
    \]
    Since $n_{i+j}\leq l-1$ for $0\leq j\leq k$, we conclude that the last inequality must be an equality and that $n_{i+j}=l-1$ for $0\leq j\leq k$. Therefore, if $k=0$, then $(M_i)$ is a full admissible configuration of type (II) or (III), while if $k=1$, then we have seen that $i$ is odd and so $(M_i,M_{i+1})$ is a full admissible configuration of type (IV) or (VI). \qedhere
\end{enumerate}    
\end{proof}

We can now show the main result for this paragraph.

\begin{proposition}\label{prop:to not lose need admissible configuration}
Let $(M,P)$ be a $\td$-rigid pair with non-diagonal component $(\red,\blue)$. Let $2\leq i\leq i+k \leq p$ and assume that $M_{i+j}\neq 0$ for $0\leq j\leq k$. Set $\Xi=\Xi(i,i+k)$. Then
\begin{equation}\label{eq:bound on loss k small}
    \abs{(\red\cup \blue)\cap \Xi} + \sum_{j=0}^{k}m_{i+j} \leq \abs{\Xi}.
\end{equation}
Moreover equality holds if and only if $(M_i,\ldots,M_{i+k})$ is a full admissible configuration.
\end{proposition}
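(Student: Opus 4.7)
The inequality (\ref{eq:bound on loss k small}) is equivalent to exhibiting at least $\sum_{j=0}^k m_{i+j}$ indices in $\Xi$ avoiding $\red \cup \blue$. I begin with a reduction via \cref{prop:to not lose need less than 2}: if $d>2$ and $k\geq 2$, part (c) gives strict inequality, and since no admissible configuration with $k\geq 2$ exists for $d>2$ (only type (VIII) has $k=2$, and it requires $d=2$), the equality direction is vacuous in this range; parts (a) and (b) rule out $d=2$ with $i$ odd and $k\geq 2$ and bound $k\leq 2$ for $d=2$ with $i$ even. It thus suffices to treat $k\in\{0,1\}$ together with the special case $k=2$, $d=2$, $i$ even.

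For $k\leq 1$, I split on whether $\red\cap\Xi$ and $\blue\cap\Xi$ are empty. If $\blue\cap\Xi=\varnothing$, then $(\red\cup\blue)\cap\Xi = \red\cap\Xi$ is disjoint from $\notred{}(i,i+k)$ by \cref{prop:description of td-rigid pairs}(b3), and combining with \cref{lem:the intervals X}(a) yields
\[
\abs{(\red\cup\blue)\cap\Xi}\leq\abs{\Xi}-\abs{\notred{}(i,i+k)}\leq\abs{\Xi}-\sum_{j=0}^k m_{i+j}.
\]
Equality forces $\red\cap\Xi=\Xi\setminus\notred{}(i,i+k)$ and $\abs{\notred{}(i,i+k)}=\sum m_{i+j}$, so \cref{lem:the intervals X}(c) applied with $X=\notred{}(i,i+k)$ produces a full admissible configuration with $\Xi$ rigid. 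The case $\red\cap\Xi=\varnothing$ is symmetric via \cref{lem:the intervals X}(b), (c), yielding $\Xi$ support.

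If both $\red\cap\Xi$ and $\blue\cap\Xi$ are non-empty, I apply \cref{prop:description of td-rigid pairs}(a). If there exist $x\in\blue\cap\Xi$ and $y\in\red\cap\Xi$ with $x<y$, then there is an interval $[z,z+l-2]\subseteq\Xi$ of length $l-1$ disjoint from $\red\cup\blue$; together with \cref{lem:consecutive diagonals can have at most l-1} this establishes the inequality. Equality then forces $\sum m_{i+j}=l-1$ and $\Xi\setminus(\red\cup\blue)=[z,z+l-2]$, and tracing the boundary alignment via \cref{lem:connection between notblue and notred} pins down a full admissible configuration of ``support to rigid'' type, with the endpoint of $\blue\cap\Xi$ marking the transition point. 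If instead no such crossing pair exists, then $\blue\cap\Xi$ lies entirely to the right of $\red\cap\Xi$; combined with $\red\cap\notred{}(i,i+k)=\varnothing$ and $\blue\cap\notblue{}(i,i+k)=\varnothing$, this confines $\red\cup\blue$ to a narrow central region of $\Xi$, and a direct count gives strict inequality (so no admissible configuration arises in this sub-case). The special case $k=2$, $d=2$, $i$ even is handled by the same strategy, producing type (VIII). The main obstacle will be the bookkeeping required in the crossing subcase to identify which ``support to rigid'' type (III), (VI), (VII), or (VIII) is produced by equality; this requires careful analysis of the intervals $\notred{i+j}$ and $\notblue{i+j}$ in (\ref{eq:the definition of notblue and notred}) combined with \cref{lem:connection between notblue and notred}.
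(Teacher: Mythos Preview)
Your overall strategy matches the paper's: reduce via \cref{prop:to not lose need less than 2}, then for $k\leq 1$ find a subset $X\subseteq\Xi$ of size $\geq\sum m_{i+j}$ disjoint from $\red\cup\blue$, and analyze equality. However, your case split differs, and this creates both an unnecessary extra case and a genuine gap.

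\textbf{The case organization.} The paper does not split on whether $\blue\cap\Xi$ or $\red\cap\Xi$ is empty. Instead it tests whether $X=\notred{}(i,i+k)$ or $X=\notblue{}(i,i+k)$ already satisfies $(\red\cup\blue)\cap X=\varnothing$; if so, \cref{lem:the intervals X} finishes. If neither works, then specifically $\blue\cap\notred{}(i,i+k)\neq\varnothing$ and $\red\cap\notblue{}(i,i+k)\neq\varnothing$, and from the positions of these intervals inside $\Xi$ (together with \cref{lem:connection between notblue and notred} for $k=1$) one shows directly that the resulting $x\in\blue$ and $y\in\red$ satisfy $x<y$. This eliminates your ``no crossing'' subcase entirely, so the direct count you allude to there is never needed.

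\textbf{The gap in the crossing subcase.} Knowing $\Xi\setminus(\red\cup\blue)=[z,z+l-2]$ does \emph{not} by itself force $\Xi$ to be support to rigid; you must still rule out an element of $\red$ lying to the left of $z$ (and dually). The paper does this by a \emph{second} application of \cref{prop:description of td-rigid pairs}(a): since $\min(\Xi)\in\notred{i}$ lies in $\blue$, any such $\red$-element would produce a second gap $Z'$ of length $l-1$ disjoint from $Z$, contradicting $\Xi\setminus(\red\cup\blue)=Z$. Only after establishing that $\Xi$ is support to rigid at $z-1$ does one compute the constraints on $z$ (using $\notred{i+1}\cap\notblue{i}\subseteq Z$ when $k=1$) to land in type (III), (VI), or (VII). \cref{lem:connection between notblue and notred} alone does not do this work.

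\textbf{Missing direction.} You never address that a full admissible configuration yields equality; the paper handles this via \cref{rem:admissible configurations are td-rigid}(c).
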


\begin{proof}
If $(M_i,\ldots,M_{i+k})$ is a full admissible configuration, then equality holds in (\ref{eq:bound on loss k small}) by Remark \ref{rem:admissible configurations are td-rigid}(c). We now show the other direction.

If $d>2$ and $k\geq 2$ then (\ref{eq:bound on loss k small}) holds by Proposition \ref{prop:to not lose need less than 2}(c). Hence we may assume that $d=2$ or $k\leq 1$. We consider the two cases separately.

\textbf{Case $k\leq 1$}. In this case we work similarly to the proof of Proposition \ref{prop:to not lose need less than 2}. That is, it is enough to show that there exists a subset $X\subseteq \Xi$ such that 
\begin{enumerate}
    \item[(i)] $(\red\cup \blue)\cap X=\varnothing$, 
    \item[(ii)] $\abs{X} \geq \sum_{j=0}^{k}m_{i+j}$, and
    \item[(iii)] if (\ref{eq:condition for maximal interval}) holds, then $(M_i,\ldots,M_{i+k})$ is a full admissible configuration.
\end{enumerate}
Indeed, conditions (i) and (ii) give
\[
\abs{(\red\cup \blue)\cap\Xi} + \sum_{j=0}^{k}m_{i+j} = \abs{(\red\cup\blue)\cap (\Xi\setminus X)} + \sum_{j=0}^{k}m_{i+j} \leq \abs{\Xi}-\abs{X}+\sum_{j=0}^{k}m_{i+j} \leq \abs{\Xi},
\]
which shows (\ref{eq:bound on loss k small}). Moreover, if equality holds in (\ref{eq:bound on loss k small}), then the above line shows that (\ref{eq:condition for maximal interval}) holds and so $(M_i,\ldots,M_{i+k})$ is a full admissible configuration by condition (iii).

Notice that by Lemma \ref{lem:the intervals X} conditions (ii) and (iii) hold for $\notred{}(i,i+k)=\bigcup_{j=1}^{k}\notred{i+j}$ and for $\notblue{}(i,i+k)=\bigcup_{j=1}^{k}\notblue{i+j}$. Hence we may assume that condition (i) fails for these two sets, otherwise the claim follows. Thus we assume that $(\red\cup\blue)\cap \notred{}(i,i+k)\neq\varnothing$, from which we obtain that $\blue\cap\notred{}(i,i+k)\neq \varnothing$, and we assume that $(\red\cup\blue)\cap\notblue{}(i,i+k)\neq \varnothing$, from which we obtain that $\red\cap\notblue{}(i,i+k)\neq \varnothing$. Let $x\in\blue\cap \notred{}(i,i+k)$ and $y\in\red\cap\notblue{}(i,i+k)$. 

We claim that $x<y$. Recall that $\Xi$ is the interval starting at $\notred{i}$ and ending at $\notblue{i+k}$. By assumption, we have that $x\in\notred{i}\cup\notred{i+k}$. Moreover, $y\in \red$ and so $y\not\in\notred{i}$. Hence if $x\in\notred{i}$, since $\Xi$ starts at $\notred{i}$, we have that $x<y$. Similarly, if $y\in\notblue{i+k}$, then we again obtain $x<y$. Hence it is enough to show that if $k=1$ and $x\in \notred{i+1}$ and $y\in\notblue{i}$, then we reach a contradiction. If $i$ is odd, then $\notred{i+1}\subsetneq \notblue{i}$ by Lemma \ref{lem:connection between notblue and notred}, contradicting $x\in\blue\cap\notred{i+1}$. Hence $i$ is even. But then $i+1$ is odd and Lemma \ref{lem:connection between notblue and notred} gives $\notblue{i}\subsetneq \notred{i+1}$, contradicting $y\in\red\cap\notblue{i}$. This shows that $x<y$.

Since $x<y$, by Proposition \ref{prop:description of td-rigid pairs}(a) there exists an interval $Z=[z,z+l-2]$ with $x<z<z+l-2<y$ such that $(\red\cup\blue)\cap Z=\varnothing$. Hence the interval $Z\subseteq \Xi$ satisfies condition (i). Moreover, since $\abs{Z}=l-1$, it also satisfies condition (ii) by Lemma \ref{lem:consecutive diagonals can have at most l-1}. Hence it remains to show that $Z$ satisfies condition (iii). Assume thus that (\ref{eq:condition for maximal interval}) holds for $Z$ and we need to show that $(M_i,\ldots,M_{i+k})$ is a full admissible configuration. Let $x'=\min(\notred{i})$ and note that
\[
x' = \min(\notred{i}) = \min(\Xi) \leq x < z = \min(Z),  
\]
and so $x'\not\in Z$. By the first part of (\ref{eq:condition for maximal interval}) we then have
\[
x'\in \Xi\setminus Z = (\red\cup\blue)\cap\Xi= (\red\cap\Xi)\cup (\blue\cap\Xi).
\]
Since $\red\cap\notred{i}=\varnothing$ and $x'\in\notred{i}$, we conclude that $x'\in \blue\cap\Xi$. We now claim that $\{\xi\in\Xi\mid \xi<z\}\cap \red=\varnothing$. Assume to a contradiction that there exists $\xi\in\Xi$ with $\xi<z$ and $\xi\in \red$. Then $x'=\min(\Xi)\leq z$ and since $x'\in\blue$ and $\xi\in\red$, we have $x'<\xi$. Hence by Proposition \ref{prop:description of td-rigid pairs}(a) there exists an interval $Z'$ of length $l-1$ between $x'$ and $\xi$ such that $(\red\cup\blue)\cap Z'=\varnothing$. In particular, $Z'\cap Z=\varnothing$ since $\xi<z=\min(Z)$. But then by the first part of (\ref{eq:condition for maximal interval}) we have
\[
Z' = Z' \setminus Z \subseteq \Xi\setminus Z = (\red\cup\blue)\cap \Xi.
\]
Hence $Z'\subseteq \red\cup\blue$ and together with $(\red\cup\blue)\cap Z'=\varnothing$ we obtain $Z'=\varnothing$, contradicting $\abs{Z'}=l-1$. We conclude that indeed $\{\xi\in\Xi\mid\xi<z\}\cap\red=\varnothing$. Then
\[
\{\xi \in \Xi \mid \xi < z\} \setminus Z \subseteq \Xi\setminus Z = (\red\cap\Xi)\cup(\blue\cap\Xi)
\]
implies that $\{\xi \in\Xi\mid \xi<z\} \subseteq \blue\cap\Xi$. A similar argument shows that $\{\xi\in\Xi\mid \xi>z+l-2\}\subseteq \red\cap\Xi$. Then we have 
\[
\{\xi \in \Xi \mid \xi < z\}\sqcup \{\xi \in \Xi \mid \xi > z+l-2\} = \Xi\setminus Z = (\red\cap \Xi)\sqcup (\blue\cap \Xi),
\]
and together with the aforementioned inclusions, we conclude that $\{\xi\in\Xi\mid\xi < z\}=\blue \cap\Xi$ and $\{\xi\in\Xi\mid \xi>z+l-2\}=\red\cap\Xi$. Hence $\Xi$ is support to rigid at $z-1$. At this point we consider the subcases $k=0$ and $k=1$ separately.

\textbf{Subcase $k=0$}. If $k=0$, then the second condition of (\ref{eq:condition for maximal interval}) gives $l-1=\abs{Z}=m_i$, from which we obtain that $n_i=l-1$, $l_i=1$ and $m_i=n_i-l_i+1$. Hence in this case $(M_i)$ is a full admissible configuration of type (III). 

\textbf{Subcase $k=1$}. Now assume that $k=1$. Then the first part of (\ref{eq:condition for maximal interval}) gives
\[
(\notred{i+1}\cap\notblue{i}) \cap (\Xi\setminus Z) = (\notred{i+1} \cap \notblue{i}) \cap (\red\cup\blue)\cap \Xi = \varnothing.
\]
Hence $\notred{i+1}\cap\notblue{i}\subseteq Z=[z,z+l-2]$. Therefore, if $\notred{i+1}\cap\notblue{i} = [\alpha,\beta]$, then
\begin{equation}\label{eq:limits for z}
    z-1 \leq \alpha-1 \text{ and } \beta+1\leq z+l-1.
\end{equation}
Also the second part of (\ref{eq:condition for maximal interval}) gives $l-1=\abs{Z}=m_{i}+m_{i+1}$. We consider the cases $i$ odd and $i$ even separately.

Assume that $i$ is odd. Then using Lemma \ref{lem:immediate use of mi li ni} and Proposition \ref{prop:description of td-rigid pairs}(b1) we have
\begin{align*}
l-1 &= m_{i}+m_{i+1} \leq (n_i-l_i+1)+(n_{i+1}-l_{i+1}+1) = n_i+n_{i+1}+2-(l_i+l_{i+1}) \\
&\leq n_i+n_{i+1}+2 -(l+1) \leq (l-1)+(l-1) +2 -(l+1) = l-1.
\end{align*}
We conclude that all inequalities above must be equalities and so $m_{i+1}=n_{i+1}-l_{i+1}+1$ and $n_{i+1}=l-1$. On the other hand, using (\ref{eq:the definition of notblue and notred}) we compute that $[\alpha,\beta]=\notred{i+1}\cap\notblue{i}=[s_i,s_i+(l-l_{i+1})-1]$. Hence in this case (\ref{eq:limits for z}) becomes
\[
z-1 \leq s_i-1 \text{ and } s_i+(l-l_{i+1})-1+1 \leq z+l-1.
\]
Equivalently, and using $m_{i+1}=n_{i+1}-l_{i+1}+1$ and $n_{i+1}=l-1$, we obtain
\[
z-1 \leq s_i-1 \text{ and } s_i-(l-m_{i+1}) \leq z-1.
\]
Hence $z-1\in [s_i-(l-m_{i+1}),s_i-1]$, which shows that $(M_i,M_{i+1})$ is a full admissible configuration of type (VI).

Assume that $i$ is even. Then using Lemma \ref{lem:immediate use of mi li ni} and Proposition \ref{prop:description of td-rigid pairs}(b2) we have
\[
l-1 = m_{i}+m_{i+1} \leq n_i+n_{i+1}+2 - (l_i+l_{i+1}) \leq (l-1)+2 -(l_i+l_{i+1})\leq (l-1)+2-(1+1) = l-1.
\]
We conclude that all inequalities above must be equalities and so $m_i=n_i-l_1+1$ and $l_i=1$. On the other hand, using (\ref{eq:the definition of notblue and notred}) we compute that $[\alpha,\beta]=\notred{i+1}\cap\notblue{i}=[s_i-n_i+1,s_i]$. Hence in this case (\ref{eq:limits for z}) becomes
\[
z-1 \leq s_i-n_i+1-1 \text{ and } s_i+1 \leq z+l-1.
\]
Equivalently, and using $m_{i}=n_{i}-l_{i}+1$ and $l_{i}=1$, we obtain
\[
z-1 \leq s_i-m_i \text{ and } s_i-(l-1) \leq z-1.
\]
Hence $z-1\in [s_i-(l-1),s_i-m_i]$, which shows that $(M_i,M_{i+1})$ is an admissible configuration of type (VII).

\textbf{Case $d=2$}. By Proposition \ref{prop:to not lose need less than 2}(a) and (b) we obtain that $k\leq 2$. Since the case $k\leq 1$ has already been dealt with, we may assume that $k=2$. Then Proposition \ref{prop:to not lose need less than 2}(a) implies that $i$ is even. Hence we have that $d=2$ and $i$ is even and $k=2$. Using (\ref{eq:the definition of notblue and notred}) and (\ref{eq:difference of consecutive simples}) we may immediately compute that
\[
\notred{i}=[s_{i}-l,s_{i}-2] \text{ and } \notred{i+1}=[s_{i}-(l-l_{i+1})+1,s_i].
\]
We claim that 
\[
\notred{}(i,i+1)=\notred{i}\cup\notred{i+1}=[s_i-l,s_i].\]
Indeed, it is enough to show that
\[
s_{i}-(l-l_{i+1})+1\leq s_i-1,
\]
or equivalently that $1+l_{i+1}\leq l-1$. Notice that since since $m_i>0$, we have that $n_i\geq 1$ and so Proposition \ref{prop:description of td-rigid pairs}(b2) gives $n_{i+1}\leq l-2$. In particular $l_{i+1}\leq n_{i+1}\leq l-2$ which shows the required inequality $1+l_{i+1}\leq l-1$. This shows that $\notred{}(i,i+1)=[s_i-l,s_i]$. Similarly we may show that $\notblue{}(i+1,i+2)=[s_{i+2}-l,s_{i+2}]$.

Assume now that $\blue\cap \notred{}(i,i+1)=\varnothing$. Then, since $\abs{\notred{}(i,i+1)}=l+1$, and using Lemma \ref{lem:consecutive diagonals can have at most l-1}, we obtain that
\[
\abs{(\red\cup\blue)\cap \Xi} + \sum_{j=0}^{2}m_{i+j} \leq \abs{\Xi}-(l+1)+(l-1) = \abs{\Xi}-2 < \abs{\Xi},
\]
which shows (\ref{eq:bound on loss k small}). Hence we may assume that there exists $x\in\blue\cap\notred{}(i,i+1)$. A similar argument shows that we may assume that there exists $y\in \red\cap \notblue{}(i+1,i+2)$. By our computations we have that $x\in [s_i-l,s_i]$ and $y\in[s_{i+2}-l,s_{i+2}]$ and so $x<y$ by (\ref{eq:difference of consecutive simples}). Hence by Proposition \ref{prop:description of td-rigid pairs}(a) there exists an interval $Z=[z,z+l-2]$ with $x<z<z+l-2<y$ such that $(\red\cup\blue)\cap Z=\varnothing$. Then using Lemma \ref{lem:consecutive diagonals can have at most l-1} again we have
\begin{equation}\label{eq:bound for the case d=2 i is even and k=2}
\abs{ (\red\cup\blue)\cap \Xi} + \sum_{j=0}^{2}m_{i+j} \leq \abs{\Xi\setminus Z} + (l-1) = \abs{\Xi}-\abs{Z}+(l-1)=\abs{\Xi},
\end{equation}
and so (\ref{eq:bound on loss k small}) holds again. Moreover, in the case of equality in (\ref{eq:bound on loss k small}), we obtain that all inequalities in (\ref{eq:bound for the case d=2 i is even and k=2}) are equalities. Hence $m_{i+j}=n_{i+j}-l_{i+j}+1$ for $j\in \{0,1,2\}$ and also $(\red\cup\blue)\cap \Xi=\Xi\setminus Z$. Then, as in the previous case, we can show that $z-1\in [s_{i+1}-l_{i+2},s_{i}-n_i]$. Therefore in this case $(M_{i},M_{i+1},M_{i+2})$ is a full admissible configuration of type (VIII).
\end{proof}

\subsection{Well-configured \texorpdfstring{$\cC$}{C}-pairs}\label{subsec:well-configured pairs}

Admissible configurations play an important role in the characterization of summand-maximal $\td$-rigid modules. Indeed, they restrict the local behaviour of a summand-maximal $\td$-rigid pair $(M,P)$. The other restriction comes from what happens between two admissible configurations. To explain this other restriction, we introduce the following notion.

\begin{definition} \label{def:the diagonal partition}
Let $(M,P)$ be a $\cC$-pair. The \emph{diagonal component of $M$}\index[definitions]{component!diagonal} is defined to be the set $T=T(M)=\{i\in [2,p] \mid M_i\neq 0\}$\index[symbols]{T@$T(M)$}. By ordering the elements of $T$ there is a unique way to write $T$ as a union of disjoint intervals $T_1,\ldots,T_k$:
\[
T = T_1 \sqcup T_2 \sqcup \cdots \sqcup T_k
\]
such that $\max(T_j)<\min(T_{j+1})-1$ for $1\leq j\leq k-1$. We call $(T_1,\ldots,T_k)$\index[symbols]{T@$(T_1,\ldots,T_k)$} the \emph{diagonal partition of $M$}\index[definitions]{diagonal partition}. 
\end{definition}
In other words, the diagonal partition of a $\cC$-pair $(M,P)$ encodes which sequences of consecutive diagonals $\diag{i},\ldots,\diag{i+k}$ are intersected by $M$.

Let $(M,P)$ be a $\cC$-pair with diagonal partition $(T_1,\ldots,T_k)$. For $1\leq j\leq k$ we write $T_j=[t_{j,1},t_{j,2}]$ and we set $\Xi(T_j) \coloneqq \Xi(t_{j,1},t_{j,2})$\index[symbols]{X@$\Xi(T_j)$}. For $0\leq j\leq k$ we define \index[symbols]{Tzzz@$\Theta_i$}
\begin{equation}\label{eq:definition of the intervals Theta}
    \Theta_j \coloneqq \begin{cases}
        [1,\min(\Xi(T_1))-1], &\mbox{if $j=0$,} \\
        [\max(\Xi(T_j))+1,\min(\Xi(T_{j+1}))-1], &\mbox{if $1\leq j\leq k-1$,} \\
        [\max(\Xi(T_j))+1,n], &\mbox{if $j=k$.}
    \end{cases}
\end{equation}
The interval $\Xi(T_j)$ plays the same role as the interval $\Xi(i,i+k)$ which we saw in the previous paragraph. The interval $\Theta_j$ is exactly the interval between the intervals $\Xi(T_j)$ and $\Xi(T_{j+1})$ as the following examples demonstrate and as we show soon.

\begin{example}
\begin{enumerate}
    \item[(a)] The diagonal partition of the $\tau_4$-rigid pair in Example \ref{example tau 4 rigid Lambda 23 4} is $T_1=[2,4]$ with $\Xi(T_1)=[1,23]$ and $\Theta_0=\varnothing=\Theta_1$.
    \item[(b)] The diagonal partition of the $\tau_4$-rigid pair in Example \ref{Example:Admissible configuration} is $T_1=[2,3]$ with $\Xi(T_1)=[1,15]$, $\Theta_0=\varnothing$ and $\Theta_1=[16,23]$.
\end{enumerate}    
\end{example}

\begin{example}
    \label{Example:diagonal partition with two intervals}
    Let $({\color{rigid} M},{\color{support} P})$ be a $\td$-rigid pair of $\Lambda(n,l)$. 
    \begin{enumerate}
    \item[(a)] If $d=4,\ n=37,\ l=4$ and ${\color{rigid} M}$ is given by
    \[
    \includegraphics{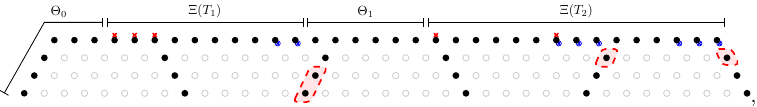}
    \]
    then the diagonal partition of ${\color{rigid} M}$ is $(T_1,T_2)=([3,3],[5,6])$. In addition, we have
    \[
        \Xi(T_1)=[7,16],\ \Xi(T_2)=[23,37], \text{ and }\Theta_0=[1,6],\  \Theta_1=[17,22],\ \Theta_2=\varnothing.
    \]
    \item[(b)] If $d=2,\ n=34,\ l=5$ and ${\color{rigid} M}$ is given by
    \[
    \includegraphics{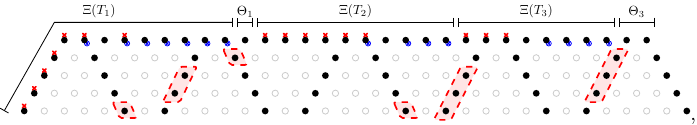}
    \]
    then the diagonal partition is $(T_1,T_2,T_3)=([2,4],[6,7],[9,9])$. In addition, we see that
    \[
    \Xi(T_1)=[1,13],\ \Xi(T_2)=[15,24],\  \Xi(T_3)=[25,32],
    \]
    and
    \[
    \Theta_0=\varnothing,\ \Theta_1=[14,14],\  \Theta_2=\varnothing,\ \Theta_3=[33,34].
    \]
    \end{enumerate}
\end{example}

If $j=0$ or $j=k$ then, as the above examples show, it is possible that $\Theta_j$ is empty. Moreover, in the above examples we see that $\Xi(T_j)\cap\Xi(T_{j+1})=\varnothing$ for $1\leq j\leq k-1$. This always holds for $\td$-rigid pairs, and even for $\cC$-pairs there is only one possible case where this property does not hold as recorded in the following lemma.

\begin{lemma}\label{lem:the intervals Theta}
Let $(M,P)$ be a $\cC$-pair with diagonal partition $(T_1,\ldots,T_k)$. 
\begin{enumerate}
    \item[(a)] Let $1\leq j\leq k-1$ and set $T_j=[\alpha_1,\alpha_2]$ and $T_{j+1}=[\beta_1,\beta_2]$. Then the following are equivalent
    \begin{enumerate}
        \item[(i)] $\Xi(T_j)\cap\Xi(T_{j+1})\neq\varnothing$.
        \item[(ii)] $d=2$ and $\beta_1=\alpha_2+2$ and $\alpha_2$ and $\beta_1$ are both odd and $n_{\alpha_2}> l_{\alpha_2+2}+1$ all hold.
    \end{enumerate}
    \item[(b)] Assume that $\Xi(T_j)\cap\Xi(T_{j+1})=\varnothing$ for $1\leq j\leq k-1$. Then the sequence
    \begin{equation}\label{eq:the partition of [1,n]}
    \Theta_0, \Xi(T_1), \Theta_1, \Xi(T_2),\ldots, \Theta_{k-1}, \Xi(T_k),\Theta_k
    \end{equation}
    is a sequence of consecutive intervals which forms a partition of $[1,n]$.
\end{enumerate}
\end{lemma}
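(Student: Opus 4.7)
My plan for part (a) is a direct case analysis on the parities of $\alpha_2$ and $\beta_1$, using $\beta_1 \geq \alpha_2 + 2$, which holds because $T_j$ and $T_{j+1}$ are separated intervals of the diagonal partition. Using the explicit formulas in (\ref{eq:the definition of Xi}), I will write $\max \Xi(T_j)$ as either $s_{\alpha_2}$ (if $\alpha_2$ is even) or $s_{\alpha_2} + n_{\alpha_2} - 1$ (if $\alpha_2$ is odd), and $\min \Xi(T_{j+1})$ as either $s_{\beta_1 - 1}$ (if $\beta_1$ is even) or $s_{\beta_1 - 1} - (l - l_{\beta_1}) + 1$ (if $\beta_1$ is odd). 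I then compute $\min \Xi(T_{j+1}) - \max \Xi(T_j)$ using the identities (\ref{eq:difference of consecutive simples}), in particular $s_{i+2} - s_i = (d-1)l + 2$ together with the expression for $s_{i+1} - s_i$ determined by the parity of $i$.

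In the three parity configurations where $\alpha_2$ and $\beta_1$ are not both odd, and also in the subcase where both are odd but $\beta_1 \geq \alpha_2 + 4$, this difference turns out to be strictly positive after applying the bounds $1 \leq l_i$ and $n_i \leq l - 1$ from Lemma \ref{lem:immediate use of mi li ni}, forcing emptiness of the intersection. The only interesting case is $\alpha_2, \beta_1$ both odd with $\beta_1 = \alpha_2 + 2$, where the difference simplifies to $\frac{d-2}{2} l + l_{\beta_1} + 2 - n_{\alpha_2}$. Since the $\Xi$'s are integer intervals, emptiness corresponds to this quantity being strictly positive, so non-emptiness is equivalent to $n_{\alpha_2} \geq l_{\beta_1} + 2$, i.e., $n_{\alpha_2} > l_{\beta_1} + 1$. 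The most delicate point is checking that for $d > 2$ the slack $\frac{d-2}{2} l$ cannot be absorbed given $n_{\alpha_2} \leq l - 1$, which forces $d = 2$ as in condition (ii).

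For part (b), combining part (a) with the hypothesis yields $\max \Xi(T_j) < \min \Xi(T_{j+1})$ for every $1 \leq j \leq k-1$, so each $\Theta_j$ in (\ref{eq:definition of the intervals Theta}) is a well-defined (possibly empty) integer interval fitting exactly between $\Xi(T_j)$ and $\Xi(T_{j+1})$. It only remains to verify $1 \leq \min \Xi(T_1)$ and $\max \Xi(T_k) \leq n$, which will follow from elementary estimates on $s_i$ using the formulas (\ref{eq:the definition of s_i}), notably $s_2 = \frac{d}{2} l + 1$ and $s_p = n$, together with $1 \leq l_i$ and $n_i \leq l - 1$. Concatenating the intervals in the prescribed order then gives the desired partition of $[1, n]$ into consecutive intervals; the main obstacle throughout is the case-by-case arithmetic in part (a), while (b) is essentially bookkeeping once (a) and the endpoint checks are in place.
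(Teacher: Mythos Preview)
Your approach is essentially the same as the paper's: both proceed by a parity case analysis on $\alpha_2$ and $\beta_1$, compute $\min\Xi(T_{j+1})-\max\Xi(T_j)$ using (\ref{eq:the definition of Xi}) and (\ref{eq:difference of consecutive simples}), and isolate the exceptional case $d=2$, $\beta_1=\alpha_2+2$, both odd.

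There is one small omission. For the equivalence in (a) you assert that ``emptiness corresponds to this quantity being strictly positive,'' but for two intervals $[a,b]$ and $[c,d]$ disjointness means $b<c$ \emph{or} $d<a$, so you also need to rule out $\max\Xi(T_{j+1})<\min\Xi(T_j)$. The paper handles this by first checking $\min\Xi(T_j)\leq\min\Xi(T_{j+1})$ via the chain
\[
\min\Xi(T_j)\leq s_{\alpha_1-1}\leq s_{\alpha_2-1}=s_{\alpha_2+1}-(d-1)l-2\leq s_{\beta_1-1}-l+2\leq \min\Xi(T_{j+1}),
\]
which then makes the intersection condition equivalent to $\max\Xi(T_j)\geq\min\Xi(T_{j+1})$ as you use. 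This is an easy patch, but without it your ``only if'' direction in the critical subcase is not justified. For part (b) your plan is fine; the paper simply says it follows from (a) and the definition of $\Theta_j$, and your endpoint checks $1\leq\min\Xi(T_1)$ and $\max\Xi(T_k)\leq n$ are the right thing to verify.
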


\begin{proof}
\begin{enumerate}
    \item[(a)] From the definition of a diagonal partition we have that $\beta_1=\alpha_2+q$ for some $q\geq 2$. From (\ref{eq:the definition of Xi}) and using Lemma \ref{lem:immediate use of mi li ni} we see that
    \[
    \max(\Xi(T_j)) \leq \begin{cases}
        s_{\alpha_2}+l-2, &\text{ if }\alpha_2 \text{ is odd,}\\
        s_{\alpha_2}, &\text{ if } \alpha_2 \text{ is even,}
    \end{cases}
    \]
    and 
    \[
    \min(\Xi(T_{j+1})) \geq  \begin{cases}
        s_{\beta_1-1}-l+2, &\text{ if }\beta_1 \text{ is odd,}\\
        s_{\beta_1-1}, &\text{ if }\beta_1 \text{ is even.}
    \end{cases} 
    \]
    On the other hand, (\ref{eq:the definition of Xi}) also gives that $\min(\Xi(T_j))\leq s_{\alpha_1-1}$. Together with $\alpha_1\leq \alpha_2=\beta_1-q\leq \beta_1-2$, we obtain
    \[
    \min(\Xi(T_j)) \leq s_{\alpha_1-1} \leq s_{\alpha_2-1} \overset{(\ref{eq:difference of consecutive simples})}{=
    } s_{\alpha_2+1} -(d-1)l+2 \leq s_{\beta_1-1}-l+2 \leq \min(\Xi(T_j+1)).
    \]
    We conclude that $\Xi(T_j)\cap\Xi(T_{j+1})=\varnothing$ if and only if 
    \begin{equation}\label{eq:consecutive Tis do not intersect in Xi}
    \max(\Xi(T_j))<\min(\Xi(T_{j+1})),
    \end{equation} 
    holds. From (\ref{eq:difference of consecutive simples}) we obtain the following computation
    \[
    s_{\beta_1-1}-s_{\alpha_2}= \begin{cases}
        \frac{q-1}{2}[(d-1)l+2],&\text{ if }\alpha_2 \text{ is odd and }\beta_1 \text{ is even,}\\
        \frac{q-1}{2}[(d-1)l+2],&\text{ if }\alpha_2 \text{ is even and }\beta_1 \text{ is odd,}\\
        \frac{q-2}{2}[(d-1)l+2]+\frac{d}{2}l,&\text{ if }\alpha_2 \text{ and }\beta_1\text{ are both odd},\\
        \frac{q-2}{2}[(d-1)l+2]+\frac{d-2}{2}l+2,&\text{ if }\alpha_2 \text{ and }\beta_1\text{ are both even}.
    \end{cases}
    \]
    Notice that if one of $\alpha_2$ and $\beta_1$ is even and the other is odd, then it follows that $q\geq 3$. Using this and the above inequalities and equalities it is straightforward to verify that (\ref{eq:consecutive Tis do not intersect in Xi}) holds, except for when $d=2$ and $q=2$ and $\alpha_2$ and $\beta_1$ are both odd. 

    Now assume that $\Xi(T_j)\cap\Xi(T_{j+1})\neq\varnothing$. Then this is equivalent to $\max(\Xi(T_j)) \geq \min(\Xi(T_{j+1}))$. Therefore (\ref{eq:consecutive Tis do not intersect in Xi}) does not hold and so we may assume that $d=2$ and $\beta_1=\alpha_2+2$ and $\alpha_2$ and $\beta_1$ are both odd. Using (\ref{eq:the definition of Xi}) we have that
    \[
    s_{\alpha_2}+n_{\alpha_2}-1 = \max(\Xi(T_j)) \geq \min(\Xi(T_{j+1}))  = s_{\beta_1-1}-(l-l_{\beta_1})+1,
    \]
    or, equivalently, using $\beta_1=\alpha_2+2$, that
    \[
    n_{\alpha_2} \geq s_{\alpha_2+1}-s_{\alpha_2}-l+l_{\alpha+2}+2.
    \]
    Since $s_{\alpha_2+1}-s_{\alpha_2}=l$, this is equivalent to $n_{\alpha_2}> l_{\alpha_2+2}+1$, as required.
    \item[(b)] Follows immediately by part (a) and the definition of $\Theta_j$.\qedhere
    \end{enumerate}
\end{proof}

\begin{remark}\label{rem:we have consecutive intervals}
Notice that if $(M,P)$ is a $\td$-rigid pair, then condition (ii) in Lemma \ref{lem:the intervals Theta}(a) never occurs by Proposition \ref{prop:description of td-rigid pairs}(b1). Hence for $\td$-rigid pairs, the condition in Lemma \ref{lem:the intervals Theta}(b) is always satisfied.
\end{remark}

We are now ready to give the definition of a well-configured $\cC$-pair, which serves as the combinatorial description of summand-maximal $\td$-rigid pairs. First let $(M,P)$ be a $\cC$-pair with diagonal partition $(T_1,\ldots,T_k)$. Assume that for each $j\in[1,k]$ the sequence $(M_{t_{j,1}},\ldots,M_{t_{j,2}})$ is an admissible configuration. When this sequence is an admissible configuration of type (III), we have that $T_j=\{t_j\}$ and so $t_{j,1}=t_{j,2}=t_j$, and also that $M_{t_j} = \diagm{t_j}$. This extreme case plays a special role which is recorded in the following definition.

\begin{definition}\label{def:well-configured}
Let $(M,P)$ be a $\cC$-pair with non-diagonal component $(\red,\blue)$. Let $(T_1,\ldots,T_k)$ be the diagonal partition of $(M,P)$. We say that $(M,P)$ is \emph{well-configured}\index[definitions]{well-configured} if any of the following conditions hold:
\begin{enumerate}
    \item[(a)] $T=\varnothing$, $\red=[1,x]$ and $\blue=[x+1,n]$ for some $0\leq x\leq n$, or
    \item[(b)] $T\neq \varnothing$ and the following hold:
    \begin{enumerate}
        \item[(i)] for $1\leq j\leq k$ we have that $(M_{t_{j,1}},\ldots,M_{t_{j,2}})$ is a full admissible configuration,
        \item[(ii)] for $1\leq j\leq k-1$ we have that $\Xi(T_j)\cap\Xi(T_{j+1})=\varnothing$,
        \item[(iii)] for $0\leq j\leq k$ we have that $\Theta_j$ is either rigid, support, or rigid to support,
        \item[(iv)] for $1\leq j\leq k$ we have that if $\Xi(T_j)$ is rigid, then $\Theta_{j-1}$ is rigid or $(M_{t_{j,1}},\ldots,M_{t_{j,2}})$ is a full admissible configuration of type (III), and
        \item[(v)] for $1\leq j\leq k$ we have that if $\Xi(T_j)$ is support, then $\Theta_{j}$ is support or $(M_{t_{j,1}},\ldots,M_{t_{j,2}})$ is a full admissible configuration of type (III).
    \end{enumerate}
\end{enumerate}
\end{definition}

There is admittedly a lot of data in Definition \ref{def:well-configured}, but almost all of it is collected in the much more compact Table \ref{table:well-configured}. The only exception is condition (ii) in Definition \ref{def:well-configured}(b) which happens only under the extremely specific circumstances described in Lemma \ref{lem:the intervals Theta}(a)(ii) and needs to be checked separately. Instead of giving an explicit but cumbersome general algorithm of how to use that table, we illustrate with three representative examples.

\begin{table}[h]
\begin{tabular}{c m{2em}|c|c|c|c|c|c|c|c|c}
\multicolumn{2}{l|}{\multirow{2}{*}{\diagbox{$T_i$}{$T_{i+1}$}}} & \multirow{2}{*}{I}  &\multirow{2}{*}{II}  &\multirow{2}{*}{III}  &\multirow{2}{*}{IV}&\multirow{2}{*}{V}&\multicolumn{2}{c|}{VI}&\multirow{2}{*}{VII}&\multirow{2}{*}{VIII}  \\
&&&&&&&R&SR&&
\\\hline
\multicolumn{2}{l|}{I} & S  &\cellcolor{black}  &S  &\cellcolor{black}&S&\cellcolor{black}&S&S&S  \\\hline
\multicolumn{2}{l|}{II} & A  &R  &A  &R&A&R&A&A&A  \\\hline
\multicolumn{2}{l|}{III} & A  &R  &A  &R&A&R&A&A&A  \\\hline
\multicolumn{2}{l|}{IV} & A  &R  &A  &R&A&R&A&A&A  \\\hline
\multicolumn{2}{l|}{V} & S  &\cellcolor{black}  &S  &\cellcolor{black}&S&\cellcolor{black}&S&S&S  \\\hline
\multicolumn{2}{l|}{VI} & A  &R  &A  &R&A&R&A&A&A  \\\hline
\multirow{2}{*}{VII}&S & S  &\cellcolor{black}  &S  &\cellcolor{black}&S&\cellcolor{black}&S&S&S  \\\cline{2-11}
&SR & A  &R  &A  &R&A&R&A&A&A  \\\hline
\multicolumn{2}{l|}{VIII} & A  &R  &A  &R&A&R&A&A &A 
\end{tabular}
\caption{In the table we show the possible direct neighbouring admissible types of a well-configured pair as non-blacked out cells. The rows are the possible type of $T_i$ and the columns of $T_{i+1}$. For $T_i$ of type VII and $T_{i+1}$ of type VI also the form of $\Xi(T_\bullet)$ is of importance, hence they are specified. The internal cells contain a label denoting the form of $\Theta_i$. The labels are given by: "R" -- Rigid, "S" -- Support, "SR" -- Support to Rigid, "A" -- Any of Rigid, Support and Rigid to Support.\label{table:well-configured}}
\end{table}

\begin{example}\label{Example:Not well-configured pair}
\begin{enumerate}
    \item[(a)] Let $({\color{rigid}M},{\color{support}P})$ be a $\tau_4$-rigid pair of $\Lambda(37,4)$. Assume that ${\color{rigid}M}$ is as in Example \ref{Example:diagonal partition with two intervals}(a), that is
    \[
    \includegraphics{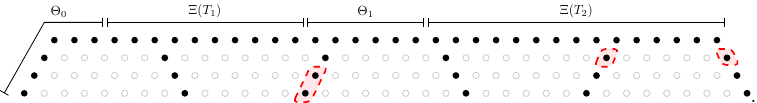}
    \]
    Assume that $\Xi(T_1)$ and $\Xi(T_2)$ are such that $({\color{rigid}M_3})$ is a full admissible configuration of type (I) and $({\color{rigid}M_5},{\color{rigid}M_6})$ is a full admissible configuration of type (IV). Table \ref{table:well-configured} indicates that this is impossible for a well-configured pair as a full admissible configuration of type (IV) can not come immediately after a full admissible configuration of type (I). We conclude that $({\color{rigid}M},{\color{support}P})$ is not well-configured.
    \item[(b)] Let us construct a well-configured $\cC$-pair of $\Lambda(37,4)$ by slightly adjusting ${\color{rigid}M}$ from the previous example. There are many ways to achieve this. For example, we may add a summand to ${\color{rigid}M_6}$ and change the configuration $({\color{rigid}M_5},{\color{rigid}M_6})$ to be a full admissible configuration of type (VI). This gives the following picture:
    \[
    \includegraphics{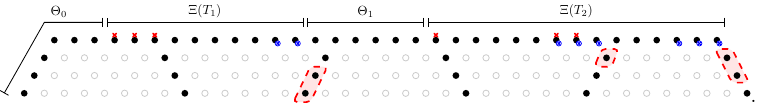}
    \]
    Consulting Table \ref{table:well-configured} we see all the possibilities for the sets $\red$ and $\blue$. In particular, since we have an admissible configuration of type (I) followed by an admissible configuration of type (VI), we see that $\Xi(T_2)$ needs to be support to rigid and that $\Theta_1$ has to be rigid. Moreover, since $({\color{rigid} M_3})$ is an admissible configuration of type (I), we have that $\Xi(T_1)$ needs to be support. To find the choices for $\Theta_0$ we may consult the column of Table \ref{table:well-configured} corresponding to the type of the first admissible configuration we have. As long as the option ``A'' appears there, $\Theta_0$ can be anything among rigid, rigid to support at any point in $\Theta_0$, and support; otherwise it has to be rigid. In our case we see that we may pick $\Theta_0$ to be rigid to support at, say, $4\in \Theta_0=[1,6]$. Then we obtain the following well-configured $\cC$-pair: 
    \[
    \includegraphics{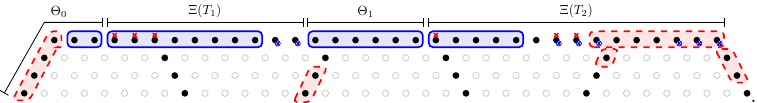}
    \]
    \item[(c)] Let $d=2$ and let us now construct a well-configured $\cC$-rigid pair of $\Lambda(34,5)$. In this case we have that $p=10$. We may pick any $i\in [2,10]$ for our first admissible configuration to start. Say we pick $i=2$. Then, we may pick any admissible configuration type from the first column of Table \ref{table:well-configured}. Say we pick an admissible configuration of type (VIII). This covers three diagonals as can be seen in Definition \ref{def:admissible configurations} and so defines $T_1=[2,4]$ and also $l_2=1$, $l_3=6-l_4$, $n_3=4-n_2$, $n_4=4$. We may choose, say $n_2=1$, $n_3=3$, $l_3=2$, $l_4=4$ which defines $({\color{rigid}M_2},{\color{rigid}M_3},{\color{rigid}M_4})$.

    Next, we must have that ${\color{rigid}M_5}=0$ and so we may pick $i\in [6,10]$ for our second admissible configuration to occur, or stop here. We may pick $i=6$ and our second admissible configuration to be of type (VII) with, say $l_6=l_7=1$, $n_6=1$ and $n_7=3$. This defines $T_2=[6,7]$ and $({\color{rigid}M_6},{\color{rigid}M_7})$. Then for the next diagonal we must have ${\color{rigid}M_8}=0$ and so we may choose $i\in[9,10]$ for the next admissible configuration to occur, or again stop here. 
    
    Say we pick $i=9$. Table \ref{table:well-configured} reveals that we may pick the next admissible configuration to be of type (III).  However, since we are in the case $d=2$, we have to be slightly more careful. Condition (ii) in Definition \ref{def:well-configured}(b) is the only condition not encoded in Table \ref{table:well-configured} and we have to check that it is not violated. This can be done easily via Lemma \ref{lem:the intervals Theta}(a). An admissible configuration of type (III) gives $T_3=[9,9]$, $l_9=1$ and $n_9=4$. Then we observe that $n_7=3>2=l_9+1$, which implies by Lemma \ref{lem:the intervals Theta} that $\Xi(T_2)\cap\Xi(T_3)\neq \varnothing$. This means that this choice does not lead to a well-configured pair and so we may not pick an admissible configuration of type (III) at this point. We may instead choose the last configuration to be of type (II). Consulting Table \ref{table:well-configured} as in the previous example reveals the possible nature for the intervals $\Xi(T_1)$, $\Theta_1$, $\Xi(T_2)$, $\Theta_2$, $\Xi(T_3)$ and $\Theta_3$. A possible well-configured $\cC$-pair obtained this way is the following:
    \[
    \includegraphics{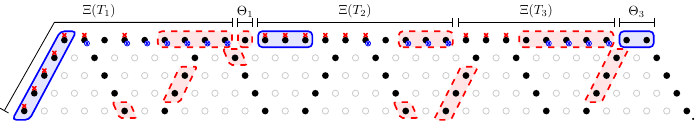}
    \]
\end{enumerate}
\end{example}

In the above examples all well-configured $\cC$-pairs are maximal $\td$-rigid pairs with $n=\abs{\Lambda}$ summands. Our main aim is to prove that well-configured $\cC$-pairs are characterized by this property and, furthermore, this property characterizes summand-maximal $\td$-rigid pairs. We start by showing that well-configured $\cC$-pairs are always $\td$-rigid.

\begin{proposition}\label{prop:well-configured implies td-rigid}
Let $(M,P)$ be a well-configured $\cC$-pair. Then $(M,P)$ is a basic $\td$-rigid pair.
\end{proposition}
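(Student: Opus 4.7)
The plan is to verify the four conditions (a), (b1), (b2), (b3) of Proposition \ref{prop:description of td-rigid pairs}. If $(M,P)$ satisfies case (a) of Definition \ref{def:well-configured}, i.e.\ $T = \varnothing$, then every $M_i$ is zero so (b1), (b2), (b3) are vacuous; condition (a) is also vacuous because $\blue = [x+1,n]$ lies entirely to the right of $\red = [1,x]$. So from now on I may assume $T \neq \varnothing$ and proceed under case (b) of Definition \ref{def:well-configured}.

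For conditions (b1) and (b2), the restrictions within each interval $T_j$ of the diagonal partition follow directly from Remark \ref{rem:admissible configurations are td-rigid}(b), which says that each admissible configuration $(M_{t_{j,1}},\ldots,M_{t_{j,2}})$ is locally $\td$-rigid. At indices $i$ not lying in $T$, as well as at the immediate neighbours just outside each $T_j$, we have $M_i = 0$, hence $n_i = 0$ and $l_i = l$, so the inequalities become trivial. The only delicate case is the extra condition in (b1) when $d = 2$, namely $n_i \leq l_{i+2}+1$ for $i$ odd when $i \in T_j$ and $i+2 \in T_{j+1}$. This is exactly the negation of the single non-trivial possibility in Lemma \ref{lem:the intervals Theta}(a)(ii), so it follows from condition (ii) of Definition \ref{def:well-configured}(b). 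For condition (b3): if $i \notin T$ both $\notred{i}$ and $\notblue{i}$ are empty; if $i \in T_j$, the condition that $\red$ (resp.\ $\blue$) avoids $\notred{i}$ (resp.\ $\notblue{i}$) follows because $\Xi(T_j)$ is forced to be support, rigid or support-to-rigid by each of the admissible-configuration types I--VIII, and each of these three possibilities, by Definition \ref{def:support, rigid, support to rigid}, is precisely such an avoidance condition.

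For condition (a), I use Lemma \ref{lem:the intervals Theta}(b) (whose hypothesis is exactly condition (ii) of well-configured) to partition $[1,n]$ into the consecutive intervals $\Theta_0, \Xi(T_1), \Theta_1, \ldots, \Xi(T_k), \Theta_k$. The proof is a case analysis on where $x \in \blue$ and $y \in \red$ with $x<y$ lie. If both lie in the same $\Theta_j$, condition (iii) of well-configured makes $\Theta_j$ rigid, support, or rigid-to-support, and none of these admit a pair with $x < y$, $x \in \blue$, $y \in \red$. If both lie in the same $\Xi(T_j)$, its type forces it to be support, rigid, or support-to-rigid; in the first two no such pair exists, and in the last one $\blue$ lies $\leq z$ while $\red$ lies $\geq z+l$ for some $z$, so $[z+1,z+l-1]$ is a gap of length $l-1$ strictly between $x$ and $y$. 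When $x$ and $y$ lie in different intervals, one looks for the \textquotedblleft switch\textquotedblright\ from a $\blue$-containing part to a $\red$-containing part along the sequence of intervals between them. Conditions (iv) and (v) of Definition \ref{def:well-configured}(b) ensure that at every such switch either the intervening $\Theta_j$ is rigid (so no $\blue$ could have been on its right entrance) or else the $\Xi(T_j)$ involved is of type (III). By Remark \ref{rem:admissible configurations are td-rigid}(d), in the type (III) case the interval $\notred{t_j}$ of length $l-1$ starts $\Xi(T_j)$ when $\Xi(T_j)$ is rigid, and the interval $\notblue{t_j}$ of length $l-1$ ends $\Xi(T_j)$ when $\Xi(T_j)$ is support; in either event this length-$(l-1)$ segment lies strictly between $x$ and $y$ and provides the gap required by Proposition \ref{prop:description of td-rigid pairs}(a).

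The main obstacle is the bookkeeping in condition (a): one must rule out every possible \textquotedblleft bad\textquotedblright\ transition pattern in the sequence $\Theta_0,\Xi(T_1),\Theta_1,\ldots$, then locate an explicit length-$(l-1)$ gap between any offending $x$ and $y$. The clean way to organise this is to observe that the only mechanisms that produce a gap of length exactly $l-1$ are (i) the intrinsic gap of support-to-rigid intervals, and (ii) the boundary segments $\notred{t_j}$ or $\notblue{t_j}$ of type (III) configurations from Remark \ref{rem:admissible configurations are td-rigid}(d); conditions (iii)--(v) of Definition \ref{def:well-configured} are designed so that at least one of these mechanisms is available at every point where the pattern switches from $\blue$ to $\red$.
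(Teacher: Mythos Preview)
Your proposal is correct and follows essentially the same route as the paper: verify conditions (a), (b1)--(b3) of Proposition~\ref{prop:description of td-rigid pairs}, use Remark~\ref{rem:admissible configurations are td-rigid}(b) together with condition~(ii) of Definition~\ref{def:well-configured} for the local checks, and for condition~(a) partition $[1,n]$ via Lemma~\ref{lem:the intervals Theta}(b) and locate a transition point where conditions~(iv)/(v) force a type~(III) configuration whose boundary interval of length $l-1$ supplies the required gap. One small imprecision: for (b3) in the support-to-rigid case, Definition~\ref{def:support, rigid, support to rigid} alone does not guarantee $\red\cap\notred{i}=\varnothing=\blue\cap\notblue{i}$---this needs the specific range of the switching point $x$ prescribed in each admissible type (III, VI, VII, VIII), which is exactly what Remark~\ref{rem:admissible configurations are td-rigid}(b) packages and what the paper cites.
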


\begin{proof}
We need to check that the conditions in Proposition \ref{prop:description of td-rigid pairs} hold. Let $(T_1,\ldots,T_k)$ be the diagonal partition of $(M,P)$. If $T=\varnothing$ then we are in case (a) of Definition \ref{def:well-configured} and clearly $(M,P)=(P(\red),P(\blue))$ is $\td$-rigid. 

Assume now that $T\neq \varnothing$. Then for $1\leq j\leq k$ we have that $(M_{t_{j,1}},\ldots,M_{t_{j,2}})$ is a full admissible configuration by condition (i) in Definition \ref{def:well-configured}(b). Let $i\in [2,p]$. If $i\in T_j$ for some $j$, then by Remark \ref{rem:admissible configurations are td-rigid}(b) we have that conditions (b1) and (b2) of Proposition \ref{prop:description of td-rigid pairs} hold. If $i\not\in T_j$, then $M_{i}=0$ and conditions (b1) and (b2) of Proposition \ref{prop:description of td-rigid pairs} hold trivially. Hence conditions (b1) and (b2) of Proposition \ref{prop:description of td-rigid pairs} hold for all $i\in [2,p]$.

For $0\leq q\leq 2k$ we set $X_q = \Theta_{\frac{q}{2}}$ for $q$ even and $X_q=\Xi(T_{\frac{q+1}{2}})$ for $q$ odd. We then set $\red_q=\red\cap X_q$ and $\blue_q=\blue\cap X_q$. By condition (ii) in Definition \ref{def:well-configured}(b) and Lemma \ref{lem:the intervals Theta} we have that $\red$ is the disjoint union of the sets $\red_0,\red_1,\ldots,\red_{2k}$ and similarly for $\blue_q$. We first claim that condition (b3) of Proposition \ref{prop:description of td-rigid pairs} hold with $\red_q$ respectively $\blue_q$ in place of $\red$ respectively $\blue$. Indeed, if $q$ is even, then this follows by Remark \ref{rem:admissible configurations are td-rigid}(b), while if $q$ is odd, this follows since $\Theta_j$ intersects no interval of the form $\notblue{i}$ or $\notred{i}$ by Lemma \ref{lem:the intervals Theta}(b). This shows that condition (b3) in Proposition \ref{prop:description of td-rigid pairs} holds.

Notice that all of the intervals $X_q$ are either rigid, support, support to rigid, or rigid to support. It remains to show that condition (a) of Proposition \ref{prop:description of td-rigid pairs} holds. Thus let $x\in \blue$ and $y\in \red$ such that $1\leq x < y \leq n$ and we need to find an interval of length $l-1$ between them that intersects neither $\red$ nor $\blue$. Then $x\in \blue_q$ and $y\in\red_{q'}$ for some $q\leq q'$. If there exists an interval $X_{q''}$ with $q\leq q''\leq q'$ and such that $X_{q''}$ is support to rigid, then we are done. Hence we may assume that for all $q''$ with $q\leq q'' \leq q'$ we have that $X_{q''}$ is not support to rigid. Since $X_q\cap \blue\neq \varnothing$, we conclude that $X_q$ is support or rigid to support. Similarly we obtain that $X_{q'}$ is rigid or rigid to support. Since $x<y$ with $x\in\blue$ and $y\in\red$, we conclude that $X_q\neq X_{q'}$. It follows that there exists $r$ such that $q\leq r <r+1\leq q'$ and such that $X_r$ is support or rigid to support, while $X_{r+1}$ is rigid or rigid to support. Assume that $r$ is odd. Then $X_r=\Xi(T_{\frac{r+1}{2}})$ is support, since no interval $\Xi(T_j)$ can be rigid to support by part (v) of Definition \ref{def:well-configured}(b) and the definition of admissible configurations. Similarly we obtain that $X_{r+1}=\Theta_{\frac{r+1}{2}}$ is rigid. Hence by condition (v) in Definition \ref{def:well-configured} we obtain that the full admissible configuration corresponding to $T_{\frac{r+1}{2}}$ is of type (III). By Remark \ref{rem:admissible configurations are td-rigid}(d) we have that $\Xi(T_{\frac{r+1}{2}})$ is the disjoint union of two consecutive intervals of the form $\blue_r$ and $\notblue{i}$, with the last one having length $l-1$. Since $X_r$ is support, it follows that $\notblue{i}\cap \red=\varnothing$. This shows the claim and so condition (a) of Proposition \ref{prop:description of td-rigid pairs} holds if $r$ is odd; the case where $r$ is even is similar.
\end{proof}

The following statement establishes a crucial bound on the number of indecomposable summands for a basic $\td$-rigid pair.

\begin{proposition}\label{prop:bound for M with partition}
Let $(M,P)$ be a basic $\td$-rigid pair with non-diagonal component $(\red,\blue)$. Then
\begin{equation}\label{eq:absolute upper bound for summands}
\abs{\red} + \abs{\blue} + \sum_{i=2}^{p}m_i \leq n.    
\end{equation}
Moreover, if equality holds in (\ref{eq:absolute upper bound for summands}), then $(M,P)$ is well-configured.
\end{proposition}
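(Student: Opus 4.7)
The plan is to decompose $[1,n]$ according to the diagonal partition and bound the number of indecomposable summands of $(M,P)$ locally on each piece. By Remark \ref{rem:we have consecutive intervals}, for any $\td$-rigid pair the sequence $\Theta_0, \Xi(T_1), \Theta_1, \ldots, \Xi(T_k), \Theta_k$ in (\ref{eq:the partition of [1,n]}) is a partition of $[1,n]$ into consecutive intervals. First I would apply Proposition \ref{prop:to not lose need admissible configuration} to each $\Xi(T_j)$ to obtain
\[
\abs{(\red\cup\blue)\cap\Xi(T_j)} + \sum_{i\in T_j}m_i \leq \abs{\Xi(T_j)},
\]
with equality precisely when $(M_{t_{j,1}},\ldots,M_{t_{j,2}})$ is a full admissible configuration. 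For each $\Theta_j$ the trivial inequality $\abs{(\red\cup\blue)\cap\Theta_j}\leq\abs{\Theta_j}$ holds, with equality iff $\Theta_j\subseteq\red\cup\blue$. Summing these bounds over all pieces of the partition, and using $\red\cap\blue=\varnothing$ together with $m_i=0$ for $i\notin T$, immediately yields (\ref{eq:absolute upper bound for summands}).

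For the moreover-statement, equality in (\ref{eq:absolute upper bound for summands}) forces equality in each local bound. In particular condition (i) of Definition \ref{def:well-configured}(b) holds, and every element of each $\Theta_j$ lies in $\red\cup\blue$. If $T=\varnothing$, then $[1,n]\subseteq\red\cup\blue$ and Proposition \ref{prop:description of td-rigid pairs}(a) prevents any blue element from appearing before a red element (otherwise one would need a $(\red\cup\blue)$-free interval of length $l-1$ inside $[1,n]$, which is impossible), yielding case (a) of Definition \ref{def:well-configured}. If $T\neq\varnothing$, condition (ii) is given by Remark \ref{rem:we have consecutive intervals}, and the same argument with Proposition \ref{prop:description of td-rigid pairs}(a) applied inside each $\Theta_j$ shows that $\Theta_j$ cannot contain a blue element strictly before a red element, proving condition (iii).

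It remains to verify conditions (iv) and (v); these are symmetric, so I would focus on (iv). Suppose $\Xi(T_j)$ is rigid but $\Theta_{j-1}$ is not rigid. Then, by $\Theta_{j-1}\subseteq\red\cup\blue$ and condition (iii), $\blue\cap\Theta_{j-1}\neq\varnothing$, and rigidity of $\Xi(T_j)$ with $T_j\neq\varnothing$ gives $\red\cap\Xi(T_j)\neq\varnothing$. Picking the maximal $x\in\blue\cap\Theta_{j-1}$ and minimal $y\in\red\cap\Xi(T_j)$, Proposition \ref{prop:description of td-rigid pairs}(a) produces a $(\red\cup\blue)$-free interval $[z,z+l-2]$ strictly between them. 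Since $\Theta_{j-1}\subseteq\red\cup\blue$, this interval must sit entirely inside $\Xi(T_j)$; since $\Xi(T_j)$ is rigid, the only $(\red\cup\blue)$-free positions in it are contained in $\bigcup_{i\in T_j}\notred{i}$, and because $y$ is right after $\notred{t_{j,1}}$ the interval $[z,z+l-2]$ must fit inside $\notred{t_{j,1}}$, forcing $l_{t_{j,1}}=1$. A case check through Definition \ref{def:admissible configurations} then shows that the only admissible type with $\Xi$ rigid and $l_{t_{j,1}}=1$ is type (III): types (II), (IV), (VI) force $l_{t_{j,1}}\geq 2$ via the inequality $l_i+l_{i+1}\geq l+1$, while types (I), (V), (VII), (VIII) do not admit a rigid $\Xi$. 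This completes (iv), and I expect this final case analysis to be the main obstacle, though it is routine given the explicit enumeration of admissible configurations.
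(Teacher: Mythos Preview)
Your proposal is correct and follows essentially the same approach as the paper: partition $[1,n]$ via Remark~\ref{rem:we have consecutive intervals}, apply Proposition~\ref{prop:to not lose need admissible configuration} on each $\Xi(T_j)$ and the trivial bound on each $\Theta_j$, then sum; for the equality case, deduce conditions (i)--(iii) from equality in each local bound together with Proposition~\ref{prop:description of td-rigid pairs}(a), and handle (iv)/(v) by showing $l_{t_{j,1}}=1$ and doing the type case check. The only cosmetic difference is that for condition (iv) the paper uses the direct observation $\Hom{\Lambda}{P(\rho_1-1)}{P(\rho_2+1)}=0$ (with $\rho_1-1=\max(\Theta_{j-1})\in\blue$ and $\rho_2+1\in\red$) to force $\abs{\notred{t_{j,1}}}\geq l-1$, whereas you route through Proposition~\ref{prop:description of td-rigid pairs}(a) to obtain a free interval inside $\notred{t_{j,1}}$; both arguments yield $l_{t_{j,1}}=1$ and the same concluding case analysis.
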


\begin{proof}
Let $T$ be the diagonal component of $M$. If $T=\varnothing$, then 
\[
\abs{\red}+\abs{\blue} + \sum_{i=2}^{p}m_i = \abs{\red\cup\blue} = \abs{(\red\cup\blue)\cap [1,n]} \leq n.
\]
Moreover, equality holds only if $(\red\cup\blue)\cap [1,n]=[1,n]$. It is easy to see that this implies that case (a) of Definition \ref{def:well-configured} must be satisfied and so $(M,P)$ is well-configured.
 
Assume that $T\neq \varnothing$. Let $(T_1,\ldots,T_k)$ be the diagonal partition of $(M,P)$. By Remark \ref{rem:we have consecutive intervals} we can apply Lemma \ref{lem:the intervals Theta}(b). In particular, condition (ii) of Definition \ref{def:well-configured}(b) holds. Moreover, we have that for $i\neq j$ we have $\Xi(T_i)\cap \Xi(T_j)=\varnothing$ and $\Theta_i\cap \Theta_j=\varnothing$. We then set
\[
\Xi = \bigsqcup_{j=1}^{k} \Xi(T_j) \text{ and } \Theta = \bigsqcup_{j=0}^{k} \Theta_j.
\]
By Lemma \ref{lem:the intervals Theta}(b) we also have that $[1,n]=\Xi\sqcup \Theta$. Using Proposition \ref{prop:to not lose need admissible configuration} we have that
\begin{align*}
\abs{(\red\cup\blue)\cap \Xi} &= \sum_{j=1}^{k} \abs{(\red\cup\blue)\cap \Xi(T_j)} 
\leq\sum_{j=1}^{k} \left(\abs{\Xi(T_j)} - \sum_{i\in T_j} m_i\right) \\
&= \sum_{j=1}^{k}\abs{\Xi(T_j)} - \sum_{i=2, m_i\neq 0}^{p} m_i 
= \abs{\Xi} - \sum_{i=2}^{p}m_i,
\end{align*}
where if equality holds, then $(M_{t_{j,1}},\ldots, M_{t_{j,2}})$ is a full admissible configuration for all $j\in [1,k]$. Hence we have
\begin{equation}\label{eq:inequality for the whole Xi}
\abs{(\red\cup\blue)\cap\Xi} \leq \abs{\Xi}-\sum_{i=2}^{p}m_i,    
\end{equation}
and equality holds only if condition (i) of Definition \ref{def:well-configured}(b) holds. Using this and $[1,n]=\Xi\sqcup\Theta$, we have
\begin{equation}\label{eq:showing 3.10}
\abs{\red}+\abs{\blue} = \abs{(\red\cup\blue)\cap [1,n]} = \abs{(\red\cup\blue)\cap\Xi} + \abs{(\red\cup\blue)\cap\Theta} \leq \abs{\Xi} - \sum_{i=2}^{p}m_i + \abs{\Theta} = n - \sum_{i=2}^{p}m_i,
\end{equation}
which shows (\ref{eq:absolute upper bound for summands}). Moreover, equality in (\ref{eq:showing 3.10}) holds only if, in addition to equality in (\ref{eq:inequality for the whole Xi}), we have that 
\begin{equation}\label{eq:equality in Theta}
    \abs{(\red\cup\blue)\cap\Theta} = \abs{\Theta}.
\end{equation}
Hence it is enough to show that if (\ref{eq:equality in Theta}) holds, then conditions (iii), (iv) and (v) of Definition \ref{def:well-configured} are satisfied. Let $0\leq j \leq k$ and we first claim that there exists no interval $I\subseteq \Theta_j$ which is support to rigid. Indeed, applying Proposition \ref{prop:description of td-rigid pairs}(a) to such an interval $I$ gives the existence of a subset $Z\subsetneq \Theta_{j-1}$ of length $l-1$ with $(\red\cup\blue)\cap Z=\varnothing$, which contradicts (\ref{eq:equality in Theta}). Since (\ref{eq:equality in Theta}) holds and $\Theta_j$ contains no interval which is support to rigid, it follows that $\Theta_j$ is rigid, support or support to rigid. This shows condition (iii) of Definition \ref{def:well-configured}. Let us now only show that condition (iv) of Definition \ref{def:well-configured} is satisfied, as the proof for condition (v) is similar.

Assume then that (\ref{eq:inequality for the whole Xi}) holds, that $\Xi(T_j)$ is rigid and that $(M_{t_{j,1}},\ldots,M_{t_{j,2}})$ is a full admissible configuration of any type other than (III) and we show that $\Theta_{j-1}$ is rigid. Assume to a contradiction that $\Theta_{j-1}$ is not rigid. In particular, $\Theta_{j-1}$ is non-empty since the empty interval is rigid. Since we have shown that condition (iii) of Definition \ref{def:well-configured} holds, it follows that $\Theta_{j-1}$ is either support or rigid to support. In any case, we have that $\max(\Theta_{j-1})\in\blue$. Now consider the interval $\Xi(T_j)$. By construction, it starts at the interval $\notred{t_{j,1}} = [\rho_1,\rho_2]$. Since $\Xi(T_j)$ is rigid, we have that $\rho_2+1\in\red$. By Lemma \ref{lem:the intervals Theta}(b) we have that $\max(\Theta_{j-1})=\rho_1-1$. Since $\max(\Theta_{j-1})\in \blue$, we conclude that $\rho_1-1\in \blue$. Then $\Hom{\Lambda}{P(\rho_1-1)}{P(\rho_2+1)}=0$, and so we conclude that $\rho_2+1 - (\rho_1-1) \geq l$. Then 
\[
l-l_{t_{j,1}}=\abs{\notred{t_{j,1}}} = \rho_2-\rho_1 + 1 \geq l-1,
\]
and so $l_{t_{j,1}}\leq 1$. Since $1\leq l_{t_{j,1}}$ holds by Lemma \ref{lem:immediate use of mi li ni}, we have that $(M_{t_{j,1}},\ldots,M_{t_{j,2}})$ is a full admissible configuration of type (I), (V), (VII) or (VIII) (since we have assumed it cannot be of type (III)). But in none of these cases is the interval $\Xi(T_j)$ rigid. Hence we reach a contradiction and we conclude that $\Theta_{j-1}$ is rigid as required.
\end{proof}

\begin{corollary}\label{cor:bound on summands}
Let $(M,P)$ be a basic $\td$-rigid pair. Then $\abs{M}+\abs{P}\leq \abs{\Lambda}$.    
\end{corollary}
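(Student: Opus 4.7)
The plan is to derive this as an immediate consequence of Proposition \ref{prop:bound for M with partition}, so the work is essentially bookkeeping: I just need to identify the quantity $\abs{M}+\abs{P}$ with the left-hand side of the inequality (\ref{eq:absolute upper bound for summands}).

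First I would recall that by definition of a $\cC$-pair (Definition \ref{def:C-pair}), every basic $\td$-rigid pair $(M,P)$ is in particular a $\cC$-pair, so we may speak of its non-diagonal component $(\red,\blue)$ and its decomposition $M = M_{\text{pr}} \oplus \bigoplus_{i=2}^{p} M_i$ with $M_{\text{pr}} = P(\red)$, $P = P(\blue)$ and $\red \cap \blue = \varnothing$. Since $M \oplus P$ is basic and the summands of $M_{\text{pr}}$, the summands of $M_i$ (for varying $i$) and the summands of $P$ are pairwise non-isomorphic (the first lie in $\projinj$ or $\diag{1}$, the $M_i$'s lie in the pairwise disjoint diagonals $\diag{i}$ by (\ref{eq:description of C}), and $P$ is disjoint from $M$), we obtain
\[
\abs{M} + \abs{P} = \abs{M_{\text{pr}}} + \sum_{i=2}^{p} \abs{M_i} + \abs{P} = \abs{\red} + \sum_{i=2}^{p} m_i + \abs{\blue}.
\]

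Next I would invoke Proposition \ref{prop:bound for M with partition} to obtain
\[
\abs{\red} + \abs{\blue} + \sum_{i=2}^{p} m_i \leq n,
\]
and combine the two displays. Since $\abs{\Lambda} = \abs{\Lambda(n,l)} = n$ (the algebra has exactly $n$ isomorphism classes of indecomposable projectives, one for each vertex of $\ALinOr{n}$), the corollary follows. There is no genuine obstacle: the only subtlety is recognizing that the three groups of summands contributing to $\abs{M}+\abs{P}$ match exactly the three terms in (\ref{eq:absolute upper bound for summands}), which is ensured by the conventions fixed in Section \ref{subsec:basic td-rigid pairs}.
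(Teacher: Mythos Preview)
Your proposal is correct and matches the paper's own proof essentially line for line: the paper also writes $\abs{M}+\abs{P} = \abs{P(\red)}+\abs{\bigoplus_{i=2}^{p}M_i}+\abs{P(\blue)} = \abs{\red} + \sum_{i=2}^p m_i+ \abs{\blue}$ and then invokes Proposition~\ref{prop:bound for M with partition} together with $\abs{\Lambda}=n$.
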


\begin{proof}
Using Proposition \ref{prop:bound for M with partition} we have
\[
\abs{M}+\abs{P} = \abs{P(\red)}+\abs*{\left(\bigoplus_{i=2}^{p}M_i\right)}+\abs{P(\blue)} = \abs{\red} + \sum_{i=2}^p m_i+ \abs{\blue}\leq n=\abs{\Lambda}.
\qedhere\]
\end{proof}

We can now formulate our main result for this section.

\begin{theorem}\label{thrm:taud tilting is well-configured}
Let $(M,P)$ be a $\cC$-pair. Then the following are equivalent.
\begin{enumerate}
    \item[(a)] $(M,P)$ is a summand-maximal $\td$-rigid pair.
    \item[(b)] $(M,P)$ is a $\td$-rigid pair and $\abs{M}+\abs{P}=\abs{\Lambda}$.
    \item[(c)] $(M,P)$ is well-configured.
\end{enumerate}
\end{theorem}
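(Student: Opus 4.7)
The plan is that this theorem is essentially a harvesting of the results from the previous two propositions together with one elementary observation. I would structure the proof as the cycle $(a)\Rightarrow (b)\Rightarrow (c)\Rightarrow (b)\Rightarrow (a)$ (noting that $(b)\Rightarrow(a)$ and $(a)\Rightarrow(b)$ together give the easy equivalence, while $(b)\Leftrightarrow(c)$ is the substantive content).

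First I would dispense with $(a)\Leftrightarrow (b)$. Note that $(\Lambda,0)$ is a $\td$-rigid pair with exactly $\abs{\Lambda}$ indecomposable summands, so any summand-maximal $\td$-rigid pair $(M,P)$ satisfies $\abs{M}+\abs{P}\geq\abs{\Lambda}$. Combining this with Corollary \ref{cor:bound on summands} gives $\abs{M}+\abs{P}=\abs{\Lambda}$, proving $(a)\Rightarrow(b)$. Conversely, if $(M,P)$ is $\td$-rigid with $\abs{M}+\abs{P}=\abs{\Lambda}$, then Corollary \ref{cor:bound on summands} applied to any other $\td$-rigid pair $(N,Q)$ yields $\abs{N}+\abs{Q}\leq\abs{\Lambda}=\abs{M}+\abs{P}$, giving $(b)\Rightarrow(a)$.

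Next I would prove $(b)\Rightarrow(c)$. Writing $M=P(\red)\oplus\bigoplus_{i=2}^{p}M_{i}$ and $P=P(\blue)$ as in the standing decomposition, the equality $\abs{M}+\abs{P}=\abs{\Lambda}=n$ translates directly into $\abs{\red}+\abs{\blue}+\sum_{i=2}^{p}m_{i}=n$. This is exactly the equality case of the inequality (\ref{eq:absolute upper bound for summands}) in Proposition \ref{prop:bound for M with partition}, whose ``moreover'' clause asserts that $(M,P)$ is then well-configured.

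Finally I would prove $(c)\Rightarrow(b)$. By Proposition \ref{prop:well-configured implies td-rigid} a well-configured $\cC$-pair is automatically $\td$-rigid, so only the count $\abs{M}+\abs{P}=\abs{\Lambda}$ remains. In the degenerate case $T=\varnothing$ of Definition \ref{def:well-configured}(a), one has $\red\sqcup\blue=[1,n]$ directly, so $\abs{M}+\abs{P}=\abs{\red}+\abs{\blue}=n$. In the case $T\neq\varnothing$, I would revisit the chain of inequalities that yielded (\ref{eq:showing 3.10}) in the proof of Proposition \ref{prop:bound for M with partition} and verify that the well-configured hypothesis forces every inequality to be an equality. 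Concretely, condition (i) of Definition \ref{def:well-configured}(b) says each $(M_{t_{j,1}},\ldots,M_{t_{j,2}})$ is full admissible, so Remark \ref{rem:admissible configurations are td-rigid}(c) upgrades (\ref{eq:inequality for the whole Xi}) to an equality on each $\Xi(T_j)$, while condition (iii), using that each $\Theta_j$ is disjoint from every $\notred{i}$ and every $\notblue{i}$ (as these lie inside the $\Xi(T_k)$), gives $(\red\cup\blue)\cap\Theta_j=\Theta_j$ for each $j$ regardless of whether $\Theta_j$ is rigid, support, or rigid to support. Summing these equalities and using the partition (\ref{eq:the partition of [1,n]}) from Lemma \ref{lem:the intervals Theta}(b) (which applies by condition (ii) of Definition \ref{def:well-configured}(b)) yields $\abs{\red}+\abs{\blue}+\sum m_i=n$, i.e., $\abs{M}+\abs{P}=\abs{\Lambda}$.

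The main obstacle is essentially bookkeeping rather than a new idea: one must carefully check that each condition in the (rather rich) Definition \ref{def:well-configured} is used precisely where it is needed in the chain of equalities above, and in particular that conditions (iv) and (v) are not actually needed for the counting itself but only indirectly through Proposition \ref{prop:well-configured implies td-rigid} to guarantee $\td$-rigidity.
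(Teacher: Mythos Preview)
Your proposal is correct and follows essentially the same route as the paper: the equivalence $(a)\Leftrightarrow(b)$ via Corollary~\ref{cor:bound on summands}, $(b)\Rightarrow(c)$ via the equality clause of Proposition~\ref{prop:bound for M with partition}, and $(c)\Rightarrow(b)$ via Proposition~\ref{prop:well-configured implies td-rigid} together with the counting argument using Remark~\ref{rem:admissible configurations are td-rigid}(c), condition~(iii), and Lemma~\ref{lem:the intervals Theta}(b). Your closing observation that conditions (iv) and (v) enter only through Proposition~\ref{prop:well-configured implies td-rigid} and not the count itself is also in line with how the paper's argument is organized.
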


\begin{proof}
(a) implies (b): If (a) holds, then by the definition of summand-maximal $\td$-rigid pairs we have that $\abs{\Lambda}\leq \abs{M}+\abs{P}$. By Corollary \ref{cor:bound on summands} we obtain that $\abs{\Lambda}=\abs{M}+\abs{P}$.

(b) implies (a): Let $(M',P')$ be any $\td$-rigid pair. Then by Corollary \ref{cor:bound on summands} we have that $\abs{M'}+\abs{P'}\leq \abs{\Lambda}=\abs{M}+\abs{P}$. Since $(M',P')$ was an arbitrary $\td$-rigid pair, it follows that $(M,P)$ is a summand-maximal $\td$-rigid pair.

(b) implies (c): If (b) holds, then we have equality in (\ref{eq:absolute upper bound for summands}). It follows that $(M,P)$ is well-configured by Proposition \ref{prop:bound for M with partition}.

(c) implies (b): Assume now that $(M,P)$ is well-configured. Then $(M,P)$ is a $\td$-rigid pair by Proposition \ref{prop:well-configured implies td-rigid}. It remains to show that $\abs{M}+\abs{P}=\abs{\Lambda}$. Let $T$ be the diagonal component of $M$. If $T=\varnothing$, then by Definition \ref{def:well-configured}(a) we have $(M,P)=(P(\red),P(\blue))$ and $\abs{\red}+\abs{\blue}=n=\abs{\Lambda}$, as required. 

Assume now that $T\neq 0$ and let $(T_1,\ldots, T_k)$ be the diagonal partition of $(M,P)$. Then conditions (i)---(v) of Definition \ref{def:well-configured}(b) are satisfied. Let
\[
\Xi = \bigsqcup_{j=1}^{k} \Xi(T_j) \text{ and } \Theta = \bigsqcup_{j=0}^{k} \Theta_j.
\]
Then condition (ii) of Definition \ref{def:well-configured}(b) allows us to apply Lemma \ref{lem:the intervals Theta}(b), 
so that $[1,n]=\Xi\sqcup\Theta$. By the definition of the intervals $\Xi$ and $\Theta$, we have that $\Theta_{j}\cap \notred{i}=\varnothing=\Theta_{j}\cap\notblue{i}$ for any $i\in [2,p]$. Hence condition (iii) of Definition \ref{def:well-configured}(b) gives that \begin{equation}\label{eq:well-configured is maximal on Theta}
(\red\cup\blue)\cap\Theta=\Theta.
\end{equation}

Now let $1\leq j\leq k$. By Remark \ref{rem:admissible configurations are td-rigid}(c) we have that
\begin{equation}\label{eq:admissible configurations do not lose 2}
\abs{(\red\cup\blue)\cap \Xi(T_j)} +\sum_{i\in T_j}m_i  = \abs{\Xi(T_j)}.
\end{equation}
Using (\ref{eq:admissible configurations do not lose 2}) we have
\[
\abs{(\red\cup\blue)\cap\Xi} = \sum_{j=1}^{k} \abs{(\red\cup\blue)\cap\Xi(T_j)} = \sum_{j=1}^{k}\left(\abs{\Xi(T_j)}-\sum_{i\in T_j}m_i\right) = \abs{\Xi} - \sum_{i=2}^{p} m_i.
\]
Using this and (\ref{eq:well-configured is maximal on Theta}) we have
\[
\abs{M}+\abs{P} = 
\abs{(\red\cup\blue)\cap [1,n]} + \sum_{i=2}^{p}m_i = \abs{(\red\cup\blue)\cap \Theta} + \abs{(\red\cup\blue)\cap \Xi} + \sum_{i=2}^{p}m_i = \abs{\Theta}+\abs{\Xi}=n,
\]
which shows that (b) holds.
\end{proof}

\subsubsection{The case \texorpdfstring{$d=\gldim(\Lambda)$}{d=gldimL}}
We finish this section by applying Theorem \ref{thrm:taud tilting is well-configured} to count the summand-maximal $\td$-rigid pairs in the special case when $d=\gldim(\Lambda)$.

\begin{remark} \label{rem: Counting strongly maximal when d-rep finite}
\begin{enumerate}
    \item[(a)] Assume that $d=\gldim(\Lambda(n,l))$ and that $\Lambda(n,l)$ admits a $d$-cluster tilting subcategory $\cC$. Then $p=2$ by Remark \ref{rem:d=gldim(Lambda) for Nakayama}, and consequently the diagonal partition of a summand-maximal $\td$-rigid pair $(M,P)$ is either empty or consists only of $T_1=\{2\}$. In the first case we have that $[1,n]=\Phi_0$ is either rigid, support or rigid to support, providing $2+(n-1)$ possible pairs. In the second case we have three possible subcases:
    \begin{enumerate}
    \item[(1)] $(M_2)$ is full admissible of type (I) with $\Xi_2=[1,n]$ rigid,
    \item[(2)] $(M_2)$ is full admissible of type (II) with $\Xi_2=[1,n]$ support, and
    \item[(3)] $(M_2)$ is full admissible of type (III) with $\Xi_2=[1,n]$ rigid, support or support to rigid at some $x\in [1,n-l]$.
\end{enumerate}
Each of (1) and (2) provides $l-2$ different pairs, and (3) provides $2+(n-l)$ pairs. In total we then have $2n+l-1$ different summand-maximal $\td$-rigid pairs of $\Lambda(n,l)$ when $d=\gldim(\Lambda)$. 

\item[(b)] To illustrate the exponential rate of growth in the number of summand-maximal $\td$-rigid pairs, let us note that through a bit more work in the same spirit as in part (a) we can calculate that for $p=4$ and either $(d>2 $ and $ l>2)$ or $(d=2$ and $l=3)$ there exists exactly
\[
\frac{1}{27}\left(-19l^3+18l^2n+144l^2-6ln^2-36ln-126l+2n^3+36n^2+72n-135\right)
\]
summand-maximal $\td$-rigid pairs.
\end{enumerate}
\end{remark}
\label{Section:Structure of tau_d rigid}

\section{\texorpdfstring{$d$}{d}-torsion classes}
\label{Section:d-torsion}

\subsubsection*{Aim.} In this section we once again fix an algebra $\Lambda=\Lambda(n,l)$ that satisfies the conditions of Theorem \ref{thm:VasoClassifyAcyclicCluster} and so admits a $d$-cluster tilting subcategory $\cC$. Our aim now is to classify $d$-torsion classes of $\Lambda$. These are subcategories $\cU\subseteq\cC$ of $\Lambda$ which serve as a higher analogue of torsion classes and are defined by certain homological properties.

\subsubsection*{Local restrictions on $d$-torsion classes.} Our basic strategy is in principle similar to our strategy for summand-maximal $\td$-rigid pairs. That is, we first find some local conditions that must be satisfied for a subcategory $\cU\subseteq\cC$ to be a $d$-torsion class, and then we find how these local conditions must be joined together. Before we proceed, let us give a brief overview of this strategy.

To describe the local condition, assume that we know that $\cU\subseteq\cC$ is a $d$-torsion class and let us consider the subcategory $\cU_i=\cU\cap\diag{i}$ consisting of all modules in $\cU$ which lie in the diagonal $\diag{i}$. First we can notice that $\cU_i$ must have a specific form: $\cU_i$ must be one of $0$, $\downdiag{i}{x}$, $\updiag{i}{y}$ or $\diag{i}$ (with some additional restrictions depending on the parity of $i$). Additionally, knowing the type of $\cU_i$ places restrictions on the type of $\cU_{i+1}$ as well. Finally, the information of the type of $\cU_i$ and $\cU_{i+1}$ restricts which indecomposable projective-injective modules with top between $s_i+a$ and $s_{i+1}+b$ can be included in $\cU$ (the $a$ and $b$ can be made precise and depend only on the parity of $i$).

\subsubsection*{Using the local restrictions to construct $d$-torsion classes.} Given this local behaviour, we can construct a $d$-torsion class by choosing a possible $\cU_1$, then a suitable $\cU_2$ and a compatible collection of indecomposable projective-injective modules in-between the diagonals $\diag{1}$ and $\diag{2}$, then a suitable $\cU_3$ (which depends only on $\cU_2$) and so on. We then collect all of this information about which type of diagonal may follow which one in the graph (\ref{eq:multigraph1 for d-torsion}), where the labels of the arrows inform us which choices we have for the indecomposable projective-injective modules in between two diagonals. Thus, paths in the graph (\ref{eq:multigraph1 for d-torsion}) with $p-1$ arrows (and so going throug $p$ vertices, which correspond to $p$ diagonals) which start in a suitable orientation for $\cU_1$ give $d$-torsion classes, and we prove that each $d$-torsion class may be constructed in this way.

\subsubsection*{A special case.} Along the way there are again a few special considerations when $d=2$. Furthermore, the case $l=2$ is quite degenerate as all of $\diag{i}$, $\updiag{i}{x}$ and $\downdiag{i}{y}$ are the same and since the parity of $i$ does not matter and so it is dealt with in a separate small section.

Although again there is some notation in this section, it is much lighter than Section \ref{Section:strongly maximal td-rigid}. Still the reader is reminded to make use of the index and the pictures at the end of the article in case it is needed.

\subsection{Preliminaries}
Let $d\geq 1$ be a positive integer. We start by recalling some notions related to $d$-torsion classes from \cite{august2023characterisation}. Let $\mathcal{A}$ be an additive category. Let $f:A\to B$ be a morphism in $\mathcal{A}$. Then $f$ is called \emph{left minimal}\index[definitions]{left minimal morphism} if any endomorphism $g$ of $B$ satisfying $g\circ f=f$ is an isomorphism. A morphism $g:B\to C$ in $\mathcal{A}$ is called a \emph{weak cokernel of $f$}\index[definitions]{weak cokernel} if $g\circ f=0$ and for any $h:B\to D$ with $h\circ f=0$, there exists a morphism $j:C\to D$ such that $j\circ g=h$; if moreover the morphism $j$ is unique, then $g:B\to C$ is called a \emph{cokernel}\index[definitions]{cokernel} and $C$ is unique up to an isomorphism. We say that a morphism $g:B\to C$ is a \emph{weak cokernel in $\mathcal{A}$} if there exists another morphism $f:A\to B$ in $\mathcal{A}$ such that $g$ is a weak cokernel of $f$. A \emph{$d$-cokernel}\index[definitions]{$d$-cokernel} of a morphism $f_0:A_0\to A_1$ is a sequence of morphisms
\[
\begin{tikzcd}
	A_1 & A_2 & \cdots & A_d & A_{d+1}\mathrlap{,}
	  \arrow["f_1",from=1-1, to=1-2]
        \arrow["f_2", from=1-2, to=1-3]
        \arrow["f_{d-1}", from=1-3, to=1-4]
        \arrow["f_{d}", from=1-4, to=1-5]
\end{tikzcd}
\]
such that $f_i$ is a weak cokernel of $f_{i-1}$ for $i\in\{1,\ldots,d-1\}$ and $f_d$ is a cokernel of $f_{d-1}$. The notions of \emph{right minimal}\index[definitions]{right minimal morphism}, \emph{weak kernel}\index[definitions]{weak kernel}, \emph{kernel}\index[definitions]{kernel} and \emph{$d$-kernel}\index[definitions]{$d$-kernel} are defined dually. A sequence
\[
\begin{tikzcd}
	0 & A_0 & A_1 & \cdots & A_d & A_{d+1} & 0 
	  \arrow[from=1-1, to=1-2]
        \arrow["f_0", from=1-2, to=1-3]
        \arrow["f_1", from=1-3, to=1-4]
        \arrow["f_{d-1}", from=1-4, to=1-5]
        \arrow["f_d", from=1-5, to=1-6]
        \arrow[from=1-6, to=1-7]
\end{tikzcd}
\]
is called a \emph{$d$-extension}\index[definitions]{$d$-extension} if $(f_1,\ldots,f_{d})$ is a $d$-cokernel of $f_0$ and $(f_0,\ldots,f_{d-1})$ is a $d$-kernel of $f_d$. Such a $d$-extension is said to be \emph{equivalent}\index[definitions]{$d$-extension, equivalent} to a $d$-extension 
\[
\begin{tikzcd}
	0 & A_0 & B_1 & \cdots & B_d & A_{d+1} & 0\mathrlap{,} 
        \arrow[from=1-1, to=1-2]
        \arrow["g_0", from=1-2, to=1-3]
        \arrow["g_1", from=1-3, to=1-4]
        \arrow["g_{d-1}", from=1-4, to=1-5]
        \arrow["g_d", from=1-5, to=1-6]
        \arrow[from=1-6, to=1-7]
\end{tikzcd}
\]
if there exists a commutative diagram
\[
\begin{tikzcd}
	0 & A_0 & A_1 & \cdots & A_d & A_{d+1} & 0 \\
	0 & A_0 & B_1 & \cdots & B_d & A_{d+1} & 0\mathrlap{.} 
        \arrow[from=1-1, to=1-2]
        \arrow["f_0", from=1-2, to=1-3]
        \arrow["f_1", from=1-3, to=1-4]
        \arrow["f_{d-1}", from=1-4, to=1-5]
        \arrow["f_d", from=1-5, to=1-6]
        \arrow[from=1-6, to=1-7]
        \arrow[from=2-1, to=2-2]
        \arrow["g_0", from=2-2, to=2-3]
        \arrow["g_1", from=2-3, to=2-4]
        \arrow["g_{d-1}", from=2-4, to=2-5]
        \arrow["g_d", from=2-5, to=2-6]
        \arrow[from=2-6, to=2-7]
        \arrow[from=1-3, to=2-3]
        \arrow[from=1-5, to=2-5]
        \arrow[equal, from=1-2, to=2-2]
        \arrow[equal, from=1-6, to=2-6]
\end{tikzcd}
\]
Let $\Lambda$ be a finite-dimensional algebra and let $\cC\subseteq \modfin{\Lambda}$ be a $d$-cluster tilting subcategory for some $d\geq 1$. Then the notion of equivalence of $d$-extensions in $\cC$ is an equivalence relation by \cite[Proposition 4.10]{jasso2016n}. Generalizing the classical notion of torsion classes to higher homological algebra, Jørgensen introduced the following notion.

\begin{definition}\cite[Definition 1.1]{Jorgensen2014}\label{def:d-torsion class}
We say that $\cU\subseteq \cC$ is a \emph{$d$-torsion class}\index[definitions]{$d$-torsion class} if for every $C\in\cC$ there exists a $d$-extension
\[
\begin{tikzcd}
	0 & U & C & C_1 & \cdots & C_d & 0 
	  \arrow[from=1-1, to=1-2]
        \arrow["u", from=1-2, to=1-3]
        \arrow["c_0", from=1-3, to=1-4]
        \arrow["c_1", from=1-4, to=1-5]
        \arrow["c_{d-1}", from=1-5, to=1-6]
        \arrow[from=1-6, to=1-7]
\end{tikzcd}
\]
such that $U\in\cU$ and, for every $U'\in\cU$, the induced sequence
\[
\begin{tikzcd}
	0 & \Hom{\Lambda}{U'}{C_1} & \cdots & \Hom{\Lambda}{U'}{C_d} & 0 
	  \arrow[from=1-1, to=1-2]
        \arrow[from=1-2, to=1-3]
        \arrow[from=1-3, to=1-4]
        \arrow[from=1-4, to=1-5]
\end{tikzcd}
\]
is exact.
\end{definition}

Note that a $d$-torsion class is closed under direct sums and summands by \cite[Lemma 2.7(iii)]{Jorgensen2014}.

Recently, an equivalent characterization of $d$-torsion classes was given in \cite{august2023characterisation}. We introduce some more notions necessary to recall that result.

Let $\cU\subseteq \cC$ be a subcategory. We say that $\cU$ is \emph{closed under $d$-quotients}\index[definitions]{closure under $d$-quotients} if for any morphism $f:M\to U$ with $M\in\cC$ and $U\in\cU$, there exists a $d$-cokernel 
\[
\begin{tikzcd}
	M & U & U_1 & \cdots & U_d & 0 
	  \arrow["f", from=1-1, to=1-2]
        \arrow["f_1", from=1-2, to=1-3]
        \arrow["f_2", from=1-3, to=1-4]
        \arrow["f_{d}", from=1-4, to=1-5]
        \arrow[from=1-5, to=1-6]
\end{tikzcd}
\]
of $f$, where $U_1,\ldots,U_d\in\cU$. We recall the following results from \cite{august2023characterisation}.

\begin{lemma}\cite[Construction 2.9]{august2023characterisation}\label{lem:how to get weak cokernels}
    Let $f:X\to Y$ be a morphism in $\cC$. Let $c:Y\to M$ be the cokernel of $f$ in $\modfin{\Lambda}$. Let $g:M\to C$ be a minimal left $\cC$-approximation of $M$. Then $g\circ c$ is a left minimal weak cokernel of $f$.
\end{lemma}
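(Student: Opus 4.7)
The plan is to verify the two required properties of $g \circ c$ separately: that it is a weak cokernel of $f$, and that it is left minimal. Both arguments should be purely formal consequences of the universal properties at hand, so I do not anticipate substantive technical difficulty; the challenge is more bookkeeping than insight.

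For the weak cokernel property, I would first note that $(g \circ c) \circ f = g \circ (c \circ f) = 0$, since $c$ is the cokernel of $f$ in $\modfin{\Lambda}$. To verify the factorization property, I would pick an arbitrary morphism $h \colon Y \to D$ with $D \in \cC$ and $h \circ f = 0$, and then factor $h$ through $g \circ c$ in two stages. First, the universal property of the cokernel $c$ in $\modfin{\Lambda}$ gives a unique $h' \colon M \to D$ with $h' \circ c = h$. Second, since $g \colon M \to C$ is a left $\cC$-approximation and $D \in \cC$, the morphism $h'$ factors as $h' = j \circ g$ for some $j \colon C \to D$. Composing, we obtain $j \circ (g \circ c) = (j \circ g) \circ c = h' \circ c = h$, as required. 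This establishes that $g \circ c$ is a weak cokernel of $f$ in $\cC$.

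For left minimality, I would start with an endomorphism $e \colon C \to C$ satisfying $e \circ (g \circ c) = g \circ c$ and aim to show that $e$ is an isomorphism. Rearranging gives $(e \circ g - g) \circ c = 0$. Because $c$ is the cokernel of $f$ in $\modfin{\Lambda}$, it is an epimorphism there, and hence $e \circ g = g$. Since $g$ is a \emph{minimal} left $\cC$-approximation, it is by definition left minimal, so any $e$ with $e \circ g = g$ must be an isomorphism. This concludes the argument.

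The main ``obstacle,'' if there is one, is simply keeping track of which universal property lives in which category: the cokernel property of $c$ holds in $\modfin{\Lambda}$, while the approximation and minimality properties of $g$ are formulated relative to $\cC$. Once that is clear, the proof is a routine diagram chase, which is consistent with the fact that the statement is quoted from \cite{august2023characterisation} as a preparatory construction.
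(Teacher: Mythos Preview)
The paper does not supply its own proof of this lemma; it is quoted as \cite[Construction 2.9]{august2023characterisation} and used without further argument. Your proof is correct and is the standard verification: the weak cokernel property follows by chaining the universal property of the cokernel $c$ in $\modfin{\Lambda}$ with the approximation property of $g$, and left minimality follows since $c$ is an epimorphism and $g$ is left minimal.
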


\begin{corollary}\label{cor:the usual weak cokernel}
    Assume that $X\in\cC$. Let $\pi_X:X\to X/\Socle{X}$ be the canonical epimorphism and let $g_0:X/\Socle{X}\to C$ be the minimal left $\cC$-approximation of $X/\Socle{X}$. Then $g=g_0\circ \pi_X$ is a left minimal weak cokernel in $\cC$.
\end{corollary}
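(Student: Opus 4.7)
The plan is to reduce this to Lemma \ref{lem:how to get weak cokernels} by exhibiting a morphism $f$ in $\cC$ whose cokernel in $\modfin{\Lambda}$ is $X/\Socle{X}$ with cokernel map equal to $\pi_X$. Once such an $f$ is in hand, Lemma \ref{lem:how to get weak cokernels} directly produces $g = g_0 \circ \pi_X$ as a left minimal weak cokernel of $f$, which is all that the definition of ``left minimal weak cokernel in $\cC$'' requires.

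To construct such an $f$, I would pick any surjection $p\colon P \twoheadrightarrow \Socle{X}$ with $P$ projective; the projective cover of $\Socle{X}$ is the natural choice, though any surjection from a projective module will do. Since a $d$-cluster tilting subcategory always contains $\proj{\Lambda}$, we have $P \in \cC$. Let $\iota\colon \Socle{X} \hookrightarrow X$ be the inclusion, and set
\[
f \coloneqq \iota \circ p \colon P \longrightarrow X.
\]
Both $P$ and $X$ lie in $\cC$, so $f$ is a morphism in $\cC$. Moreover $\im(f) = \iota(\im(p)) = \iota(\Socle{X}) = \Socle{X}$ as a submodule of $X$, so the cokernel of $f$ in $\modfin{\Lambda}$ is $X/\Socle{X}$ and the cokernel map is precisely $\pi_X$.

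With this setup, applying Lemma \ref{lem:how to get weak cokernels} to our $f$ (with the roles of $X$, $Y$, $M$, $c$, $g$ in that lemma played by $P$, $X$, $X/\Socle{X}$, $\pi_X$, $g_0$) yields that $g_0 \circ \pi_X = g$ is a left minimal weak cokernel of $f$, hence a left minimal weak cokernel in $\cC$. I do not anticipate any genuine obstacle: the only nontrivial fact needed beyond Lemma \ref{lem:how to get weak cokernels} is that $\Socle{X}$ can be surjected onto by some object of $\cC$, which is immediate from $\proj{\Lambda} \subseteq \cC$. In particular, one does not need $\Socle{X}$ itself to lie in $\cC$, which is fortunate because in general it need not (the only simple modules in $\cC$ are the $S(s_i)$, whereas socles of arbitrary modules in $\cC$ may involve other simples).
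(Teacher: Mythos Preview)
Your proof is correct and follows essentially the same approach as the paper: take the projective cover $P\to\Socle{X}$, compose with the inclusion $\Socle{X}\hookrightarrow X$ to get a morphism $f$ in $\cC$ whose cokernel is $\pi_X$, and then invoke Lemma~\ref{lem:how to get weak cokernels}. Your additional remarks (that any projective surjecting onto $\Socle{X}$ would do, and that $\Socle{X}$ itself need not lie in $\cC$) are accurate but not needed for the argument.
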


\begin{proof}
    Let $\pi:P\to \Socle{X}$ be the projective cover of $\Socle{X}$ and $\iota:\Socle{X}\to X$ be the canonical inclusion. Then $\pi_X:X\to X/\Socle{X}$ is the cokernel of $\iota\circ\pi$, and so the claim follows by Lemma \ref{lem:how to get weak cokernels}.
\end{proof}

\begin{lemma}\cite[Lemma 3.14]{august2023characterisation}\label{lem:closed under minimal weak cokernel}
    Let $\mathcal{U}\subseteq \C$ be a subcategory which is closed under $d$-quotients. Let $g:M\to N$ be a left minimal weak cokernel in $\C$. If $M\in \cU$, then $N\in\cU$.
\end{lemma}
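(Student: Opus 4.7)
The plan is to exploit the left minimality of $g$ together with the closure property of $\cU$ to realize $N$ as a direct summand of an object already known to lie in $\cU$.

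First, I will unpack the hypothesis that $g$ is a left minimal weak cokernel in $\cC$: by definition there exists a morphism $f\colon L\to M$ in $\cC$ such that $g$ is a weak cokernel of $f$ and $g$ is left minimal. Since $L\in\cC$ and $M\in\cU$, the closure of $\cU$ under $d$-quotients applied to $f$ produces a $d$-cokernel
\[
L\xrightarrow{f}M\xrightarrow{f_1}U_1\xrightarrow{f_2}\cdots\xrightarrow{f_d}U_d\to 0
\]
with $U_1,\ldots,U_d\in\cU$. In particular, $f_1\colon M\to U_1$ is itself a weak cokernel of $f$, and this is the key intermediate object I will compare with $N$.

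Next, I would compare the two weak cokernels $g$ and $f_1$ using their universal properties. Since $f_1\circ f=0$ and $g$ is a weak cokernel of $f$, there exists $\beta\colon N\to U_1$ with $\beta\circ g=f_1$. Symmetrically, since $g\circ f=0$ and $f_1$ is a weak cokernel of $f$, there exists $\alpha\colon U_1\to N$ with $\alpha\circ f_1=g$. Composing these yields $(\alpha\circ\beta)\circ g=\alpha\circ f_1=g$, so by left minimality of $g$ the endomorphism $\alpha\circ\beta$ of $N$ is an isomorphism. Consequently $\beta$ is a split monomorphism, exhibiting $N$ as a direct summand of $U_1$. Since $U_1\in\cU$ and $\cU$ is closed under direct summands (being a full additive subcategory closed under isomorphisms, by the paper's standing conventions), we conclude that $N\in\cU$.

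The main obstacle is conceptual rather than computational: one must recognize that closure under $d$-quotients, although formulated as the existence of an entire $d$-cokernel with intermediate terms in $\cU$, in particular furnishes a weak cokernel of $f$ landing in $\cU$. Once this alternative weak cokernel is compared with $g$, left minimality immediately forces $\alpha\circ\beta$ to be invertible and the conclusion follows from closure of $\cU$ under summands.
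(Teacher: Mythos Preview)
The paper does not give its own proof of this lemma; it is simply cited from \cite[Lemma 3.14]{august2023characterisation}. Your argument is the standard one and is essentially correct: extract a weak cokernel $f_1\colon M\to U_1$ with $U_1\in\cU$ from closure under $d$-quotients, compare it to $g$ via the weak-cokernel universal property, and use left minimality of $g$ to force $\alpha\circ\beta$ to be invertible, exhibiting $N$ as a summand of $U_1\in\cU$.

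There is one small gap in your justification. You assert that $\cU$ is closed under direct summands ``being a full additive subcategory closed under isomorphisms, by the paper's standing conventions.'' But the paper's convention is only that subcategories are full, additive and closed under isomorphisms, and additivity does \emph{not} imply closure under direct summands. Indeed, the paper treats closure under summands as a separate hypothesis (see Theorem~\ref{thm:d-torsion by AHJKPT} and Proposition~\ref{prop:description of d-torsion classes}). In the source paper this is presumably part of the standing conventions or an explicit hypothesis; in any case, every application of this lemma here is to a $d$-torsion class, which is closed under summands as noted after Definition~\ref{def:d-torsion class}. So the issue is cosmetic, but you should either add closure under direct summands as an explicit hypothesis or point to why it holds in the intended applications, rather than deriving it from additivity.
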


We say that $\cU$ is \emph{closed under $d$-extensions}\index[definitions]{closure under $d$-extensions} if for any $d$-extension
\[
\begin{tikzcd}
	0 & U & C_1 & \cdots & C_d & V & 0 
	  \arrow[from=1-1, to=1-2]
        \arrow["f_0",from=1-2, to=1-3]
        \arrow["f_1",from=1-3, to=1-4]
        \arrow["f_{d-1}",from=1-4, to=1-5]
        \arrow["f_d",from=1-5, to=1-6]
        \arrow[from=1-6, to=1-7]
\end{tikzcd}
\]
in $\cC$ with $U,V\in\cU$ there exists an equivalent $d$-extension
\[
\begin{tikzcd}
	0 & U & U_1 & \cdots & U_d & V & 0 
	  \arrow[from=1-1, to=1-2]
        \arrow["g_0",from=1-2, to=1-3]
        \arrow["g_1",from=1-3, to=1-4]
        \arrow["g_{d-1}",from=1-4, to=1-5]
        \arrow["g_d",from=1-5, to=1-6]
        \arrow[from=1-6, to=1-7]
\end{tikzcd}
\]
where $U_1,\ldots,U_d\in \cU$. With this, we can now recall the following from \cite{august2023characterisation}.

\begin{theorem}\cite[Theorem 1.1]{august2023characterisation}\label{thm:d-torsion by AHJKPT} 
A subcategory $\cU\subseteq \cC$ is a $d$-torsion class if and only if it is closed under $d$-quotients, $d$-extensions and direct summands.
\end{theorem}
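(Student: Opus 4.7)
The plan is to prove the two implications separately. The forward direction is relatively routine given the machinery already developed, while the reverse direction is the substantive part.

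For the forward direction, assume $\cU \subseteq \cC$ is a $d$-torsion class. Closure under direct summands is already recorded in \cite[Lemma 2.7(iii)]{Jorgensen2014}. For closure under $d$-quotients, given $f \colon M \to U$ with $M \in \cC$ and $U \in \cU$, I would build a $d$-cokernel of $f$ by iteratively taking minimal left $\cC$-approximations of the successive ordinary cokernels, using Lemma \ref{lem:how to get weak cokernels} and Corollary \ref{cor:the usual weak cokernel}. The fact that the successive terms lie in $\cU$ should follow by applying the defining $d$-extension of Definition \ref{def:d-torsion class} to each produced term and extracting the relevant direct summand via the Hom-exactness condition, using closure under summands. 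For closure under $d$-extensions, given a $d$-extension $0 \to U \to C_1 \to \cdots \to C_d \to V \to 0$ with $U, V \in \cU$, I would replace the $C_i$ step by step: apply the defining property of $d$-torsion classes to each $C_i$ to obtain an approximation by an object of $\cU$, then patch these approximations together into an equivalent $d$-extension with middle terms in $\cU$ by using the Hom-exactness guaranteed by $\cU$.

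For the reverse direction, assume $\cU$ is closed under $d$-quotients, $d$-extensions and direct summands, and let $C \in \cC$. The plan is to produce the required $d$-extension in three steps. First, choose a suitable morphism $u \colon U \to C$ with $U \in \cU$ that is \emph{maximal} among such morphisms, for example a right $\cU$-approximation of $C$ when one exists (if $\cU$ is not a priori contravariantly finite, one instead takes a limit/colimit style construction or uses that $\cC$ has finitely many indecomposables in our setting of interest). Second, extend $u$ to a sequence $U \to C \to C_1 \to \cdots \to C_d$ inside $\cC$ by iteratively forming minimal left $\cC$-approximations of ordinary cokernels, as in Corollary \ref{cor:the usual weak cokernel}; invoking Lemma \ref{lem:closed under minimal weak cokernel} ensures compatibility with the closure under $d$-quotients. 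Third, verify the Hom-exactness condition: given $\varphi \colon U' \to C_i$ with $U' \in \cU$ whose composition into $C_{i+1}$ vanishes, produce a lift to $C_{i-1}$.

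The main obstacle is the third step. The strategy I would pursue is to embed $\varphi$ into a $d$-cokernel of objects in $\cU$, which exists by closure under $d$-quotients, and then compare this $d$-cokernel with the $d$-extension we have constructed in $\cC$. Closure under $d$-extensions then provides an equivalent $d$-extension in $\cU$, and a diagram chase along the equivalence yields the required lift of $\varphi$. The delicate points will be: (i) ensuring that the constructed sequence is truly a $d$-extension rather than merely a complex, which requires confirming both the weak-kernel and weak-cokernel property at every spot; (ii) handling the minimality of the various approximations so that the equivalence of $d$-extensions is well-defined; and (iii) dealing with the choice of the initial map $U \to C$, where the right notion of maximality must be preserved through the construction so that the first term of the $d$-extension actually realizes the maximal "$\cU$-part" of $C$.
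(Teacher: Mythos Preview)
The paper does not prove this theorem at all: it is quoted verbatim as \cite[Theorem 1.1]{august2023characterisation} and used as a black box, with no proof or sketch provided. So there is no ``paper's own proof'' to compare your proposal against.

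That said, a couple of remarks on your sketch. First, several of the lemmas you invoke (Lemma~\ref{lem:how to get weak cokernels}, Corollary~\ref{cor:the usual weak cokernel}, Lemma~\ref{lem:closed under minimal weak cokernel}) are stated in this paper only for the particular Nakayama setting or are themselves consequences of results in \cite{august2023characterisation}; they are not available as independent tools for proving the general theorem. Second, your reverse direction presupposes the existence of a right $\cU$-approximation of an arbitrary $C\in\cC$, which in the general setting of \cite{august2023characterisation} is not given and is in fact one of the things that has to be established; the parenthetical fallback to ``finitely many indecomposables in our setting of interest'' would only prove the theorem for $\Lambda(n,l)$, not in general. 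Finally, your third step (the Hom-exactness verification via embedding $\varphi$ into a $d$-cokernel and comparing $d$-extensions) is where the actual content lies in \cite{august2023characterisation}, and your description remains at the level of a plan rather than an argument.

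In short: your proposal is a plausible outline of how one might try to prove the cited result, but the present paper makes no such attempt, and your sketch would need substantial work (and cannot rely on the paper's later lemmas) to become a proof of the general statement.
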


In fact, we want to use a slightly modified but equivalent description of $d$-torsion classes. A $d$-extension
\[
\begin{tikzcd}
	0 & U & C_1 & \cdots & C_d & V & 0 
	  \arrow[from=1-1, to=1-2]
        \arrow["f_0",from=1-2, to=1-3]
        \arrow["f_1",from=1-3, to=1-4]
        \arrow["f_{d-1}",from=1-4, to=1-5]
        \arrow["f_d",from=1-5, to=1-6]
        \arrow[from=1-6, to=1-7]
\end{tikzcd}
\]
in $\cC$ is called \emph{minimal}\index[definitions]{$d$-extension!minimal} if $f_i\in \mathrm{Rad}_{\modfin{\Lambda}}{(C_i,C_{i+1})}$ for $i=1,\ldots,d-1$, where
\[
\mathrm{Rad}_{\modfin{\Lambda}}{(M,N)}= \{f\in\Hom{\Lambda}{M}{N} \mid 1_X-g\circ f \text{ is invertible for any $g\in\Hom{\Lambda}{N}{M}$}\}
\]
defines the \emph{Jacobson radical}\index[definitions]{Jacobson radical of module category} of $\modfin{\Lambda}$. We say that $\cU$ is \emph{closed under minimal $d$-extensions}\index[definitions]{closure under minimal $d$-extensions} if for any minimal $d$-extension
\[
\begin{tikzcd}
	0 & U & C_1 & \cdots & C_d & V & 0 
	  \arrow[from=1-1, to=1-2]
        \arrow["f_0",from=1-2, to=1-3]
        \arrow["f_1",from=1-3, to=1-4]
        \arrow["f_{d-1}",from=1-4, to=1-5]
        \arrow["f_d",from=1-5, to=1-6]
        \arrow[from=1-6, to=1-7]
\end{tikzcd}
\]
in $\cC$ with $U,V\in\cU$, we have that $C_1,\ldots,C_d\in\cU$. We then have the following.

\begin{proposition}\label{prop:description of d-torsion classes}
    $\cU\subseteq\cC$ is a $d$-torsion class if and only if it is closed under  $d$-quotients, minimal $d$-extensions and direct summands.
\end{proposition}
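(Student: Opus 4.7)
The plan is to deduce the proposition from Theorem \ref{thm:d-torsion by AHJKPT} by showing that, in the presence of closure under $d$-quotients and direct summands, closure under arbitrary $d$-extensions is equivalent to closure under minimal $d$-extensions. The crucial input will be the following standard fact for $d$-extensions in $\cC$: every equivalence class of $d$-extensions contains a minimal representative, this representative is unique up to isomorphism, and any other representative of the same class is isomorphic to the direct sum of the minimal one with trivial $d$-extensions of the form
\[
0 \to 0 \to \cdots \to 0 \to X \xrightarrow{\mathrm{id}_X} X \to 0 \to \cdots \to 0 \to 0.
\]

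For the forward direction, I would start from a minimal $d$-extension $\xi$ with endpoints $U, V \in \cU$ and invoke closure under $d$-extensions (available via Theorem \ref{thm:d-torsion by AHJKPT}) to produce an equivalent $d$-extension $\xi'$ whose intermediate terms $U_1, \ldots, U_d$ all lie in $\cU$. By the uniqueness of the minimal representative, $\xi$ is isomorphic to a direct summand of $\xi'$, so each intermediate term of $\xi$ is a direct summand of the corresponding $U_i$ and therefore lies in $\cU$ by closure under direct summands.

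For the backward direction, I would take an arbitrary $d$-extension with endpoints in $\cU$ and pass to its minimal representative, which is an equivalent $d$-extension with the same endpoints $U, V$. Closure under minimal $d$-extensions then places all intermediate terms of this minimal representative in $\cU$, which is precisely the equivalent $d$-extension with $\cU$-valued middle terms required to verify closure under $d$-extensions. Combined with the hypothesised closure under $d$-quotients and direct summands, Theorem \ref{thm:d-torsion by AHJKPT} then yields that $\cU$ is a $d$-torsion class.

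The main obstacle is justifying the reduction of a general $d$-extension to a minimal one together with the direct summand property of the minimal representative. I expect to establish this through a Fitting-type argument exploiting the Krull--Schmidt property of $\cC$: at any position $1 \le i \le d-1$ where $f_i \notin \mathrm{Rad}_{\modfin{\Lambda}}(C_i, C_{i+1})$ one can choose matching direct sum decompositions of $C_i$ and $C_{i+1}$ making $f_i$ into an identity on a common summand $X$, use that $f_{i-1}$ and $f_{i+1}$ then have no component reaching $X$ (since $f_i f_{i-1}=0$ and $f_{i+1}f_i=0$), and thereby split $\xi$ as the direct sum of a strictly smaller $d$-extension and a trivial one. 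Iterating this procedure produces the minimal representative, and uniqueness up to isomorphism follows from Krull--Schmidt applied to each intermediate term in a chain of equivalences.
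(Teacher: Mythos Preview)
Your proposal is correct and follows essentially the same approach as the paper. The paper's proof simply cites \cite[Proposition 2.4]{Herschend-Jorgensen} for the two facts you identify as the crucial input---that every $d$-extension is equivalent to a minimal one and that the minimal one appears as a direct summand of any equivalent $d$-extension---and then says ``the claim follows''; your two directions spell out exactly how it follows, and your Fitting-type sketch is a reasonable outline of how one proves the cited proposition rather than a different route.
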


\begin{proof}
By \cite[Proposition 2.4]{Herschend-Jorgensen} we have that every $d$-extension in $\cC$ is equivalent to a minimal $d$-extension and, moreover, that this minimal $d$-extension appears as a direct summand in any $d$-extension in its equivalence class. The claim follows.
\end{proof}

For further details about $d$-torsion classes we refer to \cite{Jorgensen2014} and \cite{august2023characterisation}.

For the rest of this section we fix a finite-dimensional algebra $\Lambda=\Lambda(n,l)$ which admits a $d$-cluster tilting subcategory $\cC\subseteq \modfin{\Lambda}$ for some integer $d\geq 2$. Our aim is to classify the $d$-torsion classes of $\Lambda$. As $d$-torsion classes are closed under direct sums and summands, for the rest of this section all subcategories are closed under direct sums. In practice, this means that we describe subcategories using the indecomposable modules lying in them.

We find the classification of $d$-torsion classes to be somewhat less complicated in comparison to our previous classification of $\td$-tilting pairs. Moreover, many of the computations are of a similar flavor to the ones already performed in this article, and so we leave some details to the reader.

We may notice that any nonzero endomorphism of an indecomposable module in $\modfin{\Lambda}$ is an isomorphism by Lemma \ref{Lemma:HomSpaceAdachi}. It follows that any nonzero morphism $f:M\to N$ with $M$ and $N$ indecomposable is left minimal. We use this fact throughout.

\subsection{Weak cokernels in \texorpdfstring{$\cC$}{C}}
\label{subsec:weak cokernels in C}
In this small section we compute some weak cokernels in $\cC$. 

\begin{lemma}\label{lem:nonzero morphisms in C}
    Let $M=\ind{a}{b}\in\modfin{\Lambda}$ and $C\in\cC$ be indecomposable. Assume that $\Hom{\Lambda}{M}{C}\neq 0$. Then one of the following holds.
    \begin{enumerate}
        \item[(a)] $C$ is isomorphic to one of the indecomposable injective modules $I(a+j)$ for $0\leq j\leq b-a$.
        \item[(b)] $C\in\diag{i}$ for some odd $i$ and $s_{i}-l+1\leq a\leq s_i \leq b\leq s_i+l-1$.
        \item[(c)] $C\in\diag{i}$ for some even $i$ and $s_{i}-l+1\leq a\leq b\leq s_i$.
    \end{enumerate}
\end{lemma}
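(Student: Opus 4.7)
The plan is to carry out a direct case analysis based on where the indecomposable $C$ sits inside $\cC$. Write $C=\ind{c}{d}$. The hypothesis $\Hom{\Lambda}{M}{C}\neq 0$ combined with Lemma \ref{Lemma:HomSpaceAdachi} is exactly the chain $a\leq c\leq b\leq d$, and additionally $b-a\leq l-1$ holds automatically since every indecomposable $\Lambda(n,l)$-module has length at most $l$. I will feed these inequalities into the explicit description of indecomposables in $\cC$ coming from (\ref{eq:description of C}) together with (\ref{eq:definition of diagonals}), which partitions the indecomposables of $\cC$ into the injectives, the projective non-injectives (which form $\diag{1}$), and the non-injective modules in $\diag{i}$ for $i\in[2,p]$.

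First I handle the case where $C$ is injective. Then Lemma \ref{lem:proj inj}(b) gives $c=k$ with $C=I(k)$, so $a\leq k\leq b$ reads precisely as $k=a+j$ for some $j\in [0,b-a]$; this is conclusion (a). If $C$ is not injective, then $C\in\diag{i}$ for some $i$, and I split on the parity of $i$.

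For $i$ odd (allowing $i=1$, which captures the projective non-injective case), (\ref{eq:definition of diagonals}) gives $C=\ind{s_i}{s_i+x-1}$ for some $x\in[1,l-1]$. Then $c=s_i$ and $d\leq s_i+l-2$, so the chain $a\leq c\leq b\leq d$ becomes $a\leq s_i\leq b\leq s_i+l-1$; the remaining bound $s_i-l+1\leq a$ is obtained by combining the inequality $b\geq s_i$ with the length constraint $a\geq b-l+1$. This is exactly conclusion (b). For $i$ even, (\ref{eq:definition of diagonals}) dually gives $C=\ind{s_i-x+1}{s_i}$ with $x\in[1,l-1]$; substituting $d=s_i$ and $c\in[s_i-l+2,s_i]$ into the chain and invoking the length constraint in the same way produces the bounds of conclusion (c).

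The entire proof is thus bookkeeping, and the main obstacle is keeping the parity cases and the interplay between the constraints $a\leq c$, $b\leq d$ and $b-a\leq l-1$ straight; no deeper input beyond Lemma \ref{Lemma:HomSpaceAdachi}, Lemma \ref{lem:proj inj}(b), and the explicit form of the diagonals in (\ref{eq:definition of diagonals}) is needed.
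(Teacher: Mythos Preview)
Your overall strategy—case-splitting on whether $C$ is injective or lies in a diagonal $\diag{i}$ of a given parity, and then reading off the inequalities from Lemma~\ref{Lemma:HomSpaceAdachi}, Lemma~\ref{lem:proj inj}(b), and (\ref{eq:definition of diagonals})—is exactly the ``direct computation using the description of $\cC$'' that the paper invokes.

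The argument for case~(c), however, does not go through as written. In case~(b) you derived $s_i-l+1\leq a$ from $b\geq c=s_i$ together with $a\geq b-(l-1)$. In case~(c) the roles flip: now $d=s_i$ forces $b\leq s_i$, while the only lower bound available on $b$ is $b\geq c\geq s_i-l+2$. Feeding this into $a\geq b-(l-1)$ gives merely $a\geq s_i-2l+3$, not $a\geq s_i-l+1$, so ``invoking the length constraint in the same way'' does not establish the stated lower bound on $a$. In fact this lower bound can fail outright: for $\Lambda(20,5)$ with $d=2$ (so $p=6$ and $s_4=13$) the module $M=\ind{7}{10}$ admits a nonzero morphism to $C=\ind{10}{13}\in\diag{4}$, yet $a=7<9=s_4-l+1$, and $C$ is neither injective nor in an odd diagonal. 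The inequality in~(c) should therefore be read as a coarse localisation rather than a sharp bound; the precise information actually used downstream is recovered in Lemma~\ref{lem:minimal left approximation in C} by passing to the \emph{minimal} index $i$ with $\Hom{\Lambda}{M}{\diag{i}}\neq 0$.
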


\begin{proof}
    This follows by a direct computation using the description of $\cC$ (see for example (\ref{eq:description of C})).
\end{proof}

\begin{lemma}\label{lem:minimal left approximation in C}
    Let $M=\ind{a}{b}\in\modfin{\Lambda}$ be indecomposable and assume that $\ind{a}{b}\not\in\cC$. Let $I=\ind{a}{a+l-1}$ be the injective envelope of $M$ and $\iota:M\to I$ be the canonical inclusion. Let $i\in [1,p]$ be minimal such that $\Hom{\Lambda}{M}{\diag{i}}\neq 0$, if such $i$ exists, and $i=0$ otherwise. Then exactly one of the following hold.
    \begin{enumerate}
        \item[(a)] $i=0$. Then $\iota:M\to I$ is a minimal left $\cC$-approximation.
        \item[(b)] $i\neq 0$ is odd. Then $N=\ind{s_i}{b}$ is in $\diag{i}$ and $\iota\oplus\pi_N:  M\to I\oplus N$ is a minimal left $\cC$-approximation, where $\pi_N:M\to N$ is the canonical projection.
        \item[(c)] $i\neq 0$ is even and $N=\ind{a}{s_i}$ is in $\diag{i}$. Then the canonical inclusion $\iota_N:M\to N$ is a minimal left $\cC$-approximation.
        \item[(d)] $i\neq 0$ is even and $\ind{a}{s_i}$ is not in $\diag{i}$. Then $\iota:M\to I$ is a minimal left $\cC$-approximation.
    \end{enumerate}
\end{lemma}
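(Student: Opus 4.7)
The plan is to first classify, using Lemma \ref{lem:nonzero morphisms in C}, all indecomposable $C\in\cC$ with $\Hom{\Lambda}{M}{C}\neq 0$: these are either injective modules $I(a+j)$ for $0\le j\le b-a$, or indecomposables lying in some diagonal $\diag{j}$. By the minimality of $i$, any morphism to $\diag{j}$ must have $j\ge i$, and a short arithmetic using (\ref{eq:difference of consecutive simples}) together with the bounds in Lemma \ref{lem:nonzero morphisms in C}(b,c) and the constraint $M\notin\cC$ (which forces $b-a<l-1$, and additionally $a<s_i$ in case (b) and $b<s_i$ in case (c), since otherwise $M$ itself would lie in $\diag{i}$ or be projective-injective) rules out $j>i$. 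Thus in every case only targets in $\diag{i}$ or of the form $I(a+j)$ need be considered.

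Case (a) then follows from the injective property: every $\phi\colon M\to I(a+j)$ extends to $\psi\colon I\to I(a+j)$ with $\psi\circ\iota=\phi$ since $\iota$ is a monomorphism and $I(a+j)$ is injective; left minimality is automatic as $I$ is indecomposable.

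For case (b), I would observe that the modules in $\diag{i}$ receiving a nonzero map from $M$ are precisely the $\ind{s_i}{s_i+x-1}$ for $x\ge b-s_i+1$, and that any such map is, up to scalar, the composition of $\pi_N$ with the canonical inclusion $N=\ind{s_i}{b}\hookrightarrow \ind{s_i}{s_i+x-1}$; maps to injectives factor through $\iota$ as in case (a). Thus $\iota\oplus\pi_N$ is a left $\cC$-approximation. Left minimality would then follow from a Lemma \ref{Lemma:HomSpaceAdachi} computation: $\Hom{\Lambda}{N}{I}=0$ (else $a=s_i$, putting $M\in\cC$) and $\Hom{\Lambda}{I}{N}=0$ (else $b-a=l-1$, making $M$ projective-injective), both contradicting $M\notin\cC$. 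These vanishings, together with $I\not\cong N$, force any endomorphism $g$ of $I\oplus N$ with $g\circ(\iota\oplus\pi_N)=\iota\oplus\pi_N$ to be diagonal and to act as an isomorphism on each factor.

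Cases (c) and (d) are analogous but with a single indecomposable summand. In case (c), every map $M\to \ind{s_i-x+1}{s_i}$ factors as $M\hookrightarrow N\twoheadrightarrow \ind{s_i-x+1}{s_i}$, and every map $M\to I(a+j)$ factors as $M\hookrightarrow N\twoheadrightarrow \ind{a+j}{s_i}\hookrightarrow I(a+j)$; minimality is immediate since $N$ is indecomposable. In case (d), the hypothesis $\ind{a}{s_i}\notin\diag{i}$ combined with $a\ge s_i-l+1$ from Lemma \ref{lem:nonzero morphisms in C}(c) forces $a=s_i-l+1$, so $\ind{a}{s_i}=I(a)=I$, and the argument of case (a) combined with the factorization of morphisms to $\diag{i}$ through $I$ (as in case (c)) concludes. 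The main obstacle will be the careful book-keeping needed to separate cases (c) and (d)---namely, recognizing that $\ind{a}{s_i}$ either lies in $\diag{i}$ or coincides with the projective-injective $I(a)$---which hinges on the length bound $b-a<l-1$ forced by $M\notin\cC$ together with the spacing formulas (\ref{eq:difference of consecutive simples}).
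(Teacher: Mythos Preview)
Your proposal is correct and follows exactly the approach the paper intends: the paper's proof consists of the single sentence ``This is a direct computation using Lemma~\ref{lem:nonzero morphisms in C} and the description of $\cC$,'' and what you have written is precisely that computation carried out in full. Your case analysis, the arithmetic ruling out $j>i$ via (\ref{eq:difference of consecutive simples}), the factorization arguments, and the minimality checks (in particular the vanishing of $\Hom{\Lambda}{N}{I}$ and $\Hom{\Lambda}{I}{N}$ in case (b), and the identification $\ind{a}{s_i}=I$ in case (d)) are all sound.
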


\begin{proof}
    This is a direct computation using Lemma \ref{lem:nonzero morphisms in C} and the description of $\cC$.
\end{proof}

\tikzcdset{scale cd/.style={every label/.append style={scale=#1},
    cells={nodes={scale=#1}}}}

\begin{lemma}\label{lem:n-extensions in C}
    \begin{enumerate}
        \item[(a)] Let $2\leq i\leq p$ be even and $M=\ind{a}{s_i}\in\diag{i}$. Let $j\in\{1,\ldots,s_i-a+1\}$ and
        \[
        N = \ind{s_i+\tfrac{d-2}{2}l+2}{a+\tfrac{d}{2}l-1+j}.
        \]
        Then there exists in $\cC$ a minimal $d$-extension of the form

\[
\begin{tikzpicture}[every node/.style={scale=.85},xscale=.85,yscale=.75]
    \node (zero1) at (-.2,0) []{$0$};
    \node (M1) at (1,0) [] {$M$};
    \node (M2) at (3.5,0) [] {$I(a)\oplus \ind{a+j}{s_i}$};
    \node (M3) at (6.5,0) [] {$I(a+j)$};
    \node (M4) at (8.7,0) [] {$I(a+l)$};
    \node (M5) at (11,0) [] {$I(a+l+j)$};
    \node (dots) at (13,0) [] {$\dots$};
    \node (M6) at (15.3,0) [] {$I(a+\tfrac{d-4}{2}l+j)$};
    \node (placeholder) at (1,-1) {{}};
    \node (M7) at (3.2,-1) [] {$I(a+\tfrac{d-2}{2}l+j)$};
    \node (M8) at (9,-1) [] {$I(a+\tfrac{d-2}{2}l+j)\oplus\ind{s_i+\tfrac{d-2}{2}l+2}{a+\tfrac{d}{2}l-1}$};
    \node (M9) at (13.5,-1) [] {$N$};
    \node (zero2) at (14.6,-1) []{$0$};

    \draw[right hook->] (zero1)--(M1);
    \draw[->] (M1)--(M2)node[midway,above,scale=.8]{$f_0$};
    \draw[->] (M2)--(M3)node[midway,above,scale=.8]{$f_1$};
    \draw[->] (M3)--(M4)node[midway,above,scale=.8]{$f_2$};
    \draw[->] (M4)--(M5)node[midway,above,scale=.8]{$f_3$};
    \draw[->] (M5)--(dots)node[midway,above,scale=.8]{$f_4$};
    \draw[->] (dots)--(M6)node[midway,above,scale=.8]{$f_{d-3}$};
    \draw[->] (placeholder)--(M7)node[midway,above,scale=.8]{$f_{d-2}$};
    \draw[->] (M7)--(M8)node[midway,above,scale=.8]{$f_{d-1}$};
    \draw[->] (M8)--(M9)node[midway,above,scale=.8]{$f_d$};
    \draw[->>] (M9)--(zero2);
\end{tikzpicture}
\]
    In particular, if $j=1$, then $\tau_d^{-}(M)=\ind{s_i+\tfrac{d-2}{2}l+2}{a+\tfrac{d}{2}l}$ and so this gives a minimal $d$-extension between $M$ and $\tau_d^{-}(M)$.
    \item[(b)] Let $1\leq i\leq p-1$ be odd and $M=\ind{s_i}{b}\in\diag{i}$. Let $j\in\{1,\ldots,l-1+s_i-b\}$ and
    \[
    N={\ind{b+\tfrac{d-2}{2}l+1+j}{s_i+\tfrac{d}{2}l}}.
    \] 
    Then there exists in $\cC$ a minimal $d$-extension of the form
    \[
\begin{tikzpicture}[every node/.style={scale=.85},xscale=.85,yscale=.8]
    \node (zero1) at (-.3,0)[] {$0$};
    \node (M1) at (.9,0) [] {$M$};
    \node (M2) at (3,0) [] {$\ind{s_i}{b+j}$};
    \node (M3) at (5.5,0) [] {$I(b+1)$};
    \node (M4) at (8,0) [] {$I(b+j+1)$};
    \node (M5) at (10.8,0) [] {$I(b+l+1)$};
    \node (dots) at (13,0) [] {$\dots$};
    \node (placeholder) at (.9,-1) {{}};
    \node (M6) at (3.2,-1) [] {$I(a+\tfrac{d-4}{2}l+1)$};
    \node (M7) at (7,-1) [] {$I(a+\tfrac{d-4}{2}l+j+1)$};
    \node (M8) at (11.5,-1) [] {$\ind{b+\tfrac{d-2}{2}l+1}{s_i+\tfrac{d}{2}l}$};
    \node (M9) at (14.6,-1) [] {$N$};
    \node (zero2) at (15.7,-1) [] {$0$};

    \draw[right hook->] (zero1)--(M1);
    \draw[->] (M1) -- (M2)node[midway,above,scale=0.8]{$f_0$};
    \draw[->] (M2) -- (M3)node[midway,above,scale=0.8]{$f_1$};
    \draw[->] (M3) -- (M4)node[midway,above,scale=0.8]{$f_2$};
    \draw[->] (M4) -- (M5)node[midway,above,scale=0.8]{$f_3$};
    \draw[->] (M5) -- (dots)node[midway,above,scale=0.8]{$f_4$};
    \draw[->] (placeholder) -- (M6)node[midway,above,scale=0.8]{$f_{d-3}$};
    \draw[->] (M6) -- (M7)node[midway,above,scale=0.8]{$f_{d-2}$};
    \draw[->] (M7) -- (M8)node[midway,above,scale=0.8]{$f_{d-1}$};
    \draw[->] (M8) -- (M9)node[midway,above,scale=0.8]{$f_d$};
    \draw[->>] (M9)--(zero2);
\end{tikzpicture}
\]
    In particular, if $j=1$, then $\tau_d^{-}(M)=\ind{b+\tfrac{d-2}{2}l+2}{s_i+\tfrac{d}{2}l}$ and so this gives a minimal $d$-extension between $M$ and $\tau_d^{-}(M)$. 
    \end{enumerate}
\end{lemma}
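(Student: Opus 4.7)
The plan is to handle parts (a) and (b) by explicit iterative construction of left minimal weak cokernels in $\cC$, exploiting Corollary \ref{cor:the usual weak cokernel}, and then verifying that the resulting cokernel-chain terminates at $N$ after exactly $d$ steps. Since the two parts are structurally analogous (they differ only in the parity of $i$ and thus in whether the socle or the top governs the construction), I would write the full argument only for part (a) and indicate that part (b) follows by the obvious dual computation.

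\smallskip

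\textbf{Step 1: Construction of the sequence.} Starting from $M_0:=M=\ind{a}{s_i}$, I iteratively apply Corollary \ref{cor:the usual weak cokernel}: at each stage, factor out the socle and take the minimal left $\cC$-approximation as given by Lemma \ref{lem:minimal left approximation in C}. For $M/\Socle{M}=\ind{a}{s_i-1}$, the minimal left $\cC$-approximation lands (by case~(d) of Lemma \ref{lem:minimal left approximation in C}) in the injective envelope $I(a)$, but to accommodate the index $j$ one instead considers the composite $M\to \ind{a}{s_i-1}\to \ind{a}{a+j-1}\oplus \ind{a+j}{s_i-1}$ and then completes it to the map $f_0: M\to I(a)\oplus \ind{a+j}{s_i}$ stated; concretely $f_0$ is the sum of the inclusion $M\hookrightarrow I(a)$ and the canonical map $M\twoheadrightarrow \ind{a+j}{s_i}\oplus 0 \hookrightarrow I(a)\oplus \ind{a+j}{s_i}$ with a suitable sign to ensure $f_1\circ f_0=0$. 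One then iterates: the next quotient is (up to a projective-injective summand) an interval module whose minimal left $\cC$-approximation is $I(a+j)$, and the pattern continues with alternating injective envelopes $I(a+kl+j)$ and $I(a+kl)$ for $k=1,\ldots,\tfrac{d-4}{2}$, terminating at the final map $f_d$ landing in $N=\ind{s_i+\tfrac{d-2}{2}l+2}{a+\tfrac{d}{2}l-1+j}$.

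\smallskip

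\textbf{Step 2: Verification of weak kernel/cokernel properties and exactness at the ends.} For each $i\in[1,d-1]$, I would check that $f_i\circ f_{i-1}=0$ by a direct comparison of composition factors, and that $f_i$ is a weak cokernel of $f_{i-1}$ by invoking Lemma \ref{lem:how to get weak cokernels}: since $f_i$ arises as the composition of the cokernel of $f_{i-1}$ in $\modfin{\Lambda}$ with a minimal left $\cC$-approximation, this is automatic. Left-exactness of $f_0$ reduces to the observation that $f_0$ restricted to the socle $S(s_i)$ already includes nontrivially into $\ind{a+j}{s_i}$ (since $j\leq s_i-a+1$ implies $a+j\leq s_i+1$, so either $\ind{a+j}{s_i}$ contains $S(s_i)$ or the summand is zero and the map is controlled by $I(a)$). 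Right-exactness of $f_d$ follows because $N$ is by construction the cokernel of the radical inclusion in the last term: one checks the kernel of $f_d$ equals the image of $f_{d-1}$ by comparing supports.

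\smallskip

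\textbf{Step 3: Minimality and the case $j=1$.} To establish minimality I would check that each $f_i$ for $1\leq i\leq d-1$ lies in $\mathrm{Rad}_{\modfin{\Lambda}}$; this is immediate because no indecomposable summand of the domain of $f_i$ is isomorphic to a summand of its codomain (the intervals have shifted endpoints by multiples of $l$ together with $j\geq 1$), so each component of the morphism matrix is a proper non-invertible map between non-isomorphic indecomposables. Finally, for $j=1$ a direct comparison with Lemma \ref{lem:computation of taud}(b) shows $N=\tdo(M)$, giving the claimed minimal $d$-extension between $M$ and $\tdo(M)$.

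\smallskip

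\textbf{Main obstacle.} The technical heart of the argument is Step~1: at each intermediate stage, one must compute $X/\Socle{X}$ for a module $X$ that itself is a direct sum of an injective and an interval module sitting in some $\diag{i'}$, then identify the minimal left $\cC$-approximation via Lemma \ref{lem:minimal left approximation in C}. The bookkeeping of which case of that lemma applies at each of the $d$ steps, and ensuring that the ``shift by $j$'' propagates correctly through the alternating pattern $\ldots,I(a+kl),I(a+kl+j),\ldots$, is where the proof will require the most care. I would present this as a single inductive computation rather than $d$ separate cases, indexed by $k\in[0,\tfrac{d-2}{2}]$, to keep the argument manageable.
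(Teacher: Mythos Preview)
Your overall strategy---construct the sequence, then verify each $f_i$ is obtained as cokernel-in-$\modfin{\Lambda}$ followed by a minimal left $\cC$-approximation---is exactly what the paper does. However, Step~1 as written does not build the stated sequence, for two reasons.

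First, a slip: the socle of $M=\ind{a}{s_i}$ is $S(a)$, so $M/\Socle{M}=\ind{a+1}{s_i}$, not $\ind{a}{s_i-1}$. More importantly, ``iteratively applying Corollary~\ref{cor:the usual weak cokernel}'' does not produce the chain of weak cokernels you need. That corollary, applied to a module $X$, gives a map $g:X\to C$ which is a weak cokernel of the map $P(\Socle{X})\to X$; it does \emph{not} give a weak cokernel of the previous $f_{i-1}$ in your sequence. Concretely: at the step $f_2:I(a+j)\to I(a+l)$, the cokernel of $f_1$ in $\modfin{\Lambda}$ is $\ind{a+l}{a+l+j-1}$, whose minimal left $\cC$-approximation is $I(a+l)$, matching the statement. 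If instead you ``factor out the socle'' of $I(a+j)$ you get $\ind{a+j+1}{a+j+l-1}$, whose approximation is $I(a+j+1)$---the wrong target. So the socle-quotient procedure diverges from the required sequence after the first step.

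The fix is simply to do what you already describe in Step~2 and what the paper does: write down $f_0$ explicitly (inclusion into $I(a)$ direct sum projection onto $\ind{a+j}{s_i}$, with a sign), compute the cokernel of $f_0$ in $\modfin{\Lambda}$, apply Lemma~\ref{lem:minimal left approximation in C} to get $f_1$, and iterate using Lemma~\ref{lem:how to get weak cokernels} at each stage. The paper organises this via a single diagram of canonical inclusions and projections through the intermediate interval modules $\ind{a+kl}{a+kl+j-1}$ and $\ind{a+kl+j}{a+(k+1)l-1}$, from which each $f_i$ is read off as a composite. Finally, your Step~2 only checks that $f_0$ is a monomorphism; for the $d$-kernel half one needs each $f_i$ to be a weak kernel of $f_{i+1}$, which the paper dispatches by duality (minimal right $\cC$-approximations of kernels). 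Your minimality argument in Step~3 is correct and matches the paper's.
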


\begin{proof}
    We only show part (a) as part (b) can be shown similarly. 
    Notice that all the modules appearing in the sequence in (a) belong to $\cC$. Indeed, the injective modules clearly belong to $\cC$, the modules $M$ and $\ind{a+j}{s_i}$ belong to $\diag{i}$, while the modules $N$ and $\ind{s_i+\tfrac{d-2}{2}l+2}{a+\tfrac{d}{2}l-1}=\ind{s_{i+1}}{a+\tfrac{d}{2}l-1}$ belong to $\diag{i+1}$. Consider the following sequence of morphisms
 
    \[
    \begin{tikzpicture}[every node/.style={scale=.6},scale=.8]
        \node (leftEnd) at (-.6,0) []{$0$};
        \node (M1) at (1,0) [] {$M=\ind{a,s_i}$};
        \node (I1) at (2.5,1) [] {$I(a)$};
        \node (M2) at (2.5,-1) [] {$\ind{a+j}{s_i}$};
        \node (M3) at (4,0) [] {$\ind{a+j}{a+l-1}$};
        \node (I2) at (6.5,0) [] {$I(a+j)$};
        \node (M4) at (8,-1) [] {$\ind{a+l}{a+l+j-1}$};
        \node (I3) at (9.5,0) [] {$I(a+l)$};
        \node (M5) at (11.5,-1) [] {$\ind{a+l+j}{a+2l-1}$};
        \node (I4) at (13,0) [] {$I(a+l+j)$};
        \node (dots) at (15,0) [] {$\cdots$};

        \node (dots2) at (0,-4) [] {$\cdots$};
        \node (I5) at (2,-4) [] {$I(a+\tfrac{d-4}{2}l+j)$};
        \node (M6) at (4,-5) [] {$\ind{a+\tfrac{d-2}{2}l}{a+\tfrac{d-2}{2}l+j-1}$};
        \node (I6) at (5.5,-4) [] {$I(a+\tfrac{d-2}{2}l)$};
        \node (M7) at (9,-4) [] {$\ind{a+\tfrac{d-2}{2}l+j}{a+\tfrac{d}{2}l-1}$};
        \node (M8) at (11,-5) [] {$\ind{s_i+\tfrac{d-2}{2}l+2}{a+\tfrac{d}{2}l-1}$};
        \node (I7) at (11,-3) [] {$I(a+\tfrac{d-2}{2}l+j)$};
        \node (N) at (13,-4) [] {$N$};
        \node (N2) at (14,-4) [] {$0$};

        \draw[right hook->] (leftEnd)--(M1);
        
        \draw[right hook->] (M1)--(I1)node[midway,anchor=south east]{$-\iota_0$};
        \draw[->>] (M1)--(M2) node[midway,anchor=north east]{$\pi_0$};
        \draw[->>] (I1)--(M3);
        \draw[right hook->] (M2)--(M3);
        \draw[right hook->] (M3)--(I2);
        \draw[->] (I1) to[bend left]node[midway,above]{$f_1^1$} (I2);
        \draw[->] (M2) to[bend right]node[midway,below]{$f_1^2$} (I2);
        
        \draw[->>] (I2)--(M4);
        \draw[right hook->] (M4)--(I3);
        \draw[->] (I2)--(I3)node[midway, above]{$f_2$};

        \draw[->>] (I3)--(M5);
        \draw[right hook->] (M5)--(I4);
        \draw[->] (I3)--(I4)node[midway, above]{$f_3$};
        \draw[->] (I4)--(dots)node[midway,above]{$f_4$};
        
        \draw[->] (dots2)--(I5)node[midway,above] {$f_{d-3}$};
        \draw[->>] (I5)--(M6);
        \draw[right hook->] (M6)--(I6);
        \draw[->] (I5)--(I6)node[midway,above]{$f_{d-2}$};

        \draw[->>] (I6)--(M7);
        \draw[->] (I6) to[bend left]node[midway,above]{$f_{d-1}^1$} (I7);
        \draw[->] (I6) to[bend right]node[midway,below]{$f_{d-1}^2$} (M8);
        \draw[right hook->] (M7)--(I7);
        \draw[->>] (M7)--(M8);
        \draw[right hook->] (I7)--(N)node[midway,anchor=south west]{$-\pi_d$};
        \draw[->>] (M8)--(N) node[midway,anchor=south east]{$\iota_d$};
        \draw[->>] (N)--(N2);
    \end{tikzpicture}
    \]
    where each morphism not labeled by $f$ is a canonical (up to the noted signs) inclusion or projection, and the morphisms labeled by $f$ are the compositions of these morphisms. Set $f_0=(-\iota_0)\oplus \pi_0$, set $f_1=f_{1}^{1}\oplus f_{1}^{2}$, set $f_{d-1}=f_{d-1}^{1}\oplus f_{d-1}^{2}$ and set $f_d=(-\pi_d)\oplus \iota_d$. Using Lemma \ref{lem:minimal left approximation in C} it is easy to check that for $1\leq i\leq d$, each $f_i$ decomposes as the composition of the cokernel of $f_{i-1}$ with a minimal left $\cC$-approximation of this cokernel. By Lemma \ref{lem:how to get weak cokernels} we have that $(f_1,\dots,f_d)$ is a $d$-cokernel of $f_0$. Dually it can be shown that $(f_1,\dots,f_{d-1})$ is a $d$-kernel of $f_d$. This shows that the given sequence is a $d$-extension. Moreover, there are no nonzero morphism in the given sequence going the opposite direction, which shows that each $f_i$ is a radical morphism. This shows that this $d$-extension is minimal. 
\end{proof}

\begin{example}\label{Example:minimal d-extensions} Let $d=4$ and $l=4$. The following picture give examples of minimal $d$-extensions between consecutive diagonals in this case which can be obtained via Lemma \ref{lem:n-extensions in C}. 
\[
\includegraphics{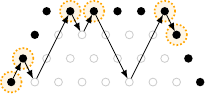}
\quad
\includegraphics{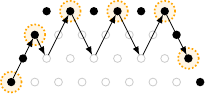}
\quad
\includegraphics{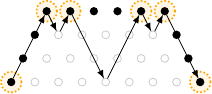}
\]
\[
\includegraphics{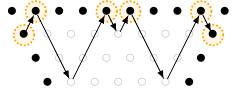}
\quad
\includegraphics{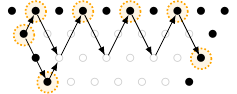}
\quad
\includegraphics{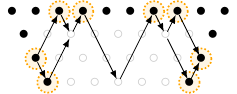}
\]
The modules appearing in the minimal $d$-extensions are given as yellow-encircled points  and the morphisms are obtained by composing the morphisms in the pictures.
\end{example}

\subsection{Classification of \texorpdfstring{$d$}{d}-torsion classes}

Let $\cU\subseteq \cC$ be an (additive and full) subcategory. Let $1\leq i\leq p$. We set $\cU_i=\cU\cap\diag{i}$\index[symbols]{U@$\cU_i$}. We further set $Q_{\cU}\subseteq [1,n-(l-1)]$\index[symbols]{Q@$Q_{\cU}$} to be the index set of the projective-injective indecomposable modules in $\cU$, that is, for $q\in [1,n-l+1]$ we have that $I(q)\in\cU$ if and only if $q\in Q_{\cU}$. Then we have that
\[
\cU = \add{\cU_1,\ldots,\cU_p,I(Q_{\cU})}.
\]
Hence a subcategory $\cU\subseteq \cC$ is described uniquely via the data $\cU_1,\ldots,\cU_p, Q_{\cU}$. Using the results of paragraph \ref{subsec:weak cokernels in C}, we can obtain a lot of information about the data $\cU_1,\ldots,\cU_p,Q_{\cU}$ when $\cU$ is a $d$-torsion class. The following three lemmas collect all of this information.

For the convenience of the reader, we recall that $\downdiag{i}{h}$ (respectively $\updiag{i}{h}$) is the additive closure of all indecomposable modules in $\diag{i}$ with length at most (respectively at least) $h$. Recall also that if $i$ is odd (respectively even), then $\ind{s_i}{s_i+h-1}$ (respectively $\ind{s_i-h+1}{s_i}$) is the unique indecomposable module in $\diag{i}$ of length $h$.

\begin{lemma}\label{lem:closure under d-quotients} Let $\cU$ be a $d$-torsion class in $\cC$ and let $1\leq i\leq p$.
\begin{enumerate}
    \item[(a)] Assume that $i$ is even and let $h\in\{1,\ldots,l-1\}$. If $\ind{s_i-h+1}{s_i}\in\cU_i$, then $\downdiag{i}{h}\subseteq \cU_i$.
    \item[(b)] Assume that $i$ is odd and let $I(q)$ be an indecomposable injective $\Lambda$-module with $s_i\leq q \leq s_{i+1}-(l-1)$. If $I(q)\in\cU$, then $[q,s_{i+1}-(l-1)]\subseteq Q_{\cU}$ and $\cU_{i+1}=\diag{i+1}$.
    \item[(c)] Assume that $i$ is even and let $I(q)$ be an indecomposable injective $\Lambda$-module with $s_i-(l-2)\leq q\leq s_{i+1}-1$. If $I(q)\in\cU$, then $[q,s_{i+1}-1]\subseteq Q_{\cU}$. If moreover $s_{i+1}-(l-1)\leq q$, then $\updiag{i+1}{h}\subseteq \cU_{i+1}$ where $h=l+q-s_{i+1}$.
    \item[(d)] Assume that $i$ is odd and let $\ind{s_i}{b}\in\diag{i}$ be indecomposable with $b>s_{i}$. If $\ind{s_i}{b}\in\cU_i$, then $[s_{i},s_{i+1}-(l-1)]\subseteq Q_{\cU}$ and $\cU_{i+1}=\diag{i+1}$.
\end{enumerate}
\end{lemma}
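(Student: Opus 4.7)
The central tool is the combination of Corollary \ref{cor:the usual weak cokernel} and Lemma \ref{lem:closed under minimal weak cokernel}: since a $d$-torsion class is closed under $d$-quotients, for any $X\in\cU$ the composition $X\twoheadrightarrow X/\Socle{X}\xrightarrow{g_0}C$, where $g_0$ is a minimal left $\cC$-approximation, is a left minimal weak cokernel in $\cC$, so $C\in\cU$. More generally, for any morphism $f\colon X\to Y$ in $\cC$ with $X\in\cU$, taking the cokernel in $\modfin{\Lambda}$ followed by a minimal left $\cC$-approximation yields a module in $\cU$ by Lemma \ref{lem:how to get weak cokernels} and Lemma \ref{lem:closed under minimal weak cokernel}. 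The explicit form of these approximations is described by Lemma \ref{lem:minimal left approximation in C}.

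For \textbf{part (a)}, apply the socle-quotient construction to $X=\ind{s_i-h+1}{s_i}\in\cU_i$. Since $i$ is even and $\Socle{X}=S(s_i-h+1)$, we have $X/\Socle{X}=\ind{s_i-h+2}{s_i}$, which already lies in $\diag{i}\subseteq\cC$ (when $h\geq 2$), so its minimal left $\cC$-approximation is itself. A straightforward descending induction on the length $h$ then yields $\downdiag{i}{h}\subseteq\cU_i$.

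For \textbf{parts (b) and (c)}, apply the socle-quotient construction to $X=I(q)=\ind{q}{q+l-1}$. Then $X/\Socle{X}=\ind{q+1}{q+l-1}$. Using the numerical constraints on $q$ together with the identities (\ref{eq:difference of consecutive simples}), determine which case of Lemma \ref{lem:minimal left approximation in C} applies to $\ind{q+1}{q+l-1}$: I will verify that there can be no odd $s_j$ with $s_j\in[q+1,q+l-1]$ under the hypotheses, so only the injective envelope $I(q+1)$ and possibly a module in an even diagonal $\diag{i+1}$ can appear in the approximation. In part (b) (with $i$ odd), the extreme case $q=s_{i+1}-(l-1)$ gives $\ind{q+1}{q+l-1}=\ind{s_{i+1}-(l-2)}{s_{i+1}}\in\diag{i+1}$, which lies in $\cC$; combined with part (a) applied at index $i+1$, this yields $\cU_{i+1}=\diag{i+1}$. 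For $q<s_{i+1}-(l-1)$, the approximation contains $I(q+1)$ as a summand (case (a) or (d) of Lemma \ref{lem:minimal left approximation in C}), allowing a descending induction on $s_{i+1}-(l-1)-q$ to produce $[q,s_{i+1}-(l-1)]\subseteq Q_{\cU}$. Part (c) is analogous, but now the even parity of $i$ allows case (c) of Lemma \ref{lem:minimal left approximation in C} to contribute a diagonal module $\ind{q+1}{s_{i+1}}\in\diag{i+1}$ of length $l+q-s_{i+1}$ exactly when $s_{i+1}-(l-1)\leq q$, giving $\updiag{i+1}{l+q-s_{i+1}}\subseteq\cU_{i+1}$ via part (a) applied to the index $i+1$.

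For \textbf{part (d)}, start from $X=\ind{s_i}{b}\in\cU_i$ with $i$ odd and $b>s_i$. Then $X/\Socle{X}=\ind{s_i+1}{b}$ is not in $\diag{i}$ anymore, and a check via Lemma \ref{lem:minimal left approximation in C} shows that its minimal left $\cC$-approximation contains $I(s_i+1)$ as a summand. Thus $I(s_i+1)\in\cU$, and part (b) applied to $q=s_i+1$ propagates to give $[s_i+1,s_{i+1}-(l-1)]\subseteq Q_{\cU}$ and $\cU_{i+1}=\diag{i+1}$. To obtain $s_i\in Q_{\cU}$, iterate the socle-quotient construction one more step or apply it directly to a suitable morphism out of $\ind{s_i}{b}$ that has cokernel $I(s_i)$.

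The main obstacle is the case analysis needed to confirm that in parts (b), (c), (d) the minimal left $\cC$-approximation of the intermediate module $\ind{q+1}{q+l-1}$ or $\ind{s_i+1}{b}$ contains the expected injective summand, and not merely a module of a higher diagonal. This requires careful bookkeeping with the formulas (\ref{eq:the definition of s_i}) and (\ref{eq:difference of consecutive simples}), and extra care in the borderline case $d=2$ with $l>3$ (cf.\ Remark \ref{rem:the case d=2 and l>3 is different}) where homomorphisms between different diagonals can be nonzero.
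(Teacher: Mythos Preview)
Your plan for parts (a) and (b) is essentially the paper's argument. In part (c) you have a parity slip: since $i$ is even, the next diagonal $\diag{i+1}$ is odd, so when $s_{i+1}-(l-1)\leq q$ it is case~(b) of Lemma~\ref{lem:minimal left approximation in C} that applies to $\ind{q+1}{q+l-1}$, producing $I(q+1)\oplus\ind{s_{i+1}}{q+l-1}$ rather than $\ind{q+1}{s_{i+1}}$. Consequently you cannot invoke part~(a) at the odd index $i+1$ to fill out $\updiag{i+1}{h}$; instead one reruns the same weak-cokernel step at each $q'\in[q,s_{i+1}-1]$, obtaining $\ind{s_{i+1}}{q'+l-1}$ for every such $q'$, and these exhaust $\updiag{i+1}{h}$. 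This is a minor repair.

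The genuine gap is in part~(d), in the final step of getting $I(s_i)\in\cU$. Your suggestion to ``iterate the socle-quotient construction'' or to find ``a suitable morphism out of $\ind{s_i}{b}$'' with cokernel $I(s_i)$ does not work: any left minimal weak cokernel $g\colon U\to C$ with $U$ among the modules already known to lie in $\cU$ (namely $\ind{s_i}{b}$, the injectives $I(q)$ with $q\geq s_i+1$, and $\diag{i+1}$) factors through a quotient of $U$, and every such quotient has socle at a vertex strictly greater than~$s_i$ unless the quotient is $\ind{s_i}{b}$ itself (which is already in $\cC$, so its approximation is the identity). Hence $I(s_i)$ never appears as the target of such a weak cokernel. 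The paper closes this gap by invoking closure under \emph{minimal $d$-extensions} (Proposition~\ref{prop:description of d-torsion classes}): after establishing $\cU_{i+1}=\diag{i+1}$, one takes the minimal $d$-extension of Lemma~\ref{lem:n-extensions in C}(b) with $j=l-1+s_i-b$, whose end terms are $\ind{s_i}{b}\in\cU$ and $\ind{s_{i+1}}{s_{i+1}}\in\cU_{i+1}$, and whose first middle term is $\ind{s_i}{b+j}=I(s_i)$. Closure under $d$-quotients alone is not enough here; you need the $d$-extension ingredient.
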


\begin{proof}
\begin{enumerate}
    \item[(a)] If $l=2$ there is nothing to show. Otherwise let $a=s_i-h+1$ and it is enough to show that $\ind{a+1}{s_i}\in\cU_i$. We have that $\ind{a+1}{s_i}=\ind{a}{s_i}/\Socle{\ind{a}{s_i}}$. Moreover, since $\ind{a+1}{s_i}\in\cC$, we have that the minimal left $\cC$-approximation of $\ind{a+1}{s_i}$ is the identity morphism. It follows by Corollary \ref{cor:the usual weak cokernel} that the canonical epimorphism $\ind{a}{s_i}\to \ind{a+1}{s_i}$ is a left minimal weak cokernel in $\cC$. We conclude that $\ind{a+1}{s_i}\in\cU_i$ by Lemma \ref{lem:closed under minimal weak cokernel}.

    \item[(b)] We first show the claim for $Q_{\cU}$. It is enough to show that $I(q+1)\in\cU$ when $q+1\leq s_{i+1}-(l-1)$. We have that $I(q)/\Socle{I(q)}=\ind{q+1}{q+l-1}$. Let $\pi:I(q)\to \ind{q+1}{q+l-1}$ be the canonical epimorphism. Using $q+1\leq s_{i+1}-(l-1)$ it is easy to see that $\ind{q+1}{q+l-1}$ satisfies the assumptions of either (a) or (d) of Lemma \ref{lem:minimal left approximation in C}. Hence Lemma \ref{lem:minimal left approximation in C} gives that the canonical inclusion $\iota:\ind{q+1}{q+l-1}\to I(q+1)$ is a minimal left $\cC$-approximation of $\ind{q+1}{q+l-1}$. It follows by Corollary \ref{cor:the usual weak cokernel} that $g=\iota\circ\pi$ is a left minimal weak cokernel in $\cC$. We conclude that $I(q+1)\in\cU$ by Lemma \ref{lem:closed under minimal weak cokernel}.
    
    Next notice that $I(s_{i+1}-(l-1))$ admits an epimorphism to any indecomposable module in $\diag{i+1}$ and so, similarly to part (a), we obtain that $\diag{i+1}\subseteq\cU_{i+1}$. Since $\cU_{i+1}\subseteq \diag{i+1}$ always holds, the claim follows.
    
    \item[(c)] We first show the claim for $Q_{\cU}$. If $q< s_{i+1}-(l-1)$, then the proof is similar to the previous case. Hence we may assume that 
    \begin{equation}\label{eq:the condition on q}
    s_{i+1}-(l-1)\leq q < q+1 \leq s_{i+1}-1.
    \end{equation}
    Again we let $\pi:I(q)\to \ind{q+1}{q+l-1}$ be the canonical epimorphism of $I(q)$ to $I(q)/\Socle{I(q)}$ and we let $\iota:\ind{q+1}{q+l-1}\to I(q+1)$ be the canonical inclusion. Using (\ref{eq:the condition on q}) it is easy to see that $\ind{q+1}{q+l-1}$ satisfies the assumptions of (b) of Lemma \ref{lem:minimal left approximation in C}. Let $N=\ind{s_{i+1}}{q+l-1}$ and let $\pi_N:\ind{q+1}{q+l-1}\to N$ be the canonical projection. Then Lemma \ref{lem:minimal left approximation in C} gives that $\iota\oplus \pi_N: \ind{q+1}{q+l-1}\to I(q+1)\oplus N$ is a minimal left $\cC$-approximation of $\ind{q+1}{q+l-1}$. We conclude that $I(q+1)\in\cU$ by Lemma \ref{lem:closed under minimal weak cokernel}. 
    
    Next notice that the above also shows that $\ind{s_{i+1}}{q'+l-1}\in\cU$ for $q'\in [q,s_{i+1}-1]$ and so $\updiag{i+1}{l+q-s_{i+1}}\subseteq \cU_{i+1}$ as well. 
    
    \item[(d)] Since $b>s_i$, we have that $\ind{s_i}{b}$ is not simple. Hence it is easy to see using Lemma \ref{lem:minimal left approximation in C} that $I(s_i+1)$ is a minimal left $\cC$-approximation of $\ind{s_i}{b}\to \ind{s_i}{b}/S(s_i)$. Thus $I(s_{i}+1)$ is a weak cokernel in $\cC$ and so $I(s_{i}+1)\in\cU$ by Lemma \ref{lem:closed under minimal weak cokernel}. By part (b) we obtain that $[s_{i}+1,s_{i+1}-(l-1)]\subseteq Q_{\cU}$ and that $\cU_{i+1}=\diag{i+1}$. It remains to show that $I(s_{i})=\ind{s_i}{s_i+l-1}\in\cU$. Set $j=l-1+s_i-b$. Then by Lemma \ref{lem:n-extensions in C}(b) we obtain a minimal $d$-extension between $M=\ind{s_i}{b}$ and $N=\ind{b+\tfrac{d-2}{2}l+1+j}{s_i+\tfrac{d}{2}l}$. By Proposition \ref{prop:description of d-torsion classes} we have that all middle terms in this minimal $d$-extension are in $\cU$, and so we obtain that $\ind{s_i}{b+j}=\ind{s_i}{s_i+l-1}=I(s_i)\in\cU$, as required.\qedhere
\end{enumerate}
\end{proof}

\begin{lemma}\label{lem:effect of M and its taud inverse}
    Let $\cU$ be a $d$-torsion class and let $1\leq i\leq p-1$. 
    \begin{enumerate}
        \item[(a)] Assume that $i$ is even. Assume also that $M=\ind{s_i-h+1}{s_i}\in\cU_i$ for some $h\in\{1,\ldots,l-1\}$ and that $\tau_d^{-}(M)\in\cU_{i+1}$. Then $\downdiag{i}{h}\subseteq\cU_i$ and $[s_i-h+1,s_{i+1}-1]\subseteq Q_{\cU}$. Moreover, we also have that $\updiag{i+1}{u}\subseteq \cU_{i+1}$ where
        \[
        u = \begin{cases}
            1, &\mbox{if $d>2$ or $h\in\{l-2,l-1\}$,} \\
            l-h-1, &\mbox{if $d=2$ and $h\leq l-3$.}
        \end{cases}
        \]
        \item[(b)] Assume that $i$ is odd. Assume also that $M=\ind{s_i}{s_i+h-1}\in\cU_i$ for some $h\in\{1,\ldots,l-1\}$ and that $M'=\ind{s_{i+1}-h'+1}{s_{i+1}}\in \cU_{i+1}$ for some $h'\in \{1,\ldots,l-h\}$. Then $\updiag{i}{h}\subseteq \cU_i$, $\cU_{i+1}=\diag{i+1}$ and $[s_i,s_{i+1}-(l-1)]\subseteq Q_{\cU}$.
    \end{enumerate}
\end{lemma}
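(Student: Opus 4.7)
The plan is to combine the closure properties of $d$-torsion classes from Proposition~\ref{prop:description of d-torsion classes} with the explicit minimal $d$-extensions provided by Lemma~\ref{lem:n-extensions in C} and the consequences of closure under $d$-quotients collected in Lemma~\ref{lem:closure under d-quotients}. The two parts will be proved separately, but following the same scheme: feed one carefully chosen $d$-extension into the closure property of $\cU$ to produce a single ``starter'' module, and then use Lemma~\ref{lem:closure under d-quotients} to propagate from it.

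For part (a), set $a = s_i - h + 1$. Since $M = \ind{a}{s_i}\in\cU_i$, Lemma~\ref{lem:closure under d-quotients}(a) gives $\downdiag{i}{h}\subseteq\cU_i$ for free. To place a projective-injective into $\cU$, I would apply Lemma~\ref{lem:n-extensions in C}(a) with $j = 1$: this produces a minimal $d$-extension in $\cC$ whose outer terms are $M$ and $\tdo(M)$, both of which lie in $\cU$ by hypothesis. Proposition~\ref{prop:description of d-torsion classes} together with closure under direct summands then places every indecomposable summand of the middle terms in $\cU$; in particular, $I(a)\in\cU$. Because $h\leq l-1$ forces $a\geq s_i-(l-2)$, Lemma~\ref{lem:closure under d-quotients}(c) applied to $q = a$ immediately yields $[s_i-h+1,\, s_{i+1}-1]\subseteq Q_{\cU}$. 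The claim about $\updiag{i+1}{u}$ is then a short case analysis of the ``moreover'' part of Lemma~\ref{lem:closure under d-quotients}(c): using $s_{i+1}-s_i = \tfrac{d-2}{2}l+2$, the inequality $a\leq s_{i+1}-(l-1)$ holds precisely when $d>2$ or $h\in\{l-2,l-1\}$, in which case one may take the optimal $q = s_{i+1}-(l-1)$ and conclude $u = 1$; in the residual case $d=2$ and $h\leq l-3$, the best available choice is $q = a$, giving $u = l + a - s_{i+1} = l - h - 1$.

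For part (b), set $b = s_i + h - 1$. The key observation is that the extremal choice $j = l - h$ in Lemma~\ref{lem:n-extensions in C}(b) degenerates its first middle term $\ind{s_i}{b+j}$ into the projective-injective $\ind{s_i}{s_i+l-1} = I(s_i)$, while its right endpoint becomes $N = \ind{s_{i+1}}{s_{i+1}} = S(s_{i+1})$. Lemma~\ref{lem:closure under d-quotients}(a) applied to $M'\in\cU_{i+1}$ puts $S(s_{i+1})$ in $\cU$, so both endpoints of this $d$-extension lie in $\cU$ and closure gives $I(s_i)\in\cU$. Lemma~\ref{lem:closure under d-quotients}(b) applied to $q = s_i$ (valid because $\tfrac{d}{2}l\geq l-1$) then delivers $[s_i,\, s_{i+1}-(l-1)]\subseteq Q_{\cU}$ and $\cU_{i+1} = \diag{i+1}$ at once. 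With $\cU_{i+1} = \diag{i+1}$ in hand, I would rerun Lemma~\ref{lem:n-extensions in C}(b) for each $j\in\{1,\ldots,l-h\}$: the right endpoint $\ind{s_{i+1}-(l-h-j+1)+1}{s_{i+1}}$ now lies in $\cU_{i+1}$, so the first middle term $\ind{s_i}{s_i+h+j-1}$ lies in $\cU$. As $j$ ranges, these modules realise every length from $h+1$ to $l-1$ inside $\diag{i}$, and together with $M$ itself they exhaust $\updiag{i}{h}$.

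The main obstacle I foresee is the $u$-value bookkeeping in part (a): the piecewise formula reflects that in the narrow regime $d=2$, $h\leq l-3$, the interval $[a, s_{i+1}-1]$ fails to contain $s_{i+1}-(l-1)$, so the preferred $q$ for Lemma~\ref{lem:closure under d-quotients}(c) is unavailable and one must fall back on $q = a$. The resulting case split is routine but fiddly. Part (b) is comparatively transparent once one spots that $j = l - h$ in Lemma~\ref{lem:n-extensions in C}(b) simultaneously produces a projective-injective feeding Lemma~\ref{lem:closure under d-quotients}(b) and, after upgrading to $\cU_{i+1} = \diag{i+1}$, a uniform family of middle terms that fills out $\updiag{i}{h}$.
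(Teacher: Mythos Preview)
Your proposal is correct and follows essentially the same scheme as the paper: feed an explicit minimal $d$-extension from Lemma~\ref{lem:n-extensions in C} into Proposition~\ref{prop:description of d-torsion classes} to produce a starter module, then propagate via Lemma~\ref{lem:closure under d-quotients}. Part (a) matches the paper's argument almost verbatim. In part (b) there is a minor but genuine difference worth noting: the paper applies Lemma~\ref{lem:n-extensions in C}(b) with $j=l+1-h-h'$, using $M'$ itself as the right endpoint, to obtain the non-simple module $\ind{s_i}{s_i+l-h'}\in\diag{i}$ and then invokes Lemma~\ref{lem:closure under d-quotients}(d); you instead first pass from $M'$ to $S(s_{i+1})$ via Lemma~\ref{lem:closure under d-quotients}(a), take the extremal $j=l-h$ so the first middle term is $I(s_i)$ directly, and invoke Lemma~\ref{lem:closure under d-quotients}(b). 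Your route has the small advantage of not tracking $h'$, while the paper's stays entirely inside the diagonals before reaching the projective-injectives; both are equally short. One cosmetic slip: in your final loop you let $j$ run to $l-h$ but then claim lengths $h+1$ to $l-1$; the value $j=l-h$ gives the projective-injective of length $l$, so the relevant range for filling $\updiag{i}{h}$ is $j\in\{1,\ldots,l-h-1\}$.
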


\begin{proof}
    \begin{enumerate}
        \item[(a)] By Lemma \ref{lem:closure under d-quotients}(a) we have that $\downdiag{i}{h}\subseteq \cU_i$. Since $M,\tau_d^{-}(M)\in\cU$ and $\cU$ is closed under minimal $d$-extensions by Proposition \ref{prop:description of d-torsion classes}, Lemma \ref{lem:n-extensions in C}(a) gives that $I(s_i-h+1)\in \cU$. By Lemma \ref{lem:closure under d-quotients}(c) we conclude that $[s_i-h+1,s_{i+1}-1]\subseteq Q_{\cU}$. 
        
        Now let 
        \[
        q = \begin{cases}
            s_{i+1}-(l-1), &\mbox{if $d>2$ or $h\in\{l-2,l-1\}$,} \\
            s_i-h+1, &\mbox{if $d=2$ and $h\leq l-3$.}
        \end{cases}
        \]
        We claim that $s_i-h+1\leq q$. Clearly this holds in the second case. In the first case, using (\ref{eq:difference of consecutive simples}), we have that $s_i-h+1\leq q$ if and only if
        \[
        h+2+\frac{d-2}{2}l\geq l.
        \]
        This inequality clearly holds if $d\geq 4$ or $h=l-2$ or $h=l-1$. It also holds if $d=2$ since in this case $l=2$. Hence the claim is shown and $s_i-h+1\leq q$. Similarly we can show that $s_{i+1}-(l-1)\leq q$. Then the second part of Lemma \ref{lem:closure under d-quotients}(c) gives that $\updiag{i+1}{l+q-s_{i+1}}\subseteq \cU_{i+1}$. Noticing that $u=l+q-s_{i+1}$ completes the proof.
        \item[(b)] Set $b=s_i+h-1$ and $j=s_i-b+l-h'$. Using $1\leq h'\leq l-h$ we can show that $1\leq j\leq s_i-b+l-1$. By Lemma \ref{lem:n-extensions in C}(b) there exists a minimal $d$-extension between $\ind{s_i}{b}$ and 
        \[
        \ind{b+\tfrac{d-2}{2}l+1+j}{s_i+\tfrac{d}{2}l}=
        \ind{s_{i+1}-h'+1}{s_{i+1}}=M'.
        \]
        Since $M,M'\in\cU$ and $\cU$ is closed under minimal $d$-extensions by Proposition \ref{prop:description of d-torsion classes}, we obtain using the minimal $d$-extension in \ref{lem:n-extensions in C}(b) that $\ind{s_i}{b+j}=\ind{s_i}{s_i+l-h'}\in \cU_i$. Since $1\leq h\leq l-h'$, we have that $s_i+l-h'>s_i$ and so Lemma \ref{lem:closure under d-quotients}(d) gives that $[s_{i},s_{i+1}-(l-1)]\subseteq Q_{\cU}$ and $\cU_{i+1}=\diag{i+1}$. 
        
        It remains to show that $\updiag{i}{h}\subseteq\cU_i$. Let $\ind{s_i}{s_i+h+k-1}\in\updiag{i}{h}$ for some $k\in\{0,1,\ldots,l-1-h\}$ and it is enough to show that $\ind{s_i}{s_i+h-1+k}\in\cU_i$. Then $M''=\ind{b+\tfrac{d-2}{2}l+1+k}{s_i+\tfrac{d}{2}l}\in\diag{i+1}=\cU_{i+1}$. By Lemma \ref{lem:n-extensions in C}(b) we have a minimal $d$-extension between $M$ and $M''$ and so by Proposition \ref{prop:description of d-torsion classes} we obtain that $\ind{s_i}{b+k}=\ind{s_i}{s_i+h-1+k}\in\cU_i$, as required. \qedhere
    \end{enumerate}
\end{proof}

\begin{lemma}\label{lem:possible Uis}
    Let $\cU\subseteq \cC$ be a $d$-torsion class and let $1\leq i\leq p$.
    \begin{enumerate}
        \item[(a)] Assume $i$ is even. Then $\cU_i=\downdiag{i}{h}$ for some $h\in\{0,1,\ldots,l-1\}$.
        \item[(b)] Assume $i$ is odd. Then $\cU_i=\downdiag{i}{0}$ or $\cU_i=\downdiag{i}{1}$ or $\cU_i=\updiag{i}{h}$ for some $h\in\{1,\ldots,l-1\}$.
    \end{enumerate}
\end{lemma}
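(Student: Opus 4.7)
The proof plan splits cleanly along the parity of $i$. For part (a), recall that the indecomposables of $\diag{i}$ are the modules $\ind{s_i-x+1}{s_i}$ for $x\in\{1,\ldots,l-1\}$, and by Lemma \ref{Lemma:MorhphismInDiagonal}(a) a nonzero morphism between two such modules exists exactly when the target has length no greater than the source. The approach is to let $h$ be the maximum length of an indecomposable summand of $\cU_i$, or $h=0$ if $\cU_i=0$. Then $\cU_i\subseteq\downdiag{i}{h}$ by definition of $h$, while the reverse inclusion $\downdiag{i}{h}\subseteq\cU_i$ follows immediately by applying Lemma \ref{lem:closure under d-quotients}(a) to an indecomposable in $\cU_i$ of maximal length $h$. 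Hence $\cU_i=\downdiag{i}{h}$.

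For part (b), the indecomposables in $\diag{i}$ are $\ind{s_i}{s_i+x-1}$ for $x\in\{1,\ldots,l-1\}$, and by Lemma \ref{Lemma:MorhphismInDiagonal}(b) nonzero morphisms now go from shorter to longer modules. I first dispose of the trivial sub-cases: if $\cU_i=0$ then $\cU_i=\downdiag{i}{0}$, and if the only indecomposable in $\cU_i$ is the simple $S(s_i)$ then $\cU_i=\downdiag{i}{1}$. It therefore suffices to treat the case where $\cU_i$ contains some non-simple indecomposable; let $h_0\geq 2$ be the minimal length of such a non-simple summand of $\cU_i$. I note that this situation can only arise when $i\leq p-1$: indeed if $i=p$ is odd then by Theorem \ref{thm:VasoClassifyAcyclicCluster} we must have $l=2$, but then $\diag{p}$ contains only the simple $S(s_p)$ and no non-simple exists.

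Assuming thus $i\leq p-1$, I apply Lemma \ref{lem:closure under d-quotients}(d) to $M=\ind{s_i}{s_i+h_0-1}\in\cU_i$ (valid since $s_i+h_0-1>s_i$) to obtain $\cU_{i+1}=\diag{i+1}$; in particular the simple $S(s_{i+1})$ lies in $\cU_{i+1}$. Next I invoke Lemma \ref{lem:effect of M and its taud inverse}(b) with this same $M$ of length $h=h_0$ and with $M'=S(s_{i+1})$ of length $h'=1$; the constraint $h'\leq l-h$ becomes $1\leq l-h_0$, which holds since $h_0\leq l-1$. The conclusion of that lemma gives $\updiag{i}{h_0}\subseteq\cU_i$.

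Finally I close the argument by splitting on whether $S(s_i)\in\cU_i$. If $S(s_i)\notin\cU_i$, then by minimality of $h_0$ no indecomposable of length strictly less than $h_0$ lies in $\cU_i$, and combined with the previous inclusion we obtain $\cU_i=\updiag{i}{h_0}$, finishing this sub-case. If instead $S(s_i)\in\cU_i$, I apply Lemma \ref{lem:effect of M and its taud inverse}(b) once more, this time with $M=S(s_i)$ of length $h=1$ and $M'=S(s_{i+1})$ of length $h'=1$ (now $h'\leq l-h=l-1$ is immediate); the lemma yields $\updiag{i}{1}\subseteq\cU_i$, so $\cU_i=\diag{i}=\updiag{i}{1}$. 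In either sub-case $\cU_i$ has the claimed form. I expect no serious obstacle here; the only delicate point is the boundary $i=p$ for $i$ odd, which is forced into the degenerate $l=2$ situation and handled as above.
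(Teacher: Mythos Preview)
Your proof is correct and follows essentially the same approach as the paper: part (a) uses Lemma \ref{lem:closure under d-quotients}(a) applied to a module of maximal length, and part (b) combines Lemma \ref{lem:closure under d-quotients}(d) with Lemma \ref{lem:effect of M and its taud inverse}(b). The only differences are cosmetic --- the paper takes $h$ to be the overall minimal length in $\cU_i$ and uses $M'=\tdo(M)$ rather than $S(s_{i+1})$ in the application of Lemma \ref{lem:effect of M and its taud inverse}(b), and it leaves the $i=p$ boundary implicit --- but the structure and the key lemmas invoked are identical.
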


\begin{proof}
    \begin{enumerate}
        \item[(a)] If $\cU_i=\{0\}$, then $\cU_i=\downdiag{i}{0}$. Assume now that $\cU_{i}\neq \{0\}$. Let $\ind{s_i-h+1}{s_i}\in\cU_i$ be the indecomposable module of largest length $h$ in $\cU_i$. Then $h\in\{1,\ldots,l-1\}$ and $\cU_i\subseteq \downdiag{i}{h}$. By Lemma \ref{lem:closure under d-quotients}(a) we also have that $\downdiag{i}{h}\subseteq \cU_i$ and so $\cU_i=\downdiag{i}{h}$.
        
        \item[(b)] If $\cU_i=\{0\}$, then $\cU_i=\downdiag{i}{0}$. Assume now that $\cU_{i}\neq \{0\}$. Let $\ind{s_i}{s_i+h-1}\in\cU_i$ be the indecomposable module of smallest length $h$ in $\cU_i$. Then $h\in\{1,\ldots,l-1\}$ and $\cU_i\subseteq \updiag{i}{h}$. We consider the cases $h>1$ and $h=1$ separately.
        
        If $h>1$, then by Lemma \ref{lem:closure under d-quotients}(d) we also have that $\cU_{i+1}=\diag{i+1}$. In particular, we obtain that $\tdo(\ind{s_i}{s_i+h-1})\in\cU$. Then Lemma \ref{lem:effect of M and its taud inverse}(b) gives that $\updiag{i}{h}\subseteq \cU_i$ and so $\cU_i=\updiag{i}{h}$. 
        
       If $h=1$, then $\ind{s_i}{s_i}\in\cU_i$. If $\cU_i=\downdiag{i}{1}$ then we are done. Now assume that there exists $h'>1$ such that $\ind{s_i}{s_i+h'-1}\in\cU_i$. Then Lemma \ref{lem:closure under d-quotients}(d) gives that $\diag{i+1}=\cU_{i+1}$. In particular, $\tdo(\ind{s_i}{s_i})\in\diag{i+1}=\cU_{i+1}$. Since we also have $\ind{s_i}{s_i}\in\cU_i$, Lemma \ref{lem:effect of M and its taud inverse}(b) gives that $\updiag{i}{1}\subseteq \cU$, and so we conclude that $\cU_i=\updiag{i}{1}$. \qedhere
    \end{enumerate}
\end{proof}

We illustrate how we can use the previous lemmas via the following example.

\begin{example}\label{Example:how to use lemmas for d-torsion}
Let $\Lambda=\Lambda(23,4)$ and $d=4$ so that $\Lambda$ admits a $4$-cluster tilting subcategory $\cC$. Let us construct the smallest $4$-torsion class $\cU\subseteq \cC$ that contains the following encircled modules:
\[
\includegraphics{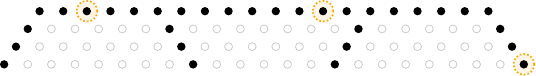}
\]
Applying Lemma \ref{lem:closure under d-quotients}(b) and (c) we obtain that the following modules are also included in $\cU$:
\[
\includegraphics{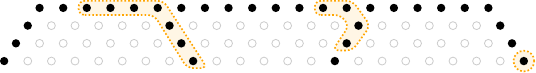}
\]
Next, applying Lemma \ref{lem:closure under d-quotients}(d) and then Lemma \ref{lem:closure under d-quotients}(c)
again we can further enlarge our collection of modules in $\cU$ as follows:
\[
\includegraphics{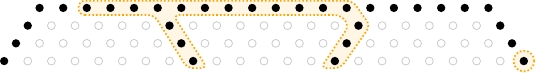}
\]
Next using the two rightmost diagonals we can apply Lemma \ref{lem:effect of M and its taud inverse}(b) to obtain the following collection of modules which must be in $\cU$:
\[
\includegraphics{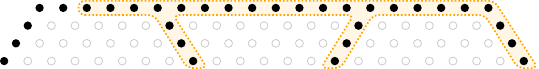}
\]
We can not get any more modules that must be included in $\cU$ using our previous results. Indeed, we can notice that the encircled part is a $4$-torsion class and thus it is equal to $\cU$.
\end{example}

Our aim is to show that we can compute all $d$-torsion classes in a way similar to Example \ref{Example:how to use lemmas for d-torsion}. The cases $l=2$ is a very degenerate version of the case $l>2$, but for technical reasons we study them separately. 

\subsubsection{The case $l>2$.}\label{subsubsec:the case l>2} Our first aim is to describe all possible pairs $(\cU_i,\cU_{i+1})$ given that $\cU$ is a $d$-torsion class. We have already done most of the work in the lemmas \ref{lem:closure under d-quotients}, \ref{lem:effect of M and its taud inverse} and \ref{lem:possible Uis}, and so our work here consists mostly of collecting all of the information in a cleaner way.

Note first that the pairs depend on the parity of $i$ and so we have two separate cases. Additionally, we also want to describe the intersection $Q_{\cU}\cap [s_i,s_{i+1}-(l-1)]$ when $i$ is odd and the intersection $Q_{\cU}\cap [s_i-(l-2),s_{i+1}-1]$ when $i$ is even, consisting of all indices $q$ such that $I(q)\in \cU$. The following two lemmas contain all of this information.

\begin{lemma}\label{lem:arrows starting at odd vertex}
    Assume that $l>2$. Let $\cU$ be a $d$-torsion class and let $i\in\{1,\ldots,p-1\}$ be odd. Then exactly one of the following holds.
    \begin{enumerate}
        \item[(a)] $(\cU_i,\cU_{i+1})\in\{(\downdiag{i}{0},\downdiag{i+1}{0}), (\downdiag{i}{0},\downdiag{i+1}{h}),(\downdiag{i}{1},\downdiag{i+1}{0}),\mid 1\leq h\leq l-2\}$
        and $Q_{\cU}\cap[s_i,s_{i+1}-(l-1)]=\varnothing$.
        \item[(b)] $(\cU_i,\cU_{i+1})=(\downdiag{i}{0},\diag{i+1})$ and there exists some $y\in\{0,1,\ldots,\tfrac{d-2}{2}l+2\}$ such that $ Q_{\cU}\cap [s_i,s_{i+1}-(l-1)]=[s_{i+1}-(l-1)-(y-1),s_{i+1}-(l-1)]$.
        \item[(c)] $(\cU_i,\cU_{i+1})=(\updiag{i}{h},\diag{i+1})$ for some $h\in\{1,\ldots,l-1\}$
        and $Q_{\cU}\cap [s_i,s_{i+1}-(l-1)]=[s_i,s_{i+1}-(l-1)]$. 
    \end{enumerate}
\end{lemma}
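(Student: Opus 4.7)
The plan is to carry out a case analysis organised by the shape of $\cU_i$. Since $i$ is odd, Lemma \ref{lem:possible Uis}(b) gives three possibilities: $\cU_i=\downdiag{i}{0}$, $\cU_i=\downdiag{i}{1}$, or $\cU_i=\updiag{i}{h}$ for some $h\in\{1,\dots,l-1\}$; and since $i+1$ is even, Lemma \ref{lem:possible Uis}(a) gives $\cU_{i+1}=\downdiag{i+1}{h'}$ for some $h'\in\{0,\dots,l-1\}$. For each shape of $\cU_i$ I will use the forcing lemmas \ref{lem:closure under d-quotients} and \ref{lem:effect of M and its taud inverse} to determine the admissible values of $\cU_{i+1}$ and simultaneously to pin down $Q_{\cU}\cap[s_i,s_{i+1}-(l-1)]$.

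I would dispatch case (c) first. If $\cU_i=\updiag{i}{h}$ for some $h\geq 1$, I exhibit an indecomposable $\ind{s_i}{b}\in\cU_i$ with $b>s_i$: when $h\geq 2$, take $b=s_i+h-1$; when $h=1$, use $\updiag{i}{1}=\diag{i}$ and pick $\ind{s_i}{s_i+1}$, which is a bona fide object of $\diag{i}$ exactly because $l>2$. Lemma \ref{lem:closure under d-quotients}(d) then immediately forces $\cU_{i+1}=\diag{i+1}$ and $[s_i,s_{i+1}-(l-1)]\subseteq Q_{\cU}$, yielding (c).

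Next I would address $\cU_i=\downdiag{i}{1}$. I claim $\cU_{i+1}=\downdiag{i+1}{0}$: otherwise $\cU_{i+1}$ would contain some $M'=\ind{s_{i+1}-h'+1}{s_{i+1}}$ with $h'\in\{1,\dots,l-1\}$, and Lemma \ref{lem:effect of M and its taud inverse}(b) applied to $M=S(s_i)\in\cU_i$ and $M'$ (the hypothesis $h'\leq l-h=l-1$ is met) would force $\updiag{i}{1}=\diag{i}\subseteq\cU_i$, contradicting $\cU_i=\downdiag{i}{1}$ since $l>2$. Moreover, any $q\in Q_{\cU}\cap[s_i,s_{i+1}-(l-1)]$ would, via Lemma \ref{lem:closure under d-quotients}(b), force $\cU_{i+1}=\diag{i+1}\neq\downdiag{i+1}{0}$; hence $Q_{\cU}\cap[s_i,s_{i+1}-(l-1)]=\varnothing$. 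This gives the $(\downdiag{i}{1},\downdiag{i+1}{0})$ row of (a).

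Finally I would handle $\cU_i=\downdiag{i}{0}$. If $\cU_{i+1}\neq\diag{i+1}$, i.e.\ $h'\in\{0,\dots,l-2\}$, the same application of Lemma \ref{lem:closure under d-quotients}(b) gives $Q_{\cU}\cap[s_i,s_{i+1}-(l-1)]=\varnothing$, yielding the remaining two rows of (a). If $\cU_{i+1}=\diag{i+1}$, then whenever some $q\in Q_{\cU}\cap[s_i,s_{i+1}-(l-1)]$ exists, Lemma \ref{lem:closure under d-quotients}(b) gives $[q,s_{i+1}-(l-1)]\subseteq Q_{\cU}$, so the intersection is a (possibly empty) suffix. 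Using (\ref{eq:difference of consecutive simples}), its ambient length is $\abs{[s_i,s_{i+1}-(l-1)]}=s_{i+1}-s_i-l+2=\tfrac{d-2}{2}l+2$, so the suffix is uniquely of the form $[s_{i+1}-(l-1)-(y-1),s_{i+1}-(l-1)]$ with $y\in\{0,\dots,\tfrac{d-2}{2}l+2\}$, which is (b). Exclusivity of the three items is automatic, since they are distinguished by the shape of $\cU_i$ and, when $\cU_i=\downdiag{i}{0}$, further by whether $\cU_{i+1}=\diag{i+1}$. The only subtle point is the compatibility check $h'\leq l-h$ in the application of Lemma \ref{lem:effect of M and its taud inverse}(b) with $h=1$, which is exactly where the assumption $l>2$ enters in an essential way.
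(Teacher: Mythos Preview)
Your proof is correct and follows essentially the same case analysis as the paper, organized by the shape of $\cU_i$ via Lemma \ref{lem:possible Uis} and using Lemmas \ref{lem:closure under d-quotients} and \ref{lem:effect of M and its taud inverse} in each case. The only minor difference is that in case (c) you extract the $Q_\cU$ conclusion directly from Lemma \ref{lem:closure under d-quotients}(d), whereas the paper additionally invokes Lemma \ref{lem:effect of M and its taud inverse}(b) (somewhat redundantly) for that part.
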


\begin{proof}
By Lemma \ref{lem:possible Uis} we have that $\cU_i=\downdiag{i}{0}$ or $\cU_i=\downdiag{i}{1}$ or $\cU_i=\updiag{i}{h}$ for some $h\in\{1,\ldots,l-1\}$ and that $\cU_{i+1}=\downdiag{i+1}{h'}$ for some $h'\in\{0,1,\ldots,l-1\}$. We consider three cases.

Assume first that $\cU_i=\downdiag{i}{0}$. If $\cU_{i+1}\neq \diag{i+1}$, then Lemma \ref{lem:closure under d-quotients}(b) gives that $Q_{\cU}\cap [s_i,s_{i+1}-(l-1)]=\varnothing$, and we are in case (a). Assume that $\cU_{i+1}=\diag{i+1}$. If $Q_{\cU}\cap [s_i,s_{i+1}-(l-1)]=\varnothing$, then we set $y=0$ and we are in case (b). Otherwise, set $m=\min\{Q_{\cU}\cap[s_i,s_{i+1}-(l-1)]\}$ and set $y=s_{i+1}-(l-1)-(m-1)$. It easily follows that $1\leq y\leq \tfrac{d-2}{2}l+2$. By Lemma \ref{lem:closure under d-quotients}(b) we have that $Q_{\cU}\cap [s_i,s_{i+1}-(l-1)]=[m,s_{i+1}-(l-1)]=[s_{i+1}-(l-1)-(y-1),s_{i+1}-(l-1)]$ and so we are in case (b) again.

Assume now that $\cU_i=\downdiag{i}{1}$. Assume to a contradiction that $\cU_{i+1}=\downdiag{i+1}{h'}$ for some $h'\in\{1,\ldots,l-1\}$. Then both $\ind{s_i}{s_i}\in\cU_i$ and $\ind{s_{i+1}}{s_{i+1}}\in\cU_{i+1}$ hold. Then Lemma \ref{lem:effect of M and its taud inverse}(b) gives that $\updiag{i}{1}\subseteq\cU_i$, which contradicts $\cU_i=\downdiag{i}{1}$ since $l>2$. Hence $\cU_{i+1}=\downdiag{i+1}{0}$ and so $Q_{\cU}\cap [s_i,s_{i+1}-(l-1)]=\varnothing$ by Lemma \ref{lem:closure under d-quotients}(b). Therefore we are in case (a).

Finally assume that $\cU_i=\updiag{i}{h}$ for some $1\leq h\leq l-1$. By Lemma \ref{lem:closure under d-quotients}(d) we have that $\cU_{i+1}=\diag{i+1}$. Since $\ind{s_i}{s_i+l-2}\in\cU_{i}$ by assumption and since $\tau_d^{-}(\ind{s_i}{s_i+l-2})\in\diag{i+1}=\cU_{i+1}$, we obtain by Lemma \ref{lem:effect of M and its taud inverse}(b) that $Q_{\cU}\cap [s_i,s_{i+1}-(l-1)]=[s_i,s_{i+1}-(l-1)]$ and so we are in case (c).
\end{proof}

\begin{lemma}\label{lem:arrows starting at even vertex}
    Assume that $l>2$. Let $\cU$ be a $d$-torsion class and let $i\in\{1,\ldots,p-1\}$ be even. Then exactly one of the following holds.
    \begin{enumerate}
        \item[(a)] $(\cU_i,\cU_{i+1})\in\{(\downdiag{i}{0},\downdiag{i+1}{0}),(\downdiag{i}{0},\downdiag{i+1}{1}),(\downdiag{i}{h},\downdiag{i+1}{0}),(\downdiag{i}{h},\downdiag{i+1}{1}),(\diag{i},\downdiag{i+1}{0})\mid 1\leq h\leq l-2\}$ and $Q_{\cU}\cap [s_i-(l-2),s_{i+1}-1]=\varnothing$.
        \item[(b)] $(\cU_i,\cU_{i+1})=(\downdiag{i}{0},\updiag{i+1}{h})$ for some $h\in \{2,\ldots,l-1\}$ and there exists some $y\in\{0,1,\ldots,l-h\}$ such that $Q_{\cU}\cap [s_i-(l-2),s_{i+1}-1]=[s_{i+1}-1-(y-1),s_{i+1}-1]$.
        \item[(c)] $(\cU_i,\cU_{i+1})=(\downdiag{i}{0},\diag{i+1})$ and there exists some $y\in\{0,1,\ldots,\tfrac{d}{2}l\}$ such that $Q_{\cU}\cap [s_i-(l-2),s_{i+1}-1]=[s_{i+1}-1-(y-1),s_{i+1}-1]$.
        \item[(d)] $(\cU_i,\cU_{i+1})=(\downdiag{i}{h},\diag{i+1})$ for some $h\in\{1,\ldots,l-2\}$ and there exists some $y\in\{0,1,\ldots,l-h-1\}$ such that $Q_{\cU}\cap [s_i-(l-2),s_{i+1}-1]=[s_i-h-(y-1),s_{i+1}-1]$.
        \item[(e)] $(\cU_i,\cU_{i+1})=(\diag{i},\diag{i+1})$ and $Q_{\cU}\cap [s_i-(l-2),s_{i+1}-1]=[s_i-(l-2),s_{i+1}-1]$.
        \item[(f)] $d=2$, $(\cU_i,\cU_{i+1})=(\downdiag{i}{h},\updiag{i+1}{u})$ for some $h\in\{1,\ldots,l-3\}$ and $u\in\{2,\ldots,l-h-1\}$ and there exists some $y\in\{0,1,\ldots,l-(h+u)-1\}$ such that $Q_{\cU}\cap [s_i-(l-2),s_{i+1}-1]=[s_{i}-h-(y-1),s_{i+1}-1]$.
    \end{enumerate}
\end{lemma}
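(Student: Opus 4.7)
The plan is to mirror the structure of the proof of Lemma \ref{lem:arrows starting at odd vertex} while keeping track of one additional subtlety arising when $d=2$. By Lemma \ref{lem:possible Uis}, since $i$ is even and $i+1$ is odd, we have $\cU_i=\downdiag{i}{h_i}$ for some $h_i\in\{0,1,\ldots,l-1\}$, and $\cU_{i+1}$ is one of $\downdiag{i+1}{0}$, $\downdiag{i+1}{1}$, or $\updiag{i+1}{h'}$ for some $h'\in\{1,\ldots,l-1\}$. I will case-split on $\cU_{i+1}$, and for each value determine both the allowed $\cU_i$ and the allowed $Q_{\cU}\cap[s_i-(l-2),s_{i+1}-1]$.

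The two driving observations are: first, by Lemma \ref{lem:closure under d-quotients}(c) any $I(q)\in\cU$ forces $[q,s_{i+1}-1]\subseteq Q_{\cU}$ and, when $q\geq s_{i+1}-(l-1)$, also $\updiag{i+1}{l+q-s_{i+1}}\subseteq\cU_{i+1}$; and second, by Lemma \ref{lem:effect of M and its taud inverse}(a) any module $\ind{s_i-h+1}{s_i}\in\cU_i$ whose $\tdo$-image lies in $\cU_{i+1}$ (equivalently $h\leq l-h'$ when $\cU_{i+1}=\updiag{i+1}{h'}$) forces $[s_i-h+1,s_{i+1}-1]\subseteq Q_{\cU}$ together with $\updiag{i+1}{u}\subseteq\cU_{i+1}$ for the specific $u$ stated there. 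Crucially, in all regimes with $d>2$ or $h\in\{l-2,l-1\}$ this $u$ equals $1$, so $\diag{i+1}\subseteq\cU_{i+1}$, which rules out $\cU_{i+1}$ being proper.

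In the case $\cU_{i+1}=\diag{i+1}$, for each $h_i\in\{0,\ldots,l-1\}$ one gets a lower bound on $Q_{\cU}$ via Lemma \ref{lem:effect of M and its taud inverse}(a) applied to the longest module $\ind{s_i-h_i+1}{s_i}\in\cU_i$, and an upper bound coming from the ambient interval $[s_i-(l-2),s_{i+1}-1]$ of size $\tfrac{d}{2}l$ by (\ref{eq:difference of consecutive simples}); this yields cases (c), (d) and (e). In the case $\cU_{i+1}\in\{\downdiag{i+1}{0},\downdiag{i+1}{1}\}$, any $I(q)\in\cU$ would propagate (via Lemma \ref{lem:closure under d-quotients}(c)) to $I(s_{i+1}-(l-1))\in\cU$ and hence force $\diag{i+1}\subseteq\cU_{i+1}$, so $Q_{\cU}\cap[s_i-(l-2),s_{i+1}-1]=\varnothing$. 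The additional exclusion of the pair $(\diag{i},\downdiag{i+1}{1})$ comes from $\tdo(\ind{s_i-l+2}{s_i})=S(s_{i+1})\in\downdiag{i+1}{1}$ triggering Lemma \ref{lem:effect of M and its taud inverse}(a) with $h=l-1$ and hence $u=1$; the remaining pairs give case (a).

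The main obstacle is the case $\cU_{i+1}=\updiag{i+1}{h'}$ with $h'\in\{2,\ldots,l-1\}$ combined with $\cU_i\neq 0$. If $\cU_i=\downdiag{i}{h}$ with $h\geq 1$, setting $k^{\ast}=\min(h,l-h')$ and applying Lemma \ref{lem:effect of M and its taud inverse}(a) to $\ind{s_i-k^{\ast}+1}{s_i}$, the returned $u$ must satisfy $u\geq h'\geq 2$; this rules out every regime except $d=2$, $k^{\ast}\leq l-3$ and $l-k^{\ast}-1\geq h'$, which rearranges exactly to $d=2$, $h\in\{1,\ldots,l-3\}$ and $h'\in\{2,\ldots,l-h-1\}$, giving case (f) (and simultaneously forcing $\cU_i=0$ in case (b)). In both cases (b) and (f) the parameter $y$ is then read off by noting that $Q_{\cU}\cap[s_i-(l-2),s_{i+1}-1]$ is either empty or of the form $[q_{\min},s_{i+1}-1]$, with $q_{\min}$ bounded below by the requirement that no propagation to $I(s_{i+1}-(l-1))$ occur (otherwise $\diag{i+1}\subseteq\cU_{i+1}$); these bounds give the stated ranges for $y$ and complete the classification.
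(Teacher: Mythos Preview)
Your proof is correct in spirit and uses the same toolkit as the paper (Lemmas \ref{lem:closure under d-quotients}, \ref{lem:effect of M and its taud inverse}, \ref{lem:possible Uis}), but the organisation is dual to the paper's: the paper case-splits on $\cU_i$ (the three cases $\cU_i=\downdiag{i}{0}$, $\cU_i=\downdiag{i}{h}$ with $1\leq h\leq l-2$, $\cU_i=\diag{i}$), whereas you case-split on $\cU_{i+1}$. Your exclusion of $\cU_i\neq 0$ when $\cU_{i+1}=\updiag{i+1}{h'}$ with $h'\geq 2$ via the choice $k^\ast=\min(h,l-h')$ in Lemma~\ref{lem:effect of M and its taud inverse}(a) is a nice packaging; the paper instead proves the constraint $h'\leq l-h-1$ by a direct appeal to the minimal $d$-extension of Lemma~\ref{lem:n-extensions in C}(a), obtaining $\ind{s_{i+1}}{s_{i+1}+h'-2}\in\cU_{i+1}$ as a contradiction. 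Both routes land in the same place.

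One imprecision to fix: in your final paragraph you bound $q_{\min}$ by ``the requirement that no propagation to $I(s_{i+1}-(l-1))$ occur''. That only gives $q_{\min}\geq s_{i+1}-(l-2)$, hence $y\leq l-2$, which is the correct bound only when $h'=2$. The actual constraint is that $I(q_{\min})\in\cU$ must not force $\updiag{i+1}{h'-1}\subseteq\cU_{i+1}$ via the second part of Lemma~\ref{lem:closure under d-quotients}(c); this gives $l+q_{\min}-s_{i+1}\geq h'$, i.e.\ $q_{\min}\geq s_{i+1}-(l-h')$, and hence $y\leq l-h'$ in case~(b) and $y\leq l-(h+u)-1$ in case~(f). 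The paper derives exactly this sharper bound (for case~(f) via the observation $I(s_{i+1}+u-2-l+1)\notin\cU$). With that correction your argument is complete.
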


\begin{proof}
By Lemma \ref{lem:possible Uis} we have that $\cU_i=\downdiag{i}{h}$ for some $h\in\{0,1,\ldots,l-1\}$ and that $\cU_{i+1}=\downdiag{i+1}{0}$ or $\cU_{i+1}=\downdiag{i+1}{1}$ or $\cU_{i+1}=\updiag{i+1}{h'}$ for some $h'\in\{1,\ldots,l-1\}$. We consider three cases.

Assume first that $\cU_i=\downdiag{i}{0}$. If $\cU_{i+1}\neq \updiag{i+1}{h'}$ for some $h'\in\{1,\ldots,l-1\}$, then Lemma \ref{lem:closure under d-quotients}(c) gives that $Q_{\cU}\cap [s_i-(l-2),s_{i+1}-1]=\varnothing$, and we are in case (a). Assume that $\cU_{i+1}=\updiag{i+1}{h'}$ for some $h'\in\{1,\ldots,l-1\}$. If $Q_{\cU}\cap [s_i-(l-2),s_{i+1}-1]=\varnothing$, set $y=0$ and we are in one of the cases (b) and (c). Otherwise, set $m=\min\{Q_{\cU}\cap [s_{i}-(l-2),s_{i+1}-1]\}$ and set $y=s_{i+1}-1-(m-1)$. Then by Lemma \ref{lem:closure under d-quotients}(c) we have that
\[
Q_{\cU}\cap [s_{i}-(l-2),s_{i+1}-1]=[m,s_{i+1}-1]=[s_{i+1}-1-(y-1),s_{i+1}-1].
\]
If $h'=1$, then $s_{i}-(l-2)\leq m \leq s_{i+1}-1$ gives that $1\leq y\leq \tfrac{d}{2}l$ and so we are in case (c). If $h'\geq 2$, then the second part of Lemma \ref{lem:closure under d-quotients}(c) gives that $s_{i+1}-(l-h')\leq m\leq s_{i+1}-1$, which gives that $1\leq y\leq l-h'$ and so we are in case (b).

Assume now that $\cU_i=\downdiag{i}{h}$ for some $h\in\{1,\ldots,l-2\}$. If $\cU_{i+1}\neq \updiag{i+1}{h'}$ for some $h'\in\{1,\ldots,l-1\}$, then Lemma \ref{lem:closure under d-quotients}(c) gives that $Q_{\cU}\cap [s_i-(l-2),s_{i+1}-1]=\varnothing$, and we are in case (a). Assume that $\cU_{i+1}=\updiag{i+1}{h'}$ for some $h'\in\{1,\ldots,l-1\}$. We claim that $h'\leq l-h-1$. Indeed, assume to a contradiction that $h'>l-h-1$. Then $l-h'\leq h$ and so $\ind{s_i-(l-h'-1)}{s_i}\in \cU_i$. Then $\tdo(\ind{s_i-(l-h'-1)}{s_i})=\ind{s_{i+1}}{s_{i+1}+(h'-1)}\in\cU_{i+1}$. Since $\cU$ is closed under minimal $d$-extensions, Lemma \ref{lem:n-extensions in C}(a) gives that 
\[
\ind{s_i+\frac{d-2}{2}l+2}{s_i-(l-h'-1)+\frac{d}{2}l-1} = \ind{s_{i+1}}{s_{i+1}+(h'-1)-1}\in\cU_{i+1},
\]
contradicting that $\cU_{i+1}=\updiag{i+1}{h'}$. Hence indeed $h'\leq l-h-1$. It follows that $\ind{s_i-(h-1)}{s_i}\in \cU_i$ and 
\[
\tdo(\ind{s_i-(h-1)}{s_i}) = \ind{s_{i+1}}{s_{i+1}+(l-h-1)}\in\cU_{i+1}.
\]
Then Lemma \ref{lem:effect of M and its taud inverse}(a) gives that $[s_i-h+1,s_{i+1}-1]\subseteq Q_{\cU}$. Set $m=\min\{Q_{\cU}\cap [s_{i}-(l-2),s_{i}-h]\}$ and $y=s_i-(h-1)-m$. Then $s_i-(l-2)\leq m\leq s_i-h$, which in turn gives that $0\leq y\leq l-h-1$. Using Lemma \ref{lem:closure under d-quotients}(c), we have that 
\[
Q_{\cU}\cap [s_{i}-(l-2),s_{i+1}-1] = [m,s_{i+1}-1] = [s_i-h-(y-1),s_{i+1}-1].
\]
Now assume that $d>2$ or $h=l-2$. Then Lemma \ref{lem:effect of M and its taud inverse} gives that $\cU_{i+1}=\diag{i+1}$. Clearly, the same is true if $h'=1$. Hence in this case we are in case (d). Thus assume that $d=2$ and $h\leq l-3$ and $h'>1$. Set $u=h'$. Then $\cU_{i+1}=\updiag{i+1}{u}$, which implies that $\ind{s_{i+1}}{s_{i+1}+u-1}\in\cU_{i+1}$ and $\ind{s_{i+1}}{s_{i+1}+u-2}\not\in\cU_{i+1}$. In particular, we have that $I(s_{i+1}+u-2-l+1)\not\in\cU$ since there exists an epimorphism $I(s_{i+1}+u-l-1)\to \ind{s_{i+1}}{s_{i+1}+u-2}$. We conclude that $s_{i+1}+u-l\leq m$. Using $m=s_i-(h-1)-y$ and $s_{i+1}-s_{i}=2$ by (\ref{eq:difference of consecutive simples}), we obtain
\[
y = s_i-(h-1)-m \leq s_i-(h-1)-s_{i+1}-u+l = l-(h+u)-1,
\]
showing that we are in case (f).

Finally assume that $\cU_i=\diag{i}$. If $\cU_{i+1}=\downdiag{i+1}{0}$, then we have that $Q_{\cU}\cap [s_i-(l-2),s_{i+1}-1]=\varnothing$ by Lemma \ref{lem:closure under d-quotients}(c) and so we are in case (a). Assume that $\cU_{i+1}\neq \downdiag{i+1}{0}$. Then there exists an indecomposable module $N\in\cU_{i+1}$. Then $N=\tau_d^{-}(M)$ for some $M\in\diag{i}=\cU_i$. By Lemma \ref{lem:effect of M and its taud inverse}(a) we obtain that $Q_{\cU(\chi)}\cap [s_i-(l-2),s_{i+1}-1]=[s_i-(l-2),s_{i+1}-1]$ and that $\cU_{i+1}=\diag{i+1}$ and so we are in case (e).
\end{proof}

Motivated by the previous results in this section, we introduce the following directed multigraph $G=G(\cC)$\index[symbols]{G@$G$}\index[symbols]{G@$G(\cC)$} for $l>2$:

\begin{equation}\label{eq:multigraph1 for d-torsion}
\includegraphics{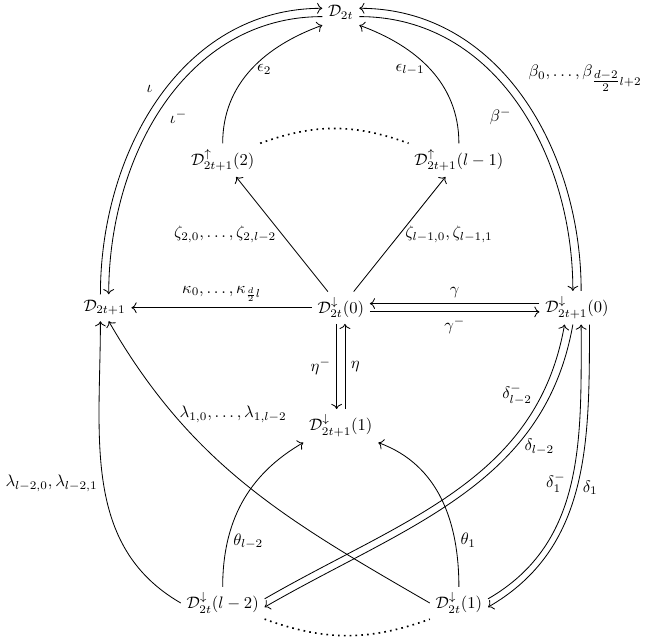}
\end{equation}
When $d=2$, we also include the following arrows in $G$:
\begin{equation}
\label{eq:multigraph1 for d-torsion addon for d=2}
\includegraphics{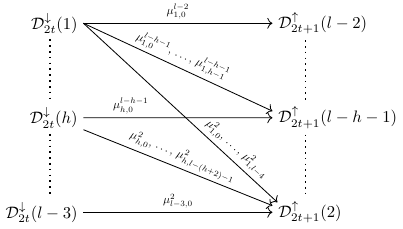}
\end{equation}
For clarity, we note that
\begin{itemize}
    \item[$\bullet$] there are $\tfrac{d-2}{2}l+3$ arrows from $\downdiag{2t+1}{0}$ to $\diag{2t}$, labelled $\beta_{0},\ldots,\beta_{\frac{d-2}{2}l+2}$,
    \item[$\bullet$] for every $h\in\{2,\ldots,l-1\}$, there are $l-h+1$ arrows from $\downdiag{2t}{0}$ to $\updiag{2t+1}{h}$, labelled $\zeta_{h,0},\ldots,\zeta_{h,l-h}$,
    \item[$\bullet$] there are $\tfrac{d}{2}l+1$ arrows from $\downdiag{2t}{0}$ to $\diag{2t+1}$, labelled $\kappa_0,\ldots,\kappa_{\frac{d}{2}l}$,
    \item[$\bullet$] for every $h\in\{1,\ldots,l-2\}$, there are $l-h$ arrows from $\downdiag{2t}{h}$ to $\diag{2t+1}$, labelled $\lambda_{h,0},\ldots,\lambda_{h,l-h-1}$, 
    \item[$\bullet$] if $d=2$, then for every $h\in\{1,\ldots,l-3\}$ and any $u\in\{2,\ldots,l-h-1\}$, there are $l-(h+u)$ arrows from $\downdiag{2t}{h}$ to $\updiag{2t+1}{u}$, labelled $\mu_{h,0}^{u},\ldots,\mu_{h,l-(h+u)-1}^{u}$, and
    \item[$\bullet$] all other arrows have multiplicity one.
\end{itemize}

We say that a vertex in $G$ is \emph{odd}\index[definitions]{vertex!odd} if the subscript appearing in that vertex is $2t+1$ and that it is \emph{even}\index[definitions]{vertex!even} if the subscript appearing in that vertex is $2t$. Notice that all arrows starting at odd vertices terminate at even vertices and all arrows starting at even vertices terminate at odd vertices. Hence a directed path $\chi$ in $G$ of odd length starts and terminates at vertices of opposite parity.

Let $\chi=\chi_1\chi_2\cdots\chi_{p-1}$ be a directed path in $G$ of length $p-1$, such that $\chi$ starts at an odd vertex; since $p$ is even, this is equivalent to $\chi$ terminating at an even vertex. We associate to $\chi$ a full additive subcategory $\cU(\chi)\subseteq \cC$\index[symbols]{U@$\cU(\chi)$}\index[symbols]{U@$\cU_i(\chi)$}. First, let $i\in\{1,\ldots,p\}$ and write $i=2t+1$ if $i$ is odd and $i=2t$ if $i$ is even. Then we set
\[
\cU_i(\chi)=\cU(\chi)\cap \diag{i} \coloneqq \begin{cases}
    \text{start of arrow $\chi_i$,} &\mbox{if $i$ is odd,} \\
    \text{end of arrow $\chi_i$,} &\mbox{if $i$ is even.}
\end{cases}
\]
Next, we want to define $Q_{\cU(\chi)}$. Notice that we may partition $[1,n-(l-1)]$ as
   \begin{align*}
[1,n-(l-1)] &= [s_1,s_2-(l-1)]\sqcup [s_2-(l-2),s_3-1] \sqcup \cdots \sqcup [s_{p-1},s_p-(l-1)] \\
&= F_1\sqcup F_2 \sqcup \cdots \sqcup F_{p-1}
\end{align*}
We describe some conditions which describe $Q_{\cU(\chi)}\cap F_i$\index[symbols]{F@$F_i$} for some $i\in\{1,\ldots,p-1\}$; for all other $i$ we set $Q_{\cU(\chi)}\cap F_i=\varnothing$. 

\begin{conditions}\label{cond:definition of U(chi)}
\begin{enumerate}
    \item[(i)] if $i$ is odd and $\chi_i=\beta_y$ for some $y\in\{0,\ldots,\tfrac{d-2}{2}l+2\}$, then $Q_{\cU(\chi)}\cap [s_i,s_{i+1}-(l-1)]=[s_{i+1}-(l-1)-(y-1),s_{i+1}-(l-1)]$,
    \item[(ii)] if $i$ is odd and $\chi_i=\iota$ or $\chi_i=\epsilon_h$ for some $h\in\{2,\ldots,l-1\}$, then $Q_{\cU(\chi)}\cap [s_i,s_{i+1}-(l-1)]=[s_i,s_{i+1}-(l-1)]$,
    \item[(iii)] if $i$ is even and $\chi_i=\zeta_{h,y}$ for some $h\in\{2,\ldots,l-1\}$ and some $y\in\{0,1,\ldots,l-h\}$, then $Q_{\cU(\chi)}\cap [s_{i}-(l-2),s_{i+1}-1]=[s_{i+1}-1-(y-1),s_{i+1}-1]$,
    \item[(iv)] if $i$ is even and $\chi_i=\kappa_y$ for some $y\in\{0,1,\ldots,\tfrac{d}{2}l\}$, then $Q_{\cU(\chi)}\cap [s_i-(l-2),s_{i+1}-1]=[s_{i+1}-1-(y-1),s_{i+1}-1]$,
    \item[(v)] if $i$ is even and $\chi_i=\lambda_{h,y}$ for some $h\in \{1,\ldots,l-2\}$ and some $y\in\{0,1,\ldots,l-h-1\}$, then $Q_{\cU(\chi)}\cap [s_{i}-(l-2),s_{i+1}-1]= [s_{i}-h-(y-1),s_{i+1}-1]$,
    \item[(vi)] if $i$ is even and $\chi_i=\iota^{-}$, then $ Q_{\cU(\chi)}\cap [s_i-(l-2),s_{i+1}-1]=[s_i-(l-2),s_{i+1}-1]$,
    \item[(vii)] if $d=2$, $i$ is even and $\chi_i=\mu_{h,y}^u$ for some $h\in\{1,\ldots,l-3\}$ and some $u\in\{2,\ldots,l-h-1\}$ and some $y\in\{0,1,\ldots,l-(h+u)-1\}$, then $Q_{\cU(\chi)}\cap [s_{i}-(l-2),s_{i+1}-1]=[s_{i}-h-(y-1),s_{i+1}-1]$.
\end{enumerate}
\end{conditions}

Then we have
\[
\cU(\chi)=\add{\cU_1(\chi),\ldots,\cU_p(\chi),I(Q_{\cU(\chi)})}.
\]

\begin{example}\label{Example:d-torsion class from the graph} Let $\Lambda=\Lambda(37,4)$ and $d=4$ so that $\Lambda$ admits a $4$-cluster tilting subcategory $\cC$. Consider the path $\chi=\epsilon_2\zeta_{2,2}\eta\theta_2\delta_2$. Then $\cU(\chi)$ is the subcategory defined as the additive closure of the encircled modules in the following picture:
\[
\includegraphics{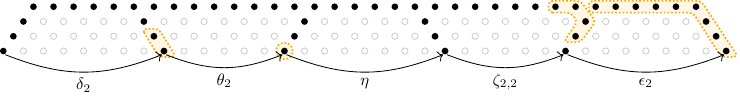}
\]
As the next theorem shows, $\cU(\chi)$ is a $4$-torsion class.
\end{example}

We are ready to prove our main result for this section, namely that $d$-torsion classes can be described using paths in the graph $G$.

\begin{theorem}\label{thrm:classification of d-torsion classes}
Assume that $l>2$. There exists a bijection between the set of paths $\chi$ in $G$ of length $p-1$ starting at an odd vertex and the set of $d$-torsion classes $\cU$ in $\cC$. The bijection is given by $\chi\mapsto \cU(\chi)$.
\end{theorem}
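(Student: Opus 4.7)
The plan is to establish the bijection in two directions. For injectivity of the assignment $\chi \mapsto \cU(\chi)$, observe that from $\cU(\chi)$ we can recover $\cU_i(\chi) = \cU(\chi) \cap \diag{i}$ and the index set $Q_{\cU(\chi)}$, and hence each arrow $\chi_i$ is determined by the correspondence set up in Lemmas~\ref{lem:arrows starting at odd vertex} and~\ref{lem:arrows starting at even vertex}; so injectivity is automatic. The content of the theorem therefore lies in two claims: every path $\chi$ yields a $d$-torsion class $\cU(\chi)$ (well-definedness), and every $d$-torsion class arises this way (surjectivity).

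For surjectivity, let $\cU \subseteq \cC$ be a $d$-torsion class. Lemma~\ref{lem:possible Uis} shows that every $\cU_i$ is one of the subcategories appearing as a vertex of $G$. Lemmas~\ref{lem:arrows starting at odd vertex} and~\ref{lem:arrows starting at even vertex} then show that for each $i \in [1,p-1]$, the pair $(\cU_i, \cU_{i+1})$ together with $Q_\cU \cap F_i$ singles out a unique arrow of $G$ from $\cU_i$ to $\cU_{i+1}$: the cases enumerated in those lemmas match one-to-one with the arrows of $G$, and the labels $\beta_y, \kappa_y, \lambda_{h,y}, \zeta_{h,y}, \mu^u_{h,y}$ record which initial segment of $F_i$ is intersected by $Q_\cU$, as prescribed by Conditions~\ref{cond:definition of U(chi)}. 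Concatenating these arrows yields a path $\chi$ of length $p-1$ starting at the odd vertex $\cU_1$, and $\cU = \cU(\chi)$ by construction.

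The well-definedness assertion requires that $\cU(\chi)$ satisfies the three conditions of Proposition~\ref{prop:description of d-torsion classes}: closure under direct summands (immediate from the additive closure built into the definition of $\cU(\chi)$), closure under $d$-quotients, and closure under minimal $d$-extensions. For closure under $d$-quotients, reduce to a morphism $f \colon M \to U$ with $U \in \cU(\chi)$ indecomposable; Lemmas~\ref{lem:nonzero morphisms in C} and~\ref{lem:minimal left approximation in C} constrain where $U$ can lie, and iterating Corollary~\ref{cor:the usual weak cokernel} one builds a $d$-cokernel in $\cC$ whose terms lie in a controlled range of diagonals. One then verifies by inspection that each such term belongs to the subcategory specified by the relevant arrow $\chi_{i-1}, \chi_i$, or $\chi_{i+1}$. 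For closure under minimal $d$-extensions, the key input is Lemma~\ref{lem:n-extensions in C}, which gives the explicit form of minimal $d$-extensions between indecomposables in consecutive diagonals $\diag{i}, \diag{i+1}$; after reducing to indecomposable endpoints and using Lemma~\ref{Lemma:DiagZeroHom} to rule out morphisms between non-adjacent diagonals outside the special case $d=2$, one matches middle terms against the labels of $\chi_i$.

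The main obstacle will be organising the case analysis in the well-definedness direction, especially for the case $d=2$. In that case, the additional arrows in~(\ref{eq:multigraph1 for d-torsion addon for d=2}) reflect the nonzero morphisms $\Hom{\Lambda}{\diag{i}}{\diag{i+1}} \neq 0$ predicted by Lemma~\ref{Lemma:DiagZeroHom}, and minimal $d$-extensions may connect modules in $\diag{i}$ and $\diag{i+2}$ through intermediate terms in the projective-injective band indexed by $F_i$. The delicate point is to verify that the parameters $h, u, y$ appearing in the arrows $\lambda_{h,y}$ and $\mu^u_{h,y}$ exactly align with the positions of the terms in the minimal $d$-extensions of Lemma~\ref{lem:n-extensions in C}, so that no middle term escapes $\cU(\chi)$. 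A systematic verification, arrow type by arrow type, should complete the proof.
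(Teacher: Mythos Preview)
Your surjectivity and injectivity arguments match the paper's. The real divergence is in showing that $\cU(\chi)$ is a $d$-torsion class. You propose to verify the closure conditions of Proposition~\ref{prop:description of d-torsion classes}, whereas the paper works directly with Definition~\ref{def:d-torsion class}: for each indecomposable $C \in \cC \setminus \cU(\chi)$ it either shows $\Hom{\Lambda}{\cU(\chi)}{C}=0$ (so the trivial $d$-extension $0 \to 0 \to C \overset{1}{\to} C \to 0 \to \cdots \to 0$ works) or exhibits an explicit $d$-extension from Lemma~\ref{lem:n-extensions in C} and checks the $\Hom$-exactness condition on the tail, case by case over where $C$ lies.

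Your route is viable in principle, but two steps are underspecified. First, the phrase ``reducing to indecomposable endpoints'' for closure under minimal $d$-extensions is not free: a minimal $d$-extension $0 \to U \to C_1 \to \cdots \to C_d \to V \to 0$ with $U,V$ decomposable need not split as a direct sum of minimal $d$-extensions between indecomposable summands, so some argument specific to $\Lambda(n,l)$ (e.g.\ via the decomposition of $\Ext^d_\Lambda(V,U)$ and one-dimensionality of the pieces) is required. Second, Lemma~\ref{lem:n-extensions in C} exhibits \emph{particular} minimal $d$-extensions between $\diag{i}$ and $\diag{i+1}$, but to verify closure under \emph{all} minimal $d$-extensions you must argue that these exhaust the nonzero ones, i.e.\ determine exactly which pairs of indecomposables $U,V \in \cC$ satisfy $\Ext^d_\Lambda(V,U) \neq 0$. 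This reduces (via the Auslander--Reiten formula and $\td$) to $\Hom{\Lambda}{\diag{i+1}}{\diag{j}}$ and hence to Lemma~\ref{Lemma:DiagZeroHom}, but you would still need to handle the extra $d=2$ case $j=i+2$ and confirm that Lemma~\ref{lem:n-extensions in C} really covers every parameter combination. The paper's approach via Definition~\ref{def:d-torsion class} avoids both problems: it only requires \emph{constructing} one suitable $d$-extension per object, not checking all of them.
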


\begin{proof}
Let $\cU$ be a $d$-torsion class in $\cC$. Let $1\leq i\leq p$. Write $i=2t+1$ if $i$ is odd and $i=2t$ if $i$ is even. By Lemma \ref{lem:possible Uis} we have that $\cU_i$ appears as one of the vertices in $G$. Assume that $i\leq p-1$. Then the arrows in $G$ starting at $\cU_i$ are in bijection with the possible configurations $(\cU_i,\cU_{i+1})$ as they are outlined in Lemma \ref{lem:arrows starting at odd vertex} and Lemma \ref{lem:arrows starting at even vertex}; the bijection is described by the conditions on the arrows given when defining $Q_{\cU(\chi)}$. We may thus define $\chi_i$ to be the corresponding arrow under this bijection. By definition of $\cU(\chi)$, we then have that $\cU(\chi)=\cU$.

It remains to show that $\cU(\chi)$ is a $d$-torsion class. For this we use Definition \ref{def:d-torsion class}. Let $C\in\cC$ and we need to produce a $d$-extension
\[
\begin{tikzcd}
	0 & U & C & C_1 & \cdots & C_d & 0 
	  \arrow[from=1-1, to=1-2]
        \arrow["u", from=1-2, to=1-3]
        \arrow["c_0", from=1-3, to=1-4]
        \arrow["c_1", from=1-4, to=1-5]
        \arrow["c_{d-1}", from=1-5, to=1-6]
        \arrow[from=1-6, to=1-7]
\end{tikzcd}
\]
such that $U\in\cU(\chi)$ and, for every $U'\in\cU(\chi)$, the induced sequence
\[
\begin{tikzcd}
	0 & \Hom{\Lambda}{U}{C_1} & \cdots & \Hom{\Lambda}{U}{C_d} & 0 
	  \arrow[from=1-1, to=1-2]
        \arrow[from=1-2, to=1-3]
        \arrow[from=1-3, to=1-4]
        \arrow[from=1-4, to=1-5]
\end{tikzcd}
\]
is exact. Since $\cU(\chi)$ is an additive category, it is enough to show the claim for $C$ indecomposable. If $C\in\cU(\chi)$ we can clearly consider the $d$-extension 
\[
\begin{tikzcd}
	0 & C & C & 0 & \cdots & 0 & 0. 
	  \arrow[from=1-1, to=1-2]
        \arrow["1", from=1-2, to=1-3]
        \arrow["0", from=1-3, to=1-4]
        \arrow["0", from=1-4, to=1-5]
        \arrow["0", from=1-5, to=1-6]
        \arrow[from=1-6, to=1-7]
\end{tikzcd}
\]
Assume that $C\not\in \cU(\chi)$. We consider three complementary cases: $C\in\diag{i}$ and $i$ is odd, $C\in\diag{i}$ and $i$ is even and $C=I(q)$ for some $q\in[1,n-(l-1)]$.

Case $C\in\diag{i}$ and $i$ is odd. Then $C=\ind{s_i}{s_i+h-1}$ for some $h\in\{1,\ldots,l-1\}$. By definition of $\cU(\chi)$ we have that $\cU_i(\chi)\in\{\downdiag{i}{0},\downdiag{i}{1},\updiag{i}{h'}\mid 1\leq h'\leq l-1\}$. Since $C\not\in\cU_i(\chi)$, we conclude that either no submodule of $C$ is in $\cU_i(\chi)$ or $\cU_i(\chi)=\downdiag{i}{1}$. If no submodule of $C$ is in $\cU_i(\chi)$, consider the $d$-extension 
\[
\begin{tikzcd}
	0 & 0 & C & C & 0 & \cdots & 0 & 0,
	  \arrow[from=1-1, to=1-2]
        \arrow["0", from=1-2, to=1-3]
        \arrow["1", from=1-3, to=1-4]
        \arrow["0", from=1-4, to=1-5]
        \arrow["0", from=1-5, to=1-6]
        \arrow["0",from=1-6, to=1-7]
        \arrow[from=1-7, to=1-8]
\end{tikzcd}
\]
and it is enough to show that $\Hom{\Lambda}{U'}{C}=0$ for all $U'\in \cU(\chi)$. If $s_i=1$, then this is clear. Assume that $s_i>1$ and assume to a contradiction that there exists a nonzero morphism $f:U'\to C$ for some indecomposable module $U'\in\cU(\chi)$. Since $i$ is odd, we have that $\Hom{\Lambda}{\diag{i-1}}{\diag{i}}=0$ by Lemma \ref{Lemma:DiagZeroHom}. We conclude that $U'$ is injective and so $I(q)\in \cU(\chi)$ for some $q\in[s_i-(l-1),s_i+h-l]$. Hence $Q_{\cU(\chi)}\cap [s_i-(l-2),s_{i+1}-1] \neq \varnothing$. By the construction of $\cU(\chi)$ we obtain that $Q_{\cU(\chi)}\cap [s_i-(l-2),s_{i+1}-1]\supseteq [q,s_{i+1}-1]$ and that $\chi_{i-1}$ is one of the arrows in cases (iii), (iv), (v), (vi) or (vii) of Conditions \ref{cond:definition of U(chi)}. In each of these cases we obtain that $\updiag{i}{q+l-s_i}\subseteq \cU_i(\chi)$. But then $q\leq s_i+h-l$ gives $q+l-s_i\leq h$ and so $C\in \updiag{i}{q+l-s_i}$, contradicting $C\not\in\cU_i(\chi)$. Hence it remains to consider the case $\cU_{i}(\chi)=\downdiag{i}{1}$. In this case, by the construction of $\cU(\chi)$, we have that $\chi_{i-1}=\theta_h$ and $\chi_i=\eta$. We conclude that $\cU_{i+1}(\chi)=\downdiag{i+1}{0}$ and moreover that $Q_{\cU(\chi)}\cap [s_{i}-(l-2),s_{i+1}-(l-1)]=\varnothing$. Set $b=s_i$ and $j=h-1$ and consider the $d$-extension in Lemma \ref{lem:n-extensions in C}(b), which is of the form
\[
\begin{tikzcd}
	0 & \ind{s_i}{s_i} & \ind{s_i}{s_i+h-1}=C & C_1 & \cdots & C_d & 0. 
	  \arrow[from=1-1, to=1-2]
        \arrow["u", from=1-2, to=1-3]
        \arrow["c_0", from=1-3, to=1-4]
        \arrow["c_1", from=1-4, to=1-5]
        \arrow["c_{d-1}", from=1-5, to=1-6]
        \arrow[from=1-6, to=1-7]
\end{tikzcd}
\]
Similarly to the previous case, it is enough to show that $\Hom{\Lambda}{\cU(\chi)}{C_s}=0$ for $s\in\{1,\ldots,d\}$. All the modules $C_1,\ldots,C_d$ are either indecomposable injective modules of the form $I(q)$ for $q\in [s_i+1,s_{i+1}-(l-1)]$ or modules in $\diag{i+1}$. Since $Q_{\cU(\chi)}\cap [s_{i-1}-(l-1)+1,s_{i+1}-(l-1)]=\varnothing$ and $\cU_{i+1}(\chi)=\downdiag{i+1}{0}$, it follows that $\Hom{\Lambda}{\cU(\chi)}{C_s}=0$ for $s\in\{1,\ldots,d\}$, as required.

Case $C\in\diag{i}$ and $i$ is even. Then $C=\ind{s_i-h+1}{s_i}$ for some $h\in\{1,\ldots,l-1\}$. By definition of $\cU(\chi)$ we have that $\cU_i(\chi)=\downdiag{i}{h'}$ for some $h'\in\{0,1\ldots,l-1\}$. Since $C\not\in\cU_i(\chi)$, we have that $h'< h\leq l-1$. By the construction of $\cU(\chi)$ we obtain that $\chi_{i-1}$ is one of the arrows $\gamma$, $\eta$ or $\delta_1,\ldots,\delta_{l-2}$ and $\cU_{i-1}(\chi)=\downdiag{i-1}{0}$ or $\cU_{i-1}(\chi)=\downdiag{i-1}{1}$. In any case we have that $Q_{\cU(\chi)}\cap [s_{i-1},s_{i}-(l-1)]=\varnothing$ and that $\Hom{\Lambda}{\cU_{i-1}(\chi)}{C}=0$. It follows that $\Hom{\Lambda}{\cU(\chi)}{C}=0$ and so we may again consider the $d$-extension
\[
\begin{tikzcd}
	0 & 0 & C & C & 0 & \cdots & 0 & 0.
	  \arrow[from=1-1, to=1-2]
        \arrow["0", from=1-2, to=1-3]
        \arrow["1", from=1-3, to=1-4]
        \arrow["0", from=1-4, to=1-5]
        \arrow["0", from=1-5, to=1-6]
        \arrow["0",from=1-6, to=1-7]
        \arrow[from=1-7, to=1-8]
\end{tikzcd}
\]

Finally, assume that $C=I(q)$ for some $q\in [1,n-(l-1)]$. Then either $q\in [s_i,s_{i+1}-(l-1)]$ for some odd $i$ or $q\in [s_{i}-(l-2),s_{i+1}-1]$ for some even $i$. We consider the two cases separately.

Assume that $q\in [s_i,s_{i+1}-(l-1)]$ for some odd $i$. Then $\cU_{i}(\chi)\in \{\downdiag{i}{0},\downdiag{i}{1},\updiag{i}{h'}\mid 1\leq h'\leq l-1\}$. If $\cU_{i}(\chi)=\updiag{i}{h'}$ for some $h'\in [1,l-1]$, then by the construction of $\cU(\chi)$ we obtain that $\chi_i=\epsilon_{h'}$ and so $[s_i,s_{i+1}-(l-1)]\subseteq Q_{\cU(\chi)}$ by Condition \ref{cond:definition of U(chi)}(ii). But then $I(q)\in \cU(\chi)$, which is a contradiction. Hence we may assume that $\cU_{i}(\chi)=\downdiag{i}{0}$ or $\cU_{i}(\chi)=\downdiag{i}{1}$. Checking the arrows in $G$ going to the vertices $\downdiag{2t+1}{0}$ and $\downdiag{2t+1}{1}$ reveals that none of them appears in Condition \ref{cond:definition of U(chi)}, from which we obtain that $Q_{\cU(\chi)}\cap [s_{i-1}-(l-2),s_i-1]=\varnothing$. We may again argue that we also have $Q_{\cU(\chi)}\cap [s_i,q]=\varnothing$, since otherwise $C=I(q)\in\cU(\chi)$. All in all we obtain that $\Hom{\Lambda}{\cU(\chi)}{C}=0$ which is enough. 

For the last case we may assume that $q\in [s_{i}-(l-2),s_{i+1}-1]$ for some even $i$. Again we may assume that $\Hom{\Lambda}{\cU(\chi)}{C}\neq 0$. By the definition of $\cU(\chi)$ and since $I(q)\not\in\cU(\chi)$, it is easy to check that this implies that $\ind{q}{s_i}\in\cU_i(\chi)$. Combined with the fact that $I(q)\not\in\cU(\chi)$ we obtain that $\cU_{i+1}(\chi)=\downdiag{i+1}{0}$ or $\cU_{i+1}(\chi)=\downdiag{i+1}{1}$. It follows that $[s_{i},s_{i+1}-(l-1)]\cap Q_{\cU(\chi)}=\varnothing$. Set $a=q$ and $j=s_i-a+1$ and consider the $d$-extension in Lemma \ref{lem:n-extensions in C}(a), which is of the form
\[
\begin{tikzcd}
	0 & \ind{q}{s_i} & I(q)=C & C_1 & \cdots & C_d & 0. 
	  \arrow[from=1-1, to=1-2]
        \arrow["u", from=1-2, to=1-3]
        \arrow["c_0", from=1-3, to=1-4]
        \arrow["c_1", from=1-4, to=1-5]
        \arrow["c_{d-1}", from=1-5, to=1-6]
        \arrow[from=1-6, to=1-7]
\end{tikzcd}
\]
All the modules $C_1,\ldots,C_{d-2}$ are indecomposable injective modules of the form $I(q')$ with $q'\in [q+j,q+\tfrac{d-2}{2}l]=[s_i+1,q+\tfrac{d-2}{2}l]$. Moreover we have
\[
C_{d-1} = I(q'') \oplus \ind{s_i+\tfrac{d-2}{2}l+2}{q+\tfrac{d}{2}l-1} \text{ and } C_{d}=\ind{s_i+\tfrac{d-2}{2}l+2}{s_i+\tfrac{d}{2}l},
\]
where again $q''\in [s_i+1,s_{i+1}-1]$. Set $C_{d-1}'=\ind{s_i+\tfrac{d-2}{2}l+2}{q+\tfrac{d}{2}l-1}$. Since $[s_{i},s_{i+1}-(l-1)]\cap Q_{\cU(\chi)}=\varnothing$ and $C_{d-1}',C_d\in \diag{i+1}$, we obtain that $\Hom{\Lambda}{\cU(\chi)}{C_i}=0$ for $1\leq i\leq d-2$ and that $\Hom{\Lambda}{\cU(\chi)}{I(q'')}=0$ too. Now let $U'\in\cU(\chi)$ be indecomposable. Then the sequence
\begin{equation}\label{eq:the exact sequence for even case}
\begin{tikzcd}
	0 & \Hom{\Lambda}{U'}{C_1} & \cdots & \Hom{\Lambda}{U'}{C_{d-1}} & \Hom{\Lambda}{U'}{C_{d}} & 0 
	  \arrow[from=1-1, to=1-2]
        \arrow[from=1-2, to=1-3]
        \arrow[from=1-3, to=1-4]
        \arrow[from=1-4, to=1-5]
        \arrow[from=1-5, to=1-6]
\end{tikzcd}
\end{equation}
becomes
\[
\begin{tikzcd}
	0 & 0 & \cdots & 0 & \Hom{\Lambda}{U'}{C_{d-1}'} & \Hom{\Lambda}{U'}{C_{d}} & 0. 
	  \arrow[from=1-1, to=1-2]
        \arrow[from=1-2, to=1-3]
        \arrow[from=1-3, to=1-4]
        \arrow[from=1-4, to=1-5]
        \arrow[from=1-5, to=1-6]
        \arrow[from=1-6, to=1-7]
\end{tikzcd}
\]
Notice that both $C_{d-1}'$ and $C_{d}$ belong to $\diag{i+1}$ and that $[s_{i},s_{i+1}-(l-1)]\cap Q_{\cU(\chi)}=\varnothing$. Hence if $U'\not\in \cU_{i+1}(\chi)$, then both $\mathrm{Hom}$-spaces in the above sequence become zero. It remains to consider the case $U'\in\cU_{i+1}(\chi)$. Since $\cU_{i+1}(\chi)=\downdiag{i+1}{0}$ or $\cU_{i+1}(\chi)=\downdiag{i+1}{1}$, we conclude that it remains to check the case $U'=\ind{s_{i+1}}{s_{i+1}}$. But then, using the fact that $C_{d-1}',C_d\in\diag{i+1}$ we obtain that
\[
\Hom{\Lambda}{\ind{s_{i+1}}{s_{i+1}}}{C_{d-1}'}\isom \Hom{\Lambda}{\ind{s_{i+1}}{s_{i+1}}}{C_d},
\]
which shows that the sequence (\ref{eq:the exact sequence for even case}) is exact, as required.
\end{proof}

\subsubsection{The case $l=2$.}\label{subsubsec:the case l=2} We finish this section with this case which is considerably simpler.

\begin{lemma}\label{lem:arrows starting at vertex for l=2}
    Assume that $l=2$. Let $\cU$ be a $d$-torsion class and let $i\in\{1,\ldots,p-1\}$. Then exactly one of the following holds.
    \begin{enumerate}
        \item[(a)] $(\cU_i,\cU_{i+1})\in\{(\downdiag{i}{0},\downdiag{i+1}{0}), (\diag{i},\downdiag{i+1}{0})\}$
        and $Q_{\cU}\cap[s_i,s_{i+1}-1]=\varnothing$.
        \item[(b)] $(\cU_i,\cU_{i+1})=(\downdiag{i}{0},\diag{i+1})$ and there exists some $y\in\{0,1,\ldots,d\}$ such that $ Q_{\cU}\cap [s_i,s_{i+1}-1]=[s_{i+1}-y,s_{i+1}-1]$.
        \item[(c)] $(\cU_i,\cU_{i+1})=(\diag{i},\diag{i+1})$ and $Q_{\cU}\cap [s_i,s_{i+1}-1]=[s_i,s_{i+1}-1]$. 
    \end{enumerate}
\end{lemma}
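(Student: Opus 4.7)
The plan is to exploit the degenerate nature of $l=2$, where each diagonal $\diag{i}$ consists of the single indecomposable module $S(s_i)$ and the distinction between $\downdiag{i}{x}$, $\updiag{i}{x}$, and $\diag{i}$ collapses. By Lemma \ref{lem:possible Uis} this forces $\cU_i$ and $\cU_{i+1}$ each to be either $0$ or the full diagonal, leaving just four possible pairs to check. I would first verify that when $l=2$ we have $s_{i+1}-s_i = d$ regardless of the parity of $i$, using (\ref{eq:difference of consecutive simples}); this makes the length of the interval $[s_i,s_{i+1}-1]$ equal to $d$ in both parities and explains why the statement has no dependence on parity (unlike Lemma \ref{lem:arrows starting at odd vertex} and Lemma \ref{lem:arrows starting at even vertex}).

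Next I would check each of the four possible pairs $(\cU_i,\cU_{i+1})$. For $(\cU_i,\cU_{i+1}) = (0,0)$ and $(\diag{i},0)$, applying Lemma \ref{lem:closure under d-quotients}(b) (when $i$ is odd) or Lemma \ref{lem:closure under d-quotients}(c) (when $i$ is even) shows that any $I(q)\in\cU$ with $q\in[s_i,s_{i+1}-1]$ would force $[q,s_{i+1}-1]\subseteq Q_{\cU}$ and, in particular, $s_{i+1}-1\in Q_{\cU}$; the second part of Lemma \ref{lem:closure under d-quotients}(b)/(c) would then yield $\cU_{i+1}=\diag{i+1}$, contradicting $\cU_{i+1}=0$. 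Hence $Q_{\cU}\cap[s_i,s_{i+1}-1]=\varnothing$, placing us in case (a).

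For $(\cU_i,\cU_{i+1}) = (0,\diag{i+1})$, the same two lemmas show that $Q_{\cU}\cap[s_i,s_{i+1}-1]$ must be a (possibly empty) suffix of the form $[s_{i+1}-y,s_{i+1}-1]$; setting $m=\min Q_{\cU}\cap[s_i,s_{i+1}-1]$ when this set is nonempty and $y=s_{i+1}-m$ (and $y=0$ otherwise), the bound $s_i\le m\le s_{i+1}-1$ translates into $y\in\{0,1,\ldots,d\}$, giving case (b). Finally, for $(\cU_i,\cU_{i+1}) = (\diag{i},\diag{i+1})$, we have $S(s_i)\in\cU_i$ and $S(s_{i+1}) = \tdo(S(s_i))\in\cU_{i+1}$, so Lemma \ref{lem:effect of M and its taud inverse}(a) applied with $h=1$ when $i$ is even, or Lemma \ref{lem:effect of M and its taud inverse}(b) applied with $h=h'=1$ when $i$ is odd, directly yields $[s_i,s_{i+1}-1]\subseteq Q_{\cU}$, which is case (c).

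The main obstacle is bookkeeping rather than substantive mathematics: the essential work was done in Lemma \ref{lem:closure under d-quotients} and Lemma \ref{lem:effect of M and its taud inverse}, and what remains is to track the interval $Q_{\cU}\cap[s_i,s_{i+1}-1]$ carefully through each of the four pair cases and confirm that the stated upper bound $y\leq d$ in case (b) is tight, which follows from the length of $[s_i,s_{i+1}-1]$ being exactly $d$ as noted above. The entire argument is thus a direct but careful specialization of the proofs of Lemma \ref{lem:arrows starting at odd vertex} and Lemma \ref{lem:arrows starting at even vertex} to the degenerate case $l=2$.
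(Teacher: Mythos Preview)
Your proposal is correct and follows exactly the approach the paper indicates: the paper's own proof simply states that the argument is similar to the proofs of Lemma~\ref{lem:arrows starting at odd vertex} and Lemma~\ref{lem:arrows starting at even vertex} and leaves the details to the reader, which is precisely the specialization you have carried out. Your observation that $s_{i+1}-s_i=d$ regardless of parity when $l=2$ is the key simplification that collapses the two separate lemmas into one, and your case analysis using Lemma~\ref{lem:closure under d-quotients} and Lemma~\ref{lem:effect of M and its taud inverse} is exactly what filling in those details entails.
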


\begin{proof}
    The proof is similar to the proofs of Lemma \ref{lem:arrows starting at odd vertex} and Lemma \ref{lem:arrows starting at even vertex}; we leave the details to the interested reader.
\end{proof}

Notice in particular that the possibilities for $(\cU_i,\cU_{i+1})$ in Lemma \ref{lem:arrows starting at vertex for l=2} do not depend on the parity of $i$, unlike the case $l>2$. Then, in a similar manner to the previous section, we may introduce the following directed multigraph $G=G(\cC)$\index[symbols]{G@$G$}\index[symbols]{G@$G(\cC)$} for $l=2$.
\begin{equation}\label{eq:multigraph2 for d-torsion}
\begin{tikzcd}[cells={text width={width("$D^{\downarrow}(0)$")},align=center}]
	{\mathcal{D}^{\downarrow}(0)} \arrow["\gamma", loop, out=210, in=150, looseness=4] && {} && {\mathcal{D}} \arrow["\epsilon", swap, loop, out=-30, in=30, looseness=4] \\
	&& {}
	\arrow["{\delta_0}"'{pos=0.4}, from=1-1, to=1-5]
	\arrow["{\delta_d}"'{pos=0.43}, curve={height=30pt}, from=1-1, to=1-5]
	\arrow[shorten <=1pt, shorten >=2pt, dotted, no head, from=1-3, to=2-3]
	\arrow["{\beta}"', curve={height=24pt}, from=1-5, to=1-1]
\end{tikzcd}
\end{equation}

Let $\chi=\chi_1\chi_2\cdots\chi_{p-1}$ be a directed path in $G$ of length $p-1$. Notice that since we are in the case $l=2$, $p$ does not necessarily have to be even. To $\chi$ we associate a full additive subcategory $\cU(\chi)\subseteq \cC$. First, for $i\in\{1,\ldots,p\}$ we set
\[
\cU_i(\chi)=\cU(\chi)\cap \diag{i} \coloneqq \text{start of arrow $\chi_i$}
\]
Next, we want to define $Q_{\cU(\chi)}$. Notice that we may partition $[1,n-1]$ as
   \begin{align*}
[1,n-1] = [s_1,s_2-1]\sqcup [s_2,s_3-1] \sqcup \cdots \sqcup [s_{p-1},s_p-1] = F_1\sqcup F_2 \sqcup \cdots \sqcup F_{p-1}
\end{align*}
We set
\[
Q_{\cU(\chi)}\cap F_i \coloneqq \begin{cases}
    \varnothing, &\mbox{if $\chi_i=\beta$ or $\chi_i=\gamma$,} \\
    [s_{i+1}-y,s_{i+1}-1], &\mbox{if $\chi_i=\delta_y$ for some $y\in \{0,\ldots,d\}$,} \\
    F_i, &\mbox{if $\chi_i=\epsilon$.}
\end{cases}
\]
Then we set
\[
\cU(\chi)=\add{\cU_1(\chi),\ldots,\cU_p(\chi),I(Q_{\cU(\chi)})},
\]
and have the following result.

\begin{theorem}\label{thrm:classification of d-torsion classes in l=2}
Assume that $l=2$. There exists a bijection between the set of paths $\chi$ in $G$ of length $p-1$ starting at an odd vertex and the set of $d$-torsion classes $\cU$ in $\cC$. The bijection is given by $\chi\mapsto \cU(\chi)$.
\end{theorem}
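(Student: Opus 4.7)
The plan is to mirror the strategy used for Theorem \ref{thrm:classification of d-torsion classes}, taking advantage of the considerable simplifications that arise when $l=2$. The essential observation is that each diagonal $\diag{i}$ contains only the simple module $S(s_i)$, so by Lemma \ref{lem:possible Uis} the only possibilities for $\cU_i$ are $\downdiag{i}{0}=0$ and $\diag{i}=\add{S(s_i)}$, which correspond exactly to the two vertices of the graph $G$ in (\ref{eq:multigraph2 for d-torsion}). In particular no parity distinction of vertices is needed.

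First I would construct the inverse map. Given a $d$-torsion class $\cU$, for each $i \in [1,p-1]$ Lemma \ref{lem:arrows starting at vertex for l=2} shows that the triple $(\cU_i, \cU_{i+1}, Q_\cU \cap F_i)$ matches exactly one arrow of $G$; define $\chi_i$ to be this arrow. By construction $\cU(\chi) = \cU$, and the injectivity of $\chi \mapsto \cU(\chi)$ is immediate since $\chi$ can be read off from $\cU(\chi)$ via its diagonal components and its projective-injective summands.

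Next I would show that for any path $\chi$ of length $p-1$ in $G$ the subcategory $\cU(\chi)$ is a $d$-torsion class, by verifying Definition \ref{def:d-torsion class} directly, exactly as in the proof of Theorem \ref{thrm:classification of d-torsion classes}. Fix an indecomposable $C \in \cC$. If $C \in \cU(\chi)$, the trivial $d$-extension works. If $C \in \diag{i} \setminus \cU(\chi)$, then $\cU_i(\chi)=0$ and the arrow configuration of $\chi$ around vertex $i$ forces $\Hom{\Lambda}{\cU(\chi)}{C}=0$, so the $d$-extension $0 \to 0 \to C \xrightarrow{1} C \to 0 \to \cdots \to 0$ suffices. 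If $C=I(q) \notin \cU(\chi)$, I would use the minimal $d$-extension of Lemma \ref{lem:n-extensions in C}(a), namely
\[
0 \to \ind{q}{s_i} \to I(q) \to C_1 \to \cdots \to C_d \to 0,
\]
and verify by inspection of the arrow labels $\beta$, $\gamma$, $\delta_y$, $\epsilon$ at positions $i-1$ and $i$ of $\chi$ that the induced Hom-sequence is exact for every indecomposable $U' \in \cU(\chi)$.

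The main obstacle is this final Hom-exactness check, which in Theorem \ref{thrm:classification of d-torsion classes} occupied the bulk of the proof. Fortunately, the $l=2$ case simplifies dramatically: because $\Hom{\Lambda}{\diag{i}}{\diag{j}}=0$ for $i \neq j$ by Lemma \ref{Lemma:DiagZeroHom} (since $l=2 \leq 3$), every nonzero morphism from $\cU(\chi)$ to a middle term $C_s$ factors through a projective-injective summand of $\cU(\chi)$. Consequently the check reduces entirely to tracking which $I(q')$ lie in $Q_{\cU(\chi)}$, which is precisely what the labels $\delta_y$ and $\epsilon$ encode. The remaining bookkeeping is routine case-work, of the same flavor as but strictly lighter than the calculations at the end of the proof of Theorem \ref{thrm:classification of d-torsion classes}.
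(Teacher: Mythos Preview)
Your proposal is correct and follows the same approach as the paper, which simply states that the proof is similar to that of Theorem~\ref{thrm:classification of d-torsion classes} and leaves the details to the reader; your sketch is in fact more detailed than the paper's own treatment.

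One small imprecision worth noting: in the case $C=I(q)\notin\cU(\chi)$ with $q\in F_i$, the $d$-extension from Lemma~\ref{lem:n-extensions in C}(a) beginning with $\ind{q}{s_i}$ is only available when $q=s_i$, since for $l=2$ the module $\ind{q}{s_i}$ lies in $\diag{i}$ only for $q=s_i$. For $q>s_i$ you instead check directly that $\Hom{\Lambda}{\cU(\chi)}{I(q)}=0$ (using that $Q_{\cU(\chi)}\cap F_i$ is a right-aligned interval, so $q-1\notin Q_{\cU(\chi)}$ whenever $q\notin Q_{\cU(\chi)}$) and use the trivial $d$-extension $0\to 0\to C\to C\to 0\to\cdots\to 0$. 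This is exactly the split into subcases that appears for $C=I(q)$ in the proof of Theorem~\ref{thrm:classification of d-torsion classes}, and is presumably the ``routine case-work'' you allude to in your final paragraph.
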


\begin{proof}
    The proof is similar to the proof of Theorem \ref{thrm:classification of d-torsion classes}; we leave the details to the interested reader.
\end{proof}

\begin{example}\label{Example:d-torsion class from the graph l=2}
Let $\Lambda=\Lambda(19,2)$ and $d=3$ so that $\Lambda$ admits a $3$-cluster tilting subcategory $\cC$. Consider the path $\chi=\beta\delta_3\beta\epsilon\delta_2\gamma$. A direct computation shows that $\cU(\chi)$ is the $3$-torsion class given by the encircled modules in the following picture:
\[
\includegraphics{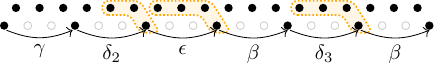}
\]

\subsubsection{The case \texorpdfstring{$d=\gldim(\Lambda)$}{d=gldimL}}

We finish this section by applying our classification of $d$-torsion classes to count their number in the special case where $d=\gldim(\Lambda)$. 

\begin{remark}\label{rem:counting d-torsion classes for d-rep-fin}
\begin{enumerate}
    \item[(a)] Assume that $d=\gldim(\Lambda(n,l))$ and that $\Lambda(n,l)$ admits a $d$-cluster tilting subcategory $\cC$. Then $p=2$ and $d=2\tfrac{n-1}{l}$ by Remark \ref{rem:d=gldim(Lambda) for Nakayama}. If $l>2$, then by Theorem \ref{thrm:classification of d-torsion classes} every $d$-torsion class of $\modfin{\Lambda}$ is given by a path in $G(\cC)$ of length $1$ starting at an odd vertex. As a path of length one is an arrow, counting the arrows in (\ref{eq:multigraph1 for d-torsion}) starting at an odd vertex we find that there are 
    \[
    1+(l-2)+\left(\frac{d-2}{2}l+3\right)+1+(l-2)+1 = \frac{d+2}{2}l +2 = \frac{2\frac{n-1}{l}+2}{2}l+2=n+l+1
    \]
    such paths. If $l=2$ then by Theorem \ref{thrm:classification of d-torsion classes in l=2} every $d$-torsion class of $\modfin{\Lambda}$ is given by an arrow in $G(\cC)$. Counting the arrows in (\ref{eq:multigraph2 for d-torsion}) we find that there are $d+4 = 2\tfrac{n-1}{2}+4=n+3$ such paths. All in all we conclude that in either case $\Lambda(n,l)$ admits exactly $n+l+1$ $d$-torsion classes.

    \item[(b)] A similar calculation shows that when $p=4$ and either ($d>2$ and $l>2$) or ($d=2$ and $l=3$) there exist exactly
    \[
    \frac{1}{18}(17 l^2 + 10ln + 57l + 2n^2 + 30n + 18)
    \]
    $d$-torsion classes.
\end{enumerate}
\end{remark}
\end{example}

\section{\texorpdfstring{$(d+1)$}{(d+1)}-term silting complexes}
\label{Section:d+1 silting}
\subsubsection*{Motivation and aim.} An important result in $\tau$-tilting theory establishes a bijection between basic support $\tau$-tilting pairs and basic $2$-silting complexes in $\HomotopyC{\proj{\Lambda}}{b}$ \cite{adachi_-tilting_2014}. Thus, when trying to generalize support $\tau$-tilting modules in the setting of higher-dimensional homological algebra, it is natural to consider an approach through silting theory. This is done in \cite{MARTINEZ202398} where Martínez and Mendoza give a higher version of $\tau$-tilting modules through $(d+1)$-silting complexes. We call such modules \emph{MM-$\td$-tilting} and we note that it is straightforward to adapt this notion to pairs $(M,P)$ as well. The main aim of this section is to prove Theorem \ref{Theorem C}, which essentially shows that such pairs are characterized by being summand-maximal $\td$-rigid pairs.

\subsubsection*{Setting} The main observations in this section are done for the algebras $\Lambda(n,l)$, however we begin our endeavour in a more general setting by recalling some results from \cite{MARTINEZ202398} and giving some fundamental definitions. The notation introduced in this section is less extensive than in the previous ones, but as before we refer to the index at the end of the article for reminders.

\subsection{Preliminaries}
Let $d\geq 1$ be a positive integer. In this section we begin by letting $\Lambda$ be a finite dimensional $\K$-algebra with $n$ simples and $\HomotopyC{\proj{\Lambda}}{b}$\index[symbols]{Kb@$\HomotopyC{\proj{\Lambda}}{b}$} the category of bounded complexes of finitely generated projective $\Lambda$-modules. Let $\mathbf{P}^{\bullet}\in \HomotopyC{\proj{\Lambda}}{b}$ be a complex. We define the \emph{truncation}\index[definitions]{truncation of complex} $\sigma_{\geq k}\mathbf{P}^{\bullet}$ and $\sigma_{\leq k}\mathbf{P}^{\bullet}$ by
\[
\sigma_{\geq k}\mathbf{P}^{i} = \begin{cases}
    \mathbf{P}^{i}, &\mbox{if $i\geq k$,} \\
    0, &\mbox{if $i<k$,}
\end{cases}
\qquad
\text{ respectively }
\qquad
\sigma_{\leq k}\mathbf{P}^{i} = \begin{cases}
    \mathbf{P}^{i}, &\mbox{if $i\leq k$,} \\
    0, &\mbox{if $i>k$.}
\end{cases}
\]
We similarly define the \emph{truncations} $\sigma_{< k}\mathbf{P}^{\bullet}$ and $\sigma_{>k}\mathbf{P}^{\bullet}$. The \emph{thick closure}\index[definitions]{thick closure} of $\mathbf{P}^{\bullet}$, denoted $\thick{\mathbf{P}^{\bullet}}$\index[symbols]{T@$\thick{\mathbf{P}^{\bullet}}$}, is the smallest triangulated subcategory of $\HomotopyC{\proj{\Lambda}}{b}$ which is closed under direct summands and contains $\mathbf{P}^{\bullet}$. The complex $\mathbf{P}^{\bullet}$ is called \textit{presilting}\index[definitions]{presilting complex} if it admits no non-trivial maps to its positive shifts, i.e. $\Hom{\HomotopyC{\proj{\Lambda}}{b}}{\mathbf{P}^\bullet}{\shift{\mathbf{P}^\bullet}{i}}=0$ for all $i> 0$. 
A presilting complex $\mathbf{P}^\bullet$ is a \emph{silting complex}\index[definitions]{silting complex} if its thick closure is the whole of $\HomotopyC{\proj{\Lambda}}{b}$.
Let $\silt{\Lambda}$\index[symbols]{Sd@$\silt{\Lambda}$} denote the collection of silting complexes. Since we have assumed that $\abs{\Lambda}=n$ and $\Lambda$ is clearly a silting complex, we have the following well-known observation.

\begin{lemma}[{\cite[Corollary 2.28]{SiltingMutation}}]
\label{lemma:silting summands}
    For any $\mathbf{P}^{\bullet}\in\silt{\Lambda}$ we have $\abs{\mathbf{P}^{\bullet}}=n$.
\end{lemma}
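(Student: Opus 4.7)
The plan is to use the Grothendieck group of $\HomotopyC{\proj{\Lambda}}{b}$. Recall that since $\Lambda$ has exactly $n$ non-isomorphic simple modules, the Grothendieck group $K_0(\HomotopyC{\proj{\Lambda}}{b})$ is free abelian of rank $n$, with basis given by the classes $[P(1)],\ldots,[P(n)]$ of the indecomposable projective $\Lambda$-modules viewed as stalk complexes in degree $0$.

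Writing $\mathbf{P}^\bullet = \bigoplus_{i=1}^{s} \mathbf{P}_i^\bullet$ as a sum of indecomposables (with $s = \abs{\mathbf{P}^\bullet}$), I first would show that the classes $[\mathbf{P}_1^\bullet],\ldots,[\mathbf{P}_s^\bullet]$ span $K_0(\HomotopyC{\proj{\Lambda}}{b})$. This follows because the thick closure of $\mathbf{P}^\bullet$ is all of $\HomotopyC{\proj{\Lambda}}{b}$: any complex in $\HomotopyC{\proj{\Lambda}}{b}$ can be obtained from $\mathbf{P}^\bullet$ by finitely many shifts, direct summands and cones, each of which either preserves or takes $\mathbb{Z}$-linear combinations of classes in $K_0$. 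Hence every class in $K_0$ is an integer linear combination of $[\mathbf{P}_1^\bullet],\ldots,[\mathbf{P}_s^\bullet]$, giving $s \geq n$.

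For the reverse inequality the key step, and the one that I expect to require the most care, is establishing that $[\mathbf{P}_1^\bullet],\ldots,[\mathbf{P}_s^\bullet]$ are linearly independent in $K_0$. The presilting condition $\Hom{\HomotopyC{\proj{\Lambda}}{b}}{\mathbf{P}^\bullet}{\shift{\mathbf{P}^\bullet}{i}}=0$ for all $i>0$ is exactly what one needs: following the standard Aihara--Iyama argument one shows that this vanishing, combined with the fact that $\Hom{\HomotopyC{\proj{\Lambda}}{b}}{-}{-}$ detects nonzero elements in $K_0$ via a pairing built from the Euler form (using that the entries of $\mathbf{P}^\bullet$ are projective and that the complexes are bounded so the form is finite), forces any non-trivial relation among the $[\mathbf{P}_i^\bullet]$ to contradict the presilting condition after passing to an appropriate shift. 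Hence $s \leq n$.

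Combining the two inequalities yields $\abs{\mathbf{P}^\bullet} = s = n$, as required. The main obstacle is the linear independence step, which is where the presilting (as opposed to merely generating) hypothesis enters; the generation part is essentially formal from the definition of $\thick{\mathbf{P}^\bullet}$. Since the result is cited from \cite{SiltingMutation}, one may simply appeal to the argument of Aihara--Iyama rather than reproducing it in detail.
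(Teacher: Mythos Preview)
Your sketch is correct and follows the standard Aihara--Iyama argument via the Grothendieck group; the paper itself gives no proof of this lemma, merely citing \cite[Corollary 2.28]{SiltingMutation}. Your final remark that one may simply appeal to that reference is therefore exactly what the paper does.
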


For any $M\in\modfin\Lambda$ we fix a minimal projective resolution 
\[
\cdots \rightarrow \mathbf{P}^{-3}(M)\rightarrow \mathbf{P}^{-2}(M)\rightarrow \mathbf{P}^{-1}(M)\rightarrow \mathbf{P}^{0}(M)\rightarrow M
\]
of $M$ and obtain the complex $\mathbf{P}^{\bullet}(M)$ given by
$$
\mathbf{P}^\bullet(M)\colon\qquad \cdots \rightarrow \mathbf{P}^{-3}(M)\rightarrow \mathbf{P}^{-2}(M)\rightarrow \mathbf{P}^{-1}(M)\rightarrow \mathbf{P}^{0}(M)\rightarrow 0 \rightarrow \cdots .
$$
We now recall the following definition from \cite{MARTINEZ202398}.

\begin{definition}\cite[Definition 1.1]{MARTINEZ202398}\label{def:MM-tau_d-tilting}
    We say that a module $M\in\modfin\Lambda$ is \emph{Martínez-Mendoza $\td$-tilting}, abbreviated to \emph{MM-$\td$-tilting}\index[definitions]{MM-$\td$-tilting module}, if $\trunc{\mathbf{P}^\bullet}{\geq -d}(M)\in\silt\Lambda$.
\end{definition}

As shown in \cite[Theorem 5.7]{MARTINEZ202398}, a MM-$\td$-tilting module is sincere and has $n$ non-isomorphic indecomposable summands. Moreover, in \cite[Theorem 3.4]{MARTINEZ202398} it is shown that $\trunc{\mathbf{P}^\bullet}{\geq -d}(M)$ is presilting in $\HomotopyC{\proj{\Lambda}}{b}$ if and only if $\Hom{\Lambda}{M}{\td(M)}=0$ and $\Ext_{\Lambda}^{i}({M},{M})=0$ for $1\leq i\leq d-1$.

Now let $\cC\subseteq\modfin\Lambda$ be a $d$-cluster tilting subcategory and let $(M,P)$ be a $\td$-rigid pair. By the previous paragraph we conclude that $\trunc{\mathbf{P}^\bullet}{\geq -d}(M)$ is presilting in $\HomotopyC{\proj{\Lambda}}{b}$. On the other hand, if $P\neq 0$, then $M$ is not sincere and so $\trunc{\mathbf{P}^\bullet}{\geq -d}(M)$ cannot be a silting complex. To have a chance at obtaining a silting complex from a $\td$-rigid pair $(M,P)$, it is necessary to encode information of $P$ in the candidate complex. We follow the strategy of the $d=1$ case in \cite{adachi_-tilting_2014} and simply add the projective module to the truncated complex as a stalk complex. Let us therefore fix the notation $\mathbf{P}^\bullet_{(M,P)}\coloneqq \shift{P}{d}\oplus\trunc{\mathbf{P}^\bullet}{\geq -d}(M)\in\HomotopyC{\proj\,\Lambda}{b}$\index[symbols]{Pc@$\mathbf{P}^\bullet_{(M,P)}$} for any $\cC$-pair $(M,P)$. We are now ready to give a slight expansion of \cite[Thm. 3.4]{MARTINEZ202398}. The proof is straight-forward using \cite[Lem. 5.2]{MARTINEZ202398}, hence we omit it here.
\begin{lemma}
    \label{lemma:taud-rigid iff presilting}
    Let $(M,P)$ be a $\cC$-pair.
    Then $(M,P)$ is a $\td$-rigid pair if and only if $\mathbf{P}^\bullet_{(M,P)}$
    is a $(d+1)$-presilting complex.
\end{lemma}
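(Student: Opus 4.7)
The plan is to expand the hom spaces
\[
\mathrm{Hom}_{\HomotopyC{\proj{\Lambda}}{b}}(\mathbf{P}^\bullet_{(M,P)},\, \shift{\mathbf{P}^\bullet_{(M,P)}}{i})
\]
via the decomposition $\mathbf{P}^\bullet_{(M,P)} = \shift{P}{d} \oplus \trunc{\mathbf{P}^\bullet(M)}{\geq -d}$ into four summands, and to show that each of them vanishes for all $i>0$ precisely when one of the two defining conditions of a $\td$-rigid pair holds.

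Three of the four summands should be dispatched quickly. The self-hom of the stalk, $\mathrm{Hom}(\shift{P}{d}, \shift{P}{d+i}) \isom \Hom{\Lambda}{P}{\shift{P}{i}}$, vanishes for $i>0$ since $P$ is projective. The cross term $\mathrm{Hom}(\trunc{\mathbf{P}^\bullet(M)}{\geq -d}, \shift{P}{d+i})$ vanishes by support: for $i>0$ its target sits in degree $-(d+i)$, strictly below the support $[-d, 0]$ of the source, so there are no nonzero chain maps. Finally, by \cite[Theorem 3.4]{MARTINEZ202398} (already recalled in the excerpt), the self-hom of $\trunc{\mathbf{P}^\bullet(M)}{\geq -d}$ vanishes for all $i>0$ if and only if $\Hom{\Lambda}{M}{\td(M)}=0$ and $\Ext^{j}_{\Lambda}(M,M)=0$ for $1\leq j\leq d-1$; since $M\in\cC$, the $\Ext$ conditions are automatic from Definition \ref{def:n-ct}, so this piece vanishes exactly when $M$ is $\td$-rigid.

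The only nontrivial cross term is $\mathrm{Hom}(\shift{P}{d}, \shift{\trunc{\mathbf{P}^\bullet(M)}{\geq -d}}{i})$. Since $P$ is projective, the functor $\Hom{\Lambda}{P}{-}$ is exact and commutes with taking cohomology of complexes of modules, so a standard computation in $\HomotopyC{\proj{\Lambda}}{b}$ yields
\[
\mathrm{Hom}_{\HomotopyC{\proj{\Lambda}}{b}}(\shift{P}{d}, \shift{\trunc{\mathbf{P}^\bullet(M)}{\geq -d}}{i}) \isom \Hom{\Lambda}{P}{H^{i-d}(\trunc{\mathbf{P}^\bullet(M)}{\geq -d})}.
\]
The cohomology of $\trunc{\mathbf{P}^\bullet(M)}{\geq -d}$ agrees with that of the full resolution $\mathbf{P}^\bullet(M)$ in degrees $-d+1, \ldots, 0$, so it equals $M$ in degree $0$ and vanishes in degrees $-d+1, \ldots, -1$; for $i>d$ the hom is zero by support. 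Hence this summand equals $\Hom{\Lambda}{P}{M}$ when $i=d$ and is zero otherwise, so it vanishes for all $i>0$ precisely when $\Hom{\Lambda}{P}{M}=0$.

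Combining the four observations, $\mathbf{P}^\bullet_{(M,P)}$ is $(d+1)$-presilting if and only if both $\Hom{\Lambda}{M}{\td(M)}=0$ and $\Hom{\Lambda}{P}{M}=0$ hold, which is precisely the definition of a $\td$-rigid pair. The only slightly delicate step is the cohomology identification in the last cross term, which I expect to go through by a direct diagram chase on $\shift{P}{d}$ exploiting the minimality of the projective resolution, or equivalently via the fully faithful embedding $\HomotopyC{\proj{\Lambda}}{b}\hookrightarrow \mathrm{D}^b(\modfin{\Lambda})$.
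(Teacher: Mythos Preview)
Your proposal is correct and follows essentially the same approach as the paper. The paper omits the proof entirely, pointing to \cite[Lem.~5.2]{MARTINEZ202398} for the straightforward computation; your four-summand decomposition together with the direct identification of the cross term $\mathrm{Hom}(\shift{P}{d},\shift{\trunc{\mathbf{P}^\bullet(M)}{\geq -d}}{i})$ with $\Hom{\Lambda}{P}{H^{i-d}(\trunc{\mathbf{P}^\bullet(M)}{\geq -d})}$ is exactly what that citation encapsulates, and your use of \cite[Thm.~3.4]{MARTINEZ202398} for the self-hom of $\trunc{\mathbf{P}^\bullet(M)}{\geq -d}$ matches what the paper already recalls in the preceding paragraph.
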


As a corollary, we obtain one direction of Theorem \ref{Theorem C}.

\begin{corollary}\label{cor:silting give strongly maximal}
Let $(M,P)$ be a $\cC$-pair. If $\mathbf{P}^\bullet_{(M,P)}\in\silt\Lambda$,
then $(M,P)$ is a $\td$-rigid pair with $\abs{M}+\abs{P}=\abs{\Lambda}$.
\end{corollary}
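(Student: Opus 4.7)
The plan is to squeeze $\abs{M}+\abs{P}$ between matching upper and lower bounds. The first assertion, that $(M,P)$ is $\td$-rigid, is essentially free: a silting complex is in particular presilting, so Lemma \ref{lemma:taud-rigid iff presilting} applies and $(M,P)$ is a $\td$-rigid pair. This immediately unlocks Corollary \ref{cor:bound on summands}, giving the upper bound $\abs{M}+\abs{P}\leq \abs{\Lambda}=n$, so the whole corollary reduces to proving the reverse inequality $\abs{M}+\abs{P}\geq n$.

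For this, I would exploit Lemma \ref{lemma:silting summands}, which tells us that a silting complex in $\HomotopyC{\proj{\Lambda}}{b}$ has exactly $n$ pairwise non-isomorphic indecomposable summands. Applied to $\mathbf{P}^\bullet_{(M,P)}=\shift{P}{d}\oplus \trunc{\mathbf{P}^\bullet(M)}{\geq -d}$, this gives $\abs{\mathbf{P}^\bullet_{(M,P)}}=n$. The remaining task is to bound this quantity above by $\abs{M}+\abs{P}$, since then the chain $n=\abs{\mathbf{P}^\bullet_{(M,P)}}\leq \abs{M}+\abs{P}\leq n$ collapses to equality.

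The upper bound $\abs{\mathbf{P}^\bullet_{(M,P)}}\leq \abs{M}+\abs{P}$ is the routine, but necessary, calculation. By subadditivity of the counting function over direct sums, one has $\abs{\mathbf{P}^\bullet_{(M,P)}}\leq \abs{\shift{P}{d}}+\abs{\trunc{\mathbf{P}^\bullet(M)}{\geq -d}}$. The first term is evidently $\abs{P}$ since shifting preserves the number of indecomposable summands and $P$ is projective. For the second term, I would write $M=\bigoplus_{i=1}^{\abs{M}}M_i$ into pairwise non-isomorphic indecomposables; minimal projective resolutions split over direct sums, so $\trunc{\mathbf{P}^\bullet(M)}{\geq -d}\cong \bigoplus_i \trunc{\mathbf{P}^\bullet(M_i)}{\geq -d}$ and hence has at most $\abs{M}$ pairwise non-isomorphic indecomposable summands in $\HomotopyC{\proj{\Lambda}}{b}$.

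There is no substantial obstacle here; the corollary is essentially a bookkeeping exercise combining Lemma \ref{lemma:taud-rigid iff presilting}, Lemma \ref{lemma:silting summands} and Corollary \ref{cor:bound on summands}. The only mild subtlety is ensuring that one is consistent about what $\abs{-}$ means for complexes, namely the number of pairwise non-isomorphic indecomposable direct summands in the homotopy category, under which both subadditivity and the identities $\abs{\shift{P}{d}}=\abs{P}$ and $\abs{\trunc{\mathbf{P}^\bullet(M)}{\geq -d}}\leq \abs{M}$ are transparent.
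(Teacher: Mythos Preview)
Your argument is correct, but it takes a detour that the paper avoids. The paper's proof cites only Lemma~\ref{lemma:silting summands} and Lemma~\ref{lemma:taud-rigid iff presilting}: the latter gives $\td$-rigidity exactly as you say, and the former gives $\abs{\mathbf{P}^\bullet_{(M,P)}}=n$. The paper then implicitly uses the \emph{equality} $\abs{\mathbf{P}^\bullet_{(M,P)}}=\abs{M}+\abs{P}$, which is standard: the truncated minimal projective resolution of an indecomposable $M_i$ is a minimal complex (radical differentials) and hence indecomposable in $\HomotopyC{\proj{\Lambda}}{b}$; distinct $M_i$'s give non-isomorphic complexes since $H^0$ recovers $M_i$; and no stalk $\shift{P(j)}{d}$ with $d\geq 1$ can be isomorphic to such a complex since the latter has nonzero $H^0$. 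This yields $\abs{M}+\abs{P}=n$ in one step.

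You instead settle for the inequality $\abs{\mathbf{P}^\bullet_{(M,P)}}\leq \abs{M}+\abs{P}$ and close the sandwich with Corollary~\ref{cor:bound on summands}. This is valid, but note two things. First, Corollary~\ref{cor:bound on summands} is proved specifically for $\Lambda(n,l)$, whereas Corollary~\ref{cor:silting give strongly maximal} sits in Section~5.1 where $\Lambda$ is an arbitrary finite-dimensional algebra with a $d$-cluster tilting subcategory; so your argument narrows the scope of the statement unnecessarily (the paper explicitly says it proves the converse only ``in the case of $\Lambda=\Lambda(n,l)$'', indicating this direction is meant to be general). Second, the trade-off is that your subadditivity step is genuinely lighter than verifying indecomposability of the truncated resolutions, so if one is already committed to $\Lambda(n,l)$ your route is not unreasonable---it just leans on substantially more machinery (all of Section~\ref{Section:strongly maximal td-rigid}) to avoid a short homotopy-category computation.
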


\begin{proof}
    Follows from Lemma \ref{lemma:silting summands} and \ref{lemma:taud-rigid iff presilting}.
\end{proof}

\subsection{Thick closure of summand-maximal \texorpdfstring{$\td$}{td}-rigid pairs in \texorpdfstring{$\Lambda(n,l)$}{L(n,l)}}

Our aim is to show the converse of Corollary \ref{cor:silting give strongly maximal} in the case of $\Lambda=\Lambda(n,l)$. Hence let $\Lambda=\Lambda(n,l)$ be an algebra that satisfies the conditions of Theorem \ref{thm:VasoClassifyAcyclicCluster}. To attain our goal we see in light of Lemma \ref{lemma:taud-rigid iff presilting} that we only need to show that for a summand-maximal $\td$-rigid pair $(M,P)$ the complex $\mathbf{P}^\bullet_{(M,P)}$ generates the triangulated category $\HomotopyC{\proj{\Lambda}}{b}$, i.e. that $\thick{\mathbf{P}^\bullet_{(M,P)}}=\HomotopyC{\proj{\Lambda}}{b}$. The strategy used for this mirrors largely those of the previous sections, meaning we partition the problem into smaller problems centered around admissible configurations $(M_i,\ldots,M_{i+k})$. 

To show that $\thick{\mathbf{P}^\bullet_{(M,P)}}=\HomotopyC{\proj{\Lambda}}{b}$ we need to show that each indecomposable projective module appears in $\thick{\mathbf{P}^\bullet_{(M,P)}}$. This is clear for indecomposable projective summands of $M$ and $P$; let us call this collection of indecomposable projective modules $\mathcal{P}_1$. As a next step we may find all complexes in $\thick{\mathbf{P}^\bullet_{(M,P)}}$ that contains some of these indecomposable projective modules, together with one other indecomposable projective module. If this other indecomposable projective module appears as a stalk complex, then it is easy to see that it also belongs to $\thick{\mathbf{P}^\bullet_{(M,P)}}$. This gives us a new collection of indecomposable projective modules, say $\mathcal{P}_2$, that necessarily includes $\mathcal{P}_1$ and is included in $\thick{\mathbf{P}^\bullet_{(M,P)}}$. We may repeat this process inductively with the hope of eventually obtaining a collection $\mathcal{P}_k$ containing all of the indecomposable projective modules. 

In paragraph \ref{subsubsec: Restricting silting local} we show that the above strategy works. We begin by obtaining some useful complexes in $\thick{\mathbf{P}^\bullet_{(M,P)}}$: these are the truncated projective resolutions of modules in $M$. We collect the terms of such projective resolutions in sets $\mathcal{P}(X)$ for each indecomposable module $X$. Then we introduce the notion of reducing sets which encode the inductive procedure we describe in the previous paragraph. By this translation we see that $\thick{\mathbf{P}^\bullet_{(M,P)}}=\HomotopyC{\proj{\Lambda}}{b}$ holds if a particular type of reducing set exists for $[1,n]\setminus (\red\cup\blue)$. We go on to show that this type of reducing set can be constructed from reducing sets of $\Xi({i,i+k})\setminus (\red\cup\blue)$ for each admissible configuration $(M_i,\ldots,M_{i+k})$ of a summand-maximal $\td$-rigid pair $(M,P)$.

Then in paragraph \ref{subsubsec: local silting admissible} we are concerned with showing that for each full admissible type $(M_i,\ldots,M_{i+k})$ in Definition \ref{def:admissible configurations} we can find a reducing set of $\Xi({i,i+k})\setminus (\red\cup\blue)$. We conclude the paragraph at the end by proving Theorem \ref{Theorem C}.

\subsubsection{Restricting silting to local conditions}\label{subsubsec: Restricting silting local}

\begin{lemma}[{\cite[Corollary 4.5]{VASO20192101}}]
\label{lemma:Truncated projective presentation}
    Let \ind{a}{b} be a non-projective indecomposable module. Then
    \[
    \mathbf{P}^{-j}(\ind{a}{b})=\begin{cases}
        P(a-\frac{j-1}{2}l-1),&\text{ if }j\geq 1\text{ is odd,}\\
        P(b-\frac{j}{2}l),&\text{ if }j\geq 0\text{ is even.}
    \end{cases}
    \]
\end{lemma}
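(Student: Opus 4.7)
The plan is to prove the formula by induction on $j$, using iterative syzygy computations in $\Lambda(n,l)$. First I would establish the base cases $j=0$ and $j=1$. Since $\ind{a}{b}$ has top $S(b)$, its projective cover is $P(b)$, giving $\mathbf{P}^{0}(\ind{a}{b})=P(b)$, in agreement with the formula at $j=0$. By Lemma \ref{lem:proj inj}, $P(b)=\ind{\max(1,b-l+1)}{b}$, so the canonical surjection $P(b)\twoheadrightarrow \ind{a}{b}$ has kernel $\ind{\max(1,b-l+1)}{a-1}$. Since $\ind{a}{b}$ is non-projective, $a>\max(1,b-l+1)$, so this syzygy is non-zero with top $S(a-1)$, giving $\mathbf{P}^{-1}(\ind{a}{b})=P(a-1)$, in agreement with the formula at $j=1$.

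For the inductive step, I would iterate this syzygy computation. The key observation is that for any non-projective indecomposable $\ind{c}{d}$, the syzygy $\syzygy(\ind{c}{d})$ equals $\ind{\max(1,d-l+1)}{c-1}$, with top $S(c-1)$ and hence projective cover $P(c-1)$. Using this, a secondary induction on $s$ shows that
\[
\syzygy^{2s}(\ind{a}{b})=\ind{\max(1,a-sl)}{b-sl} \quad \text{and} \quad \syzygy^{2s+1}(\ind{a}{b})=\ind{\max(1,b-(s+1)l+1)}{a-sl-1},
\]
whenever the right-hand sides represent non-zero modules (i.e.\ whenever $b-sl\geq 1$, respectively $a-sl-1\geq 1$). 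Reading off the top index of each syzygy then gives $\mathbf{P}^{-2s}(\ind{a}{b})=P(b-sl)$ and $\mathbf{P}^{-(2s+1)}(\ind{a}{b})=P(a-sl-1)$, which match the stated formulas after substituting $j=2s$ and $j=2s+1$.

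The main delicate point is termination of the resolution: the formula implicitly uses the convention that $P(k)=0$ when $k\leq 0$. Since eventually either $b-sl$ or $a-sl-1$ drops to or below zero, the corresponding syzygy vanishes at that step and the resolution terminates. As the formula still yields $0$ under this convention, no separate case analysis is required and the induction closes uniformly.
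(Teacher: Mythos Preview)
Your argument is correct. The paper does not supply its own proof of this lemma; it is simply quoted from \cite[Corollary~4.5]{VASO20192101} without further justification. Your direct induction via iterated syzygy computation is the natural elementary route to the formula, and your handling of termination through the convention $P(k)=0$ for $k\leq 0$ (consistent with the paper's convention that $\ind{a}{b}=0$ when $b<1$) is appropriate.
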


As a tool for later, we define for an indecomposable module $X=\ind{a}{b}$ the set $\mathcal{P}(X)$\index[symbols]{Pd@$\mathcal{P}(X)$} given by the indices of the indecomposable projective modules appearing in its truncated minimal projective resolution $\trunc{\mathbf{P}^\bullet}{\geq -d}(X)$. In particular, since $\trunc{\mathbf{P}^{-t}}{\geq -d}(X)$ is indecomposable by Lemma \ref{lemma:Truncated projective presentation}, we have that
\[
\begin{split}
    \mathcal{P}(X)=&\{i\in[1,n]\ |\ P(i)=\trunc{\mathbf{P}^{-t}}{\geq -d}(X) \text{ for some } t\}\\
    =&[1,n]\cap\{x_d<\cdots<x_1<x_0\},
\end{split}
\]
where $x_j=b-\tfrac{j}{2}l$ if $j$ is even and $x_j=a-\tfrac{j-1}{2}l-1$ if $j$ is odd. Observe also that $x_{j-2}-x_{j}=l$ for $0\leq j-2<j\leq d$, and that if $i,i'\in\mathcal{P}(X)$, then there exist unique $t_i,t_{i'}$ such that $P(i)=\trunc{\mathbf{P}^{-t_i}}{\geq -d}(X)$ and $P(i')=\trunc{\mathbf{P}^{-t_{i'}}}{\geq -d}(X)$, and then $i\leq i'$ is equivalent to $t_{i}\leq t_{i'}$. 

\begin{example}
\label{Example: Truncated projective resolution}
    Consider the indecomposable $\tau_4$-rigid module $\ind{s_3}{s_3+2}=\ind{15}{17}$ of $\Lambda(23,4)$. Then, we have
    \[
    \mathcal{P}(\ind{15}{17})=\{x_4=9<x_3=10<x_2=13<x_1=14<x_0=17\}
    \]
    
    \[
    \includegraphics{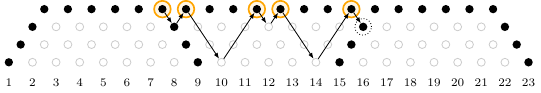}
    \]
\end{example}

We record the following important corollary of Lemma \ref{lemma:Truncated projective presentation} which arises from a couple of easy calculations.

\begin{corollary}\label{Corollary: trunc proj resolution contained in Xi}\label{Corollary:conf (III) gives connected intervals}
    Let $(M,P)$ be a $\cC$-pair with $M_i\neq 0$ for $2\leq i\leq p$. Then
    \[
    \bigcup_{X\in\mathrm{ind}M_i}\mathcal{P}(X)\subseteq \Xi_i.
    \]
    Moreover, when $m_i=l-1$, this strengthens to an equality.
\end{corollary}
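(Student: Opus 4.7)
The plan is essentially a direct computation using Lemma \ref{lemma:Truncated projective presentation}, splitting into cases based on the parity of $i$. First I would unpack what $\mathcal{P}(X)$ looks like for an indecomposable $X \in M_i$: by (\ref{eq:the definition of M_i}), every such $X$ is of the form $\ind{s_i}{s_i+x-1}$ with $x \in X_i$ when $i$ is odd, and of the form $\ind{s_i-x+1}{s_i}$ with $x \in X_i$ when $i$ is even, with $l_i \leq x \leq n_i$. Plugging these into Lemma \ref{lemma:Truncated projective presentation} gives an explicit expression for the $x_j$ in $\mathcal{P}(X) = [1,n] \cap \{x_d < \cdots < x_0\}$; for instance in the odd case, $x_j = s_i + x - 1 - \tfrac{j}{2}l$ for $j$ even and $x_j = s_i - \tfrac{j-1}{2}l - 1$ for $j$ odd.

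For the inclusion, I would then only need to check the two extremes $x_0$ and $x_d$. When $i$ is odd: $x_0 = s_i + x - 1 \leq s_i + n_i - 1$, matching the right endpoint of $\Xi_i$. The left endpoint $s_{i-1} - (l-l_i) + 1$ simplifies via (\ref{eq:difference of consecutive simples}) to $s_i - \tfrac{d}{2}l - 1 + l_i$, and one checks $x_d \geq s_i + l_i - 1 - \tfrac{d}{2}l$ in the case $d$ even, and the corresponding bound for $d$ odd (which by Theorem \ref{thm:VasoClassifyAcyclicCluster} forces $l = 2$ and $l_i = n_i = 1$). The case $i$ even is analogous using $\Xi_i = [s_{i-1}, s_i]$ and $s_{i-1} = s_i - \tfrac{d}{2}l$, and one should also verify the odd-$j$ indices $s_i - x - \tfrac{j-1}{2}l$ lie above $s_{i-1}$, which reduces to $n_i \leq l$.

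For the equality claim, if $m_i = l-1$ then $X_i = \{1,\ldots,l-1\}$, so $l_i = 1$ and $n_i = l-1$. The plan is to show that as $x$ ranges over $\{1,\ldots,l-1\}$, the even-$j$ indices (resp.\ odd-$j$ indices) sweep out a full interval of length $l-1$ at each "level" $\tfrac{j}{2}l$, while the indices of opposite parity at the adjacent levels fill in the gap of length $1$ between consecutive such intervals. Concretely in the odd-$i$ case the union $\bigcup_{x} \mathcal{P}(\ind{s_i}{s_i+x-1})$ becomes the interval from $s_i + l - 2$ down to $s_i - \tfrac{d}{2}l$ (with the obvious modification when $d$ is odd, in which case $l = 2$ forces $M_i = S(s_i)$ and the interval is already full); comparing with $\Xi_i = [s_i - \tfrac{d}{2}l, s_i + l - 2]$ gives equality. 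The even-$i$ case is handled the same way.

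There is no real obstacle here; the argument is purely bookkeeping between Lemma \ref{lemma:Truncated projective presentation}, the definition (\ref{eq:the definition of M_i}) of $M_i$, the formula (\ref{eq:the definition of Xi}) for $\Xi_i$, and the relations (\ref{eq:difference of consecutive simples}) between consecutive $s_i$'s. The only mild subtlety is keeping track of when clipping by $[1,n]$ matters and of the degenerate case $l = 2$ allowed by Theorem \ref{thm:VasoClassifyAcyclicCluster}, both of which can be dispatched by observing that the endpoint computations above are internal to $\Xi_i$ (so they remain valid after intersecting with $[1,n]$) and that $l = 2$ forces $l_i = n_i = 1$ so that the claimed equality is immediate.
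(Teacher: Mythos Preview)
Your proposal is correct and is precisely the kind of direct computation the paper has in mind: the paper itself omits the proof entirely, stating only that the corollary ``arises from a couple of easy calculations'' using Lemma~\ref{lemma:Truncated projective presentation}. Your case split on the parity of $i$, endpoint checks via (\ref{eq:difference of consecutive simples}), and the observation that for $m_i=l-1$ the even-$j$ intervals of length $l-1$ are bridged by the single odd-$j$ points are exactly those calculations carried out in full.
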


For a basic module $M\in\modfin\Lambda$ and a subset $\Gamma$ of $[1,n]$ we let $\mathcal{P}_{\Gamma}(M)$\index[symbols]{Pe@$\mathcal{P}_{\Gamma}(M)$} be the set
\[
\mathcal{P}_{\Gamma}(M)\coloneqq\{\mathcal{P}(X)\cap\Gamma\ |\ X \text{ an indecomposable summand of M}\}.
\]
This set is in the heart and center of the following arguments. In order for it to reach its full potential we need one additional notion, namely reducing sets.

\begin{definition}
\label{def:Reducing set}
    Let $\Gamma$ be a finite set with $k+1$ elements. A subset $\Psi\subseteq \mathbb{P}(\Gamma)$ of the power set of $\Gamma$ is called a \emph{reducing set of $\Gamma$}\index[definitions]{reducing set} if there exists an ordering $\alpha_0,\alpha_1,\ldots,\alpha_k$ of the elements of $\Gamma$ such that for $0\leq i\leq k$ we have $\{\alpha_i\}\in \Psi_i$, where 
    \[
    \Psi_i = \begin{cases}
        \Psi, &\mbox{if $i=0$,} \\
        \{\chi\setminus \{\alpha_{i-1}\} \mid \chi\in \Psi_{i-1}\}, &\mbox{if $1\leq i\leq k$.}
    \end{cases}
    \]
\end{definition}

\begin{example}
For any finite set $\Gamma$, the set $\{\{\beta\}\mid \beta\in \Gamma\}$ is a reducing set, which we call the \emph{trivial reducing set}.
\end{example}

We have some immediate properties of reducing sets which contrary to their simplicity will play a significant part in the following.
\begin{lemma}
\label{lemma:Decompose reducing sets}
    Let $\Gamma=\Gamma^1\sqcup\Gamma^2$ be a finite set and $\Psi=\Psi^1\cup\Psi^2$ a subset of the power set of $\Gamma$. Assume that $\Psi^1$ is a reducing set of $\Gamma^1$ and $\widetilde{\Psi}^2\coloneqq \{\xi\cap \Gamma^2\ | \ \xi\in\Psi^2\}$ is a reducing set of $\Gamma^2$, then $\Psi$ is a reducing set of $\Gamma$. 
\end{lemma}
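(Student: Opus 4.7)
The plan is to concatenate the two reducing orderings. First I would take the ordering $\alpha_0,\ldots,\alpha_{k_1-1}$ of $\Gamma^1$ witnessing that $\Psi^1$ is a reducing set of $\Gamma^1$, where $k_1=|\Gamma^1|$, and then follow it by an ordering $\alpha_{k_1},\ldots,\alpha_k$ of $\Gamma^2$ witnessing that $\widetilde{\Psi}^2$ is a reducing set of $\Gamma^2$. The claim is that this is the desired ordering for $\Psi$.

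For the first $k_1$ steps I would verify that the singleton $\{\alpha_i\}$ still appears in $\Psi_i$. This follows immediately from the inclusion $\Psi^1\subseteq\Psi$ together with the fact that the reducing operation in Definition \ref{def:Reducing set} is compatible with inclusions: at step $i$ one has $\Psi_i^1\subseteq\Psi_i$, so any witness from $\Psi^1$ producing $\{\alpha_i\}\in\Psi_i^1$ automatically produces $\{\alpha_i\}\in\Psi_i$.

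For the remaining $|\Gamma^2|$ steps I would compute what $\Psi_{k_1}$ looks like. Since by definition $\Psi^1\subseteq\mathbb{P}(\Gamma^1)$, each element of $\Psi^1$ becomes empty after removing all elements of $\Gamma^1$. For each $\xi\in\Psi^2$, one instead obtains $\xi\setminus\Gamma^1=\xi\cap\Gamma^2\in\widetilde{\Psi}^2$. Hence $\Psi_{k_1}\supseteq\widetilde{\Psi}^2$, and the same argument applied after $j$ further steps gives $\Psi_{k_1+j}\supseteq\widetilde{\Psi}^2_j$ for every $0\le j<|\Gamma^2|$. The assumption that $\widetilde{\Psi}^2$ is a reducing set of $\Gamma^2$ via the ordering $\alpha_{k_1},\ldots,\alpha_k$ then supplies $\{\alpha_{k_1+j}\}\in\widetilde{\Psi}_j^2\subseteq\Psi_{k_1+j}$, which is exactly what is required.

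No real obstacle is expected. The only point requiring a moment of care is verifying that the reductions carried out in $\Psi$ during the first $k_1$ steps interact correctly with the elements of $\Psi^2$, i.e.\ that removing $\{\alpha_0,\ldots,\alpha_{k_1+j-1}\}$ from $\xi\in\Psi^2$ yields the same set as first intersecting $\xi$ with $\Gamma^2$ and then removing $\{\alpha_{k_1},\ldots,\alpha_{k_1+j-1}\}$; this is a direct set-theoretic identity since $\{\alpha_0,\ldots,\alpha_{k_1-1}\}=\Gamma^1$ is disjoint from $\{\alpha_{k_1},\ldots,\alpha_{k_1+j-1}\}\subseteq\Gamma^2$.
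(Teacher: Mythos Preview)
Your proposal is correct and is precisely the natural argument the paper has in mind: the paper states this lemma as an ``immediate property'' and gives no proof, so there is nothing to compare against beyond the obvious concatenation of the two witnessing orderings, which you carry out cleanly.
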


\begin{lemma}
\label{lemma:Helping reducing tuples and singleton}
    Let $\Gamma$ be a finite set and $\Psi$ a subset of the power set $\mathbb{P}(\Gamma)$ such that 
    \begin{enumerate}
        \item[(a)] $\bigcup_{\chi\in\Psi}\chi=\Gamma$,
        \item[(b)] for all $\chi\in \Psi$, $\abs{\chi}\leq 2$,
        \item[(c)] there exists a unique $\alpha\in\Gamma$ such that $\chi\cap\xi=\{\alpha\}$ for all $\chi,\xi\in \Psi$, and $\{\alpha\}\in\Psi$
    \end{enumerate}
    Then $\Psi$ is a reducing set of $\Gamma$.
\end{lemma}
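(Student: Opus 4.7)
First I would extract the structure of $\Psi$ from conditions (b) and (c). Condition (c) implies that every set $\chi \in \Psi$ must contain the element $\alpha$ (otherwise its intersection with $\{\alpha\} \in \Psi$ would be empty, contradicting the requirement that it equal $\{\alpha\}$). Combined with (b), this forces every $\chi \in \Psi$ to be either $\{\alpha\}$ itself or of the form $\{\alpha, \beta\}$ for some $\beta \in \Gamma \setminus \{\alpha\}$. Moreover, two distinct sets of size 2 in $\Psi$ must have pairwise intersection exactly $\{\alpha\}$, so the ``second coordinates'' $\beta$ are all distinct. Writing $\Psi = \{\{\alpha\}\} \cup \{\{\alpha, \beta_1\}, \ldots, \{\alpha, \beta_k\}\}$ with the $\beta_j$ pairwise distinct, condition (a) then gives $\Gamma = \{\alpha, \beta_1, \ldots, \beta_k\}$.

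Next I would exhibit an ordering of $\Gamma$ satisfying Definition \ref{def:Reducing set}. Set $\alpha_0 := \alpha$ and $\alpha_j := \beta_j$ for $1 \leq j \leq k$. Then $\{\alpha_0\} = \{\alpha\} \in \Psi = \Psi_0$ by condition (c), which handles the base case.

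For the inductive step, I would show by induction on $i \geq 1$ that
\[
\Psi_i = \{\emptyset\} \cup \{\{\beta_j\} \mid i \leq j \leq k\}.
\]
Indeed, applying the definition $\Psi_i = \{\chi \setminus \{\alpha_{i-1}\} \mid \chi \in \Psi_{i-1}\}$ removes $\alpha$ (when $i=1$) or $\beta_{i-1}$ (when $i \geq 2$) from each set in $\Psi_{i-1}$; by the explicit description of $\Psi_{i-1}$, this leaves precisely the claimed collection. In particular $\{\alpha_i\} = \{\beta_i\} \in \Psi_i$ for all $1 \leq i \leq k$, which verifies the required condition and shows that $\Psi$ is a reducing set of $\Gamma$.

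There is no real obstacle here: once the structural description of $\Psi$ from (b) and (c) is in place, the ordering $(\alpha, \beta_1, \ldots, \beta_k)$ works essentially tautologically, and the induction tracking $\Psi_i$ is routine.
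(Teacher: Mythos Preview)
Your proof is correct. The paper does not actually supply a proof of this lemma; it is introduced together with Lemma~\ref{lemma:Decompose reducing sets} under the remark that these are ``immediate properties of reducing sets,'' and both statements are left without proof. Your argument---extracting from (b) and (c) that $\Psi=\{\{\alpha\}\}\cup\{\{\alpha,\beta_1\},\ldots,\{\alpha,\beta_k\}\}$ with the $\beta_j$ distinct, and then using the ordering $(\alpha,\beta_1,\ldots,\beta_k)$---is exactly the straightforward verification the paper has in mind.
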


We can now obtain the thickness condition for silting from the existence of a reducing set as per the following lemma.

\begin{lemma}\label{lemma:strongly maximal is thick iff reducing}
    Let $(M,P)$ be a summand-maximal $\td$-rigid pair and $\Gamma\coloneqq[1,n]\setminus (\red\cup\blue)$. If $\Psi\coloneqq\mathcal{P}_{\Gamma}(\bigoplus_{i=2}^p M_i)$ is a reducing set of $\Gamma$, then $\thick{\mathbf{P}^\bullet_{(M,P)}
    }=\HomotopyC{\proj(\Lambda)}{b}$.
\end{lemma}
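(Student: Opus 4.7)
The plan is to show that every indecomposable projective $\Lambda$-module $P(i)$ with $i\in [1,n]$ lies in $\thick{\mathbf{P}^\bullet_{(M,P)}}$, since these collectively generate $\HomotopyC{\proj{\Lambda}}{b}$ as a thick subcategory. Decomposing $\mathbf{P}^\bullet_{(M,P)}=\shift{P(\blue)}{d}\oplus P(\red) \oplus \bigoplus_{X}\trunc{\mathbf{P}^\bullet(X)}{\geq -d}$, where the last sum runs over the indecomposable summands $X$ of $\bigoplus_{i=2}^p M_i$, we immediately obtain that $P(i)\in\thick{\mathbf{P}^\bullet_{(M,P)}}$ for every $i\in\red\cup\blue$ (from the first two summands, using closure under shifts) and that $\trunc{\mathbf{P}^\bullet(X)}{\geq -d}\in\thick{\mathbf{P}^\bullet_{(M,P)}}$ for each such $X$.

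The key technical tool is a \emph{peeling} principle: for any bounded complex $C^\bullet$ of finitely generated projectives, the brutal truncation triangles $\trunc{C^\bullet}{<k}\to C^\bullet \to \trunc{C^\bullet}{\geq k}\to \trunc{C^\bullet}{<k}[1]$, applied iteratively together with the two-out-of-three principle, show that if $C^\bullet$ and all but one of its stalks $C^j$ (viewed as stalk complexes concentrated in degree $j$) belong to a thick subcategory, then so does the remaining stalk. Applied to $C^\bullet=\trunc{\mathbf{P}^\bullet(X)}{\geq -d}$, whose nonzero terms are exactly the projectives $P(j)$ with $j\in\mathcal{P}(X)$, this says: if $\trunc{\mathbf{P}^\bullet(X)}{\geq -d}$ is in $\thick{\mathbf{P}^\bullet_{(M,P)}}$ and $P(j)\in\thick{\mathbf{P}^\bullet_{(M,P)}}$ for every $j\in\mathcal{P}(X)$ with at most one exception, then the exceptional $P(j)$ also lies in $\thick{\mathbf{P}^\bullet_{(M,P)}}$.

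We now invoke the reducing-set hypothesis. Let $\alpha_0,\alpha_1,\ldots,\alpha_k$ be the ordering of $\Gamma$ witnessing that $\Psi$ is a reducing set, and proceed by induction on $i\in [0,k]$ with the hypothesis that $P(\alpha_j)\in\thick{\mathbf{P}^\bullet_{(M,P)}}$ for all $j<i$. Unwinding the recursive definition of $\Psi_i$, the condition $\{\alpha_i\}\in\Psi_i$ says precisely that there exists an indecomposable summand $X_i$ of $\bigoplus_{j=2}^p M_j$ with $(\mathcal{P}(X_i)\cap\Gamma)\setminus\{\alpha_0,\ldots,\alpha_{i-1}\}=\{\alpha_i\}$; equivalently, $\alpha_i\in\mathcal{P}(X_i)$ and $\mathcal{P}(X_i)\subseteq (\red\cup\blue)\cup\{\alpha_0,\ldots,\alpha_i\}$. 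The base observations cover every $P(j)$ with $j\in \mathcal{P}(X_i)\cap(\red\cup\blue)$, and the inductive hypothesis covers every $P(\alpha_j)$ with $j<i$, so all terms $P(j)$ of $\trunc{\mathbf{P}^\bullet(X_i)}{\geq -d}$ apart from $P(\alpha_i)$ are already in $\thick{\mathbf{P}^\bullet_{(M,P)}}$. Since $\trunc{\mathbf{P}^\bullet(X_i)}{\geq -d}$ itself is in $\thick{\mathbf{P}^\bullet_{(M,P)}}$, peeling forces $P(\alpha_i)\in\thick{\mathbf{P}^\bullet_{(M,P)}}$. After the induction terminates, every $P(i)$ with $i\in\Gamma$ is accounted for, which combined with the base observations yields $P(i)\in\thick{\mathbf{P}^\bullet_{(M,P)}}$ for all $i\in[1,n]$, whence $\thick{\mathbf{P}^\bullet_{(M,P)}}=\HomotopyC{\proj{\Lambda}}{b}$.

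The main bookkeeping obstacle is a clean formulation of the peeling principle that tracks degrees and signs correctly when iterating brutal truncations; this is standard and causes no real difficulty. Note also that the summand-maximality of $(M,P)$ is not used directly here---the argument only requires that $\mathbf{P}^\bullet_{(M,P)}$ be a complex of the stated form---so the lemma holds for any $\cC$-pair for which $\Psi$ turns out to be a reducing set of $\Gamma$.
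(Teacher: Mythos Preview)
Your proof is correct and follows essentially the same route as the paper: reduce to showing each $P(i)$ lies in the thick closure, handle $i\in\red\cup\blue$ directly from the summands of $\mathbf{P}^\bullet_{(M,P)}$, establish the ``peeling'' principle via brutal truncation triangles, and then run the induction along the ordering $\alpha_0,\ldots,\alpha_k$ supplied by the reducing-set hypothesis. Your closing remark that summand-maximality is not actually used in this step is also accurate and worth noting.
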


\begin{proof}
    We know that $\thick{\Lambda}=\HomotopyC{\proj(\Lambda)}{b}$, hence it is enough to show that for all $i\in[1,n]$, the indecomposable projective $P(i)$ lies in $\mathcal{T}\coloneqq\thick{\mathbf{P}^\bullet_{(M,P)}}$. Immediately, we observe that for $i\in \red\cup\blue$, we have $P(i)\in\mathcal{T}$, hence we only need to consider the projective modules $P(i)$ for $i\in \Gamma$.

    An important, albeit simple, observation we make use of is the following. Fix  $k\in\mathbb{Z}$ and assume that a complex $\mathbf{Q}^\bullet$ and the stalk complexes $\mathbf{Q}^i$ for $i\neq k$ lie in $\mathcal{T}$. Then the stalk complex $\mathbf{Q}^k$ also lies in $\mathcal{T}$. Let us justify this briefly. The triangle $\trunc{\mathbf{Q}^\bullet}{>k}\rightarrow \mathbf{Q}^\bullet\rightarrow\trunc{\mathbf{Q}^\bullet}{\leq k}\rightarrow \shift{\trunc{\mathbf{Q}^\bullet}{>k}}{1}$, shows that $\trunc{\mathbf{Q}^\bullet}{\leq k}\in \mathcal{T}$, since both $\mathbf{Q}^\bullet$ and $\shift{\trunc{\mathbf{Q}^\bullet}{>k}}{1}$ lie in $\mathcal{T}$. Now, the triangle $\shift{\mathbf{Q}^k}{-k}\rightarrow\trunc{\mathbf{Q}^\bullet}{\leq k}\rightarrow \trunc{\mathbf{Q}^\bullet}{<k}\rightarrow\shift{\shift{\mathbf{Q}^k}{-k}}{1}$ shows that $\mathbf{Q}^k\in\mathcal{T}$.
    
    The rest of the proof is done iteratively. Since $\Psi$ is a reducing set, there exists an ordering $\alpha_0,\alpha_1,\ldots,\alpha_k$ of the elements of $\Gamma$ as in Definition \ref{def:Reducing set}. Then
    \[
    \{\alpha_0\} \in \Psi = \{\mathcal{P}(X)\cap \Gamma \mid X \text{ is an indecomposable summand of $M_2\oplus\cdots\oplus M_p$}\}.
    \]
    Hence there exists an indecomposable summand $X_{\alpha_0}$ of $M_2\oplus\cdots\oplus M_p$ such that $\mathcal{P}(X_{\alpha_0})\cap \Gamma_0=\{\alpha_0\}$. By definition of $\mathcal{P}(X_{\alpha_0})$ there exists $t_0\in\mathbb{Z}$ such that $\trunc{\mathbf{P}^{t_0}}{\geq -d}(X_{\alpha_0})=P(\alpha_0)$ and $\trunc{\mathbf{P}^i}{\geq -d}(X_{\alpha_0})$ lies in $\mathcal{T}$ for $i\neq t_0$ (since there are no repeats of indecomposable summands in a projective resolution of $X_{\alpha_0}$). Since $X_{\alpha_0}$ is a summand of $M_2\oplus\cdots\oplus M_p$, the complex $\trunc{\mathbf{P}^\bullet}{\geq -d}(X_{\alpha_0})$ also lies in $\mathcal{T}$. Hence from the discussion above also $\trunc{\mathbf{P}^{t_0}}{\geq -d}(X_{\alpha_0})=P(\alpha_0)$ lies in $\mathcal{T}$.
    
    Now assume that $P(\alpha_0),P(\alpha_1),\ldots,P(\alpha_{j-1})\in \mathcal{T}$ and we show that $P(\alpha_j)\in\mathcal{T}$. As above there exists an indecomposable summand $X_{\alpha_j}$ of $M_2\oplus\cdots \oplus M_p$ such that $\mathcal{P}(X_{\alpha_j})\cap \Gamma_j=\{\alpha_j\}$. Since $\Gamma\setminus\Gamma_j=\{\alpha_0,\alpha_1,\ldots,\alpha_{j-1}\}$, we have that $\mathcal{P}(X_{\alpha_j})\cap \Gamma \subseteq\{\alpha_0,\alpha_1,\ldots,\alpha_j\}$. By induction hypothesis we have that $P(\alpha_0),P(\alpha_1),\ldots,P(\alpha_{j-1})\in\mathcal{T}$. Since $\alpha_j\in \mathcal{P}(X_{\alpha_j})\cap \Gamma$, there exists $t_j\in\mathbb{Z}$ such that $\trunc{\mathbf{P}^{t_j}}{\geq -d}(X_{\alpha_j})=P(\alpha_j)$. Moreover, as in the base case, the complexes $\trunc{\mathbf{P}^\bullet}{\geq -d}(X_{\alpha_j})$ and $\trunc{\mathbf{P}^i}{\geq -d}(X_{\alpha_0})$, for $i\neq t_j$, lie in $\mathcal{T}$ by assumption. Again we conclude that $\trunc{\mathbf{P}^{t_j}}{\geq -d}(X_{\alpha_j})=P(\alpha_j)$ lies in $\mathcal{T}$.
\end{proof}

We are now in the home stretch of this section. As sketched in the introduction of the paragraph 
the strategy is to show that for a summand-maximal $\td$-rigid pair $(M,P)$ and $\Gamma=[1,n]\setminus(\red\cup \blue)$, the set $\mathcal{P}_{\Gamma}(\oplus_{i=2}^pM_i)$ is a reducing set of $\Gamma$. To manage this, we look at the existence of a reducing set more locally. 

\begin{lemma}\label{lemma:reducing set can be computed locally}
    Let $(M,P)$ be a summand-maximal $\td$-rigid pair with diagonal partition $(T_1,\ldots,T_k)$. Then the following hold.
    \begin{enumerate}
        \item[(a)] $\mathcal{P}_{\Xi(T_j)}(M_{t_{i,1}}\oplus\cdots\oplus M_{t_{i,2}})=\{\varnothing\}$ when $i\neq j$.
        \item[(b)] Let $\Gamma=[1,n]\setminus(R\cup B)$. Let $\Gamma_{T_j}=\Xi(T_j)\setminus (R\cup B)$. Then $\mathcal{P}_{\Gamma}(\bigoplus_{i=2}^pM_i)$ is a reducing set of $\Gamma$ if and only if for every $j\in[1,k]$ the set $\mathcal{P}_{\Gamma_{T_j}}(M_{t_{j,1}}\oplus\cdots\oplus M_{t_{j,2}})$ is a reducing set of $\Gamma_{T_j}$.
    \end{enumerate}
\end{lemma}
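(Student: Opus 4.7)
For part (a) the plan is to combine Corollary \ref{Corollary: trunc proj resolution contained in Xi} with the disjointness of the intervals $\Xi(T_j)$. Since $(M,P)$ is a $\td$-rigid pair, Remark \ref{rem:we have consecutive intervals} lets us apply Lemma \ref{lem:the intervals Theta}(b), so $\Xi(T_i)\cap \Xi(T_j)=\varnothing$ for $i\neq j$. For any indecomposable summand $X$ of $M_{t_{i,1}}\oplus\cdots\oplus M_{t_{i,2}}$ we have $X\in\diag{t}$ for some $t\in T_i$, and therefore $\mathcal{P}(X)\subseteq\Xi_t\subseteq\Xi(T_i)$ by Corollary \ref{Corollary: trunc proj resolution contained in Xi}. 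Intersecting with $\Xi(T_j)$ yields $\varnothing$, so $\mathcal{P}_{\Xi(T_j)}(M_{t_{i,1}}\oplus\cdots\oplus M_{t_{i,2}})=\{\varnothing\}$.

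For part (b) I would first decompose $\Gamma$ as $\bigsqcup_{j=1}^k \Gamma_{T_j}$. By Lemma \ref{lem:the intervals Theta}(b), $[1,n]$ is the disjoint union of $\Theta_0,\Xi(T_1),\Theta_1,\ldots,\Xi(T_k),\Theta_k$, and for a summand-maximal $\td$-rigid pair the identity $(\red\cup\blue)\cap\Theta=\Theta$ established in the proof of Theorem \ref{thrm:taud tilting is well-configured} (equation (\ref{eq:well-configured is maximal on Theta})) gives $\Theta_j\subseteq \red\cup\blue$, whence $\Gamma\cap\Theta_j=\varnothing$ and $\Gamma=\bigsqcup_{j=1}^k\Gamma_{T_j}$. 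Part (a) then shows that for any indecomposable summand $X$ of $M_i$ with $i\in T_j$ we have $\mathcal{P}(X)\cap\Gamma=\mathcal{P}(X)\cap\Gamma_{T_j}$, and hence
\[
\mathcal{P}_\Gamma\Bigl(\bigoplus_{i=2}^p M_i\Bigr)=\bigcup_{j=1}^k\mathcal{P}_{\Gamma_{T_j}}(M_{t_{j,1}}\oplus\cdots\oplus M_{t_{j,2}}),
\]
where every element of the union is already contained in a single $\Gamma_{T_j}$.

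From this description, the forward direction of the equivalence follows from iterating Lemma \ref{lemma:Decompose reducing sets} on the partition $\Gamma=\Gamma_{T_1}\sqcup(\Gamma_{T_2}\sqcup\cdots\sqcup\Gamma_{T_k})$ (intersecting elements of $\mathcal{P}_{\Gamma_{T_j}}(\cdots)$ with $\Gamma_{T_j}$ changes nothing, since they already lie inside it). For the converse, given a reducing ordering $\alpha_0,\alpha_1,\ldots$ of $\Gamma$ in the sense of Definition \ref{def:Reducing set}, I would restrict it to $\Gamma_{T_j}$ while preserving the order. At each step in $\Gamma_{T_j}$, the singleton arises from some $\chi\in\mathcal{P}_\Gamma(\cdots)$ which, being contained in a unique $\Gamma_{T_{j'}}$ and meeting $\Gamma_{T_j}$, satisfies $j'=j$; removing from $\chi$ only those previously enumerated $\alpha_i$ that lie in $\Gamma_{T_j}$ produces exactly the same singleton, which is the required reduction in $\mathcal{P}_{\Gamma_{T_j}}(\cdots)$. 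The main point to keep clear is this bookkeeping of the converse direction, but the fact that elements of $\mathcal{P}_\Gamma(\cdots)$ never straddle two distinct $\Gamma_{T_{j'}}$ makes the passage automatic.
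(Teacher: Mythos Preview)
Your proof is correct and follows the same route as the paper: disjointness of the $\Xi(T_j)$ combined with Corollary \ref{Corollary: trunc proj resolution contained in Xi} for part (a), and the identity $(\red\cup\blue)\cap\Theta=\Theta$ together with Lemma \ref{lemma:Decompose reducing sets} for part (b). One small remark: the paper's own proof of (b) invokes only Lemma \ref{lemma:Decompose reducing sets}, which supplies the ``if'' direction (local reducing sets $\Rightarrow$ global reducing set); your explicit argument for the converse direction, restricting a global reducing ordering to each $\Gamma_{T_j}$ and using that no $\chi$ straddles two blocks, is a genuine addition and makes the biconditional complete.
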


\begin{proof}
    A summand-maximal $\td$-rigid pair is necessarily well-configured, hence by Definition \ref{def:well-configured}(b)(ii), we have $\Xi(T_j)\cap\Xi(T_i)=\varnothing$ when $i\neq j$. Using Corollary \ref{Corollary: trunc proj resolution contained in Xi} and the fact that $\Xi(T_i)=\bigcup_{\alpha \in T_i}\Xi_\alpha$, we therefore obtain (a). 

    Recall that for a well-configured $\td$-rigid pair $(M,P)$ with non-diagonal support $(R,B)$ we have $(R\cup B)\cap \Theta=\Theta$ and $\Theta\sqcup \Xi=[1,n]$ where
    \(
    \Xi=\bigsqcup_{j=1}^k \Xi(T_j).
    \)
    Hence, (b) follows from (a) and repeated use of Lemma \ref{lemma:Decompose reducing sets}.
\end{proof}

\subsubsection{Local silting through admissible configurations.}\label{subsubsec: local silting admissible}
Now we study each type of admissible configuration separately and show the existence of reducing sets for each such type locally.

\begin{lemma}\label{lemma:reducing set in types (I),(II),(IV),(V)}
    Let $(M,P)$ be a summand-maximal $\td$-rigid pair. Assume that $(M_i,\ldots,M_{i+k})$ is a full admissible configuration and let $\Gamma=\Xi(i,i+k)\setminus (R\cup B)$.
    \begin{enumerate}
        \item[(a)] If the configuration is of type (I), then $\mathcal{P}_\Gamma(M_i)$ is a reducing set of $\Gamma=\notblue{i}$.
        \item[(b)] If the configuration is of type (II), then $\mathcal{P}_\Gamma(M_i)$ is a reducing set of $\Gamma=\notred{i}$.
        \item[(c)] If the configuration is of type (IV) or (VI) with $\Xi$ rigid, then $\mathcal{P}_\Gamma(M_i)\cup \mathcal{P}_\Gamma(M_{i+1})$ is a reducing set of $\Gamma=\notred{i}\cup\notred{i+1}$.
        \item[(d)] If the configuration is of type (V) or (VII) with $\Xi$ support, then $\mathcal{P}_\Gamma(M_i)\cup\mathcal{P}_\Gamma(M_{i+1})$ is a reducing set of $\Gamma=\notblue{i}\cup\notblue{i+1}$. 
    \end{enumerate}
\end{lemma}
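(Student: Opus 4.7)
The plan is a case analysis parallel to the four items (a)--(d), applying Lemma \ref{lemma:Truncated projective presentation} to compute the truncated projective resolution of each indecomposable summand of $M_i$ (and $M_{i+1}$) and then assembling the resulting intersections $\mathcal{P}(X)\cap\Gamma$ into reducing sets via Lemma \ref{lemma:Helping reducing tuples and singleton} or Lemma \ref{lemma:Decompose reducing sets}.

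For parts (a) and (b), where $k=0$, I would first identify $\Gamma$ explicitly: since $(M,P)$ is summand-maximal (hence well-configured), the support condition on $\Xi_i$ forces $\Gamma=\notblue{i}$, while the rigid condition forces $\Gamma=\notred{i}$. By fullness, the indecomposable summands of $M_i$ are $\ind{s_i}{s_i+x-1}$ (if $i$ is odd) or $\ind{s_i-x+1}{s_i}$ (if $i$ is even) with $x$ in the range prescribed by $[l_i,n_i]$. For each such $X$ I would list $\mathcal{P}(X)=\{x_d<\cdots<x_0\}$ via Lemma \ref{lemma:Truncated projective presentation} and, using the identities (\ref{eq:difference of consecutive simples}) for $s_{i+1}-s_i$ and the explicit form of $\notblue{i}$ and $\notred{i}$ from (\ref{eq:the definition of notblue and notred}), verify that at most two $x_j$ lie in $\Gamma$, and that when two do, one of them is a common index shared across all summands. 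Depending on the parity of $i$ this yields either singletons (a trivial reducing set) or a collection of the form $\{\{\alpha\}\}\cup\{\{\alpha,\beta\}\}$ to which Lemma \ref{lemma:Helping reducing tuples and singleton} applies, with $\alpha=s_i$ in (a) and $\alpha=s_{i-1}$ in (b).

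For parts (c) and (d), where $k=1$, the observation is that $\Gamma$ splits as a disjoint union: in (c), since $i$ is odd, the maxima and minima of $\notred{i}$ and $\notred{i+1}$ are $s_{i-1}<s_i$, and dually in (d). Corollary \ref{Corollary: trunc proj resolution contained in Xi} localizes $\mathcal{P}(X)$ to $\Xi_i$ for summands $X$ of $M_i$ and to $\Xi_{i+1}$ for summands of $M_{i+1}$. Combined with the fullness assumption $n_i=n_{i+1}=l-1$ and the inequality $l_i+l_{i+1}\geq l+1$ from Proposition \ref{prop:description of td-rigid pairs}(b1), a short computation rules out any $x_j\in\mathcal{P}(X)$ reaching the other half of $\Gamma$. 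Thus $\mathcal{P}_\Gamma(M_i)$ and $\mathcal{P}_\Gamma(M_{i+1})$ sit inside $\mathbb{P}(\notred{i})$ and $\mathbb{P}(\notred{i+1})$ respectively, and each is a reducing set of its half by the same computation used in part (a) or (b). Lemma \ref{lemma:Decompose reducing sets} then glues these into a reducing set of $\Gamma$, and case (d) is handled symmetrically.

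The main obstacle will be the bookkeeping in parts (a) and (b): one must carefully track which $x_j$ in Lemma \ref{lemma:Truncated projective presentation} land in the target interval across the four parity/type subcases, a task which rests on manipulating fullness $m_i=n_i-l_i+1$ together with (\ref{eq:difference of consecutive simples}) and (\ref{eq:the definition of notblue and notred}). Once these explicit computations are established, parts (c) and (d) follow routinely from the locality provided by Corollary \ref{Corollary: trunc proj resolution contained in Xi} and the assembly of Lemma \ref{lemma:Decompose reducing sets}.
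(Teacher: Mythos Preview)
Your proposal is correct and follows essentially the same approach as the paper: compute $\mathcal{P}(X)\cap\Gamma$ explicitly via Lemma~\ref{lemma:Truncated projective presentation} and (\ref{eq:difference of consecutive simples}), obtain in each parity either a trivial reducing set or one to which Lemma~\ref{lemma:Helping reducing tuples and singleton} applies, and then glue the two halves in (c) and (d) via Lemma~\ref{lemma:Decompose reducing sets}. One small economy in the paper's argument for (c): it exploits the asymmetry of Lemma~\ref{lemma:Decompose reducing sets} by taking $\Gamma^1=\notred{i+1}$, so that $\Psi^1=\mathcal{P}_\Gamma(M_{i+1})\subseteq\mathbb{P}(\notred{i+1})$ follows immediately from Corollary~\ref{Corollary: trunc proj resolution contained in Xi}, and the containment $\mathcal{P}(X)\cap\notred{i+1}=\varnothing$ for $X$ a summand of $M_i$ never needs to be checked (only the restriction $\widetilde{\Psi}^2$ matters); your full-disjointness claim via $l_i+l_{i+1}\geq l+1$ is true but slightly more than needed.
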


\begin{proof}
    If $d$ is odd, then $l=2$ by Theorem \ref{thm:VasoClassifyAcyclicCluster} and so all admissible configurations are of type (III). Hence we assume that $d$ is even.
    \begin{enumerate}
        \item[(a)] This part can be proved similarly to (b), hence we only consider (b) here.
        \item[(b)] The configuration is of type (II), hence $k=0$, $\Xi_i$ is rigid and $\Gamma=\Xi_i\setminus(R\cup B)=\notred{i}$. Further $1<l_i\leq n_i=l-1$. From (\ref{eq:the definition of notblue and notred}) we get
        \[
        \notred{i}=\begin{cases}
            [s_{i-1}-(l-l_i)+1,s_{i-1}],&\text{ if }i\text{ is odd,}\\
            [s_{i-1},s_{i-1}+(l-l_i)-1],&\text{ if }i\text{ is even.}
        \end{cases}
        \]
        Let $X(h)$ be an indecomposable summand of $M_i$ with length $h\in [l_i,l-1]$. By (\ref{eq:the definition of M_i}), (\ref{eq:difference of consecutive simples}) and Lemma \ref{lemma:Truncated projective presentation} we obtain that
        \[
        \mathcal{P}(X(h))\cap\notred{i}=\begin{cases}
            \{s_{i-1}-(l-h)+1\},&\text{ if }i\text{ is odd,}\\
            \{s_{i-1}\},&\text{ if }i\text{ is even and }h=l_i,\\
            \{s_{i-1},s_{i-1}+(l-h)\},&\text{ if }i\text{ is even and }l_i+1\leq h.
        \end{cases}
        \]
        Thus, if $i$ is odd, then $\mathcal{P}_{\notred{i}}(M_i)$ is the trivial reducing set, and if $i$ is even, then $\mathcal{P}_{\notred{i}}(M_i)$ is reducing by Lemma \ref{lemma:Helping reducing tuples and singleton}.
        \item[(c)] This follows as in (b) using Lemma \ref{lemma:Decompose reducing sets} with $\Gamma^1=\notred{i+1}$, $\Gamma^2=\Gamma\setminus\Gamma^1$, $\Psi^1=\mathcal{P}_{\Gamma}(M_{i+1})$ and $\Psi^2=\mathcal{P}_\Gamma(M_i)$.
        \item[(d)] This follows as in (a) using Lemma \ref{lemma:Decompose reducing sets} with $\Gamma^1=\notblue{i}$, $\Gamma^2=\Gamma\setminus\Gamma^1$, $\Psi^1=\mathcal{P}_{\Gamma}(M_{i})$ and $\Psi^2=\mathcal{P}_\Gamma(M_{i+1})$. \qedhere
    \end{enumerate}
\end{proof}

\begin{example}
    Consider the following summand-maximal $\tau_2$-rigid pair $({\color{rigid}M},{\color{support}P})$ of $\Lambda(20,5)$. The diagonal partition is $(T_1)=([4,4])$ and $({\color{rigid}M_4})$ is a full admissible configuration of type (I).
\[
\includegraphics{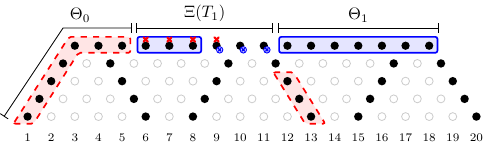}
\]
We have that $\Gamma=\Xi(T_1)\setminus(R\cup B)=\{11,12,13\}$ and
\begin{align*}
    \mathcal{P}(\ind{11}{13})&=\{8<10<13\}
    &
    \mathcal{P}(\ind{12}{13})&=\{8<11<13\}\\
    \mathcal{P}(\ind{13}{13})&=\{8<12<13\}.
\end{align*}
Hence, we have $\mathcal{P}_\Gamma(M_4)=\{\{13\},\{11,13\},\{12,13\}\}$, which is clearly a reducing set of $\Gamma$.
\end{example}

\begin{lemma}\label{lemma:reducing set in type (III)}
    Let $(M,P)$ be a summand-maximal $\td$-rigid pair and $(M_i,\ldots,M_{i+k})$ a full admissible configuration of type (III). Then $\mathcal{P}_\Gamma(M_i)$ is a reducing set of $\Gamma\coloneqq\Xi(i,i+k)\setminus (R\cup B)$
\end{lemma}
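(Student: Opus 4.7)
The plan is to exploit the rigidity of type-(III) configurations to compute $\mathcal{P}_\Gamma(M_i)$ explicitly and then invoke Lemma \ref{lemma:Helping reducing tuples and singleton} in the nontrivial case. Being full of type (III) forces $k = 0$, $l_i = 1$, $n_i = l - 1$ and $m_i = l - 1$, so $M_i = \diagm{i}$ has exactly $l - 1$ indecomposable summands $X(1), \ldots, X(l-1)$ indexed by length, with $X(h) = \ind{s_i}{s_i+h-1}$ if $i$ is odd and $X(h) = \ind{s_i-h+1}{s_i}$ if $i$ is even. Moreover $\Gamma$ is an interval of length exactly $l - 1$ in each of the allowed situations for $\Xi$: by Remark \ref{rem:admissible configurations are td-rigid}(d) one has $\Gamma = \notblue{i}$ when $\Xi$ is support and $\Gamma = \notred{i}$ when $\Xi$ is rigid, while in the support-to-rigid-at-$x$ case $\Gamma = [x+1, x+l-1]$.

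The main computational input is Lemma \ref{lemma:Truncated projective presentation}: the elements of $\mathcal{P}(X(h))$ lie in exactly two residue classes modulo $l$. The class coming from the endpoint common to all $X(h)$ is independent of $h$; I call it the \emph{fixed} class (equal to $s_i - 1 \bmod l$ if $i$ is odd and $s_i \bmod l$ if $i$ is even). The other, the \emph{variable} class, depends on $h$ and sweeps through each of the $l - 1$ residues distinct from the fixed class exactly once as $h$ ranges over $[1, l-1]$. Since $\Gamma$ has length $l - 1$ it omits precisely one residue class modulo $l$. I denote by $\alpha \in \Gamma$ the fixed-class element (when it exists) and by $\beta_h \in \Gamma$ the variable-class element for each $h$ (when it exists).

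The main technical step, which I expect to be the principal obstacle, is to show that $\alpha$ (when defined) always belongs to $\mathcal{P}(X(h))$ for every $h$, and that $\beta_h$ (when defined) belongs to $\mathcal{P}(X(h))$. Both reduce to bounding from above the integer $k$ in the relation ``element $=$ class representative $- kl$'' by $\tfrac{d-2}{2}$ or $\tfrac{d}{2}$, as dictated by the truncation at index $d$. The required bounds follow from $\alpha, \beta_h \geq \min(\Xi_i)$ combined with the simple-difference formula \eqref{eq:difference of consecutive simples}, which yields $\min(\Xi_i) = s_i - \tfrac{d}{2}l$ in both parities of $i$; the two parities can then be handled by essentially the same computation.

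Once these inclusions are established, the structure of $\mathcal{P}_\Gamma(M_i)$ splits into a dichotomy. If the fixed class coincides with the class missing from $\Gamma$, then $\alpha$ is undefined and each $\mathcal{P}(X(h)) \cap \Gamma$ equals the singleton $\{\beta_h\}$; the $\beta_h$'s bijectively cover $\Gamma$, and $\mathcal{P}_\Gamma(M_i)$ is the trivial reducing set. Otherwise $\alpha \in \Gamma$ belongs to every $\mathcal{P}(X(h))$; the variable class meets the class missing from $\Gamma$ for a unique $h^* \in [1, l-1]$ (this $h^*$ exists precisely because the fixed and missing classes are distinct), so $\mathcal{P}(X(h^*)) \cap \Gamma = \{\alpha\}$, while for $h \neq h^*$ the intersection is $\{\alpha, \beta_h\}$. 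All hypotheses of Lemma \ref{lemma:Helping reducing tuples and singleton} are then satisfied with distinguished element $\alpha$, and the conclusion follows.
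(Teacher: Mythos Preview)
Your proof is correct and follows essentially the same approach as the paper: both identify $\Gamma$ as a length-$(l-1)$ interval inside $\Xi_i$, note that each $\mathcal{P}(X(h))\cap\Gamma$ has at most two elements (one in a residue class modulo $l$ independent of $h$ and one varying with $h$), and conclude via the dichotomy trivial-reducing-set versus Lemma~\ref{lemma:Helping reducing tuples and singleton}. Your residue-class framing makes the case split and the existence of the index $h^*$ with $\mathcal{P}(X(h^*))\cap\Gamma=\{\alpha\}$ more explicit than the paper's argument, which instead invokes Corollary~\ref{Corollary:conf (III) gives connected intervals} for the covering condition.
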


\begin{proof}
    If $l=2$, then a direct calculation shows that $\Gamma = \mathcal{P}_\Gamma(M_i)=\{a\}$ for some $a\in [s_i-d,s_i]$ and the claim follows immediately.

    Assume that $l>2$. We only prove the case of $i$ being odd, the even case is essentially the same. Since the admissible configuration is of type (III), we have that $k=0$, that $\Xi(i,i+k)=\Xi_i$ and that $\Xi_i$ is either rigid, support or support to rigid at some $x\in\Xi_i\setminus\notblue{i}$. If
    \begin{itemize}
        \item $\Xi_i$ is rigid, then $\Gamma=\notred{i}=[s_{i-1}-l+2,s_{i-1}]$,
        \item $\Xi_i$ is support, then $\Gamma=\notblue{i}=[s_i,s_i+l-2]$, 
        \item $\Xi_i$ is support to rigid, then $\Gamma=[x+1,x+l-1]$ for some $x\in\Xi_i\setminus\notblue{i}$.
    \end{itemize}
    In any case, we have $\abs{\Gamma}=l-1$, and from the description after Lemma \ref{lemma:Truncated projective presentation}, we see that $\abs{\mathcal{P}(X)\cap \Gamma}\leq 2$ for all indecomposable summands $X$ of $M_i$. Observe now that $\Xi_i=[s_{i-1}-l+2,s_i+l-2]$ is equal to $\bigcup_{X\in\mathrm{ind}M_i}\mathcal{P}(X)$ by Corollary \ref{Corollary:conf (III) gives connected intervals}. Therefore $\bigcup_{\chi\in\mathcal{P}_\Gamma(M_i)}\chi=\Gamma.$      
    Next, for $h\in \{1,\ldots,l-1\}$ let $X(h)=\ind{s_i}{s_i+h-1}$ so that $M_i=\bigoplus_{h=1}^{l-1}X(h)$. Using Lemma \ref{lemma:Truncated projective presentation} we see that 
    \[
    \mathbf{P}^{-j}(X(h))=\begin{cases}
        P(s_i-\frac{j-1}{2}l-1),&\text{ if }j\geq 1\text{ is odd,}\\
        P(s_i+h-1-\frac{j}{2}l),&\text{ if }j\geq 0\text{ is even.}
    \end{cases}
    \]
    It follows that either $(\mathcal{P}(X(h))\cap\Gamma)\cap(\mathcal{P}(X(h'))\cap\Gamma)=\varnothing$ for any $h\neq h'$, in which case $\mathcal{P}_\Gamma(M_i)$ is the trivial reducing set of $\Gamma$, or there exists $\alpha=s_i-\tfrac{j-1}{2}l-1\in \Gamma$ for some $j$ such that $\{\alpha\}= (\mathcal{P}(X(h))\cap\Gamma)\cap(\mathcal{P}(X(h'))\cap\Gamma)$ for any $h\neq h'$ of $M_i$, in which case $\mathcal{P}_\Gamma(M_i)$ is a reducing set of $\Gamma$ by Lemma \ref{lemma:Helping reducing tuples and singleton}.
\end{proof}

\begin{example}
    Let $({\color{rigid}M},{\color{support}P})$ be the following summand-maximal $\tau_4$-rigid pair of $\Lambda(23,4)$ with diagonal partition $(T_1)=([3,3])$ of type (III). 
\[
\includegraphics{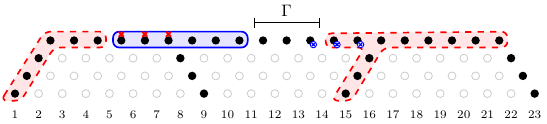}
\]
    Here we got 
    \[
    \mathcal{P}_\Gamma(M)=\{\{14,15\},\{14\},\{13,14\}\}
    \]
    which is clearly a reducing set of $\Gamma=[1,23]\setminus (R\cup B)=[13,15]$.
\end{example}

We continue with admissible configurations of type (VI) and (VII).

\begin{lemma}\label{lemma:reducing set in type (VI)}
   Let $(M,P)$ be a summand-maximal $\td$-rigid pair and let $(M_{i},\ldots, M_{i+k})$ be a full admissible configuration.
   \begin{enumerate}
        \item[(a)] If the configuration is of type (VI), then any subinterval $\Gamma$ of
        $$
        \mathcal{I}=[s_{i}-(l-m_{i+1})+1,s_i+(l-1)-1].
        $$
        with length $l-1$ has $\mathcal{P}_\Gamma(M_i\oplus M_{i+1})$ as a reducing set.
        \item[(b)] If the configuration is of type (VII), then any subinterval $\Gamma$ of
        $$
        \mathcal{I}=[s_i-(l-1)+1,s_i-m_i+(l-1)].
        $$
        with length $l-1$ has $\mathcal{P}_\Gamma(M_i\oplus M_{i+1})$ as a reducing set.
   \end{enumerate}
\end{lemma}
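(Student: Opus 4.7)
The plan is to mimic the strategy of Lemma \ref{lemma:reducing set in types (I),(II),(IV),(V)}(c), but with a sliding window $\Gamma$ inside $\mathcal{I}$ rather than a fixed interval. I would first use Lemma \ref{lemma:Truncated projective presentation} to describe the sets $\mathcal{P}(X)$ explicitly. For case (a) (type (VI), so $i$ odd), the indecomposable summands of $M_i$ are $X(h)=\ind{s_i}{s_i+h-1}$ for $h\in\{l_i,\ldots,l-1\}$, and of $M_{i+1}$ are $Y(h')=\ind{s_{i+1}-h'+1}{s_{i+1}}$ for $h'\in\{l_{i+1},\ldots,l-1\}$. By Lemma \ref{lemma:Truncated projective presentation}, $\mathcal{P}(X(h))$ is the union of two arithmetic progressions with common difference $l$, one starting at $s_i+h-1$ (from even levels of the resolution) and the other at $s_i-1$ (from odd levels); similarly $\mathcal{P}(Y(h'))$ arises from $s_{i+1}$ and $s_{i+1}-h'$, which are congruent to $s_i$ and $s_i-h'$ modulo $l$ by (\ref{eq:difference of consecutive simples}). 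Case (b) (type (VII), $i$ even) is handled analogously after swapping the roles of even and odd resolution levels.

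Since $|\Gamma|=l-1$, each arithmetic progression with common difference $l$ contributes at most one element of $\Gamma$, so every $\mathcal{P}(X)\cap\Gamma$ has at most two elements. Using the identity $l_i=m_{i+1}+1$ in type (VI), and $n_i+n_{i+1}=l-1$ in type (VII)---both consequences of the full-admissibility hypothesis $m_i+m_{i+1}=l-1$---a direct residue-class computation modulo $l$ shows that the residues represented by $\{s_i+h-1:h\in X_i\}\cup\{s_i-h':h'\in X_{i+1}\}$ together with $s_i-1$ and $s_i$ exhaust all $l$ residues. Consequently $\bigcup_{\chi\in\mathcal{P}_\Gamma(M_i\oplus M_{i+1})}\chi=\Gamma$.

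To prove the reducing property, I would split $\Gamma$ as $\Gamma^1\sqcup\Gamma^2$, where $\Gamma^1$ consists of the positions contributed by $M_i$-summands and $\Gamma^2$ of those contributed by $M_{i+1}$-summands. A case analysis on the position of $x$ identifies a unique pivot element---either $s_i-1$ (the common residue of every $\mathcal{P}(X(h))$) or $s_i$ (the common residue of every $\mathcal{P}(Y(h'))$)---which plays the role of $\alpha$ in Lemma \ref{lemma:Helping reducing tuples and singleton}. That lemma then produces a reducing set for each piece, and Lemma \ref{lemma:Decompose reducing sets} assembles them into a reducing set of $\Gamma$. The main obstacle is the case analysis near the endpoints $x=s_i-(l-m_{i+1})$ and $x=s_i-1$ in (a) (and correspondingly $x=s_i-(l-1)$ and $x=s_i-m_i$ in (b)), since at these extremes one of the two potential pivots falls outside $\Gamma$, the decomposition degenerates, and one must argue directly with Lemma \ref{lemma:Helping reducing tuples and singleton} applied to a single piece, in the spirit of the proof of Lemma \ref{lemma:reducing set in types (I),(II),(IV),(V)}(c).
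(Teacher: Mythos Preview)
Your approach is correct and close in spirit to the paper's, but packaged differently. The paper, after the same explicit computation of $\mathcal{P}(X)\cap\mathcal{I}$ and $\mathcal{P}(Y)\cap\mathcal{I}$, does \emph{not} appeal to Lemmas~\ref{lemma:Decompose reducing sets} and~\ref{lemma:Helping reducing tuples and singleton}; instead it parametrizes $\Gamma=[s_i-m_i+\gamma,\,s_i-m_i+\gamma+l-2]$ for $\gamma\in[0,m_i]$, distinguishes the three cases $\gamma=m_i$, $\gamma=0$, and $1\le\gamma\le m_i-1$, and in each case writes down an explicit ordering of $\Gamma$ witnessing the reducing-set property (e.g.\ for $1\le\gamma\le m_i-1$ the ordering begins $s_i-1,\,s_i+l_i-1,\,s_i,\,\ldots$). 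Your modular route via the two auxiliary lemmas is a legitimate alternative and arguably cleaner, but it rests on the same computations.

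One point to tighten: the sets ``positions contributed by $M_i$'' and ``positions contributed by $M_{i+1}$'' are \emph{not} disjoint in general---they overlap precisely at $s_i+l_i-1=s_i+l-l_{i+1}$ (using $l_i+l_{i+1}=l+1$). So you cannot literally take $\Gamma^1\sqcup\Gamma^2$ as stated; rather set $\Gamma^1=\bigcup_X(\mathcal{P}(X)\cap\Gamma)$ and $\Gamma^2=\Gamma\setminus\Gamma^1$, and then check that $\widetilde{\Psi}^2=\{\mathcal{P}(Y)\cap\Gamma^2\}$ still satisfies Lemma~\ref{lemma:Helping reducing tuples and singleton} with pivot $s_i$ (it does, since the overlap element is removed and $\mathcal{P}(Y(l_{i+1}))\cap\Gamma^2=\{s_i\}$). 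A second minor point: your residue-class argument shows that every residue modulo $l$ is represented among the progressions, but since $\mathcal{P}(X)$ is a \emph{truncated} progression you still need to verify that the representative in $\Gamma$ actually lies among $x_0,\ldots,x_d$; the paper handles this by computing $\mathcal{P}(X)\cap\mathcal{I}$ directly rather than arguing via residues.
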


\begin{proof}
    We only prove (a) as the proof for (b) is similar.

    The interval $\mathcal{I}$ can be rewritten as $[s_i-m_i,s_i+(l-1)-1]$, using $m_i+m_{i+1}=l-1$. Now, an arbitrary subinterval $\Gamma$ of $\mathcal{I}$ with length $l-1$ takes the form $\Gamma=[s_i-m_i+\gamma,s_i-m_i+\gamma+(l-1)-1]$ for some $\gamma\in[0,m_i]$. Let $X=\ind{s_i}{s_i+x-1}$ for $x\in[l_i,l-1]$ and $Y=\ind{s_{i+1}-y+1}{s_{i+1}}$ for $y\in[l_{i+1},l-1]$ be indecomposable summands of $M_i$ and $M_{i+1}$ respectively. With some calculations using (\ref{eq:difference of consecutive simples}) and Lemma \ref{lemma:Truncated projective presentation}, we see that
\[
\mathcal{P}(X)\cap \mathcal{I}=\begin{cases}
    \{s_i-1,\ s_i+l_i-1\},&\text{ if }x=l_i,\\
    \{s_i-(l-x)-1,\ s_i-1,\ s_i+x-1\},&\text{ else,}
\end{cases}
\]
and
\[
\mathcal{P}(Y)\cap \mathcal{I}=\{s_i,\ s_{i}+l-y\}.
\]
It is easily verified that the union of all sets in $\mathcal{P}_\mathcal{I}(M_i\oplus M_{i+1})$ is in fact $\mathcal{I}$, and hence, for any $\gamma\in[0,m_i]$ also the union of all sets in $\mathcal{P}_\Gamma(M_i\oplus M_{i+1})$ equals $\Gamma$.

Let us now first consider the case $\gamma=m_i$. Then $\mathcal{P}(X)\cap\Gamma=\{s_i+x-1\}$ and $\mathcal{P}(Y)\cap\Gamma=\{s_i,s_i+l-y\}$. We also have that $s_i+l_i-1=s_i+l-l_{i+1}$, since $l_i+l_{i+1}=l+1$. Hence, we see that $\mathcal{P}_\Gamma(M_i\oplus M_{i+1})$ is a reducing set of $\Gamma$, by ordering the elements of $\Gamma$ first by the elements appearing in the sets of $\mathcal{P}_\Gamma(M_i)$, then the element $s_i$, and then any ordering of the remaining elements in $\Gamma$, e.g.
\[
(s_i+(l-1)-1,\ s_i+(l-2)-1,\ \ldots,\ s_i+l_i-1=s_i+l-l_{i+1}, \ s_i,\ s_i+1,\ \ldots, \ s_i+l-l_{i+1}-1).
\]
We are now left with considering $0\leq\gamma\leq m_i-1$, which gives
\[
\mathcal{P}(X)\cap \Gamma=\begin{cases}
    \{s_i-(l-x)-1,\ s_i-1\},&\text{ if }x>\gamma+l_i,\\
    \{s_i-1\},&\text{ if }x=\gamma+l_i,\\
    \{s_i-1,\ s_i+x-1\},&\text{ if }x<\gamma+l_i.
\end{cases}
\]
Assume that $\gamma=0$. Then for $y=l_{i+1}$ we have $\mathcal{P}(Y)\cap\Gamma=\{s_i\}$. Thus we see that $\mathcal{P}_\Gamma(M_i\oplus M_{i+1})$ is a reducing set of $\Gamma$ by any ordering starting as $(s_i,\ s_i-1,\ \ldots)$, since after removing these two elements all the sets in $\mathcal{P}_\Gamma(M_i\oplus M_{i+1})$ contain either one or zero elements.

Assume now that $1\leq \gamma\leq m_i-1$. Then for $x=l_i$ we have $\mathcal{P}(X)\cap \Gamma=\{s_i-1,s_i+l_i-1\}$. Further, we have seen that $s_i+l_i-1=s_i+l-l_{i+1}$. Hence $\mathcal{P}_\Gamma(M_i\oplus M_{i+1})$ is a reducing set of $\Gamma$ by any ordering starting as $(s_i-1,\ s_i+l_i-1,\ s_i,\ \ldots)$, since after removing these elements every non-empty set in $\mathcal{P}_\Gamma(M_i\oplus M_{i+1})$ has only one element. 
\end{proof}

\begin{example}
    Consider the following summand-maximal $\tau_4$-rigid pair $({\color{rigid}M},{\color{support}P})$ of $\Lambda(23,4)$ with a full admissible configuration $({\color{rigid}M_2},{\color{rigid}M_3})$ of type (VII) with $\Xi$ support to rigid. Then $\mathcal{I}=[7,11]$.
    \[
    \includegraphics{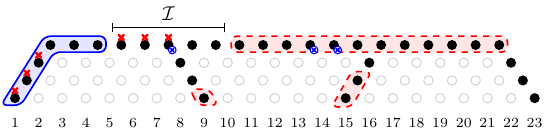}
    \]
    We have
    \begin{align*}
        \mathcal{P}(\ind{9}{9})&=\{1<4<5<8<9\}
        &
        \mathcal{P}(\ind{15}{15})&=\{7<10<11<14<15\}\\
        \mathcal{P}(\ind{15}{16})&=\{8<10<12<14<16\}.
    \end{align*}
    In $\mathcal{I}$ we find three subintervals of length $3$: $\Gamma_1=[7,9]$, $\Gamma_2=[8,10]$ and $\Gamma_3=[9,11]$, which can be seen to have respective reducing sets:
    \begin{align*}
        \mathcal{P}_{\Gamma_1}(M_2\oplus M_3)&=\{\{8,9\},\{7\},\{8\}\}
        &
        \mathcal{P}_{\Gamma_2}(M_2\oplus M_3)&=\{\{8,9\},\{10\},\{8,10\}\}\\
        \mathcal{P}_{\Gamma_3}(M_2\oplus M_3)&=\{\{9\},\{10,11\},\{10\}\}.
    \end{align*}
\end{example}

We are left with showing that also for the last configuration we obtain a reducing set of $\Xi(i,i+2)\setminus (R\cup B)$. The idea of the proof is quite similar to that of Lemma \ref{lemma:reducing set in type (VI)}, however the slight complication of working with three diagonals at once leads to a bit more juggling of indexes. It might be beneficial to revisit Definition \ref{def:admissible configurations}, and Example \ref{ex:Admissible type VIII} before moving on.

\begin{lemma}\label{lemma:reducing set in type (VIII)}
    Let $(M,P)$ be a summand-maximal $\td$-rigid pair and let $(M_i,M_{i+1},M_{i+2})$ be a full admissible configuration of type (VIII). Any subinterval $\Gamma$ of
    \[
    \mathcal{I}=[s_{i+1}-l_{i+2}+1,s_i-n_i+(l-1)]
    \]
    with length $l-1$ has $\mathcal{P}_\Gamma(M_i\oplus M_{i+1}\oplus M_{i+2})$ as reducing set.
\end{lemma}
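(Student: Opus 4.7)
My plan is to mirror the strategy of Lemma~\ref{lemma:reducing set in type (VI)}, using the fact that for type~(VIII) we have $d=2$, so by Lemma~\ref{lemma:Truncated projective presentation} all minimal projective resolutions in question have length at most two. Since $i$ is even and $d=2$, equation~(\ref{eq:difference of consecutive simples}) gives $s_{i+1}=s_i+2$ and $s_{i+2}=s_i+l+2$. Writing the indecomposable summands of $M_i\oplus M_{i+1}\oplus M_{i+2}$ as $X(x)=\ind{s_i-x+1}{s_i}$ for $x\in[1,n_i]$, $Y(y)=\ind{s_{i+1}}{s_{i+1}+y-1}$ for $y\in[l-l_{i+2}+1,l-n_i-1]$, and $Z(z)=\ind{s_{i+2}-z+1}{s_{i+2}}$ for $z\in[l_{i+2},l-1]$ (the index ranges are forced by the type~(VIII) admissibility identities $l_i=1$, $l_{i+1}=l-l_{i+2}+1$, $n_{i+1}=l-n_i-1$, $n_{i+2}=l-1$), I would first record the formulas
\[
\mathcal{P}(X(x))=\{s_i-l,\,s_i-x,\,s_i\},\quad \mathcal{P}(Y(y))=\{s_i+1+y-l,\,s_i+1,\,s_i+1+y\},\quad \mathcal{P}(Z(z))=\{s_i+2,\,s_i+l+2-z,\,s_i+l+2\}.
\]

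Any length-$(l-1)$ subinterval of $\mathcal{I}$ can be written as $\Gamma=[s_i-c+1,s_i-c+l-1]$ for some $c\in[n_i,l_{i+2}-2]$. A direct case analysis of the intersections with $\Gamma$ above shows the critical observation $\mathcal{P}(Y(l-c-1))\cap\Gamma=\{s_i+1\}$, providing a seed singleton $\{s_i+1\}\in\Psi\coloneqq\mathcal{P}_\Gamma(M_i\oplus M_{i+1}\oplus M_{i+2})$, so I would pick $\alpha_0=s_i+1$ as the first element to remove. Deleting $s_i+1$ from every set yields a new collection $\Psi_1$ which decomposes as a disjoint union $\Psi^1\cup\Psi^2$, where $\Psi^1\subseteq\mathbb{P}(\Gamma^1)$ collects the sets coming from $M_i$ together with the $Y$-singletons $\{s_i+1+y-l\}$ for $y\geq l-c$, all supported on $\Gamma^1=[s_i-c+1,s_i]$, while $\Psi^2\subseteq\mathbb{P}(\Gamma^2)$ collects the sets coming from $M_{i+2}$ together with the $Y$-singletons $\{s_i+1+y\}$ for $y\leq l-c-2$, supported on $\Gamma^2=[s_i+2,s_i-c+l-1]$. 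By Lemma~\ref{lemma:Decompose reducing sets} it is then enough to show that each $\Psi^j$ is a reducing set of $\Gamma^j$.

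To verify this for $\Psi^1$, I would order $\Gamma^1$ by first selecting the $Y$-left singletons in the order $s_i-c+1,\ldots,s_i-n_i$; the last of these coincides with the second coordinate of $\{s_i,s_i-n_i\}=\mathcal{P}(X(n_i))\cap\Gamma$, so after this phase $\{s_i\}$ becomes a singleton (and when $c=n_i$, $\{s_i\}$ already appears in $\Psi^1$ directly from $X(n_i)$). Picking $s_i$ next collapses every remaining $X$-set into a singleton, so the remaining elements $s_i-n_i+1,\ldots,s_i-1$ can then be removed in any order. An entirely analogous ordering works for $\Psi^2$ with the roles of $s_i$, $M_i$ replaced by $s_i+2$, $M_{i+2}$ and with $Z(l_{i+2})$ playing the role of $X(n_i)$. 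Combining the two local orderings with $\alpha_0=s_i+1$ prepended yields the required reducing ordering of $\Gamma$.

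The main technical obstacle I anticipate is the bookkeeping around the two boundary cases $c=n_i$ and $c=l_{i+2}-2$, where the corresponding $Y$-left or $Y$-right subcollection becomes empty and the central singleton $\{s_i\}$ respectively $\{s_i+2\}$ must instead appear directly in $\Psi^j$; in addition, verifying the disjoint decomposition $\Psi_1=\Psi^1\cup\Psi^2$ requires checking that the only "bridging" indices $s_i-n_i$ and $s_i+l+2-l_{i+2}$, which can be shared between a $Y$-singleton and an $X$- or $Z$-set, lie firmly inside the respective halves $\Gamma^1$ and $\Gamma^2$. This last observation is precisely the structural feature that makes the local reductions work and allows the application of Lemma~\ref{lemma:Decompose reducing sets} to conclude.
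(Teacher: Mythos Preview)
Your proof is correct. The overall strategy coincides with the paper's: compute $\mathcal{P}(X),\mathcal{P}(Y),\mathcal{P}(Z)$ explicitly using $d=2$ and Lemma~\ref{lemma:Truncated projective presentation}, identify $s_{i+1}-1=s_i+1$ as the common element of all $Y$-sets, and apply Lemma~\ref{lemma:Decompose reducing sets}. The difference is organizational. The paper first splits into the cases $m_{i+1}=1$ and $m_{i+1}>1$; in the latter it shows $\mathcal{P}_\Gamma(M_{i+1})$ is a reducing set of all of $\Gamma_{i+1}$ (via Lemma~\ref{lemma:Helping reducing tuples and singleton}) and then applies Lemma~\ref{lemma:Decompose reducing sets} with $\Gamma^1=\Gamma_{i+1}$. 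You instead pivot on the single element $\alpha_0=s_i+1$ coming from $Y(l-c-1)$ and then split the remainder into a left half $\Gamma^1=[s_i-c+1,s_i]$ (handled by the $X$-sets together with the $Y$-left singletons) and a symmetric right half $\Gamma^2$ (handled by $Z$-sets and $Y$-right singletons). Your decomposition is arguably cleaner: it treats the boundary cases $c=n_i$ and $c=l_{i+2}-2$ uniformly and absorbs the paper's separate case $m_{i+1}=1$ (which forces $c=n_i=l_{i+2}-2$, so both boundary cases occur simultaneously and both $Y$-left and $Y$-right subcollections are empty, exactly as your parenthetical remark anticipates). The price is that verifying the left/right halves each requires a short two-phase ordering argument rather than a direct appeal to Lemma~\ref{lemma:Helping reducing tuples and singleton}.
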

\begin{proof} We begin by noting that by assumption $d=2$, and that by Remark \ref{Remark: no three consecutive for l=3} also $l>3$. Now, we find it beneficial for the proof to rewrite $\mathcal{I}$ as 
$$[s_{i+1}-(l-l_{i+1}),s_{i+1}+n_{i+1}+2].$$
Let $\Gamma\subseteq\mathcal{I}$ be a subinterval of length $l-1$. For $0\leq k\leq 2$, we denote by $\Gamma_{i+k}$ the subset of $\Gamma$ consisting of integers in $\Gamma$ appearing in $\mathcal{P}(N)$ for indecomposable summands $N$ of $M_{i+k}$. Further, letting $X$, $Y$ and $Z$ be indecomposable summands of $M_i$, $M_{i+1}$ and $M_{i+2}$ respectively, we get by (\ref{eq:difference of consecutive simples}) that
\begin{align*}
    X&=\ind{s_{i+1}-x-1}{s_{i+1}-2},
    &
    Y&=\ind{s_{i+1}}{s_{i+1}+y-1},\\
    Z&=\ind{s_{i+1}+(l-z)+1}{s_{i+1}+l},
\end{align*}
where $1\leq x\leq n_i$, $l_{i+1}\leq y\leq n_{i+1}$ and $l_{i+2}\leq z\leq l-1$. 

With the help of Lemma \ref{lemma:Truncated projective presentation} we calculate the following sets:
\[
\mathcal{P}(X)\cap\mathcal{I}=\begin{cases}
    \{s_{i+1}-2\},&\text{ if }m_{i+1}=1\text{ and }x=l-n_{i+1}-1=n_i, \\
    \{s_{i+1}-x-2\ <\ s_{i+1}-2\},&\text{ else,}
\end{cases}
\]
\[
\mathcal{P}(Z)\cap\mathcal{I}=\begin{cases}
    \{s_{i+1}\},&\text{ if }m_{i+1}=1\text{ and }z=l-l_{i+1}+1=l_{i+2},\\
    \{s_{i+1}\ <\ s_{i+1}+l-z\},&\text{ else,}
\end{cases}
\]
and 
\[
\mathcal{P}(Y)\cap\mathcal{I}=\begin{cases}
    \{s_{i+1}-1\},&\text{ if }m_{i+1}=1,\\
    \{s_{i+1}-1\ <\ s_{i+1}+l_{i+1}-1\},&\text{ if }m_{i+1}>1\text{ and }y=l_{i+1},\\
    \{s_{i+1}+n_{i+1}-l-1\ <\ s_{i+1}-1\},&\text{ if }m_{i+1}>1\text{ and }y=n_{i+1},\\
    \{s_{i+1}+y-l-1\ <\ s_{i+1}-1\ <\ s_{i+1}+y-1\},&\text{ if }m_{i+1}>1\text{ and }l_{i+1}<y<n_{i+1}.
\end{cases}
\]
From this it is easily verified that $\Gamma=\Gamma_{i}\cup\Gamma_{i+1}\cup \Gamma_{i+2}$. We now have two different cases to consider, either $m_{i+1}=1$ or $m_{i+1}>1$. The first of these is significantly easier than the other.

\textbf{Case $m_{i+1}=1$.} In particular $\Gamma=\mathcal{I}$. Further, we see that $\{\Gamma_{i},\Gamma_{i+1},\Gamma_{i+2}\}$ gives a partition of $\Gamma$, and by Lemma \ref{lemma:Helping reducing tuples and singleton} we see that $\mathcal{P}_{\Gamma_{i+k}}(M_{i+k})$ is a reducing set of $\Gamma_{i+k}$ for $0\leq k\leq 2$. Hence, $\mathcal{P}_\Gamma(M_i\oplus M_{i+1}\oplus M_{i+2})$ is a reducing set of $\Gamma$.

\textbf{Case $m_{i+1}>1$.} For some $\gamma\in[0,m_i-1]$, we have 
\(
\Gamma=[s_{i+1}-(l-l_{i+1})+\gamma\ ,\ s_{i+1}+l_{i+1}+\gamma-2],
\)
and 
\[
\mathcal{P}(Y)\cap \Gamma=\begin{cases}
    \{s_{i+1}-1\ <\ s_{i+1}+y-1\},&\text{ if }y<l_{i+1}+\gamma,\\
    \{s_{i+1}-1\},&\text{ if }y=l_{i+1}+\gamma,\\
    \{s_{i+1}+y-l-1\ <\ s_{i+1}-1\},&\text{ if }y>l_{i+1}+\gamma.
\end{cases}
\]
It follows by Lemma \ref{lemma:Helping reducing tuples and singleton} that $\mathcal{P}_\Gamma(M_{i+1})$ is a reducing set of $\Gamma_{i+1}$. We can now observe that 
\[
\mathcal{P}(X)\cap\mathcal{P}(Y)=\begin{cases}
    \{s_{i+1}+n_{i+1}-l-1\},&\text{ if }y=n_{i+1}\text{ and }x=n_i,\\
    \varnothing,&\text{ else.}
\end{cases}
\]
Either $s_{i+1}+n_{i+1}-l-1$ is in $\Gamma_i$, or $\Gamma_{i}$ has $\mathcal{P}_\Gamma(M_{i})$ as reducing set by Lemma \ref{lemma:Helping reducing tuples and singleton}. Similarly, we observe that
\[
\mathcal{P}(Y)\cap \mathcal{P}(Z)=\begin{cases}
    \{s_{i+1}+l_{i+1}-1\},&\text{ if }y=l_{i+1}\text{ and }z=l_{i+2},\\
    \varnothing,&\text{ else,}
\end{cases}
\]
and thus either $s_{i+1}+n_{i+1}-l-1$ is in $\Gamma_{i+2}$ or $\Gamma_{i+2}$ has $\mathcal{P}_\Gamma(M_{i+2})$ as reducing set. 

Hence, letting $\Gamma^1=\Gamma_{i+1}$, $\Gamma^2=\Gamma\setminus \Gamma^1$, $\Psi^1=\mathcal{P}_\Gamma(M_{i+1})$ and $\Psi^2=\mathcal{P}_\Gamma(M_i\oplus M_{i+2})$, it follows by Lemma \ref{lemma:Decompose reducing sets} that $\mathcal{P}_\Gamma(M_i\oplus M_{i+1}\oplus M_{i+2})$ is a reducing set of $\Gamma$.
\end{proof}

\begin{example}
We consider a summand-maximal $\tau_2$-rigid pair $({\color{rigid}M},{\color{support}P})$ of $\Lambda(23,6)$ with a full admissible configuration $({\color{rigid}M_4},{\color{rigid}M_5},{\color{rigid}M_6})$ of type (VIII). Then $\mathcal{I}=[13,19]$
    \[
\includegraphics{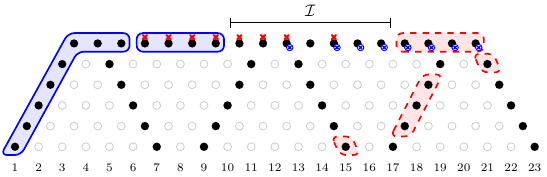}
\]
We calculate the truncated projective resolutions:
\begin{align*}
    \mathcal{P}(\ind{15}{15})&=\{9<14<15\}
    &
    \mathcal{P}(\ind{17}{18})&=\{12<16<18\}\\
    \mathcal{P}(\ind{17}{19})&=\{13<16<19\}
    &
    \mathcal{P}(\ind{17}{20})&=\{14<16<20\}\\
    \mathcal{P}(\ind{19}{23})&=\{17<18<23\}
\end{align*}
There are three possible subintervals of $\mathcal{I}$ of length $5$: $\Gamma_1=[13,17]$, $\Gamma_2=[14,18]$ and $\Gamma_3=[15,19]$. These have reduction sets given respectively by:
\[
\begin{split}
    \mathcal{P}_{\Gamma_1}(M_4\oplus M_5\oplus M_6)&=\{\{14,\ 15\},\ \{16\},\ \{13,\ 16\},\ \{14,\ 16\},\ \{17\}\}\\
    \mathcal{P}_{\Gamma_2}(M_4\oplus M_5\oplus M_6)&=\{\{14,\ 15\},\ \{16,\ 18\},\ \{16\},\ \{14,\ 16\},\ \{17,\ 18\}\}\\
    \mathcal{P}_{\Gamma_3}(M_4\oplus M_5\oplus M_6)&=\{\{15\},\ \{16,\ 18\},\ \{16,\ 19\},\ \{16\},\ \{17,\ 18\}\}\\
\end{split}
\]

\end{example}

Having done all the grunt work we are now ready to sew it all together into our main theorem of the section: Theorem \ref{Theorem C}. 

\begin{theorem}\label{Theorem:strongly maximal and silting}
    Let $\Lambda=\Lambda(m,l)$ be as in Theorem \ref{thm:VasoClassifyAcyclicCluster}, and let $(M,P)$ be a $\cC$-pair. Then $(M,P)$ is a summand-maximal $\td$-rigid pair if and only if $\mathbf{P}^\bullet_{(M,P)}$ is a $(d+1)$-silting complex in $\HomotopyC{\proj{\Lambda}}{b}$.
\end{theorem}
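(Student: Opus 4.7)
The plan is to treat the two implications separately, with the forward direction doing all the real work. For the easy direction, assume $\mathbf{P}^\bullet_{(M,P)}$ is silting. Then Corollary \ref{cor:silting give strongly maximal} immediately gives that $(M,P)$ is a $\td$-rigid pair with $\abs{M}+\abs{P}=\abs{\Lambda}$, and Theorem \ref{thrm:taud tilting is well-configured} upgrades this to summand-maximality.

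For the forward direction, assume $(M,P)$ is summand-maximal $\td$-rigid. By Lemma \ref{lemma:taud-rigid iff presilting}, $\mathbf{P}^\bullet_{(M,P)}$ is already $(d+1)$-presilting, so the only content is to verify that $\thick{\mathbf{P}^\bullet_{(M,P)}}=\HomotopyC{\proj{\Lambda}}{b}$. By Lemma \ref{lemma:strongly maximal is thick iff reducing}, it suffices to exhibit $\Psi\coloneqq \mathcal{P}_\Gamma(\bigoplus_{i=2}^{p}M_i)$ as a reducing set of $\Gamma\coloneqq [1,n]\setminus(\red\cup\blue)$. Using that $(M,P)$ is well-configured (Theorem \ref{thrm:taud tilting is well-configured}) with diagonal partition $(T_1,\ldots,T_k)$, Lemma \ref{lemma:reducing set can be computed locally} reduces the problem to checking, for each $j\in[1,k]$, that $\mathcal{P}_{\Gamma_{T_j}}(M_{t_{j,1}}\oplus\cdots\oplus M_{t_{j,2}})$ is a reducing set of $\Gamma_{T_j}\coloneqq \Xi(T_j)\setminus(\red\cup\blue)$. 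Since each tuple $(M_{t_{j,1}},\ldots,M_{t_{j,2}})$ is a full admissible configuration (Definition \ref{def:well-configured}(b)(i)), we may split the argument along the eight types of Definition \ref{def:admissible configurations}.

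The remaining step is a type-by-type invocation of the reducing-set lemmas established in Section \ref{subsubsec: local silting admissible}. Concretely: types (I) and (V) or (VII) with $\Xi$ support (so $\Gamma_{T_j}$ is a union of intervals of the form $\notblue{i}$) are handled by Lemma \ref{lemma:reducing set in types (I),(II),(IV),(V)}(a) and (d); types (II) and (IV) or (VI) with $\Xi$ rigid (so $\Gamma_{T_j}$ is a union of intervals of the form $\notred{i}$) are handled by Lemma \ref{lemma:reducing set in types (I),(II),(IV),(V)}(b) and (c); type (III) is handled by Lemma \ref{lemma:reducing set in type (III)}; and types (VI), (VII) with $\Xi$ support-to-rigid, and type (VIII) are handled by Lemmas \ref{lemma:reducing set in type (VI)} and \ref{lemma:reducing set in type (VIII)}. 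The only genuine obstacle here is bookkeeping: in the latter three cases $\Gamma_{T_j}$ is the interval of length $l-1$ coming from the support-to-rigid description, and one must check that the point at which $\Xi(T_j)$ switches from support to rigid, as constrained by Definition \ref{def:admissible configurations}(VI)--(VIII), indeed forces $\Gamma_{T_j}$ to sit inside the interval $\mathcal{I}$ appearing in the hypotheses of Lemma \ref{lemma:reducing set in type (VI)} or \ref{lemma:reducing set in type (VIII)}. This is a direct inspection. Assembling the local reducing sets via Lemma \ref{lemma:reducing set can be computed locally} produces $\Psi$ and, through Lemma \ref{lemma:strongly maximal is thick iff reducing}, finishes the forward implication.
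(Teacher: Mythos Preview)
Your proposal is correct and follows essentially the same route as the paper's own proof: both directions invoke exactly the same chain of results (Corollary~\ref{cor:silting give strongly maximal} and Theorem~\ref{thrm:taud tilting is well-configured} for the backward implication; Lemma~\ref{lemma:taud-rigid iff presilting}, Lemma~\ref{lemma:strongly maximal is thick iff reducing}, Lemma~\ref{lemma:reducing set can be computed locally}, and the type-by-type Lemmas~\ref{lemma:reducing set in types (I),(II),(IV),(V)}--\ref{lemma:reducing set in type (VIII)} for the forward one). Your version is slightly more explicit about which lemma covers which admissible type and about the bookkeeping that places $\Gamma_{T_j}$ inside the interval $\mathcal{I}$ in the support-to-rigid cases, but the argument is the same.
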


\begin{proof}
    If $\mathbf{P}^\bullet_{(M,P)}$ is a $(d+1)$-silting complex in $\HomotopyC{\proj{\Lambda}}{b}$ then $(M,P)$ is a summand-maximal $\td$-rigid pair by Corollary \ref{cor:silting give strongly maximal} and Theorem \ref{thrm:taud tilting is well-configured}. For the opposite direction, assume that $(M,P)$ is a summand-maximal $\td$-rigid pair. By Lemma \ref{lemma:taud-rigid iff presilting} we have that $\shift{P}{d}\oplus\trunc{\mathbf{P}^\bullet}{\geq -d}(M)$ is a $(d+1)$-presilting complex. Let $\Gamma=[1,n]\setminus (\red\cup\blue)$. By Lemma \ref{lemma:strongly maximal is thick iff reducing} it remains to show that $\mathcal{P}_{\Gamma}(\bigoplus_{i=2}^{p}M_i)$ is a reducing set of $\Gamma$. Since $(M,P)$ is well-configured by Theorem \ref{thrm:taud tilting is well-configured}, this follows by Lemma \ref{lemma:reducing set can be computed locally}(b) together with Lemma \ref{lemma:reducing set in types (I),(II),(IV),(V)}, Lemma \ref{lemma:reducing set in type (III)}, Lemma \ref{lemma:reducing set in type (VI)} and Lemma \ref{lemma:reducing set in type (VIII)}. 
\end{proof}

\section{Summary and concluding remarks}
\makeatletter
\newcommand{\rightleftarrowss}[2]{%
  \mathrel{\mathop{%
    \vcenter{\offinterlineskip\m@th
      \ialign{\hfil##\hfil\cr
        \hphantom{$\scriptstyle\mspace{8mu}{#1}\mspace{8mu}$}\cr
        \rightarrowfill\cr
        \vrule height0pt width 2em\cr
        \leftarrowfill\cr
        \hphantom{$\scriptstyle\mspace{8mu}{#2}\mspace{8mu}$}\cr
        \noalign{\kern-0.3ex}
      }%
    }%
  }\limits^{#1}_{#2}}%
}
\makeatother

\makeatletter
\tikzset{
  edge node/.code={%
      \expandafter\def\expandafter\tikz@tonodes\expandafter{\tikz@tonodes #1}}}
\makeatother
\tikzset{
  Belongs/.style={
    draw=none,
    every to/.append style={
      edge node={node [sloped, allow upside down, auto=false]{$\in$}}}
  }
}

\label{Section:Comparing}
The aim of this section is to compare the theory we have developed in sections \ref{Section:strongly maximal td-rigid}, \ref{Section:d-torsion} and \ref{Section:d+1 silting} with the classical case $d=1$. Throughout this section we fix a finite-dimensional algebra $\Lambda$. 

\subsection{Comparison to classical \texorpdfstring{$\tau$}{t}-tilting theory} We start by recalling the definition of support $\tau$-tilting pairs.

\begin{lemma}[{\cite[Corollary 2.13]{adachi_-tilting_2014}}]\label{lem:equivalence for tau-tilting}
Let $(M,P)$ be a $\tau$-rigid pair over $\Lambda$. Then the following are equivalent.
\begin{enumerate}
    \item[(a)] If $(M\oplus N,P)$ is a $\tau$-rigid pair for some $N\in\modfin{\Lambda}$, then $N\in \add{M}$.
    \item[(b)] If $\Hom{\Lambda}{M}{\tau(N)}=0$, $\Hom{\Lambda}{N}{\tau(M)}=0$ and $\Hom{\Lambda}{P}{N}=0$ for some $N\in\modfin{\Lambda}$, then $N\in\add{M}$.
    \item[(c)] $\abs{M}+\abs{P}=\abs{\Lambda}$.
\end{enumerate}
In particular if $(M,P)$ satisfies any of the above conditions, then it is called a \emph{support $\tau$-tilting pair}\index[definitions]{support $\tau$-tilting pair}.
\end{lemma}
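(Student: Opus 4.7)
The plan is to establish the cycle (b) $\Rightarrow$ (a) $\Rightarrow$ (c) $\Rightarrow$ (b).

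The first implication is essentially a definitional unpacking. If $(M \oplus N, P)$ is a $\tau$-rigid pair, then decomposing the identity $\Hom{\Lambda}{M \oplus N}{\tau(M \oplus N)} = 0$ yields both $\Hom{\Lambda}{M}{\tau N} = 0$ and $\Hom{\Lambda}{N}{\tau M} = 0$, while $\Hom{\Lambda}{P}{M \oplus N} = 0$ gives $\Hom{\Lambda}{P}{N} = 0$. So (b) applies to $N$ and forces $N \in \add{M}$, which is (a).

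For (a) $\Rightarrow$ (c), I would invoke a Bongartz-type completion for $\tau$-rigid pairs, which asserts that every basic $\tau$-rigid pair $(M, P)$ is a direct summand of some $\tau$-rigid pair $(M', P')$ with $\abs{M'} + \abs{P'} = \abs{\Lambda}$. Under hypothesis (a), the maximality property forces $M' \in \add{M}$ and $P' \in \add{P}$, so $\abs{M} + \abs{P} = \abs{M'} + \abs{P'} = \abs{\Lambda}$.

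The substantive step is (c) $\Rightarrow$ (b). Set $P = e\Lambda$ and $\bar\Lambda = \Lambda/\langle e \rangle$. The condition $\Hom{\Lambda}{P}{M} = 0$ places $M$ in the full subcategory $\modfin{\bar\Lambda} \subseteq \modfin{\Lambda}$, and combined with $\abs{M} + \abs{P} = \abs{\Lambda} = \abs{\bar\Lambda} + \abs{P}$ this gives $\abs{M} = \abs{\bar\Lambda}$. After verifying that $M$ is $\tau_{\bar\Lambda}$-rigid, one recognizes $M$ as a $\tau$-tilting $\bar\Lambda$-module. Given $N$ satisfying (b), $\Hom{\Lambda}{P}{N} = 0$ puts $N \in \modfin{\bar\Lambda}$ as well; one then concludes $N \in \add{M}$ from the classical fact that a $\tau$-tilting module over a finite-dimensional algebra is maximal under mutual Hom-orthogonality with respect to $\tau$.

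The main obstacle is the comparison between $\tau_\Lambda$ and $\tau_{\bar\Lambda}$ on $\modfin{\bar\Lambda}$, since the Auslander--Reiten translation does not descend to quotient algebras in general. The Auslander--Reiten formula expressing $\Hom{\Lambda}{X}{\tau Y}$ as the dual of an injectively stable $\Hom{\Lambda}{Y}{X}$ provides the bridge; the hypothesis $\Hom{\Lambda}{P}{-} = 0$ on both $X$ and $Y$ is exactly what is required for this comparison to be compatible with passage to $\bar\Lambda$, so that the Hom conditions in (b) stated with respect to $\tau_\Lambda$ translate into the analogous conditions over $\bar\Lambda$.
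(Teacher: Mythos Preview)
The paper does not prove this lemma: it is stated with a citation to \cite[Corollary 2.13]{adachi_-tilting_2014} and no proof is given, since it is a well-known foundational result being recalled. So there is no paper proof to compare against; your proposal stands as an independent reconstruction of the argument from Adachi--Iyama--Reiten.

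Your sketch follows the standard route and is essentially correct, but there is one imprecision worth flagging in the step (a) $\Rightarrow$ (c). You phrase Bongartz completion as producing a pair $(M',P')$ with both $M$ a summand of $M'$ and $P$ a summand of $P'$, and then claim that hypothesis (a) forces $M'\in\add{M}$ \emph{and} $P'\in\add{P}$. But condition (a) only constrains enlargements of $M$ with $P$ held fixed; it says nothing about enlarging $P$. The argument works because the Bongartz completion actually produces a pair of the form $(M\oplus N, P)$ with the projective part unchanged (this is how it is stated in \cite[Theorem~2.10]{adachi_-tilting_2014}), so (a) applies directly to conclude $N\in\add{M}$. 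You should state the completion in this sharper form rather than allowing $P'\neq P$.

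For (c) $\Rightarrow$ (b), your reduction to $\bar\Lambda=\Lambda/\langle e\rangle$ is the right idea, and the comparison of $\tau_\Lambda$ with $\tau_{\bar\Lambda}$ on $\modfin{\bar\Lambda}$ is exactly \cite[Lemma~2.1]{adachi_-tilting_2014}. Just be aware that your final appeal---``a $\tau$-tilting module is maximal under mutual Hom-orthogonality with respect to $\tau$''---is the $P=0$ case of the very implication you are proving, so you are implicitly reducing to a base case that requires its own argument (e.g.\ via torsion classes and $\Ext$-projectives, as in \cite[Proposition~2.4, Theorem~2.12]{adachi_-tilting_2014}).
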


The equivalence of the above statements shows that if $(M,P)$ is a $\tau$-rigid pair over $\Lambda$, then $\abs{M}+\abs{P}\leq \abs{\Lambda}$. Based on this fact and these definitions, we have the following generalizations to the case $d>1$. 

\begin{definition}\label{def:tau_d tilting higher}
Let $\cC\subseteq\modfin{\Lambda}$ be a $d$-cluster tilting subcategory. Let $M\in\cC$ and $P\in\proj\Lambda$.
\begin{enumerate}
    \item[(a)] \cite[Def 2.4(3)]{ZHOU2023193} We say that $(M,P)$ is a \emph{support $\td$-tilting pair}\index[definitions]{support $\td$-tilting pair} if it is a $\td$-rigid pair and moreover it satisfies the following:
    \begin{enumerate}
        \item[(i)] if $(M\oplus N,P)$ is a $\td$-rigid pair for some $N\in\cC$, then $N\in\add{M}$, and
        \item[(ii)] if $Q\in \proj{\Lambda}$, then 
        \[
        Q \in \add{P} \iff \Hom{\Lambda}{Q}{M}=0.
        \]
    \end{enumerate}
    \item[(b)] \cite[Def 0.7]{JACOBSEN2020119}\cite[Def 2.4(4)]{ZHOU2023193} We say that $(M,P)$ is a \emph{maximal $\td$-rigid pair}\index[definitions]{$\td$-rigid pair!maximal} if it satisfies the following:
    \begin{enumerate}
        \item[(i)] if $N\in\cC$, then 
        \[
        N \in \add{M} \iff \begin{cases}
            \Hom{\Lambda}{M}{\td(N)}=0, \\
            \Hom{\Lambda}{N}{\td(M)}=0, \\
            \Hom{\Lambda}{P}{N}=0,
        \end{cases}
        \]
        and
        \item[(ii)] if $Q\in \proj{\Lambda}$, then 
        \[
        Q \in \add{P} \iff \Hom{\Lambda}{Q}{M}=0.
        \]
    \end{enumerate}
    \item[(c)] We say that $(M,P)$ is a \emph{summand-maximal $\td$-rigid pair}\index[definitions]{$\td$-rigid pair!summand-maximal} if it is a $\td$-rigid pair and for any other $\td$-rigid pair $(M',P')$ we have $\abs{M'}+\abs{P'}\leq \abs{M}+\abs{P}$.
\end{enumerate}
\end{definition}

It is easy to see that we have implications
\[
\begin{tikzpicture}
    \node (A) at (4,2) [] {$(M,P)$ is a maximal $\td$-rigid pair};
    \node (B) at ([yshift=-1pt]A.south) [] {$\Downarrow$};
    \node (C) at ([yshift=-1pt]B.south) [] {$(M,P)$ is a support $\td$-tilting pair};
    \node (D) at ([yshift=-1pt]C.south) [] {$\Downarrow$};
    \node (E) at ([yshift=-1pt]D.south) [] {$(M,P)$ is a $\td$-rigid pair};
    \node (F) at ([xshift=-1pt]C.west) [anchor=east] {$\implies$};
    \node (G) at ([xshift=-1pt]F.west) [anchor=east] {$(M,P)$ is a summand-maximal $\td$-rigid pair};
    \node (H) at ([yshift=-1pt]G.south) [] {$\Downarrow$};
    \node (I) at ([yshift=-1pt]H.south) [] {$(M,P)$ is a $\td$-rigid pair such that $\abs{M}+\abs{P}\geq \abs{\Lambda}$};
\end{tikzpicture}
\]

We are interested in studying when these implications can be reversed. It is clear that the bottom right implication can not be reversed. For the upper right implication we consider the following special case.

\begin{lemma}\label{lem:support tau_d tilting and strongly maximal is the same if no homs}
    Let $\cC\subseteq\modfin{\Lambda}$ be a $d$-cluster tilting subcategory. Assume that for every indecomposable module $X\in\cC$ we have that $\Hom{\Lambda}{X}{\td(X)}=0$. Then a pair $(M,P)$ with $M\in\cC$ and $P\in\proj\Lambda$ is a support $\td$-tilting pair if and only if it is a maximal $\td$-tilting pair.
\end{lemma}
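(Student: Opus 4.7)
The plan is to verify the two implications separately, and to observe at the start that only one direction actually requires the additional hypothesis on indecomposable summands.

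For the easier implication, assume $(M,P)$ is maximal $\td$-rigid. Taking $N=M$ in Definition \ref{def:tau_d tilting higher}(b)(i) yields that the three Hom-vanishing conditions hold for $N=M$, which in particular gives $\Hom{\Lambda}{M}{\td(M)}=0$ and $\Hom{\Lambda}{P}{M}=0$, so $(M,P)$ is $\td$-rigid. Property (ii) of support $\td$-tilting is identical to property (ii) of maximal $\td$-rigid. For property (i) of support $\td$-tilting, if $(M\oplus N,P)$ is a $\td$-rigid pair then $\Hom{\Lambda}{M}{\td(N)}=0$, $\Hom{\Lambda}{N}{\td(M)}=0$ and $\Hom{\Lambda}{P}{N}=0$ all hold by decomposing the $\td$-rigidity conditions across the direct summands, and hence $N\in\add{M}$ by Definition \ref{def:tau_d tilting higher}(b)(i). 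Notice that this direction is valid without invoking the hypothesis $\Hom{\Lambda}{X}{\td(X)}=0$.

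For the converse, assume $(M,P)$ is a support $\td$-tilting pair. Property (ii) of maximal $\td$-rigid is again immediate. The ``only if'' part of property (i) follows from $\td$-rigidity of $(M,P)$ and the closure of Hom-vanishing under taking direct summands. For the ``if'' part, suppose $N\in\cC$ satisfies $\Hom{\Lambda}{M}{\td(N)}=0$, $\Hom{\Lambda}{N}{\td(M)}=0$ and $\Hom{\Lambda}{P}{N}=0$. Decompose $N$ into indecomposable summands $X_i\in\cC$. Since $\td=\tau\circ\syzygy^{d-1}$ distributes over direct sums (up to projective summands which $\tau$ kills), the three Hom-vanishing conditions pass to each individual indecomposable summand $X_i$ in place of $N$.

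This is where the hypothesis enters: by assumption $\Hom{\Lambda}{X_i}{\td(X_i)}=0$ for every indecomposable $X_i\in\cC$. Combining this with $\Hom{\Lambda}{M}{\td(M)}=0$ together with the three inherited conditions $\Hom{\Lambda}{M}{\td(X_i)}=0$, $\Hom{\Lambda}{X_i}{\td(M)}=0$ and $\Hom{\Lambda}{P}{X_i}=0$, one gets $\Hom{\Lambda}{M\oplus X_i}{\td(M\oplus X_i)}=0$ and $\Hom{\Lambda}{P}{M\oplus X_i}=0$, so $(M\oplus X_i,P)$ is $\td$-rigid. Definition \ref{def:tau_d tilting higher}(a)(i) then gives $X_i\in\add{M}$, and varying $i$ yields $N\in\add{M}$. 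The main obstacle is exactly the self-Hom $\Hom{\Lambda}{X_i}{\td(X_i)}$: without the assumption, one cannot upgrade the three Hom-vanishing conditions for $N$ to $\td$-rigidity of $(M\oplus N,P)$, and so one cannot apply the support $\td$-tilting property to deduce $N\in\add{M}$; reducing to indecomposable summands is precisely the device that lets the hypothesis do its work.
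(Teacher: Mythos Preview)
Your proof is correct and follows essentially the same route as the paper's. The paper dispatches the implication ``maximal $\Rightarrow$ support'' with the single phrase ``the other implication always holds'' and, for the converse, writes ``Clearly we may assume that $N$ is indecomposable'' before invoking the hypothesis to obtain $\Hom{\Lambda}{N}{\td(N)}=0$ and conclude that $(M\oplus N,P)$ is $\td$-rigid; your argument unpacks both of these steps explicitly (spelling out why maximal implies support, and justifying the reduction to indecomposables via additivity of $\td$ and Hom), but the underlying idea is identical.
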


\begin{proof}
Assume that $(M,P)$ is a support $\td$-tilting pair and we show that it is a maximal $\td$-tilting pair; the other implication always holds. Let $N\in\cC$ and we need to show that 
\[
N \in \add{M} \iff \begin{cases}
            \Hom{\Lambda}{M}{\td(N)}=0, \\
            \Hom{\Lambda}{N}{\td(M)}=0, \\
            \Hom{\Lambda}{P}{N}=0.
        \end{cases}
\]
Clearly we may assume that $N$ is indecomposable. Since $(M,P)$ is a $\td$-rigid pair, we have that $\Hom{\Lambda}{M}{\td(M)}=0$ and that $\Hom{\Lambda}{P}{M}=0$. Hence if $N\in\add{M}$, then we obtain that $\Hom{\Lambda}{M}{\td(N)}=0$, that $\Hom{\Lambda}{N}{\td(M)}=0$ and that $\Hom{\Lambda}{P}{N}=0$ as required. Assume now that these three equalities hold. By assumption we also have that $\Hom{\Lambda}{N}{\td(N)}=0$. Altogether we conclude that $(M\oplus N,P)$ is a $\td$-rigid pair, and so $N\in\add{M}$ since $(M,P)$ is a support $\td$-tilting pair. 
\end{proof}

Notice that the condition of Lemma \ref{lem:support tau_d tilting and strongly maximal is the same if no homs} is satisfied for all representation-directed algebras. Since a homogeneous linear Nakayama algebra $\Lambda(n,l)$ is representation-directed, in this article we provide no examples of support $\td$-tilting pairs which are not maximal $\td$-rigid pairs. 

On the other hand, Theorem \ref{thrm:taud tilting is well-configured} gives a way to construct maximal $\td$-rigid pairs which are not summand-maximal. Indeed, we may pick a $\td$-rigid pair which is not well-configured and cannot be enlarged to a well-configured pair. For example, by their definition, well-configured pairs are never supported in three consecutive diagonals when $d>2$. But if $d>2$, $p\geq 4$ and $l\geq 4$, then we can always find $\td$-rigid pairs which are supported in three consecutive diagonals. By enlarging such a $\td$-rigid pair as much as possible, we obtain a maximal $\td$-rigid pair which is not summand-maximal. We note that not all maximal $\td$-rigid pairs are of this form either (i.e. supported in three or more consecutive diagonals), for an example of a different kind see Example \ref{Example:Not well-configured pair}(a).

\begin{remark}
    We remark that for an arbitrary finite-dimensional algebra $\Lambda$ it is unknown whether a $\td$-rigid pair $(M,P)$ with $\abs{M}+\abs{P}=\abs{\Lambda}$ is also maximal, let alone summand-maximal. Moreover, as we have seen, a maximal $\td$-rigid pair $(M,P)$ does not necessarily satisfy $\abs{M}+\abs{P}=\abs{\Lambda}$, in contrast to the case $d=1$. It is also unknown whether for a summand-maximal $\td$-rigid pair $(M,P)$ the condition $\abs{M}+\abs{P}=\abs{\Lambda}$ is always satisfied. 

    Both of these remarks are closely tied to the question of when a presilting object in $\HomotopyC{\proj{\Lambda}}{b}$ is \emph{partial silting}\index[definitions]{partial silting object}, i.e. it can be completed to a silting object. Observe that in this case we obtain from Lemma \ref{lemma:silting summands} and Lemma \ref{lemma:taud-rigid iff presilting} that a $\td$-rigid pair $(M,P)$ is summand-maximal if and only if $\abs{M}+\abs{P}=\abs{\Lambda}$. A class of algebas where presilting is known to imply partial silting is silting-discrete algebras \cite[Thm. 2.15]{Aihara2017Classifying}. In particular, by \cite[Prop. 3.1]{aihara2023siltingdiscreteness} and \cite[Prop. 2.1]{Happel2010Piecewise}, one obtains that in the algebras $\Lambda(n,2)$ and $\Lambda(n,n-1)$ (which always admit $d$-cluster tilting subcategories) a $\td$-rigid pair $(M,P)$ is summand-maximal if and only if $\abs{M}+\abs{P}=\abs{\Lambda}$. However, most of the algebras on the form $\Lambda(n,l)$ are not silting-discrete, as seen in Table 1 of \cite{Happel2010Piecewise}, hence Theorem \ref{Theorem A} is needed to establish the property for $\Lambda(n,l)$ in general.
\end{remark}

\subsection{Maximal \texorpdfstring{$\td$}{td}-rigid pairs, \texorpdfstring{$d$}{d}-torsion classes and \texorpdfstring{$(d+1)$}{(d+1)}-term silting complexes} 
We start with recalling some useful notions. Let $\cC\subseteq \modfin{\Lambda}$ be a $d$-cluster tilting subcategory for some $d\geq 1$. Recall that if $d=1$ then $\cC=\modfin{\Lambda}$. Let $\cU\subseteq \cC$ be a $d$-torsion class. An object $X\in\cU$ is called \emph{$\Ext^{d}$-projective in $\cU$}\index[definitions]{$\Ext^{d}$-projective!object} if $\Ext^{d}_{\Lambda}(X,\cU)=0$. If moreover $\add{X}$ contains all $\Ext^{d}$-projective objects in $\cU$, then $X$ is called an \emph{$\Ext^{d}$-projective generator of $\cU$}\index[definitions]{$\Ext^{d}$-projective!generator}. It is shown in \cite{august2024taudtilting} that an $\Ext^{d}$-projective generator of $\cU$ always exists. It then follows by the definition of $\Ext^{d}$-projective generators that there exists a unique up to isomorphism basic $\Ext^{d}$-projective generator of $\cU$, which we denote by $M^{\cU}$. We further set $P^{\cU}$ to be a maximal basic projective $\Lambda$-module satisfying $\Hom{\Lambda}{P^{\cU}}{\cU}=0$.

For a $2$-silting complex $P_1\overset{f}{\to} P_0$ we write $f=\begin{psmallmatrix}
    f' & 0
\end{psmallmatrix}:P_1'\oplus P_1'' \to P_0$ where $f'$ is right minimal. For $d=1$ we have the following result.

\begin{theorem}[{\cite[Theorem 2.7, Theorem 3.2]{adachi_-tilting_2014}}]\label{thrm:bijections when d=1}
There are bijections 
\[
\left\{\begin{array}{c}
    \text{functorially finite} \\
    \text{torsion classes in $\modfin{\Lambda}$}
\end{array} \right\}\rightleftarrowss{\phi}{\phi^{-1}}\left\{\begin{array}{c} \text{basic support $\tau$-tilting} \\ 
\text{pairs in $\modfin{\Lambda}$} 
\end{array}\right\}\rightleftarrowss{\psi}{\psi^{-1}}\left\{\begin{array}{c}
     \text{basic $2$-silting} \\
     \text{complexes in $\HomotopyC{\proj{\Lambda}}{b}$}
\end{array}\right\}.
\] 
Explicitly, for a functorially finite torsion class $\cU\subseteq\modfin{\Lambda}$, a basic support $\tau$-tilting pair $(M,P)$ in $\modfin{\Lambda}$ and a basic $2$-silting complex $P=P_1\overset{f}{\to}P_0$ in $\HomotopyC{\proj{\Lambda}}{b}$ we have
\begin{itemize}
    \item $\phi(\cU)=(M^{\cU},P^{\cU})$ and $\phi^{-1}(M,P)=\fac{M}$, and
    \item $\psi(M,P)=P[1]\oplus \trunc{\mathbf{P}^\bullet}{\geq -1}(M)$ and $\psi^{-1}(P)=(\coker(f),P_1'')$.
\end{itemize}
\end{theorem}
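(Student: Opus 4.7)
The proof splits into two bijections, both organized around the equivalent formulations in Lemma \ref{lem:equivalence for tau-tilting}. For the bijection $\phi$, the plan is to first show it is well-defined: the $\Ext^{1}$-projectivity of $M^{\cU}$ in $\cU$ translates into the $\tau$-rigidity condition $\Hom{\Lambda}{M^{\cU}}{\tau M^{\cU}} = 0$ via the Auslander--Reiten formula $D\Hom{\Lambda}{X}{\tau Y} \cong \overline{\Ext}^{1}_{\Lambda}(Y,X)$ applied with $Y = M^{\cU} \in \cU$, while $\Hom{\Lambda}{P^{\cU}}{M^{\cU}} = 0$ is immediate from the definition of $P^{\cU}$. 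The count $\abs{M^{\cU}} + \abs{P^{\cU}} = \abs{\Lambda}$ is verified via a Bongartz-type completion argument inside $\cU$ combined with a dimension count on the Grothendieck group. For the inverse $(M,P) \mapsto \fac{M}$, closure of $\fac{M}$ under quotients is immediate, closure under extensions uses $\tau$-rigidity of $M$ through the same Auslander--Reiten formula, and functorial finiteness comes from $M$ itself providing a right $\fac{M}$-approximation of any $X \in \fac{M}$. Mutual inverseness then reduces to the identities $\fac{M^{\cU}} = \cU$ (proved via an approximation sequence built from $M^{\cU}$) and $M^{\fac{M}} \cong M$ (which uses that $M$ is $\Ext^{1}$-projective in $\fac{M}$ and maximal by Lemma \ref{lem:equivalence for tau-tilting}).

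For the bijection $\psi$, presilting of $\psi(M,P) = P[1] \oplus \sigma_{\geq -1}\mathbf{P}^{\bullet}(M)$ is the $d=1$ case of Lemma \ref{lemma:taud-rigid iff presilting}, so the substantive point is to upgrade to silting, namely $\thick{\psi(M,P)} = \HomotopyC{\proj{\Lambda}}{b}$. The plan is to combine the cardinality $\abs{M} + \abs{P} = \abs{\Lambda}$ with the fact that a minimal projective presentation of $M$ together with the stalk $P[1]$ contains enough information to recover every indecomposable projective in the thick closure, via a simplified version of the argument in Section \ref{subsubsec: Restricting silting local}. For the inverse, given a basic $2$-silting complex $P_{1} \xrightarrow{f} P_{0}$ with right-minimal factor $f' \colon P_{1}' \to P_{0}$, the sequence $P_{1}' \to P_{0} \to \coker(f') \to 0$ is a minimal projective presentation; the presilting hypothesis $\Hom{\HomotopyC{\proj{\Lambda}}{b}}{\psi(M,P)}{\psi(M,P)[1]} = 0$ then translates back to $\tau$-rigidity of $(\coker(f'), P_{1}'')$ via the Auslander--Reiten formula used above, and the right-minimality ensures that $P_{1}''$ captures exactly the missing projective direct summands.

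Compatibility of the two bijections is essentially bookkeeping: $\psi \circ \phi^{-1}(M,P)$ is a complex whose zeroth cohomology recovers $M$ and whose shifted stalk summand recovers $P$, while the reverse composition is checked on the level of factor categories using $\fac{\coker(f')} = \fac{M}$. The main obstacle throughout is translating between the three forms of maximality --- categorical (Lemma \ref{lem:equivalence for tau-tilting}(a),(b)), summand-count (Lemma \ref{lem:equivalence for tau-tilting}(c)), and triangulated-generation (Lemma \ref{lemma:silting summands}); establishing the equivalence of these is where the deep content sits, and the explicit formulas for $\phi$ and $\psi$ serve as the natural bridges that make the correspondence visible.
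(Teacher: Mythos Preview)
The paper does not prove this theorem: it is stated with the citation \cite[Theorem 2.7, Theorem 3.2]{adachi_-tilting_2014} and used as a black-box result from the literature, with no proof given. Your proposal is a reasonable high-level sketch of how the Adachi--Iyama--Reiten argument goes, but there is nothing in the paper to compare it against; the authors simply recall the statement for context before discussing the $d>1$ analogue in Theorem~\ref{thrm:injections when d>1}.
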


When $d>1$, we have the following generalization of Theorem \ref{thrm:bijections when d=1} from \cite{august2024taudtilting}.

\begin{theorem}\cite{august2024taudtilting}\label{thrm:injections when d>1}
Let $\cC\subseteq\modfin{\Lambda}$ be a $d$-cluster tilting subcategory. There exist maps
\[
\left\{\begin{array}{c}
    \text{functorially finite} \\
    \text{$d$-torsion classes in $\cC$}
\end{array} \right\} \overset{\phi_d}{\longrightarrow} \left\{\begin{array}{c} \text{basic maximal $\td$-rigid} \\ 
\text{pairs $(M,P)$ in $\cC$ with} \\
\text{$\abs{M}+\abs{P}=\abs{\Lambda}$}
\end{array}\right\}\supseteq \im(\phi_d) \overset{\psi_d}{\longrightarrow} \left\{\begin{array}{c}
     \text{basic $(d+1)$-silting} \\
     \text{complexes in $\HomotopyC{\proj{\Lambda}}{b}$}
\end{array}\right\}.
\]
Explicitly, for a functorially finite $d$-torsion class $\cU\subseteq \modfin{\Lambda}$ and a basic maximal $\td$-rigid pair $(M,P)$ in $\cC$ with $\abs{M}+\abs{P}=\abs{\Lambda}$ we have
\begin{itemize}
    \item $\phi_d$ is injective, is in general not surjective when $d>1$, is given by $\phi_d(\cU)=(M^{\cU},P^{\cU})$, and it has a partial inverse given by $(M^{\cU},P^{\cU})\mapsto \fac{M^{\cU}}\cap\cC$, and
    \item $\psi_d$ is defined on $\im(\phi_d)$, and is given by $\psi_d(M,P)=P[d]\oplus \trunc{\mathbf{P}^\bullet}{\geq -d}(M)$.
\end{itemize}
\end{theorem}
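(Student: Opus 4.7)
The plan is to construct both maps, verify they land where claimed, and establish injectivity of $\phi_d$ via an explicit partial inverse, leaving the non-surjectivity to a counterexample. Throughout I would use the higher Auslander--Reiten duality for the $d$-cluster tilting subcategory $\cC$, which says $\underline{\Hom}_\Lambda(X,\td Y)\cong D\Ext^d_\Lambda(Y,X)$ for $X,Y\in\cC$, to translate $\Ext^d$-vanishing into $\Hom_\Lambda(-,\td(-))$-vanishing and vice versa.

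First I would construct $\phi_d$. Given a functorially finite $d$-torsion class $\cU\subseteq\cC$, I take $M^\cU$ to be the basic $\Ext^d$-projective generator (whose existence is the main input from \cite{august2024taudtilting}). By definition $\Ext^d_\Lambda(M^\cU,M^\cU)=0$, and the AR formula above then gives $\Hom_\Lambda(M^\cU,\td M^\cU)=0$, so $M^\cU$ is $\td$-rigid. Setting $P^\cU$ to be a maximal basic projective with $\Hom_\Lambda(P^\cU,\cU)=0$ makes $(M^\cU,P^\cU)$ a $\td$-rigid pair. To establish $|M^\cU|+|P^\cU|=|\Lambda|$ and maximality, I would decompose $\Lambda=P^\cU\oplus Q$ and argue that every indecomposable summand of $Q$ lies in $\cU$ (using projectivity together with the maximality of $P^\cU$), whence it must be a direct summand of $M^\cU$ since projective objects of $\cC$ are automatically $\Ext^d$-projective in every $d$-torsion class containing them. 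A summand count, together with the universal property of $M^\cU$, would then yield the equality $|M^\cU|+|P^\cU|=|\Lambda|$ and the maximality property: if $(M^\cU\oplus N,P^\cU)$ is $\td$-rigid with $N\in\cC$, then dualising through the AR formula gives $\Ext^d_\Lambda(N,\cU)=\Ext^d_\Lambda(\cU,N)=0$, forcing $N\in\add M^\cU$.

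Next I would show $\phi_d$ is injective by exhibiting the partial inverse $(M^\cU,P^\cU)\mapsto \fac M^\cU\cap\cC$. The inclusion $\cU\subseteq\fac M^\cU\cap\cC$ is immediate, since $M^\cU$ generates $\cU$ (every object of $\cU$ is a quotient of a direct sum of copies of $M^\cU$, by the Ext$^d$-projective generator property) and $\cU\subseteq\cC$. For the reverse inclusion, let $X\in\fac M^\cU\cap\cC$ and pick a surjection $f\colon (M^\cU)^{\oplus k}\to X$. Applying closure of $\cU$ under $d$-quotients (which is part of the characterisation in Theorem \ref{thm:d-torsion by AHJKPT}) to $f$ yields a $d$-cokernel in $\cC$ whose first term is $X$ and whose remaining terms lie in $\cU$; this forces $X\in\cU$. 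Hence $\fac M^\cU\cap\cC=\cU$, so $\phi_d$ is injective. Non-surjectivity in general I would justify by appealing to our Theorem \ref{thrm:taud tilting is well-configured}: an explicit summand-maximal $\td$-rigid pair $(M,P)$ over some $\Lambda(n,l)$ whose factor category $\fac M\cap\cC$ is not closed under $d$-quotients (e.g.\ arising from an admissible configuration of type (VI) or (VIII), where the interaction between distinct diagonals blocks $d$-quotient closure) supplies the required counterexample.

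Finally, for $\psi_d$, Lemma \ref{lemma:taud-rigid iff presilting} already tells us that $\mathbf{P}^\bullet_{(M^\cU,P^\cU)}=P^\cU[d]\oplus\sigma_{\geq -d}\mathbf{P}^\bullet(M^\cU)$ is $(d+1)$-presilting. What remains is that its thick closure is all of $\HomotopyC{\proj\Lambda}{b}$. I would proceed by showing that every indecomposable projective summand $P(i)$ of $\Lambda$ lies in $\thick{\mathbf{P}^\bullet_{(M^\cU,P^\cU)}}$: if $P(i)$ is a summand of $P^\cU$ it is present as a stalk; otherwise $P(i)$ is itself in $\cU$ (by the counting $|M^\cU|+|P^\cU|=|\Lambda|$ combined with the analysis of the second paragraph), hence a summand of $M^\cU$, and its minimal projective resolution truncated at degree $-d$ is a summand of $\sigma_{\geq -d}\mathbf{P}^\bullet(M^\cU)$. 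The non-truncated terms, which are genuine projective modules living in $\cU$, are then recovered via the standard triangulated induction that underpins Lemma \ref{lemma:strongly maximal is thick iff reducing}.

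The main obstacle will be the counting step $|M^\cU|+|P^\cU|=|\Lambda|$: as the failure of naïve Bongartz completion when $d>1$ (Example \ref{example tau 4 rigid Lambda 23 4}) shows, one cannot simply enlarge $M^\cU$ freely, so controlling which projectives are forced into $\cU$ requires a careful approximation-theoretic argument using that $\cU$ is \emph{functorially finite}. Once this is in hand, the rest of the theorem essentially unfolds formally from the standard higher Auslander--Reiten machinery and the $d$-torsion class axioms.
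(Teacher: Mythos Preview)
The paper does not prove this theorem: it is stated with the citation \cite{august2024taudtilting} and no proof is given. The result is quoted from an external source as input for the subsequent discussion, so there is nothing in the paper to compare your proposal against.

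That said, your sketch has some genuine gaps worth flagging. In your injectivity argument, the claim that every object of $\cU$ is a quotient of a sum of copies of $M^{\cU}$ does not follow from the definition of ``$\Ext^d$-projective generator'' used here (which only says $\add{M^{\cU}}$ contains all $\Ext^d$-projective objects of $\cU$, not that $M^{\cU}$ generates $\cU$ via epimorphisms). More seriously, your argument for $\fac{M^{\cU}}\cap\cC\subseteq\cU$ misapplies closure under $d$-quotients: by definition that property concerns morphisms $f\colon M\to U$ with $U\in\cU$, whereas you are applying it to a surjection $(M^{\cU})^{\oplus k}\to X$ whose \emph{source} lies in $\cU$ and whose target is the object you are trying to place in $\cU$. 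This step does not go through as written. Similarly, in the $\psi_d$ part your thickness argument assumes that every indecomposable projective not in $\add{P^{\cU}}$ is a summand of $M^{\cU}$; this needs the counting step $|M^{\cU}|+|P^{\cU}|=|\Lambda|$ together with an identification of which projectives lie in $\cU$, and you correctly identify this as the crux but do not supply the argument.
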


Let $M$ be any module in a $d$-cluster tilting subcategory $\cC\subseteq\modfin{\Lambda}$. Let $\dtors{M}$ be the smallest $d$-torsion class in $\cC$ which contains $M$; this is well-defined since $d$-torsion classes in $\cC$ form a complete lattice with meet given by intersection by \cite[Theorem 4.3]{august2023characterisation}. It follows that we may define a map
\[
\left\{\begin{array}{c}
    \text{functorially finite} \\
    \text{$d$-torsion classes in $\cC$}
\end{array} \right\} \overset{\tilde{\phi}_d}{\longleftarrow} \left\{\begin{array}{c} \text{basic maximal $\td$-rigid} \\ 
\text{pairs $(M,P)$ in $\cC$ with} \\
\text{$\abs{M}+\abs{P}=\abs{\Lambda}$}
\end{array}\right\}
\]
by $\tilde{\phi}_d(M,P)=\dtors{M}$. Since $\fac{M}\cap \cC\subseteq \dtors{M}$, we conclude that if $(M,P)$ is in the image of $\phi_d$ then 
\[
\dtors{M} = \fac{M}\cap \cC,
\]
and so $\tilde{\phi}_d$ is surjective but not injective. 

As far as the map $\psi_d$ is concerned, we have the following result in our case.

\begin{theorem}\label{thrm:the silting map can be extended}
    Let $\Lambda=\Lambda(n,l)$ be a homogeneous Nakayama algebra admitting a $d$-cluster tilting subcategory $\cC\subseteq\modfin{\Lambda}$. Then the map $\psi_d$ in Theorem \ref{thrm:injections when d>1} can be extended to an injective map
    \[
    \left\{\begin{array}{c} \text{basic maximal $\td$-rigid} \\ 
\text{pairs $(M,P)$ in $\cC$ with} \\
\text{$\abs{M}+\abs{P}=\abs{\Lambda}$}
\end{array}\right\}\overset{\psi_d}{\longrightarrow} \left\{\begin{array}{c}
     \text{basic $(d+1)$-silting} \\
     \text{complexes in $\HomotopyC{\proj{\Lambda}}{b}$}
\end{array}\right\}.
    \]
\end{theorem}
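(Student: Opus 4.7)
The plan is to define the extension by the same formula $\psi_d(M,P) \coloneqq P[d] \oplus \trunc{\mathbf{P}^\bullet(M)}{\geq -d}$ used in Theorem \ref{thrm:injections when d>1}, so that it automatically agrees with the original $\psi_d$ on $\mathrm{im}(\phi_d)$. The task then splits into two parts: verifying that this formula still outputs a $(d+1)$-silting complex on the enlarged domain, and verifying injectivity.

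For well-definedness, I would first note that every maximal $\td$-rigid pair is in particular a $\td$-rigid pair: setting $N = M$ in Definition \ref{def:tau_d tilting higher}(b)(i) forces $\Hom{\Lambda}{M}{\td(M)} = 0$ and $\Hom{\Lambda}{P}{M} = 0$. Combined with the hypothesis $\abs{M} + \abs{P} = \abs{\Lambda}$, Theorem \ref{thrm:taud tilting is well-configured} elevates $(M,P)$ to a summand-maximal $\td$-rigid pair, and then Theorem \ref{Theorem:strongly maximal and silting} provides exactly that $\psi_d(M,P) = \mathbf{P}^\bullet_{(M,P)}$ is $(d+1)$-silting. So the two substantive inputs are already supplied by Theorems \ref{Theorem A} and \ref{Theorem C}.

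For injectivity, the idea is to reconstruct $(M,P)$ from the isomorphism class of $\psi_d(M,P)$ in $\HomotopyC{\proj{\Lambda}}{b}$. The key observation is that the $0$-th cohomology is exactly $M$: the stalk $P[d]$ is concentrated in degree $-d$ and contributes nothing at degree $0$, while $\trunc{\mathbf{P}^\bullet(M)}{\geq -d}$ has $H^0 = M$ because it is the truncation of a minimal projective resolution of $M$. Hence an isomorphism $\psi_d(M,P) \isom \psi_d(M',P')$ immediately yields $M \isom M'$. Since minimal projective resolutions are unique up to isomorphism, the summand $\trunc{\mathbf{P}^\bullet(M)}{\geq -d}$ is determined by $M$, so Krull--Schmidt cancellation in the Krull--Schmidt category $\HomotopyC{\proj{\Lambda}}{b}$ gives $P[d] \isom P'[d]$, and hence $P \isom P'$.

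The main obstacle that I anticipate is essentially bookkeeping: ensuring the formula for $\psi_d$ given in Theorem \ref{thrm:injections when d>1} really coincides with $\mathbf{P}^\bullet_{(M,P)}$ from Section \ref{Section:d+1 silting} (it does, by construction), and carefully invoking Krull--Schmidt. All the genuinely nontrivial content has been absorbed into Theorem \ref{Theorem A} (which upgrades summand count to summand-maximality) and Theorem \ref{Theorem C} (which upgrades summand-maximality to silting), so once those have been established the present theorem is almost a formal corollary.
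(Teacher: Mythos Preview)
Your proposal is correct and follows essentially the same approach as the paper, which simply writes ``Follows by Theorem \ref{Theorem:strongly maximal and silting}.'' You have made explicit the details the paper leaves implicit: the reduction from ``maximal $\td$-rigid with $\abs{M}+\abs{P}=\abs{\Lambda}$'' to ``summand-maximal'' via Theorem \ref{thrm:taud tilting is well-configured}, and the injectivity argument via $H^0$ and Krull--Schmidt.
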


\begin{proof}
    Follows by Theorem \ref{Theorem:strongly maximal and silting}.
\end{proof}

As we have already mentioned, the bijection $\psi$ in Theorem \ref{thrm:bijections when d=1} can be seen as a motivation for the definition of MM-$\td$-tilting modules in \cite{MARTINEZ202398}, see Definition \ref{def:MM-tau_d-tilting}. Based on our results, we have the following description of MM-$\td$-tilting modules lying in $d$-cluster tilting subcategories in the case $\Lambda=\Lambda(n,l)$.

\begin{corollary}
    Let $\Lambda=\Lambda(n,l)$ be a homogeneous Nakayama algebra admitting a $d$-cluster tilting subcategory $\cC\subseteq\modfin{\Lambda}$. Let $M\in\cC$ be a basic module. Then the following are equivalent.
    \begin{enumerate}
        \item[(a)] $M$ is MM-$\td$-tilting. 
        \item[(b)] $(M,0)$ is $\td$-rigid and $\abs{M}=\abs{\Lambda}$.
    \end{enumerate}
\end{corollary}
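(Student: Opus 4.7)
The plan is to observe that the corollary is an immediate consequence of Theorem \ref{Theorem:strongly maximal and silting} and Theorem \ref{thrm:taud tilting is well-configured}, once we unwind the definitions in the special case $P=0$. There is essentially no new content; the main step is to verify that all relevant notions specialize correctly.

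First I would unwind the notation: for the $\cC$-pair $(M,0)$ one has
\[
\mathbf{P}^\bullet_{(M,0)} \;=\; 0[d]\oplus \trunc{\mathbf{P}^\bullet(M)}{\geq -d} \;=\; \trunc{\mathbf{P}^\bullet(M)}{\geq -d}.
\]
Hence by Definition \ref{def:MM-tau_d-tilting}, $M$ being MM-$\td$-tilting is exactly the statement that $\mathbf{P}^\bullet_{(M,0)}\in\silt\Lambda$. Note that $(M,0)$ is automatically a $\cC$-pair, since there is nothing to check about the trivial projective part and $M\oplus 0 = M$ is basic.

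Next, by Theorem \ref{Theorem:strongly maximal and silting} applied to the pair $(M,0)$, the condition $\mathbf{P}^\bullet_{(M,0)}\in\silt\Lambda$ is equivalent to $(M,0)$ being a summand-maximal $\td$-rigid pair. Finally, Theorem \ref{thrm:taud tilting is well-configured} tells us that $(M,0)$ is summand-maximal $\td$-rigid if and only if $(M,0)$ is a $\td$-rigid pair with $\abs{M}+\abs{0}=\abs{\Lambda}$, which is exactly condition (b). Chaining these two equivalences together gives the corollary.

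The only subtlety to check is that nothing degenerates in the $P=0$ case: the $\cC$-pair condition is trivial, the shift summand $P[d]$ disappears, and the counting $\abs{M}+\abs{P}=\abs{\Lambda}$ reduces to $\abs{M}=\abs{\Lambda}$. Since all three main theorems of the paper apply verbatim to pairs of the form $(M,0)$, there is no obstacle to overcome; the proof is just a one-line invocation of the two prior theorems.
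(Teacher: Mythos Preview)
Your proof is correct and essentially matches the paper's argument: both directions ultimately rest on Theorem \ref{Theorem:strongly maximal and silting} (together with Theorem \ref{thrm:taud tilting is well-configured}), specialized to $P=0$. The only cosmetic difference is that the paper cites \cite[Theorem 3.4, Theorem 5.7]{MARTINEZ202398} directly for (a) $\Rightarrow$ (b) to emphasize that this implication holds for any finite-dimensional algebra, whereas you obtain it via the easy direction of Theorem \ref{Theorem:strongly maximal and silting} (which itself invokes those same results through Corollary \ref{cor:silting give strongly maximal}).
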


\begin{proof}
    If (a) holds, then (b) holds for any finite-dimensional algebra by \cite[Theorem 3.4]{MARTINEZ202398} and \cite[Theorem 5.7]{MARTINEZ202398}. If (b) holds then (a) follows by Theorem \ref{Theorem:strongly maximal and silting}.
\end{proof}

We illustrate Theorem \ref{thrm:injections when d>1} and Theorem \ref{thrm:the silting map can be extended} with an example.

\begin{example}
Let $\Lambda=\Lambda(37,4)$ and $d=4$ so that $\Lambda$ admits a $4$-cluster tilting subcategory $\cC$. Let $\cU$ be the $4$-torsion class defined as the additive closure of the encircled modules in the following picture:
    \[
\includegraphics{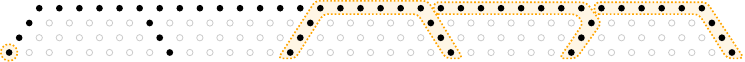}
\]
Then $\phi_4(\cU)=(M^{\cU},P^{\cU})$ where $M^{\cU}$ is the additive closure of the modules encircled in red while $P^{\cU}$ is the additive closure of the modules encircled in blue:
\[
\includegraphics{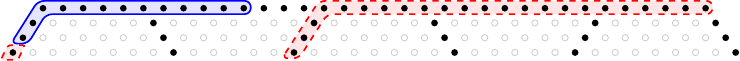}
\]
    We see that $(M^{\cU},P^{\cU})$ satisfies $\abs{M^{\cU}}+\abs{P^{\cU}}=37$. Also we have that $\tilde{\phi}_4(M^{\cU},P^{\cU})=\dtors{M^{\cU}}=\cU$, as expected. Now consider the summand-maximal $\tau_4$-rigid pair $(M',P')$ where $M'$ is the additive closure of the modules encircled in red while $P'$ is the additive closure of the modules encircled in blue:
\[
\includegraphics{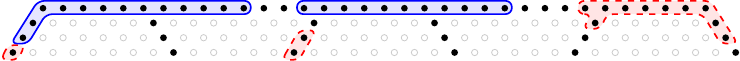}
\]  
    Then we have that $\abs{M'}+\abs{P'}=37$ again, and also that $\tilde{\phi}_4(M',P')=\dtors{M'}=\cU$. Hence $(M',P')\not\in \im(\phi_d)$ and we cannot apply Theorem \ref{thrm:injections when d>1}. However Theorem \ref{Theorem:strongly maximal and silting} gives that both 
    \[
    P^{\cU}[4]\oplus \trunc{\mathbf{P}^\bullet}{\geq -4}(M^{\cU}) \text{ and } 
    P'[4]\oplus \trunc{\mathbf{P}^\bullet}{\geq -4}(M')
    \]
    are $5$-silting complexes in $\HomotopyC{\proj\Lambda}{b}$.    
\end{example}

\subsection{Mutation of summand-maximal \texorpdfstring{$\td$}{td}-rigid pairs}
Finally we share some observations on mutations of summand-maximal $\td$-rigid pairs. These observations are not on the level of a full description. However, since mutation as a concept has been a driving factor for development within several branches of representation theory, e.g. the generalization from tilting to support $\tau$-tilting, we feel justified in sharing some of them here.

Let $\cC\subseteq\modfin{\Lambda}$ be a $d$-cluster tilting subcategory. We say that two summand-maximal $\td$-rigid pairs $(M,P)$ and $(M',P')$ are \emph{$\td$-mutations of each other}\index[definitions]{higher tilting theory!$\td$-mutation of summand-maximal $\td$-rigid pairs} if there exists a $\td$-rigid pair $(N,Q)$ which is a direct summand of both $(M,P)$ and $(M',P')$, and $\abs{N}+\abs{Q}+1=\abs{M}+\abs{P}=\abs{M'}+\abs{P'}$. We also define the \emph{mutation graph of summand-maximal $\td$-rigid pairs}\index[definitions]{higher tilting theory!mutation graph of summand-maximal $\td$-rigid pairs} (abbreviated \emph{sm $\td$-rigid graph}) of $\Lambda$ as the graph having the summand-maximal $\td$-rigid pairs as nodes, and edges between two nodes if the pairs are $\td$-mutations of each other. 

When $d=1$ the definition of $\td$-mutation coincides with the definition of mutation for support $\tau$-tilting pairs. In that case we have that when the algebra is $\tau$-tilting finite, the mutation graph of support $\tau$-tilting modules is connected \cite[Corollary 2.38]{adachi_-tilting_2014}. We may show the same for $\Lambda(n,l)$ and $d>1$ as well.

\begin{proposition}\label{prop:mutation graph is connected}
Assume that $\Lambda(n,l)$ admits a $d$-cluster tilting module $C$ for some $d\geq 1$. Then the mutation graph of summand-maximal $\td$-rigid pairs of $\Lambda(n,l)$ is connected. 
\end{proposition}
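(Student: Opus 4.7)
My plan is to prove the slightly stronger claim that every summand-maximal $\td$-rigid pair of $\Lambda(n,l)$ is $\td$-mutation-equivalent to the canonical pair $(\Lambda,0)=(P([1,n]),0)$, from which connectedness follows.

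I first handle the case (a) well-configured pairs of Definition \ref{def:well-configured}, which by Theorem \ref{thrm:taud tilting is well-configured} are exactly the $n+1$ pairs $(P([1,x]),P([x+1,n]))$ for $x\in[0,n]$. For consecutive $x,x+1$ in this list, the pair $(P([1,x]),P([x+2,n]))$ is a common $\td$-rigid summand of size $n-1$: $\td$ vanishes on projective modules (since $d\geq 2$), and $\Hom{\Lambda}{P(j)}{P(i)}=0$ whenever $j>i$ by Lemma \ref{Lemma:HomSpaceAdachi}. Hence consecutive case (a) pairs are $\td$-mutations of each other, giving a connected chain that terminates at $(\Lambda,0)$.

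For case (b) pairs I would argue by induction on the total number $\sum_{i=2}^p m_i$ of non-projective indecomposable summands of $M$. The inductive step claims that every summand-maximal pair $(M,P)$ with $\sum m_i>0$ admits a $\td$-mutation to another summand-maximal pair with strictly smaller $\sum m_i$. To construct it, I look at the last admissible configuration $(M_{t_{k,1}},\ldots,M_{t_{k,2}})$ in the diagonal partition of $(M,P)$ and perform a local surgery determined by its type (I)--(VIII) in Definition \ref{def:admissible configurations}: I either decrement the parameter $n_{t_{k,2}}$ (by removing the longest summand of $M_{t_{k,2}}$) or increment the parameter $l_{t_{k,1}}$ (by removing the shortest summand of $M_{t_{k,1}}$), and I simultaneously adjoin one indecomposable projective to $M$ or $P$ so that the counting $|M'|+|P'|=|\Lambda|$ and all conditions (i)--(v) of Definition \ref{def:well-configured}(b) are preserved. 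The pair obtained from $(M,P)$ by deleting only the removed summand is then automatically a $\td$-rigid common summand of size $n-1$, so the two pairs are indeed $\td$-mutations. The prototypical case is type (I) with $i$ odd: remove $\ind{s_i}{s_i+n_i-1}$ from $M$ and adjoin $P(s_i+n_i-1)$ to $P$, which decreases $n_i$ by one while preserving the type (I) structure (or erases the configuration entirely when $n_i=1$). Iterating such reductions reaches a case (a) pair, which is connected to $(\Lambda,0)$ by the previous paragraph.

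The main obstacle is the case-by-case construction of these surgeries for all eight admissible configuration types, because for each type the allowed choice of removed summand and adjoined projective is tightly constrained by the compatibility conditions (iv)--(v) of Definition \ref{def:well-configured}(b), which couple the orientation (rigid/support/mixed) of $\Xi(T_k)$ with that of the neighbouring intervals $\Theta_{k-1}$ and $\Theta_k$. For instance, in type (II) the rigidity of $\Xi_i$ forces $\Theta_{k-1}$ to be rigid by (iv), so the adjoined projective must join $\red$ rather than $\blue$ in order to preserve rigidity of the enlarged $\Theta'_{k-1}$; in more delicate boundary cases such as type (III) with $\Xi$ rigid and $\Theta_k$ support, a direct $\sum m_i$-decreasing mutation may not exist and the argument must first invoke one or more auxiliary $\sum m_i$-preserving mutations that rearrange a boundary projective between $\red$ and $\blue$ inside a $\Theta_j$ interval (these are valid $\td$-mutations by exactly the argument used for the case (a) chain), loosening conditions (iii)--(v) enough to unlock the desired reduction. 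With this case analysis completed, the induction on $\sum m_i$ terminates at case (a), and connectedness of the sm $\td$-rigid graph follows.
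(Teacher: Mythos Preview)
Your proposal is correct at the sketch level, which matches the paper's own proof (the paper explicitly says ``We only give a sketch of the proof''). Both arguments reduce an arbitrary summand-maximal $\td$-rigid pair to a case~(a) pair $(P([1,x]),P([x+1,n]))$, and your treatment of the case~(a) chain is clean and complete.

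The reduction mechanisms differ, however. You argue by induction on $\sum m_i$, removing one non-projective summand at a time and compensating with one projective, which forces you into a rather fine case analysis on all eight admissible types together with the coupling conditions (iv)--(v). The paper instead proceeds in two coarser phases: first it mutates any non-type-(III) admissible configuration $(M_i,\ldots,M_{i+k})$ into a type-(III) configuration, and then it mutates an entire type-(III) configuration away in one block (replacing $\diag{i}$ by projectives indexed in $\Xi_i$). The advantage of the paper's route is that type (III) is the rigid case $l_i=1$, $n_i=l-1$, where $M_i=\diagm{i}$ and $\Xi_i$ has the maximal flexibility (it may be support, rigid, or support-to-rigid), so the compatibility checks (iv)--(v) become trivial once everything has been standardised. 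Your direct decrement strategy works too, but---as you correctly anticipate---it occasionally needs auxiliary $\sum m_i$-preserving mutations to realign the orientation of a neighbouring $\Theta_j$ before the decrement is legal; the paper avoids this by passing through type (III) first.
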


\begin{proof}
We only give a sketch of the proof. Let $(M,P)$ be a summand-maximal $\td$-rigid pair. Equivalently, $(M,P)$ is well-configured. One can show that if $(M_i)$ is a full admissible configuration of type (III), then we can mutate the modules in the diagonal $\diag{i}$ to the interval $\Xi_i$. In other words, $(M,P)$ is $\td$-mutation equivalent to a well configured pair $(M',P')$ with $M_j=M_j'$ for $j\neq i$ and $M_i'=0$. Similarly, if $(M_i,\ldots,M_{i+k})$ is a full admissible configuration of type different than (III), then we can show that $(M,P)$ is $\td$-mutation equivalent to a well configured pair $(M',P')$ which is the same as $(M,P)$ except for the fact that we have replaced $(M_i,\ldots,M_{i+k})$ with a full admissible configuration of type (III) and adjusted $\Xi(i,i+k)$ accordingly. 

By repeatedly applying these two steps we show that $(M,P)$ is $\td$-mutation equivalent to a summand-maximal $\td$-rigid pair $(P(\red'),P(\blue'))$ for some $\red'$ and $\blue'$. Necessarily we have that $\red'=[1,x]$ and $\blue'=[x+1,n]$ for some $x\in [0,n+1]$. Such a pair can be easily seen to be $\td$-mutation equivalent to $(0,\Lambda)$. Hence every summand-maximal $\td$-rigid pair is $\td$-mutation equivalent to $(0,\Lambda)$ which proves connectedness of the sm $\td$-rigid graph.
\end{proof}

On the other hand, the case $d>1$ has also some important differences as well when compared to the case $d=1$. Recall that if $d=1$ then the mutation graph of support $\tau$-tilting modules is not necessarily finite. Currently every known example of a $d$-cluster tilting subcategory $\cC\subseteq\modfin{\Lambda}$ has an additive generator $C$ and it is an open question if that is always the case. In particular, there is no known example of an infinite sm $\td$-rigid graph for $d>1$.

Another crucial difference concerns the regularity of the sm $\td$-rigid graph. Bongartz' (co)completion guarantees that a support $\tau$-tilting pair can be mutated in $\abs{\Lambda}=n$ different ways and so the mutation graphs for $d=1$ are $n$-regular. However, for summand-maximal $\td$-tilting pairs this is not always the case as we can see in the following example.

\begin{example}
    Consider the algebra $\Lambda(9,3)$ which admits a $2$-cluster tilting subcategory $\cC$ with $4$ diagonals. From the formula at the end of Remark \ref{rem: Counting strongly maximal when d-rep finite} one can calculate that there are $160$ summand-maximal $\tau_2^{\phantom{-}}$-rigid pairs in $\cC$. These pairs give rise to the sm $\tau_2^{\phantom{-}}$-rigid graph drawn in Figure \ref{fig:mutationGraphLambda94}\footnote{This figure can also be found at \url{https://endresr.github.io/Higher_Tau_Nakayama/resources/CompleteGraph.pdf} in higher resolution}. Observe that in the neighbourhood of $(\Lambda,0)$ and $(0,\Lambda)$ we get planar subgraphs with nodes of degree either $3$ or $4$, which both connects to two subgraphs (A) and (B) which are not planar and have nodes of degrees $3$, $4$, $5$ and $6$. 

    Both (A) and (B) are characterized by having nodes where $(R,B)$ is not uniquely determined by the diagonal component. We also note that the nodes of maximal degree are all in (A) and have diagonal partition $\{3\}$ of admissible type (III). Equivalently, the nodes of maximal degree are summand-maximal $\tau_2^{\phantom{-}}$-rigid pairs such that $\tau_2^{-2}\Lambda$ lies in $\add{M}$. This may suggest that for a general algebra $\Lambda$ admitting a $d$-cluster tilting subcategory a strategy for finding summand-maximal $\td$-rigid pairs with the maximal amount of possible mutations might be to restrict to those having $\tau_d^{-r}\Lambda$ as rigid summands for some $r$.
    
\begin{figure}[p]
    \centering
    \includegraphics[width=1.6\textwidth,angle=90]{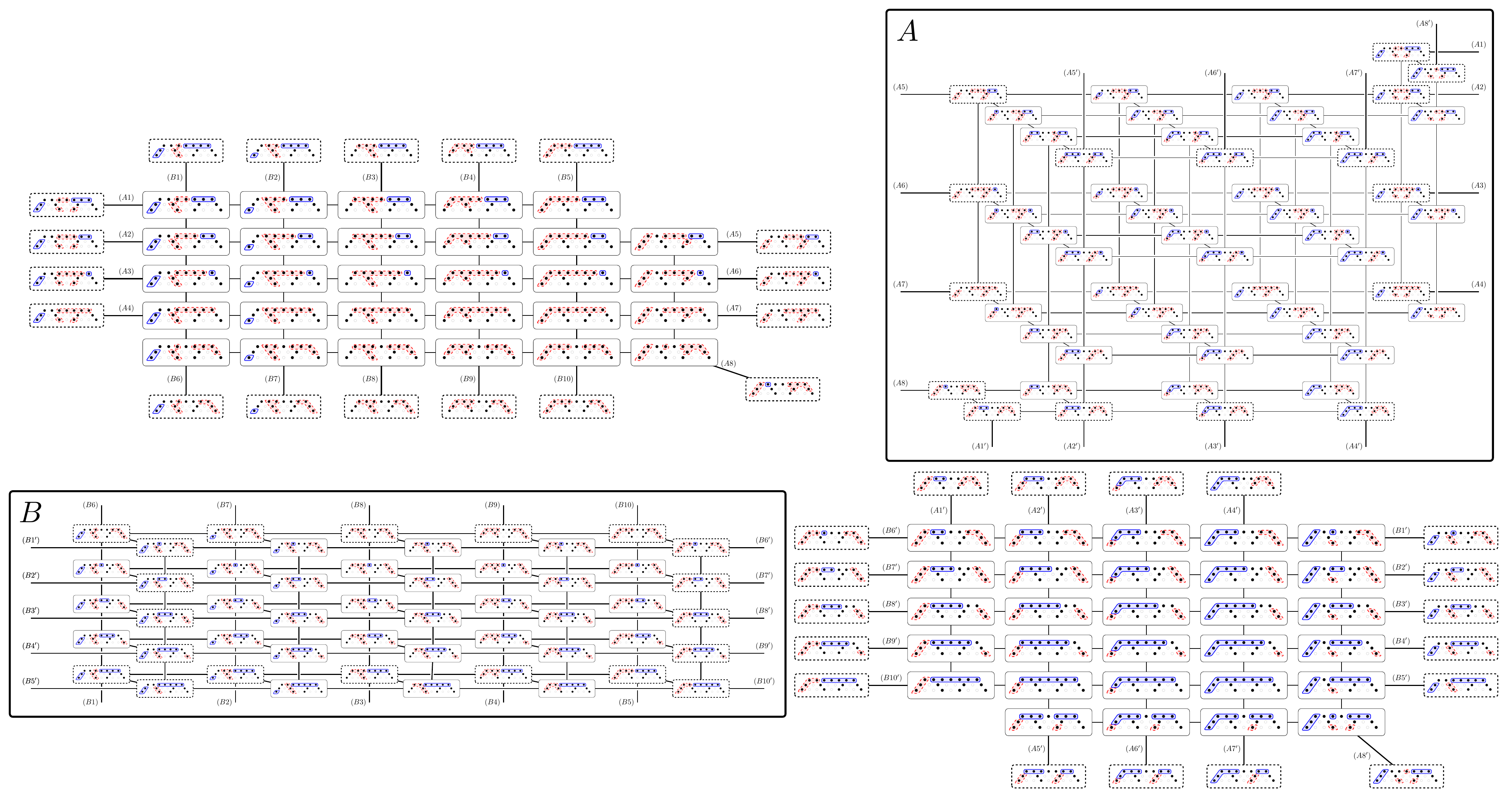}
    \caption{The sm $\tau_2^{\phantom{-}}$-rigid graph of $\Lambda(9,3)$. }
    \label{fig:mutationGraphLambda94}
\end{figure}
\end{example}

Despite the complexity of the previous example we see that when $\Lambda=\Lambda(n,l)$ is $d$-representation finite, that is when $d=\gldim(\Lambda)$, the situation is greatly simplified.

\begin{proposition}\label{prop:2-regular for d-rep-fin}
    Assume that $\Lambda(n,l)$ is $d$-representation-finite for some $d>1$. Then the sm $\td$-rigid graph of $\Lambda(n,l)$ is an extended Dynkin diagram of type $\tilde{A}$ with $2n+l-1$ vertices. In particular, it is a $2$-regular graph.
\end{proposition}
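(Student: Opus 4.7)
The plan is to explicitly enumerate all summand-maximal $\td$-rigid pairs and their mutations, then arrange them in a cycle. By Remark \ref{rem:d=gldim(Lambda) for Nakayama}, the hypothesis that $\Lambda(n,l)$ is $d$-representation-finite forces $p = 2$, so $s_1 = 1$ and $s_2 = n$. Using Theorem \ref{thrm:taud tilting is well-configured} together with the case analysis in Remark \ref{rem: Counting strongly maximal when d-rep finite}(a), every summand-maximal $\td$-rigid pair belongs to exactly one of four explicit families: the $n+1$ ``trivial'' pairs $T_x = (P([1,x]), P([x+1,n]))$ for $x \in [0,n]$; the $l-2$ type~(I) pairs $A_{n_2} = (\downdiag{2}{n_2}, P([1,n-n_2]))$ for $n_2\in[1,l-2]$; the $l-2$ type~(II) pairs $B_{l_2} = (\updiag{2}{l_2}\oplus P([l-l_2+1,n]), 0)$ for $l_2\in[2,l-1]$; and the $n-l+2$ type~(III) pairs $C_{\mathrm{rigid}},\,C_1,\,\ldots,\,C_{n-l},\,C_{\mathrm{support}}$ (obtained by declaring $\Xi_2=[1,n]$ to be rigid, support to rigid at $x\in[1,n-l]$, or support, respectively). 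Summing gives the expected $2n+l-1$ pairs.

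Next, I arrange these pairs in the cyclic order
\[
T_0,\,T_1,\,\ldots,\,T_n,\,B_{l-1},\,B_{l-2},\,\ldots,\,B_2,\,C_{\mathrm{rigid}},\,C_1,\,C_2,\,\ldots,\,C_{n-l},\,C_{\mathrm{support}},\,A_{l-2},\,A_{l-3},\,\ldots,\,A_1
\]
and verify directly that each consecutive pair is a $\td$-mutation by exhibiting a common summand pair with exactly $n-1$ indecomposable summands: for example, $T_x$ and $T_{x+1}$ share $(P([1,x]), P([x+2,n]))$; $T_n$ and $B_{l-1}$ share $(P([2,n]),0)$; $B_{l_2}$ and $B_{l_2-1}$ share $(\updiag{2}{l_2}\oplus P([l-l_2+2,n]),0)$; $B_2$ and $C_{\mathrm{rigid}}$ share $(\updiag{2}{2}\oplus P([l,n]),0)$; $C_x$ and $C_{x+1}$ share $(\diagm{2}\oplus P([x+l+1,n]), P([1,x]))$; and so on to $A_1$ and $T_0$ via $(0,P([1,n-1]))$. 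The degenerate case $l=2$ needs to be handled separately: here the $A$- and $B$-families are empty and the transitions at the ``joints'' become $T_n\leftrightarrow C_{\mathrm{rigid}}$ and $C_{\mathrm{support}}\leftrightarrow T_0$, which one checks in the same way. This shows the mutation graph contains a Hamiltonian cycle, so every vertex has degree at least $2$.

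To finish, I must show that the cycle exhausts all mutations, i.e.\ that every vertex has degree at most $2$. For this I compute, for each pair of families, the size of the common summand pair between two arbitrary representatives and verify that it equals $n-1$ only in the consecutive cases listed above. For instance, $T_x$ and $T_y$ share $(P([1,\min(x,y)]), P([\max(x,y)+1,n]))$, which has $n - |x-y|$ summands; the analogous computation for $A_{n_2}$ versus $A_{n'_2}$ yields $n - |n_2-n'_2|$, and for $B_{l_2}$ versus $B_{l'_2}$ yields $n - |l_2-l'_2|$; cross-family comparisons such as $T_x \leftrightarrow A_{n_2}$ force $x + n_2 = 1$, while $T_x\leftrightarrow B_{l_2}$ forces $x = n$ and $l_2 = l-1$, and $T_x\leftrightarrow C_\star$ has no solutions except in the degenerate $l=2$ boundary cases already identified. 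The main obstacle is just to organize this finite but bookkeeping-heavy case analysis over all $\binom{4}{2}$ cross-family combinations, together with the intra-family comparisons.

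Once this is established, the mutation graph is a $2$-regular connected simple graph on $2n+l-1$ vertices (connectedness being supplied by Proposition \ref{prop:mutation graph is connected}), and such a graph must be a single cycle, which is precisely the extended Dynkin diagram $\tilde{A}_{2n+l-2}$.
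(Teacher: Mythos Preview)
Your proposal is correct and follows essentially the same approach as the paper: both use the well-configured classification for $p=2$ from Remark~\ref{rem: Counting strongly maximal when d-rep finite} to enumerate the $2n+l-1$ pairs and then observe they form a cycle. The paper's proof is a very brief sketch that simply points to Example~\ref{ex:a 2-regular d-mutation graph} as ``indicative of the general picture'', whereas you have explicitly written out the four families $T_x$, $A_{n_2}$, $B_{l_2}$, $C_\star$, exhibited the Hamiltonian cycle, and outlined the degree-$\leq 2$ case analysis---so your version is a faithful (and considerably more detailed) fleshing-out of what the paper leaves implicit; note also that your citation of Proposition~\ref{prop:mutation graph is connected} for connectedness is redundant, since the Hamiltonian cycle already supplies it.
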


\begin{proof}
By \cite[Theorem 3]{VASO20192101} we have that $\Lambda(n,l)$ is $d$-representation-finite if and only if $p=2$. By Remark \ref{rem: Counting strongly maximal when d-rep finite} we have that there exist $2n+l-1$ summand-maximal $\td$-rigid pairs. The claim about the shape of the graph follows easily using this and the description of summand-maximal $\td$-rigid pairs as well-configured $\cC$-pairs in Theorem \ref{thrm:taud tilting is well-configured}; Example \ref{ex:a 2-regular d-mutation graph} below is indicative of the general picture.
\end{proof}

\begin{example}\label{ex:a 2-regular d-mutation graph}
    The sm $\tau_2^{\phantom{-}}$-rigid graph of the $2$-representation-finite algebra $\Lambda(4,3)$ is
\[
\includegraphics{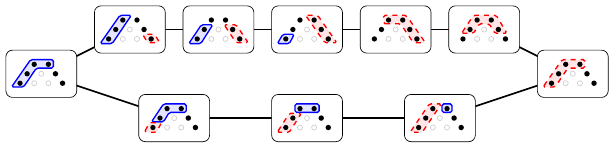}
\]
\end{example}

Note that $\Lambda(4,3)$ in Example \ref{ex:a 2-regular d-mutation graph} has $l=3$, $d=2$ and $p=2$ and $\Lambda(9,3)$ in Figure \ref{fig:mutationGraphLambda94} has $l=3$, $d=2$ and $p=4$. In other words $\Lambda(9,3)$ is the simplest example of $\Lambda(n,l)$ where $l>2$ and where $\Lambda(n,l)$ is not $2$-representation-finite. There is a perhaps surprising jump in complexity between the two graphs. One might therefore ask if $d$-representation finite algebras can always be expected to posses such nice behaviour. With regard to regularity the answer is no, a counterexample is the Auslander algebra of type $\mathbb{A}_3$ which is $2$-representation finite, but which has nodes of degree $3$ and $4$ in it's sm $\tau_2$-rigid graph.

We finish this section with a remark about the lattice of $d$-torsion cases when $\Lambda$ is $d$-representation-finite.

\begin{remark}\label{rem:lattice of d-torsion classes in d-rep-fin}
Let $\Lambda(n,l)$ be $d$-representation-finite. By Remark \ref{rem: Counting strongly maximal when d-rep finite} and Remark \ref{rem:counting d-torsion classes for d-rep-fin} we conclude that there are 
\[
2n+l-1 - (n+l+1) = n-2 
\]
more summand-maximal $\td$-rigid pairs than $d$-torsion classes. Continuing from Example \ref{ex:a 2-regular d-mutation graph}, the lattice of $2$-torsion classes of $\Lambda(4,3)$ is 
    \[
\includegraphics{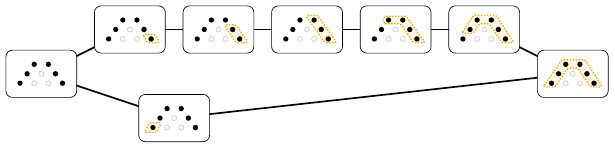}
\]
As expected, it has $2$ vertices less than the sm $\tau_2^{\phantom{-}}$-rigid graph. The above picture is indicative of the general situation, where the lattice of $d$-torsion classes can be obtained by removing all summand-maximal $\td$-rigid pairs of the form $(P(\red),P(\blue))$ where $\red=[1,x]$ and $\blue=[x+1,n]$ for some $x\in [2,n-1]$, and replacing each of the other summand-maximal $\td$-rigid pair $(M,P)$ with $\dtors{M}$.
\end{remark}

\appendix
\section{An illustration for the notation}\label{sec:appendix}
One aspect of this paper is the reliance on a wide variety of notations, which can be challenging but is necessary for clarity and precision. This is particularly evident in Sections \ref{Section:Preliminaries} and \ref{Section:strongly maximal td-rigid}. In an effort to reduce the cognitive load imposed by this fact, we include an illustration which encodes the most widely used notation in the article. Our hope is that it can be used as a place to quickly (re)familiarise oneself with the meaning of the different symbols.
\[
\includegraphics[width=\linewidth]{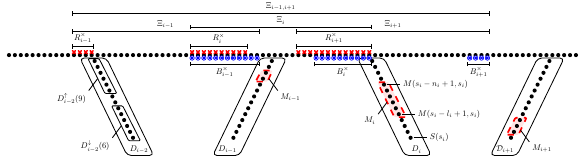}
\]
The illustration above should be thought of as 4 consecutive diagonals in the $d$-cluster tilting subcategory of an algebra $\Lambda(n,l)$ for $n$ and $l$ large enough, and for suitable $d$. In the diagonals $\diag{i-1}$, $\diag{i}$ and $\diag{i+1}$ we have denoted some modules, to be thought of as direct summands of a $\td$-rigid pair $(M,P)$. These in turn define all the intervals which we have drawn at the top. For a quick way to find the formal definitions of all of this notation we refer to the index at the end of this paper.

As a specific example for verifying some values, we note that if $l=15$, $p=6$, $d=4$ (thus $n=121$) and $i=4$, then we obtain the following values:
\[
\begin{array}{c|cccc}
     & i-2 & i-1 & i & i+1  \\ \hline
    m_j & 0 & 2 & 6 & 3 \\
    n_j & 0 & 12 & 10 & 4 \\
    l_j & 15 & 11 & 5 & 2 \\
    s_j & 31 & 48 & 78 & 95 \\
    \Xi_j & \varnothing & [28,58] & [47,78] & [65,98]
\end{array}
\]
Notice that increasing $p$ (and thus also $n$) has no effect on the numbers in the above table.

\bibliographystyle{plain} 
\bibliography{bibliography} 

\printindex[definitions]
\printindex[symbols]

\end{document}